\newcommand{\dbtilde}[1]{\accentset{\approx}{#1}}
\numberwithin{equation}{section}
\newcommand{\RR}{\mathbb{R}}
\newcommand{\CC}{\mathbb{C}}
\newcommand{\ZZ}{\mathbb{Z}}
\newcommand{\eps}{\varepsilon}
\newcommand{\tubes}{\mathbb{T}}
\newcommand{\dir}{\operatorname{dir}}
\newcommand{\dist}{\operatorname{dist}}
\newcommand{\gtrapproxdelta}{\gtrapprox_\delta}
\newtheorem{thm}{Theorem}[section]
\newtheorem{lem}[thm]{Lemma}
\newtheorem{prop}[thm]{Proposition}
\newtheorem{cor}[thm]{Corollary}
\newtheorem{conj}{Conjecture}[section]
\newtheorem*{mainThmPrime}{Theorem \ref{mainThm}$^\prime$}
\newtheorem*{SWThmRepeat}{Theorem \ref{SWThm}}
\newtheorem*{SWThmRepeatPrime}{Theorem \ref{SWThm}$'$}
\newtheorem*{SWThmInf}{Theorem \ref{SWThm}, informal version}
\newtheorem*{smallL3NormPropInf}{Proposition \ref{smallL3Norm}, informal version}
\newtheorem*{bootstrapLemDiscretized}{Proposition \ref{bootstrapLem}$'$}
\theoremstyle{remark}
\newtheorem{defn}{Definition}[section]
\newtheorem{rem}{Remark}[section]
\newtheorem*{discretizedThinTubesDefn}{Definition \ref{thinTubes}$'$}
\begin{document}
\title{Sticky Kakeya sets and the sticky Kakeya conjecture}
\author{Hong Wang\thanks{Department of Mathematics, New York University, New York, NY; and IHES, Universite Paris-Saclay, Bures sur Yvette, France. hw3639@nyu.edu} \and Joshua Zahl\thanks{Chern Institute of Mathematics and LPMC, Nankai University, Tianjin 300071, China; and The University of British Columbia, Vancouver, BC, Canada. jzahl@nankai.edu.cn}}
\maketitle


\begin{abstract}
A Kakeya set is a compact subset of $\RR^n$ that contains a unit line segment pointing in every direction. The Kakeya conjecture asserts that such sets must have Hausdorff and Minkowski dimension $n$. There is a special class of Kakeya sets, called sticky Kakeya sets. Sticky Kakeya sets exhibit an approximate multi-scale self-similarity, and sets of this type played an important role in Katz, \L{}aba, and Tao's groundbreaking 1999 work on the Kakeya problem. We propose a special case of the Kakeya conjecture, which asserts that sticky Kakeya sets must have Hausdorff and Minkowski dimension $n$. We prove this conjecture in three dimensions. 
\end{abstract}

\section{Introduction}\label{introSection} 
A compact set $K\subset\RR^n$ is called a \emph{Kakeya set} if it contains a unit line segment pointing in every direction. A surprising construction by Besicovitch \cite{bes} shows that such sets can have measure 0. The Kakeya set conjecture asserts that every Kakeya set in $\RR^n$ has Hausdorff and Minkowski dimension $n$. There is also a slightly more technical, single-scale variant of this conjecture, which is called the Kakeya maximal function conjecture: let $\delta>0$ and let $\tubes$ be a set of $1\times\delta$ tubes in $\RR^n$ whose coaxial lines point in $\delta$-separated directions. Then the tubes must be almost disjoint, in the sense that for every $1\leq d\leq n$ and every $\eps>0$, there exists $C=C(n,d,\eps)$ (independent of $\delta$) so that
\begin{equation}\label{KakeyaMaximalFnEstimate}
\Big\Vert \sum_{T\in\tubes}\chi_T \Big\Vert_{\frac{d}{d-1}} \leq C\Big(\frac{1}{\delta}\Big)^{\frac{n}{d}-1+\eps}\Big(\sum_{T\in\tubes}|T|\Big)^{\frac{d-1}{d}}.
\end{equation}
The Kakeya set conjecture and Kakeya maximal conjecture were proved for $n=2$ by Davies \cite{dav} and Cordoba \cite{cordoba}, respectively. The conjectures remain open in three and higher dimensions.

The Kakeya conjecture is closely related to questions in Fourier analysis. This connection was first explored by Fefferman \cite{fef}, who used a variant of Besicovitch's construction to show that the ball multiplier $\widehat{Tf} = \chi_B \hat f$ is unbounded on $L^p(\RR^2)$ when $p\neq 2$. In \cite{bour91}, Bourgain obtained new estimates for Stein's Fourier restriction conjecture in $\RR^3$ by first proving, and then using estimates of the form \eqref{KakeyaMaximalFnEstimate} (for certain $d<3$) in three dimensions. In brief, a function whose Fourier transform is supported on a curved manifold $M$ can be decomposed into a sum of ``wave packets,'' each of which is supported on a tube $T$. The curvature of $M$ ensures that many of these wave packets point in different directions, and estimates of the form \eqref{KakeyaMaximalFnEstimate} can be used to analyze the possible intersection patterns of these wave packets. 

Since Bourgain's seminal work \cite{bour91}, Kakeya estimates have served as an input when studying the Fourier restriction problem \cite{BG, dem, WW}, and methods that were developed in the context of the Kakeya problem have been successfully applied to the Fourier restriction problem, Bochner-Riesz problem, and related questions. Indeed, the modern renaissance in polynomial method techniques was sparked by Zeev Dvir's proof \cite{dvi} of Wolff's finite field Kakeya conjecture. These polynomial method techniques have since revolutionized discrete math, combinatorial geometry, and harmonic analysis. See e.g.~\cite{GutBk, shef} for a modern survey of these developments. 

In this paper we will restrict our attention to the Kakeya set conjecture in three dimensions. In \cite{Wo95}, Wolff proved that every Kakeya set in $\RR^3$ has Hausdorff dimension at least $5/2$. In \cite{KLT}, Katz, \L{}aba, and Tao made the following improvement: every Kakeya set in $\RR^3$ has upper Minkowski dimension at least $5/2+c$, where $c>0$ is a small absolute constant. Katz, \L{}aba, and Tao's argument began by carefully analyzing the structure of a (hypothetical) Kakeya set that had dimension exactly 5/2. At scale $\delta$, such a set $K$ would contain a union of roughly $\delta^{-2}$ $1\times\delta$ tubes that point in $\delta$-separated directions, and the union of these tubes would have volume roughly $\delta^{1/2}$. Next, Katz, \L{}aba, and Tao considered how these $1\times\delta$ tubes would arrange themselves at scale $\rho=\delta^{1/2}$. Since the $1\times\delta$ tubes point in different directions, each $1\times\rho$ tube can contain at most $\rho^2/\delta^2$ $1\times\delta$ tubes. If equality (or near equality) holds (i.e. if the $1\times\delta$ tubes arrange themselves into roughly $\rho^{-2}$ many $1\times\rho$ tubes, each of which contain $\rho^2/\delta^2$ $1\times\delta$ tubes) then we say the arrangement of $1\times\delta$ tubes is \emph{sticky} at scale $\rho$. As a starting point for their arguments in \cite{KLT}, Katz, \L{}aba, and Tao used a result by Wolff \cite{Wo98} to prove that that if a Kakeya set $K$ has upper Minkowski dimension $5/2$, then the corresponding set of $1\times\delta$ tubes must be sticky at every scale $\rho\in (\delta,1)$. 

Motivated by this observation, we introduce a special class of Kakeya sets, which we call sticky Kakeya sets.
Let $\mathcal{L}$ be the set of (affine) lines in $\RR^n$, equipped with the metric $d(l,l') = |p-p'| + \angle(u,u')$. Here $u$ (resp $u'$) is a unit vector parallel to $\ell$ and $p$ (resp. $p'$) is the unique point on $\ell$ with $p\perp u$. Since the direction map from $\mathcal{L}$ to $\RR\mathbb{P}^{n-1}$ is Lipschitz, if $L\subset\mathcal{L}$ contains a line in every direction then $\dim_P(L)\geq n-1$. (Here $\dim_P$ denotes packing dimension; similar statements hold for other metric notions of dimension, but this is less relevant to the current discussion). We say a Kakeya set is sticky if equality holds:

\begin{defn}\label{defnStickyKakeya}
A compact set $K\subset\RR^n$ is called a \emph{sticky Kakeya set} if there is a set of lines $L$ with packing dimension $n-1$ that contains at least one line in each direction, so that $\ell\cap K$ contains a unit interval for each $\ell\in L$. 
\end{defn}

Observe that if we drop the requirement that $L$ has packing dimension $n-1$, then we have recovered the usual definition of a Kakeya set. In particular, every sticky Kakeya set is also a (classical) Kakeya set. As we will see in Section \ref{discretizationSection}, when sticky Kakeya sets (in the sense of Definition \ref{defnStickyKakeya}) are discretized at a small scale $\delta>0$, the corresponding collection of $1\times\delta$ tubes is sticky in the sense of Katz, \L{}aba, and Tao. 
With these definitions, we can introduce the sticky Kakeya set conjecture.

\begin{conj}\label{stickyKakeyaConj}
Every sticky Kakeya set in $\RR^n$ has Hausdorff and Minkowski dimension $n$.
\end{conj}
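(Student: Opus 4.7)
The plan is to proceed by induction on the ambient dimension $n$. The base case $n=2$ follows from Davies' theorem, and the case $n=3$ is the main result of this paper. Assume $n\geq 4$ and that Conjecture \ref{stickyKakeyaConj} has been established in dimensions $2,\ldots,n-1$. Let $K\subset\RR^n$ be a sticky Kakeya set, and suppose for contradiction that $\dim_H K = s < n$. First I would discretize at scale $\delta>0$, obtaining (via the reduction promised in Section \ref{discretizationSection}) a family $\tubes$ of roughly $\delta^{-(n-1)}$ tubes of dimensions $1\times\delta^{n-1}$ in $\delta$-separated directions, whose union has volume at most $\delta^{n-s+o(1)}$. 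By the sticky property, for every scale $\rho\in(\delta,1)$ these tubes organize into roughly $\rho^{-(n-1)}$ ``parent'' tubes of dimensions $1\times\rho^{n-1}$, each containing roughly $(\rho/\delta)^{n-1}$ thin tubes whose directions lie in a cap of diameter comparable to $\rho$.

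The core of the argument should be a two-step structural analysis. In the first step I would rule out \emph{direction degeneracy}: if, at a positive density of scales $\rho$, the directions of the thin tubes inside a typical parent tube concentrated on an $(n-2)$-dimensional subvariety of $\RR\mathbb{P}^{n-1}$, then after rescaling by $\rho^{-1}$ and projecting transversally to the common axis, the thin tubes would give a sticky configuration in $\RR^{n-1}$ of dimension strictly less than $n-1$; this would contradict the inductive hypothesis applied to the resulting sticky Kakeya set. The second step treats the complementary ``non-degenerate'' case: the directions fill out a genuinely $(n-1)$-dimensional set at every scale. Here I would try to establish an $n$-dimensional analogue of the structure theorem that underlies the $n=3$ proof---roughly, that non-degenerate stickiness forces the thin tubes inside each parent to locally fill out a set of almost full dimension $n-1$ transverse to the tube axis---and combine this with the remaining axial direction to contradict $s < n$.

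To make the first step work cleanly one has to pass from ``directions concentrate on a lower-dimensional variety'' to a genuine sticky Kakeya set in $\RR^{n-1}$, which requires a careful multi-scale argument: the induction hypothesis is only useful if the descendants of a single parent tube themselves satisfy the sticky definition with packing dimension $n-2$, uniformly in the parent and in $\rho$. I would carry this out by pigeonholing first on the scale, then on the parent, and finally on the structure of the direction set, and verifying that the packing-dimension bound transfers to the limiting configuration. A similar pigeonholing should be compatible with the multiscale arrangement provided by stickiness.

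The main obstacle, unsurprisingly, lies in the second step. For $n=3$, the structure theorem for non-degenerate sticky configurations relies heavily on planiness/graininess trichotomies of Katz--\L{}aba--Tao, on three-dimensional incidence geometry of lines, and on variants of the Guth--Katz polynomial partitioning theorem, all of which become substantially weaker and more delicate in higher dimensions. Moreover, in $\RR^n$ the directions can concentrate on subvarieties of any intermediate dimension $1,2,\ldots,n-2$, and each such scenario plausibly requires its own reduction; the cleanest induction would be on the \emph{pair} $(n,k)$ where $k$ is the dimension of the direction set, but arranging this so that the sticky hypothesis is preserved is nontrivial. I therefore expect the heart of the difficulty to be producing an $n$-dimensional analogue of the structure theorem for $n=3$---or an entirely new algebraic or combinatorial input---and it is very plausible that such an input will not be forthcoming by a straightforward extrapolation of the three-dimensional methods used in this paper.
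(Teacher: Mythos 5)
There is a genuine gap here, and it is a structural one: the statement you are addressing is Conjecture \ref{stickyKakeyaConj}, which the paper itself does not prove --- it is posed as an open conjecture, and the paper establishes only the case $n=3$ (Theorem \ref{mainThm}, via the quantity $\sigma_n$ and Proposition \ref{sigmaNVsDimensionProp}). Your text is a research outline rather than a proof: the entire ``second step'' (a higher-dimensional analogue of the planiness/graininess structure theory, the $C^2$ slope-function regularity, and the cinematic-curvature maximal estimate that drive the $n=3$ argument) is precisely the open problem, and you acknowledge that the needed input ``will not be forthcoming by a straightforward extrapolation.'' An argument whose key step is an unproven structure theorem cannot be accepted as a proof of the conjecture, for any $n\geq 4$.

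Beyond that, the inductive ``first step'' does not work as stated. If the directions of the thin tubes inside a parent tube concentrate on an $(n-2)$-dimensional subvariety of $\RR\mathbb{P}^{n-1}$, then after rescaling you obtain a family of tubes in (a slab of) $\RR^{n-1}$ whose directions lie in that subvariety --- but a Kakeya set in $\RR^{n-1}$ requires a line segment in \emph{every} direction of $S^{n-2}$, and nothing in the degeneracy hypothesis supplies directions outside the concentration set. Likewise, the sticky hypothesis in Definition \ref{defnStickyKakeya} is that the line family has packing dimension exactly $n-2$ in the rescaled picture; concentration of directions alone does not give this, and the paper's own mechanism for propagating extremality across scales (Proposition \ref{multiScaleStructureExtremal}, built on the carefully chosen quantity $\sigma_n$ and the covering condition in Definition \ref{defnOfM}) is what substitutes for such a reduction --- it controls multiplicities and volumes at pairs of scales rather than attempting to manufacture a lower-dimensional sticky Kakeya set. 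Finally, your description of the $n=3$ proof is inaccurate: it does not use Guth--Katz polynomial partitioning, but rather multilinear Kakeya, radial/Kaufman-type projection theory, a discretized sum-product-flavored theorem (Theorem \ref{SWThm}), and the Pramanik--Yang--Zahl cinematic maximal function bound; any attempted higher-dimensional extension would have to engage with those tools, not with partitioning.
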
 

A defining feature of sticky Kakeya sets is that if $K$ is a (hypothetical) counter-example to the sticky Kakeya conjecture, then $K$ must be roughly self-similar at many different scales. This was first observed in \cite{KLT} and discussed in greater detail in \cite{TaoBlog}. A precise version of this principle is stated in Proposition \ref{multiScaleStructureExtremal} below.

Since every sticky Kakeya set is also a (classical) Kakeya set, the Kakeya set conjecture implies the sticky Kakeya conjecture, and partial progress toward the Kakeya set conjecture implies the same partial progress for the sticky Kakeya conjecture. In particular, Davies' solution \cite{dav} to the Kakeya set conjecture in the plane implies that Conjecture \ref{stickyKakeyaConj} holds when $n=2$. In this paper, we will prove Conjecture \ref{stickyKakeyaConj} for $n=3$. 

\begin{thm}\label{mainThm}
Every sticky Kakeya set in $\RR^3$ has Hausdorff dimension 3. 
\end{thm}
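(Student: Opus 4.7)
My plan is to argue by contradiction: suppose $K\subset\RR^3$ is a sticky Kakeya set with $\dim K = d < 3$. Discretizing at a scale $\delta>0$, I obtain a collection $\tubes$ of $\approx \delta^{-2}$ tubes of dimensions $1\times\delta$ pointing in $\delta$-separated directions, whose union has Lebesgue measure $\approx \delta^{3-d}$. The stickiness of $K$ is equivalent (as anticipated in Section \ref{discretizationSection}) to a multi-scale self-similarity of $\tubes$: for every intermediate scale $\rho\in(\delta,1)$, the $\delta$-tubes organize themselves into $\approx \rho^{-2}$ thicker tubes of dimensions $1\times\rho$, and after rescaling, the $\delta$-tubes inside each $\rho$-tube again form a sticky, sub-maximal Kakeya configuration. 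I expect a precise version of this to be supplied by Proposition \ref{multiScaleStructureExtremal}.

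The natural next step is a dichotomy based on the \emph{planeyness} of the configuration at each scale. Call the configuration planey at scale $\rho$ if a large fraction of the $\rho$-tubes are concentrated in a small family of thin rectangular slabs. If the configuration is non-planey at some scale, I would invoke an improvement over Wolff's $5/2$-theorem available under the non-planey hypothesis — such improvements follow from three-dimensional incidence and polynomial-method techniques in the spirit of Katz--\L{}aba--Tao and Guth--Zahl — to upgrade the dimension past $5/2+c$, and iterating with the self-similarity should push $d$ all the way to $3$.

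The main obstacle, and the heart of the proof, is the case in which the configuration is planey at every scale. Here the sticky assumption forces very rigid structure: the lines through $K$ must lie in a two-parameter family of two-dimensional surfaces (or thin slabs) that fit together coherently under self-similar rescaling, and the Heisenberg/$SL_2(\RR)$ examples discussed in \cite{KLT} and \cite{TaoBlog} show that such families can genuinely exist. I would try to rule out this scenario by establishing a structure theorem showing that a collection of lines realizing every direction, while lying in such a plane/slab family, must come from (a slight perturbation of) a family of doubly-ruled quadrics — and that any such algebraic family is incompatible with having packing dimension $n-1$ in the direction map while failing to fill out a set of full Hausdorff dimension. The sticky hypothesis is essential here, since it is what allows the planey information at one scale to propagate to all scales and hence constrain the global algebraic structure.

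The main technical obstacle I foresee is making the dichotomy and the planey rigidity step genuinely quantitative across all scales simultaneously. ``Planey at every scale'' is a hypothesis about an entire hierarchy of tube collections, and the rigidity statement used to rule it out must respect and exploit that hierarchy rather than a single scale. Concretely, one needs a propagation lemma showing that planeyness at one scale combined with sticky self-similarity forces algebraic constraints at neighboring scales, and a base case at the smallest scale to close the induction. Once the planey case is eliminated, the non-planey case combined with the multi-scale self-similarity should complete the proof.
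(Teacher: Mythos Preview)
Your proposal correctly anticipates the discretization and multi-scale self-similarity (Sections \ref{discretizationSection}--\ref{extremalFamiliesSubsection}), but after that it diverges from the actual argument in two essential ways, and the divergence is a genuine gap rather than an alternative route.

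First, there is no planey/non-planey dichotomy to run. For an \emph{extremal} sticky configuration, multilinear Kakeya (Theorem \ref{multilinearKakeyaThm}) forces planiness outright: this is the content of Section \ref{findingPlaneMapSec}. So the ``non-planey'' branch of your dichotomy is empty in the extremal case, and the Katz--\L{}aba--Tao / Guth--Zahl style improvements you invoke do not enter. The entire proof lives in what you call the planey case.

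Second, your plan for that case---show the line family must be (a perturbation of) a family of doubly-ruled quadrics and derive a contradiction from packing-dimension constraints---is not what the paper does, and there is no indication such an algebraic rigidity statement is available. Indeed the Heisenberg and $SL_2$ examples you cite are precisely the warning that planey, self-similar configurations with the right combinatorics \emph{can} exist abstractly; what must be exploited is something specific to $\RR^3$ that those models lack. The paper's actual mechanism is: (i) planiness plus self-similarity yields a Lipschitz plane map and a grain decomposition with a \emph{slope function} $f$ (Proposition \ref{plainyGrainyProp}); (ii) a new discretized projection theorem (Theorem \ref{SWThm}), proved with radial-projection and thin-tube ideas from \cite{OSW}, upgrades $f$ from Lipschitz to $C^2$ (Proposition \ref{C2GrainsProp}); (iii) direction-separation forces $|f'|\gtrsim 1$ (Proposition \ref{largeSlopeSlopeFunction}); (iv) the twisted projection $\pi_f$ then sends the tubes to a family of cinematic $C^2$ curves, and a Wolff-type $L^{3/2}$ maximal estimate for such curves (Theorem \ref{twistedProjectionPYZ}, from \cite{PYZ}) gives Proposition \ref{smallL3Norm}, which contradicts the assumed volume deficit. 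Steps (ii) and (iv) are the decisive new inputs, and neither is hinted at in your outline.
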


Our proof of Theorem \ref{mainThm} is inspired by, and partially follows, the arguments recorded in Terence Tao's blog entry \cite{TaoBlog}. That blog entry sketches an approach to solving the Kakeya conjecture in $\RR^3$ that was explored by Nets Katz and Terence Tao in the early 2000s. In brief, \cite{TaoBlog} begins by conjecturing that a (hypothetical) counter-example $K\subset\RR^3$ to the Kakeya conjecture in $\RR^3$ must be sticky, and must also have two additional structural properties, which are called planiness and graininess. We will discuss this conjecture in Section \ref{KakeyaVsStickySection} below. Next, the blog entry describes how these three properties can be used to obtain multi-scale structural information about $K$. Finally, the blog entry suggests how to use this structural information to produce a counter-example to Bourgain's discretized sum-product theorem \cite{bour}, and hence obtain a contradiction.

\subsection{A sketch of the proof}\label{proofSketchSection}
We will briefly outline our approach to proving Theorem \ref{mainThm}, and discuss how it relates to Katz and Tao's strategy, as described in \cite{TaoBlog}. Katz and Tao's strategy was influenced by the following example from \cite{KLT}, which is called the (second) Heisenberg group example:
\begin{equation}\label{defnOfH}
\mathbb{H} = \{(z_1,z_2,z_3)\in\CC^3\colon \operatorname{Im}(z_3) = \operatorname{Im}(z_1\bar z_2)\}.
\end{equation}
The set $\mathbb{H}$ is a counter-example to a strong form of the Kakeya conjecture. In particular, the $\delta$-neighbourhood of $\mathbb{H}\cap B(0,1)$ contains about $\delta^{-4}$ ``complex unit line segments,'' and at most $\delta^{-2}$ of these line segments are contained in the $\delta$-neighbourhood of a (complex) plane in $\CC^3$. This set has (complex analogues of) the structural properties stickiness, planiness, and graininess, but it has volume substantially smaller than 1. At a key step in our argument, we will use (a consequence of) Bourgain's discretized sum-product theorem to distinguish between a sticky Kakeya set in $\mathbb{R}^3$ and the Heisenberg group example. 

Our proof can be divided into three major steps. The first step follows some of the arguments described in \cite{TaoBlog}. For steps 2 and 3 our arguments take a different path from those presented in \cite{TaoBlog}; we do this in part because sticky Kakeya sets can have structural properties that were not discussed in \cite{TaoBlog}.

\medskip

\noindent{\bf Step 1: Discretization and multi-scale self-similarity}\\
In Section \ref{discretizationSection}, we set up a discretization of the sticky Kakeya problem that allows us to exploit multi-scale self-similarity. The technical details of this procedure are new; in particular, we explain how to replace each $1\times\delta$ tube by a reasonably large subset (called a shading), so that key statistics of the Kakeya set are maintained after successive refinements. We hope that this setup will be useful when studying the sticky Kakeya conjecture in higher dimensions. 

In Section \ref{extremalFamiliesSubsection}, we flesh-out the ideas from \cite{TaoBlog} to obtain multi-scale structural information about sticky Kakeya sets. The main conclusion of this section is as follows. Suppose that the sticky Kakeya conjecture in $\RR^n$ was false, i.e. $\inf \dim_H K = n-\sigma_n$, where the infimum is taken over all sticky Kakeya sets in $\RR^n$, and $\sigma_n>0$. Let $K$ be a sticky Kakeya set with dimension $n-\sigma_n+\eps$ for some small $\eps>0$ (we will call this an $\eps$-extremal sticky Kakeya set), and let $E$ be the discretization of $K$ at a small scale $\delta>0$. Then $E$ contains a union of $1\times\delta$-tubes, which are sticky in the sense of Katz, \L{}aba, and Tao. Furthermore, this collection of $\delta$-tubes is coarsely self-similar: for each intermediate scale $\rho\in(\delta,1)$, the $\rho$-thickening of $K$ again resembles an $\eps$-extremal sticky Kakeya set, and the union of $\delta$-tubes inside each $\rho$-tube resembles a (anisotropically re-scaled) $\eps$-extremal sticky Kakeya set. The precise statement is given by Proposition \ref{multiScaleStructureExtremal}.

In Section \ref{planinessAndGraininessSection} we restrict attention to the case $n=3$ and again follow arguments proposed by Katz and Tao in \cite{TaoBlog}. We show that extremal sticky Kakeya sets must have a structural property called planiness. Specifically, with $K$ and $E$ as above, if $\tubes$ is the set of $1\times\delta$ tubes contained in $E$, then there is a function $V \colon E \to S^2$ so that for each point $x\in E$, the tubes in $\tubes$ containing $x$ must make small angle with the subspace $V(x)^\perp$. $V(x)$ is called a plane map, and we use the multi-scale self-similarity of extremal Kakeya sets to show that $V$ must be Lipschitz. An important consequence is that inside each ball of radius $\delta^{1/2}$, the set $E$ can be decomposed into a disjoint union of parallel rectangular prisms of dimensions roughly $\delta^{1/2}\times\delta^{1/2}\times\delta$; these rectangular prisms are called grains. After an anisotropic re-scaling, we obtain a new collection of $1\times\delta$ tubes whose union $E'\subset\RR^3$ satisfies the following {\bf Property (P):} For each $z_0\in [0,1]$, the slice $E' \cap \{z=z_0\}$ is a union of parallel $1\times\delta$ rectangles whose spacing forms an Ahlfors-regular set of dimension $1-\sigma_3$; the direction of these rectangles is determined by a ``slope function'' $f(z_0)$, which is Lipschitz. All of the arguments described thus far also apply to the Heisenberg group example.

\medskip

\noindent{\bf Step 2: Regularity of the slope function}\\
In Section \ref{globalGrainsIsC2Sec}, we show that the slope function $f(z)$ described above is $C^2$ (or more precisely, it agrees at scale $\delta$ with a function that has controlled $C^2$ norm). To do this, it suffices to show that at every scale $\rho\in (\delta,1)$ and for every interval $I$ of length $\rho^{1/2}$, the graph of $f$ above $\rho$ can be contained in the $\rho$-neighborhood of a line. We will briefly describe how this is done. Fix a scale $\rho$ and an interval $I$, and let $F\subset[0,1]^2$ be a re-scaled copy of the graph of $f$ above $I$. We will also construct a set $G\subset[0,1]^2$, which is the graph of a Lipschitz function $g\colon [0,1]\to\RR$ that encodes a local analogue of the plane map $V$. We construct the set $G$ so that certain closed paths inside $E'$ are encoded by arithmetic operations on the sets $F$ and $G$. Specifically, whenever we select points $p\in F$ and $q,q'\in G$, we have that $p\cdot(q-q')$ specifies the location of one of the $1\times\delta$ rectangles in the set $E' \cap \{z=z_0\}$ that was described above in Property (P). Since the spacing of these rectangles form an Ahlfors-regular set, we conclude that 
\begin{equation}\label{FCdotGGContained}
F\cdot (G-G)\quad\textrm{is contained in an Ahlfors regular set of dimension}\ 1-\sigma_3.
\end{equation} 

Recall that $F$ and $G$ are the graphs of Lipschitz functions, and thus satisfy a non-concentration condition analogous to having Hausdorff dimension 1. Is it possible for \eqref{FCdotGGContained} to occur? If $F$ and $G$ are contained in orthogonal lines, then $F\cdot(G-G)$ is a point, so the answer is yes. The next result says that this is the only way that \eqref{FCdotGGContained} can occur.

\begin{SWThmInf}
Let $F$ and $G$ be one-dimensional sets that have been discretized at a small scale $\rho>0$. Then either (A): (most of) $F$ and $G$ are contained in the $\rho$-neighborhoods of orthogonal lines, or (B): there exists some scale $\rho \leq t <\!\! < 1$ so that the $t$-neighborhood of $F\cdot(G-G)$ contains an interval.
\end{SWThmInf}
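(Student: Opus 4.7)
The plan is to prove the contrapositive: assuming the $t$-neighborhood of $F\cdot(G-G)$ contains no interval for any intermediate scale $\rho\leq t\ll 1$, I will deduce that (most of) $F$ and $G$ lie in the $\rho$-neighborhoods of orthogonal lines. The governing identity is that for any $q\neq q'\in G$,
\[
p\cdot(q-q')=|q-q'|\,\pi_{\theta}(p),\qquad \theta=(q-q')/|q-q'|,
\]
where $\pi_\theta$ is orthogonal projection onto $\RR\theta$. Thus $F\cdot(G-G)$ is a union of scaled projections $|q-q'|\,\pi_{\theta(q-q')}(F)$ indexed by the difference set $G-G$, and symmetrically it is a union of scaled projections of $G-G$ in directions $\hat p$ for $p\in F$.

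\emph{Stage 1 (linearization).} First I would argue that $F$ must lie in the $\rho$-neighborhood of a line. If not, then a discretized projection theorem in the tradition of Bourgain--Orponen--Shmerkin--Ren--Wang gives that, for a set of directions $\theta$ of positive measure in $S^1$, the projection $\pi_\theta(F)$ exhibits multi-scale fattening: at some intermediate scale $t\in[\rho,1]$, its $t$-neighborhood contains a nontrivial interval. Choosing $\delta_0\in G-G$ with $\hat\delta_0$ in this good set of directions (which is possible whenever $G$ contains two points separated by distance $\approx 1$, since the good set has full measure in $S^1$) places an interval at scale $|\delta_0|t$ into $|\delta_0|\,\pi_{\hat\delta_0}(F)\subseteq F\cdot(G-G)$, contradicting our hypothesis. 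The analogous argument applied with the roles of $F$ and $G$ interchanged forces $G$ onto a line as well.

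\emph{Stage 2 (orthogonality).} Once $F\subseteq\ell_F=\{a_F+su_F:s\in S\}$ and $G\subseteq\ell_G=\{a_G+tu_G:t\in T\}$ are both linear, a direct computation gives
\[
F\cdot(G-G)=(T-T)\cdot\bigl(a_F\cdot u_G+(u_F\cdot u_G)\,S\bigr).
\]
If $u_F\cdot u_G\neq 0$, then the right-hand side contains a nontrivial affine image of the product set $S\cdot(T-T)$. Since $S$ and $T-T$ are positive-dimensional subsets of $\RR$ arising from the non-concentration hypotheses on $F$ and $G$, Bourgain's discretized sum-product theorem produces an intermediate scale $t$ at which this product fattens enough for its $t$-neighborhood to contain an interval, contradicting the hypothesis once more. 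Therefore $u_F\cdot u_G=0$, the lines $\ell_F$ and $\ell_G$ are orthogonal, and conclusion (A) holds.

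\emph{Main obstacle.} The principal technical difficulty is the linearization stage: off-the-shelf discretized projection and sum-product theorems produce lower bounds on $\rho$-covering numbers (i.e., dimension estimates), whereas I need the strictly stronger statement that, at \emph{some} scale $t$, the $t$-neighborhood of the projected or product set contains a genuine interval. Bridging this gap should require iterating projection/sum-product gains across the $\log(1/\rho)$ available scales and then pigeonholing to locate a single scale at which the accumulated gain has saturated to near-full density. Coordinating this multi-scale iteration with the Lipschitz-graph non-concentration of $F$ and $G$, and treating the boundary cases where $F$ or $G$ is only ``barely'' non-linear (so that $G-G$ supplies only a narrow cone of directions), is the heart of the argument.
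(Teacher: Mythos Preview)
Your high-level heuristic matches the informal sketch the paper itself gives in the introduction: view $F\cdot(G-G)$ as a union of scaled projections $|q-q'|\,\pi_{\theta(q,q')}(F)$, argue that if $G$ is not linear then $G-G$ spans a rich direction set, and then invoke a Kaufman-type projection theorem on $F$. But there are two concrete gaps, and the paper's actual route is different from your Stage~2.

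\textbf{Stage 1 gap.} Your claim that one can ``choose $\delta_0\in G-G$ with $\hat\delta_0$ in this good set of directions \ldots\ since the good set has full measure in $S^1$'' does not follow. The direction set $\{\widehat{q-q'}:q,q'\in G\}$ has measure zero in $S^1$ (it is a countable/$\rho$-discretized set), so a full-measure statement about good projection directions for $F$ says nothing about hitting one of them from $G-G$. What is needed is the quantitative Beck-type statement that if $G$ is \emph{not} near a line then $G-G$ spans a $(\rho,1)$-set of directions; this is exactly the ``thin tubes'' machinery of Orponen--Shmerkin--Wang, and it is the engine of the paper's proof. The paper bootstraps from a weak non-concentration exponent $\zeta$ on $G$ (via a Liu--Shen argument giving $1/4$-thin tubes) up to $1$-thin tubes using the OSW bootstrap lemma, and only then applies Kaufman to $F$. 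The multi-scale difficulty you flag in your ``main obstacle'' paragraph---that $G$ can be linear at some scales and not at others---is handled not by iteration of sum-product but by a two-ends reduction (Lemma~\ref{twoEndsOnLines}) that localizes $G$ to a strip of some width $w$, followed by an anisotropic rescaling that restores the $(\delta,1)$-set hypothesis at the coarser scale $\delta/w$.

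\textbf{Stage 2 is unnecessary and the sum-product route has the wrong output.} The paper never invokes Bourgain's discretized sum-product. Orthogonality is forced instead by Lemma~\ref{A1A3Localized}: once $A_1$ sits in a strip $N_w(\ell)$, either Conclusion~(B) already holds or $A_2$ sits in $N_{\delta^{-\eps}w}(\ell)$ and $A_3$ in $N_{\delta^{-\eps}w}(\ell^\perp)$, and the proof is again Kaufman's projection theorem applied to the rescaled picture. Your proposed route---reduce to $S\cdot(T-T)$ for one-dimensional $S,T$ and apply sum-product---faces the problem you yourself identify: sum-product yields a covering-number gain $\mathcal{E}_\rho\gtrsim\rho^{-1-c}$ for some $c>0$, not an interval at any scale. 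Converting such a gain into Conclusion~(B) of the precise Theorem~\ref{SWThm} (which asks for $\mathcal{E}_\rho(I\cap\cdot)\ge(|I|/\rho)^{1-\eps}$ on some interval $I$) would require a further pigeonholing across scales that is not supplied. The paper sidesteps this entirely: the Kaufman argument already outputs a set of $\delta$-covering number $\ge\delta^{\eps-1}$ inside an interval of controlled length, which is exactly the form of Conclusion~(B).
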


The precise version of Theorem \ref{SWThm} is stated in Section \ref{globalGrainsIsC2Sec} and proved in Section \ref{SWProofSec}. Our proof of Theorem \ref{SWThm} uses recent ideas from projection theory that were developed in the context of the Falconer distance problem \cite{Orp, SW0, OSW}. Specifically, the projection theorem we use generalizes Bourgain's discretized sum-product estimates; see \cite{OrponenShmerkin, RenWang} for further recent developments in this area. It is at this step where we distinguish between a sticky Kakeya set (which is a subset of $\RR^3$) and the Heisenberg group example (which is a subset of $\CC^3$). In brief, if $G$ is not contained in a line, then by an analogue of Beck's theorem due to Orponen, Shmerkin, and the first author \cite{OSW}, we might expect $G$ to span a two-dimensional set of lines, and hence a one-dimensional set of directions, i.e. there is a one-dimensional set $\Theta\subset S^1$ so that for each $\theta\in\Theta$, there are points $q,q'\in G$ with $\theta = \frac{q-q'}{|q-q'|}$. If this happens, then Kaufman's projection theorem says that there exists a direction $\theta\in \Theta$ for which $F\cdot \theta$ (and hence $F\cdot (q-q')$ ) has dimension 1. The actual proof of Theorem \ref{SWThm} must overcome several difficulties when executing the above strategy. First, Theorem \ref{SWThm} has weaker hypotheses than the ones stated above; the set $G\cdot(F-F)$ is replaced by a smaller set, where the differences and dot products are taken along a sparse (but not too sparse!) subset of $F\times G\times G$. Second, the proof sketch above supposes a dichotomy, where either $G$ is contained in a line, or it spans a one-dimensional set of directions. In the discretized setting, however, this dichotomy is less apparent because $G$ can behave differently at different scales. 

Returning to the proof of Theorem \ref{mainThm},  if Item (B) from Theorem \ref{SWThm} holds, then by an observation of Dyatlov and the second author \cite[Proposition 6.13]{DZ}, $F\cdot(G-G)$ cannot be contained in an Ahlfors-regular set of dimension strictly smaller than 1. Thus Theorem \ref{SWThm} implies that the graph of $f$ above $I$ must be contained in the $\rho$ neighborhood of a line. Since this holds for every interval $I$ at every scale $\rho$, we conclude that $f$ is $C^2$. Kakeya sets with regularity conditions have been studied in the past \cite{FG, KR}, but different ideas are needed in this setting. 

\medskip

\noindent{\bf Step 3: Twisted projections}\\
In Section \ref{largeSlopeSec} we exploit the fact that lines in a Kakeya set point in different directions to show that the slope function $f$ has (moderately) large derivative, i.e. $|f'|$ is bounded away from 0. 

In Section \ref{C2ProjThmSec}, we begin by observing the following consequence of Property (P) from Step 1 above. Since each slice $E'\cap \{z=z_0\}$ is a union of parallel $1\times\delta$ rectangles whose spacing forms an Ahlfors regular set and whose slope is given by $f(z_0)$, if we define $\pi_f(x,y,z) = \big( x + yf(z),\ z\big)$, then $\pi_f(E')\subset [0,1]^2$ has measure roughly $\delta^{\sigma_3}$. This is because each slice $\pi_f(E') \cap \{z =z_0\}$ is a union of $\delta$-intervals that are arranged like a $1-\sigma_3$ dimensional Ahlfors regular set. 

If $T$ is a $1\times\delta$ tube, then $\pi_f(T)$ is the $\delta$-neighborhood of a $C^2$ curve. Since the tubes in $\tubes$ point in different directions and since $|f'|$ is bounded away from 0, the corresponding curves $\pi_f(T)$ satisfy a separation property related to Sogge's cinematic curvature condition \cite{KoW,sog}. The next result says that the union of these curves must be large
\begin{smallL3NormPropInf}
Let $\tubes$ be a set of $1\times\delta$ tubes pointing in $\delta$-separated directions, with $\#\tubes\gtrsim \delta^{-2}$. Let $f\colon [0,1]\to\RR$ with $\Vert f\Vert_{C^2}\lesssim 1$ and $|f'|\sim 1$. Then for all $\eps>0$, there exists $c_\eps>0$ so that $\bigcup_{T\in\tubes}\pi_f(T)$ has measure at least $c_\eps \delta^\eps$.
\end{smallL3NormPropInf}
The precise version of Proposition \ref{smallL3Norm} is a maximal function estimate analogous to \eqref{KakeyaMaximalFnEstimate}. It is proved using a variant of Wolff's $L^{3}$ bound for the Wolff circular maximal function \cite{Wo}, where circles are replaced by a family of $C^2$ curves that satisfy Sogge's cinematic curvature condition. This latter result was recently proved by Pramanik, Yang, and the second author \cite{PYZ}. Since $E'$ contains about $\delta^{-2}$ tubes that point in $\delta$-separated directions, Proposition \ref{smallL3Norm} says that $\pi_f(E')$ must have measure at least $c_\eps\delta^\eps$, for each $\eps>0$. On the other hand, Property (P) says that $\pi_f(E')$ has measure at most $\delta^{\sigma_3}$. We conclude that $\sigma_3=0$, which finishes the proof. 


\subsection{The sticky Kakeya conjecture versus the Kakeya conjecture}\label{KakeyaVsStickySection}
In this section, we will informally discuss what Theorem \ref{mainThm} tells us about the Kakeya conjecture. In particular, how close does Theorem \ref{mainThm} get us to proving the Kakeya conjecture in $\RR^3$? Katz and Tao's arguments from  \cite{TaoBlog} begin by conjecturing that a (hypothetical) counter-example $K\subset\RR^3$ to the Kakeya conjecture in $\RR^3$ must have the structural properties stickiness, planiness, and graininess. The results from \cite{BCT} and \cite{Gut} show that such a counter-example must be plainy and grainy, but it is unclear whether $K$ must be sticky. To reformulate the remaining part of Katz and Tao's conjecture in our language, we need the following definition.
\begin{defn}
A Kakeya set $K\subset\RR^n$ is $\eps$-close to being a sticky if there is a set of lines $L$ with packing dimension at most $n-1+\eps$ that contains at least one line in each direction, so that $\ell\cap K$ contains a unit interval for each $\ell\in L$. 
\end{defn}
In particular, a sticky Kakeya set is $\eps$-close to being sticky for every $\eps>0$. The following version of Katz and Tao's conjecture says that Kakeya sets that are nearly extremal are close to being sticky. 
\begin{conj}\label{extremalIsStickyConj}
Suppose the Kakeya conjecture in $\RR^n$ is false, i.e. $\inf \dim_H K = d<n$, where the infimum is taken over all Kakeya sets in $\RR^n$. Then for all $\eps>0$, there exists $\eta>0$ so that the following holds. Let $K\subset\RR^n$ be a Kakeya set with $\dim_H(K)\leq d+\eta$. Then $K$ is $\eps$-close to being sticky.
\end{conj}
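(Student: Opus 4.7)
The plan is to prove the contrapositive: assuming $K \subset \RR^n$ is a Kakeya set that is not $\eps$-close to being sticky, we show $\dim_H K > d + \eta$ for some $\eta = \eta(n, \eps) > 0$. So suppose for contradiction that there is a Kakeya set $K$ with $\dim_H K \leq d + \eta$ such that every family of lines $L \subset \mathcal{L}$ containing at least one line in each direction, with $\ell \cap K$ containing a unit interval for each $\ell \in L$, satisfies $\dim_P L > n - 1 + \eps$.

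The first step is to discretize this global ``no thin selection exists'' hypothesis to a statement at a single scale $\delta > 0$. Using a Frostman-style argument on the space of lines $\graff(1,n)$ together with a Baire-category diagonalization over selections, I would produce a scale $\delta = \delta(K,\eta) > 0$ and a collection $\tubes$ of $1 \times \delta$ tubes, each of whose core lines has a unit-length intersection with $N_\delta(K)$, such that: (a) the directions of $\tubes$ form a $\delta$-net of $S^{n-1}$; and (b) \emph{every} subfamily $\tubes' \subset \tubes$ whose direction set is $\delta$-dense in $S^{n-1}$ satisfies $\#\tubes' \gtrsim \delta^{-(n-1+\eps/2)}$. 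Condition (b) is the single-scale echo of non-stickiness: a positive fraction of directions admit $\gtrsim \delta^{-\eps/2}$ essentially different tubes, whose basepoints form a spread-out subset of $\omega^\perp \cap N_\delta(K)$ that cannot be thinned.

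The second step is to derive a volume lower bound on $\bigcup_{T \in \tubes} T \subset N_\delta(K)$ that contradicts $|N_\delta(K)| \lesssim \delta^{n - d - C\eta}$ (coming from $\dim_H K \leq d + \eta$). Heuristically, the excess directional dispersion in (b) should produce more volume than the minimal sticky configuration with $\delta^{-(n-1)}$ tubes would. One concrete implementation is to iterate the excess across scales using an analogue of Proposition \ref{multiScaleStructureExtremal} adapted to near-extremal but possibly non-sticky Kakeya sets: the self-similarity would ``amplify'' a fixed $\eps$-excess at scale $\delta$ into an excess of the form $|N_\delta(K)| \gtrapproxdelta \delta^{n-d-c\eps}$ for some absolute $c > 0$, after which choosing $\eta < c\eps/2$ closes the contradiction.

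The main obstacle is the passage in the second step from line-set packing dimension to tube-union volume. Packing dimension is a sparse covering invariant, while $\big|\bigcup T\big|$ is an averaged $\ell^1$ quantity, and bridging them requires either a Kakeya-type inequality sensitive to direction-set dimensional excess above $n-1$, or a structural rigidity theorem asserting that near-extremal Kakeya sets have controlled direction-regular local structure at every scale. Neither ingredient is presently available in the Kakeya literature, and avoiding circular reliance on the full Kakeya maximal conjecture is the real difficulty. In the three-dimensional case, however, the planiness and graininess properties extracted in Section \ref{planinessAndGraininessSection}, if they could be shown to persist for merely near-extremal (rather than sticky) Kakeya sets, would go a long way toward this step, suggesting that Conjecture \ref{extremalIsStickyConj} restricted to $\RR^3$ is the natural next milestone after Theorem \ref{mainThm}.
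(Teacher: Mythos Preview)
The statement you are attempting to prove is Conjecture~\ref{extremalIsStickyConj}, which the paper explicitly leaves open. The paper does not supply a proof; indeed, after stating the conjecture it remarks that ``We will not discuss Conjectures~\ref{extremalIsStickyConj} or~\ref{sl2Conj} further,'' and it even presents the $SL_2$ example as evidence that the non-sticky case may be genuinely different. So there is no paper proof to compare against, and your proposal should be read as a research outline rather than a proof.

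You correctly identify the central obstruction yourself: the step from ``every direction-dense selection $L$ has $\dim_P L > n-1+\eps$'' to a quantitative volume gain $|N_\delta(K)| \gtrapprox \delta^{n-d-c\eps}$ is not currently known, and is essentially equivalent in strength to an X-ray transform estimate that is sensitive to dimensional excess of the line set above $n-1$. Your suggestion to use an analogue of Proposition~\ref{multiScaleStructureExtremal} for near-extremal but non-sticky sets is circular as stated: that proposition is proved precisely by exploiting the sticky covering property (Item~\ref{DefnOfMGoodCoveringItem} of Definition~\ref{defnOfM}), which is exactly what fails in the non-sticky regime. Likewise, your appeal to the planiness and graininess arguments of Section~\ref{planinessAndGraininessSection} does not help directly, since those arguments take the output of Proposition~\ref{multiScaleStructureExtremal} as input. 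In short, your proposal accurately maps the terrain and names the missing ingredient, but does not supply it; this is consistent with the conjecture's status in the paper.
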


If Conjecture \ref{extremalIsStickyConj} is true for $n=3$, then the following generalization of Theorem \ref{mainThm} would imply the Kakeya conjecture in $\RR^3$.
\begin{mainThmPrime}
For all $\eps>0$, there exists $\eta>0$ so that the following holds. Let $K\subset\RR^3$ be a Kakeya set that is $\eta$-close to being sticky. Then $\dim_H K \geq 3-\eps$.
\end{mainThmPrime}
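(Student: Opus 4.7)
The plan is to trace through the three-step argument for Theorem \ref{mainThm} while tracking how each inequality degrades when the packing dimension of the line family $L$ is weakened from $2$ to $2+\eta$. Let $K \subset \RR^3$ be a Kakeya set that is $\eta$-close to being sticky, with witnessing line family $L$, and suppose for contradiction that $\dim_H K \leq 3 - \eps$. Write $\sigma = 3 - \dim_H K \geq \eps$. The overall goal is to show that, provided $\eta$ is sufficiently small depending on $\eps$, each step of the proof of Theorem \ref{mainThm} survives with at most a $\delta^{-C\eta}$ loss, and the resulting estimates remain strong enough to derive a contradiction.

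For Step 1, I would reprove the discretization lemmas of Section \ref{discretizationSection} and the multi-scale self-similarity statement of Proposition \ref{multiScaleStructureExtremal} in this relaxed setting. Since $L$ is covered at scale $\rho$ by at most $\rho^{-(2+\eta)}$ balls in line space, the tube counts at every scale acquire a $\rho^{-\eta}$ slack. Because the self-similar structure is invoked at $O(\log(1/\delta))$ nested scales, the cumulative multiplicative loss is $\delta^{-C\eta}$, which is acceptable provided $\eta \ll \eps$. The planiness and graininess arguments of Section \ref{planinessAndGraininessSection} still produce a plane map $V$ and a slope function $f$, but these are now only approximately well-defined and Lipschitz, with tolerance parameters of order $\delta^{-C\eta}$; in particular, Property (P) becomes the statement that horizontal slices are contained in the $\delta$-neighborhood of a set that is Ahlfors-regular of dimension at most $1 - \sigma + O(\eta)$.

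For Step 2, the bootstrap argument establishing the $C^2$ regularity of $f$ relies on Theorem \ref{SWThm} applied at every scale $\rho \in (\delta, 1)$. The proof of Theorem \ref{SWThm} should be robust to $\eta$-perturbations: if $F \cdot (G-G)$ is contained in an Ahlfors-regular set of dimension at most $1 - \sigma + O(\eta)$ with $\sigma \geq \eps$, then for $\eta \ll \eps$ we should still conclude, via the Kaufman and Orponen--Shmerkin--Wang inputs together with \cite[Proposition 6.13]{DZ}, that $F$ and $G$ are essentially contained in orthogonal lines. Therefore $f$ remains approximately $C^2$ in the sense that its graph above every interval of length $\rho^{1/2}$ lies in the $\rho^{1-C\eta}$-neighborhood of a line. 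For Step 3, combining the upper bound $|\pi_f(E')| \lesssim \delta^{\sigma - C\eta}$ from Property (P) with the lower bound $|\pi_f(E')| \geq c_{\eps'} \delta^{\eps'}$ from Proposition \ref{smallL3Norm} yields $\sigma \leq \eps' + C\eta$ for every $\eps' > 0$. Choosing $\eps' = \eps/4$ and $\eta < \eps/(4C)$ contradicts $\sigma \geq \eps$.

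The main obstacle I expect is the compounding of $\eta$-losses in Step 2. The $C^2$ bootstrap for $f$ aggregates Theorem \ref{SWThm} over all intermediate scales, and at each scale an $\eta$-fraction of bad points or bad scales must be pruned without destroying either the non-concentration properties of $F$ and $G$ or the approximate Ahlfors-regularity of $F \cdot (G-G)$. Making this rigorous will likely require a quantitative version of Theorem \ref{SWThm} with explicit dependence of the constants on the Ahlfors-regularity parameter, together with careful pigeonholing to a subset of scales at which the self-similar structure from Step 1 is genuinely realized. The rest of the argument should then be a bookkeeping exercise converting each $\delta^{o(1)}$ factor appearing in the proof of Theorem \ref{mainThm} into a $\delta^{-C\eta}$ factor and verifying that the final contradiction is preserved.
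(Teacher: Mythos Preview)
Your proposal takes a genuinely different and much harder route than the paper. The paper does \emph{not} track $\eta$-losses through Steps 1--3 for a specific set $K$. Instead, the quantity $\sigma_3$ is defined abstractly via $M(s,t,\delta)$ in Definition~\ref{defnOfM}, where the parameter $t$ already encodes the ``$\eta$-close to sticky'' condition (Item~\ref{DefnOfMGoodCoveringItem} allows up to $\delta^{-t}$ parallel tubes in any cover). Proposition~\ref{sigmaNVsDimensionProp} then shows directly that any Kakeya set $\eta$-close to sticky satisfies $\dim_H K \geq 3 - \sigma_3 - \eps$, with $\eta$ depending only on $\eps$. The entire three-step argument (Sections~\ref{extremalFamiliesSubsection}--\ref{C2ProjThmSec}) is devoted to proving the single combinatorial fact $\sigma_3 = 0$; it never refers to a specific Kakeya set $K$ and involves no $\eta$-tracking whatsoever. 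Once $\sigma_3 = 0$ is established, Theorem~\ref{mainThm}$^\prime$ is a one-line consequence of Proposition~\ref{sigmaNVsDimensionProp}.

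Your approach has a genuine obstruction, which you correctly identify as the compounding-of-losses problem, but the issue runs deeper than bookkeeping. The self-similarity in Proposition~\ref{multiScaleStructureExtremal} hinges on the fact that $\sigma_n$ is a \emph{universal lower bound}: for any intermediate scale $\rho$, both the coarse $\rho$-tubes and the rescaled fine tubes must have union of size at least $\rho^{\sigma_n + o(1)}$ and $(\delta/\rho)^{\sigma_n + o(1)}$ respectively, simply by the definition of $\sigma_n$ as an infimum. Combined with the upper bound from extremality at scale $\delta$, this squeezes both pieces to be nearly extremal. If instead you start from a single $K$ with $\dim_H K = 3 - \sigma$, you have no such universal lower bound at intermediate scales---Hausdorff dimension gives you control along some sequence of scales, not all of them---so you cannot force the coarse and fine parts to be simultaneously near-extremal. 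You would effectively need to reinvent $\sigma_n$ to make your Step~1 work, at which point you have recovered the paper's approach.
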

\begin{rem}
A consequence of Theorem \ref{mainThm}$^\prime$ is that if an X-ray estimate holds in $\RR^3$ at dimension $d<3$, then there exists $c>0$ so that every Kakeya set in $\RR^3$ must have upper Minkowski dimension at least $d+c$.
\end{rem}

One piece of evidence in favor of Conjecture \ref{extremalIsStickyConj} at the time it was formulated in \cite{TaoBlog} was that the only known example of a Kakeya-like object with Hausdorff dimension less than 3 was the (second) Heisenberg group example \eqref{defnOfH}. As noted above, $\mathbb{H}$ is a counter-example to a strong form of the Kakeya conjecture, and it has (complex analogues of) the structural properties stickiness, planiness, and graininess. 

More recently, however, Katz and the second author \cite{KZ} discovered a second Kakeya-like object that has small volume at certain scales---this is analogous to having Hausdorff dimension less than 3. This object is called the $SL_2$ example, and it is not sticky. The $SL_2$ example is not a counter-example to Conjecture \ref{extremalIsStickyConj} (nor to the Kakeya conjecture) because it is not a Kakeya set in $\RR^3$. Nonetheless, the existence of the $SL_2$ example suggests that in order to resolve the Kakeya conjecture in $\RR^3$, it will be necessary to study Kakeya sets that are far from sticky. 

As a starting point for this latter program, it seems reasonable to show that that the $SL_2$ example cannot be realized in $\RR^3$. To make this statement precise, define $\mathcal L_{SL_2}$ to be the set of lines in $\RR^3$ that can either be written in the form $(a,b,0) + \RR(c,d,1)$ with $ad-bc=1$, or $(0,0,0)+\RR(c,d,0)$. Equivalently, $\mathcal L_{SL_2}$ is the set of horizontal lines in the first Heisenberg group. 

We conjecture that lines from this set cannot be used to construct a counter-example to the Kakeya conjecture.

\begin{conj}\label{sl2Conj}
Let $K\subset\RR^3$ be compact, and suppose there is a set of lines $L\subset \mathcal L_{SL_2}$ that contains at least one line in each direction, so that $\ell\cap K$ contains a unit interval for each $\ell\in L$. Then $K$ has Hausdorff and Minkowski dimension 3.
\end{conj}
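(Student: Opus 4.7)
The plan is to reduce Conjecture \ref{sl2Conj} to Theorem \ref{mainThm}. Observe that $\mathcal L_{SL_2}$ is a smooth $3$-dimensional family, and the direction map $\dir\colon\mathcal L_{SL_2}\to S^2$ is a submersion whose fiber over $(c,d,1)$ is the $1$-dimensional affine set of $(a,b)$ satisfying $ad-bc=1$. Consequently any smooth section of $\dir$ is a $2$-dimensional submanifold of $\mathcal L_{SL_2}$, which by Definition \ref{defnStickyKakeya} is the line set of a sticky Kakeya set. The plan is: given $L\subset\mathcal L_{SL_2}$ covering all directions, extract a Borel subset $L_0\subset L$ with $\dir(L_0)=S^2$ and $\dim_P L_0\le 2$. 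Since $\dir$ is Lipschitz we automatically have $\dim_P L_0\ge 2$, so in fact $\dim_P L_0=2$, and thus $\bigcup_{\ell\in L_0}(\ell\cap K)$ is a sticky Kakeya subset of $K$; Theorem \ref{mainThm} then forces $\dim_H K=3$.

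To extract $L_0$, I would fix a smooth reference section $\sigma\colon S^2\to\mathcal L_{SL_2}$, for instance the section sending $(c,d,1)$ to the line $(d/(c^2+d^2),-c/(c^2+d^2),0)+\RR(c,d,1)$ (which satisfies $ad-bc=1$). For each direction $\theta$, let $s(\theta)$ be an element of $L\cap\dir^{-1}(\theta)$ minimizing distance to $\sigma(\theta)$; breaking ties in a Borel fashion via Kuratowski--Ryll-Nardzewski gives a Borel section of $\dir$ landing in $L$. Controlling the packing dimension of $L_0=s(S^2)$ is the heart of the matter. I would proceed scale by scale: at each $\delta>0$, pick $\sim\delta^{-2}$ $\delta$-separated lines from $L$ (one per $\delta$-cap of directions in $S^2$, each chosen as close as possible to $\sigma$), and use the Heisenberg group action on $\mathcal L_{SL_2}$---which acts smoothly and transitively on the fibers of $\dir$---to argue that these discretized selections converge as $\delta\to 0$ to a Borel section of packing dimension exactly $2$.

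The main obstacle is that general Borel selections do not preserve packing dimension, so the scale-by-scale argument above must genuinely leverage the algebraic structure of $\mathcal L_{SL_2}$ rather than treating $L$ as an abstract Borel set. If this direct reduction proves infeasible, an alternative is to adapt the paper's three-step framework to $\mathcal L_{SL_2}$-Kakeya sets directly. The contact structure of the Heisenberg group supplies the plane map $V(x_0,y_0,z_0)=(y_0,-x_0,1)/\sqrt{x_0^2+y_0^2+1}$ for free, because any line in $\mathcal L_{SL_2}$ through $(x_0,y_0,z_0)$ has direction $(c,d,1)$ satisfying the linear constraint $x_0 d-y_0 c=1$, forcing all such directions into the plane $V(x_0,y_0,z_0)^\perp$. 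This $V$ satisfies the non-integrability identity $V\cdot(\nabla\times V)=-2/(x_0^2+y_0^2+1)\neq 0$, which is precisely the contact condition that distinguishes $\RR^3$ from the integrable complex Heisenberg example $\mathbb H\subset\CC^3$. Planiness and graininess then follow as in Section \ref{planinessAndGraininessSection}, one derives the slope function $f$ and shows $|f'|\sim 1$ from the non-integrability as in Section \ref{largeSlopeSec}, and Proposition \ref{smallL3Norm} concludes as in Section \ref{C2ProjThmSec}. The hardest step is the analogue of Section \ref{globalGrainsIsC2Sec}, verifying $C^2$ regularity of $f$: in the absence of multi-scale self-similarity coming from stickiness, one would hope to compute $f$ explicitly from the Heisenberg group law and read off $C^2$ regularity directly, rather than invoking Theorem \ref{SWThm}.
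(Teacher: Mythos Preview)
The statement you are attempting to prove is \emph{Conjecture}~\ref{sl2Conj}; the paper does not prove it and explicitly says ``We will not discuss Conjectures~\ref{extremalIsStickyConj} or~\ref{sl2Conj} further.'' The whole point of introducing the $SL_2$ example in Section~\ref{KakeyaVsStickySection} is that it is \emph{not} sticky, so any reduction to Theorem~\ref{mainThm} must fail in general.

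Your first approach has a concrete gap you yourself half-identify. The hypothesis only guarantees that $L$ contains \emph{at least one} line per direction; nothing prevents $L$ from containing \emph{exactly one} line per direction, chosen via a wild Borel function $\theta\mapsto s(\theta)\in\dir^{-1}(\theta)$. In that case $L_0=L$ is forced, and $\dim_P L_0$ can equal $3$ (the graph of a bad function $S^2\to\RR$ inside the $3$-manifold $\mathcal L_{SL_2}$). Your ``closest to $\sigma$'' selector and the Heisenberg group action do nothing here: there is only one point in each fiber to select, and no amount of algebraic structure on $\mathcal L_{SL_2}$ constrains how $L$ sits inside it. So the reduction to a sticky set is not available.

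Your second approach correctly observes that the contact structure supplies a Lipschitz plane map for free, which is a genuine advantage. But the rest of the paper's machinery---Proposition~\ref{multiScaleStructureExtremal}, the graininess arguments of Section~\ref{grainSec}, and especially the $C^2$ regularity of Section~\ref{globalGrainsIsC2Sec}---all rest on the multi-scale self-similarity of \emph{extremal} collections, which in turn comes from the definition of $\sigma_n$ and the stickiness hypothesis. Without stickiness there is no reason a hypothetical counterexample should be self-similar, and hence no mechanism to propagate structural information between scales. Your hope to ``compute $f$ explicitly from the Heisenberg group law'' cannot work: $f$ encodes how the particular set $K$ slices into global grains, and $K$ is an arbitrary compact set satisfying the hypotheses, not determined by the group law. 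This is precisely why the paper poses Conjecture~\ref{sl2Conj} as a separate open problem rather than a corollary.
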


\noindent \emph{Added November 3, 2022:} F\"assler and Orponen \cite{FO} have recently proved Conjecture \ref{sl2Conj}.

\medskip

We will not discuss Conjectures \ref{extremalIsStickyConj} or \ref{sl2Conj} further. The remainder of this paper will be devoted to proving Theorem \ref{mainThm} and \ref{mainThm}$^\prime$.

\subsection{Thanks}
The authors would like to thank Larry Guth, Nets Katz, Pablo Shmerkin, and Terence Tao  for helpful conversations and suggestions during the preparation of this manuscript. The authors would like to thank Mukul Rai Choudhuri, Keith Rogers, and the anonymous referees for corrections to an earlier version of this manuscript. Hong Wang was supported by NSF Grant DMS-2055544. Joshua Zahl was supported by a NSERC Discovery grant and by the Nankai Zhide Foundation.


\section{Discretization}\label{discretizationSection}
\subsection{Lines, tubes and shadings}
We begin by recalling some standard definitions and terminology associated with the Kakeya problem. Throughout this section, we will fix an integer $n\geq 2$; all implicit constants may depend on $n$. Slightly abusing the notation we introduced in the introduction, we will define $\mathcal{L}_n$ to be the set of lines in $\RR^n$ of the form $(\underline{p},0)+\RR v$, where $\underline{p}\in [-\frac{1}{n},\frac{1}{n}]^{n-1}$ and $v\in S^{n-1}$ has final coordinate $v_n\geq 1/2$. We define $d(\ell,\ell')=|\underline{p}-\underline{p}'| + \angle(v, v')$. This definition of distance is comparable to the definition from Section \ref{introSection}. We define the measure $\lambda_n$ on $\mathcal{L}_n$ to be the product measure of $(n-1)$-dimensional Euclidean measure and normalized surface measure on $S^{n-1}$ (the latter we denote by $\nu_{n-1}$). With these definitions of distance and measure, the packing dimension of a set $L\subset \mathcal{L}_n$ agrees with its upper modified box dimension.


For $\ell\in\mathcal{L}_n$ and $\delta>0$, we define the $\delta$-tube $T$ with coaxial line $\ell$ to be $N_{2n\delta}(\ell)\cap [-1,1]^n$, where $N_t(X)$ denotes the $t$-neighborhood of $X$. This definition is slightly nonstandard, since we use the $2n\delta$-neighborhood rather than the $\delta$-neighborhood of $\ell$; we do this so that a cube of side-length $\delta$ intersecting $\ell$ will be contained in the associated tube $T$. Observe that any tube of this type has volume comparable to $\delta^{n-1}$. If $T$ is a $\delta$-tube and $C>0$, we will use $CT$ to denote the $C\delta$-tube with the same coaxial line. We say two $\delta$-tubes $T,T'$ are \emph{essentially identical} if their respective coaxial lines satisfy $d(\ell, \ell')\leq \delta$; otherwise $T$ and $T'$ are \emph{essentially distinct}. We say that $T$ and $T'$ are \emph{essentially parallel} if their respective coaxial lines satisfy $|\dir(\ell)-\dir(\ell')|\leq\delta$, where $\dir(\ell)$ is the unit vector $v$ described above.

A $\delta$-\emph{cube} is a set of the form $[0,\delta)^n+p$, where $p\in(\delta\ZZ)^n.$ A \emph{shading} of a $\delta$-tube $T$ is a set $Y(T)\subset T$ that is a union of $\delta$-cubes. We will write $(\tubes,Y)_\delta$ to denote a set of $\delta$ tubes and their associated shadings $\{Y(T)\colon T\in\tubes\}$. If $(\tubes,Y)_\delta$ is a set of tubes and their shadings, we write $E_{\tubes}$ to denote the set $\bigcup_{T\in\tubes}Y(T)$; the shading $Y$ will always be apparent from context. 

We say that $(\tubes',Y')_\delta$ is a \emph{sub-collection} of $(\tubes,Y)_\delta$ if $\tubes'\subset\tubes$ and $Y'(T')\subset Y(T')$ for each $T'\in\tubes'$. We say that $(\tubes',Y')_\delta$ is a \emph{refinement} of $(\tubes,Y)_\delta$ if in addition, $\sum_{T'\in\tubes'}|Y'(T)|\geq (\log(1/\delta))^{-C}\sum_{T\in\tubes}|Y(T)|$, where $C\geq 0$ is a constant that depends only on $n$. We will denote this by $\sum_{T'\in\tubes'}|Y'(T)|\gtrapproxdelta\sum_{T\in\tubes}|Y(T)|.$ In practice, we will only study collections of tubes $(\tubes,Y)_\delta$ with $\#\tubes\leq\delta^{-100n}$. By dyadic pigeonholing, every collection of tubes with this property has a refinement $(\tubes',Y')_\delta$ with $\sum_{T\in\tubes'}\chi_{Y'(T)}=\mu\chi_{E_{\tubes'}}$ for some integer $\mu\geq 1$; this is called a \emph{constant multiplicity} refinement of $(\tubes,Y)_\delta$.

Let $T$ be a $\delta$-tube and let $\tilde T$ be a $\rho$-tube, with $\delta\leq\rho$. We say that $\tilde T$ \emph{covers} $T$ if their respective coaxial lines satisfy $d(\ell, \tilde \ell)\leq \rho/2$. If $\tubes$ is a collection of $\delta$-tubes and $\tilde\tubes$ is a collection of $\rho$-tubes, we say $\tilde\tubes$ covers $\tubes$ if every tube in $\tubes$ is covered by some tube in $\tilde\tubes$. If this is the case, we write $\tubes[\tilde T]$ to denote the set of tubes in $\tubes$ that are covered by $\tilde T$. A similar definition holds for collections of tubes and their associated shadings: we say that $(\tilde\tubes,\tilde Y)_\rho$ \emph{covers} $(\tubes,Y)_\delta$ if $\tilde\tubes$ covers $\tubes$, and for each $\tilde T\in\tilde\tubes$ and each $T\in\tubes[\tilde T]$ we have $Y(T)\subset\tilde Y(\tilde T)$. Such a cover is called \emph{balanced} if $|E_{\tubes}\cap Q|$ is the same for each $\rho$-cube $Q\subset E_{\tilde\tubes}$


\subsection{Defining $\sigma_n$}
In this section we will define a quantity, called $\sigma_n$, that will allow us to formulate a discretized version of the sticky Kakeya conjecture.

\begin{defn}\label{defnOfM}
For $s,t,\delta\in(0,1)$, define

\begin{equation}\label{defnM}
M(s,t,\delta) = \inf_{\tubes,Y}\Big|\bigcup_{T\in\tubes}Y(T)\Big|.
\end{equation}
Here, the infimum is taken over all pairs $(\tubes,Y)_\delta$ with the following properties.
\begin{enumerate}[label=(\alph*)]
\item\label{tubesDistinct} The tubes in $\tubes$ are essentially distinct. 

\item\label{DefnOfMGoodCoveringItem} For each $\delta\leq \rho\leq 1$, $\tubes$ can be covered by a set of $\rho$ tubes, at most $\delta^{-t}$ of which are essentially parallel to a common $\rho$ tube.

\item\label{DefnOfMBigVolItem} $\sum_{T\in\tubes}|Y(T)|\geq\delta^s$
\end{enumerate}
\end{defn}

Next, let
\begin{equation}\label{defnNEpsS}
N(s,t)=\limsup_{\delta\to 0^+}\frac{\log M(s, t,\delta)}{\log\delta}.
\end{equation}
$N(s,t)$ is defined for all $(s,t)\in (0,1)^2$. Observe that for $\delta\in (0,1)$, if  $s\leq s'$ and $t\leq t'$, then $M(s, t,\delta) \geq M(s', t',\delta)$, and hence $N(s,t)\leq N(s',t')$. In particular, since $0\leq N(s,t)\leq n$, we have that
\[
\lim_{(s,t)\to (0,0)}N(s,t)=\inf_{(s,t)\in (0,1)^2}N(s,t).
\]
Denote this common value by $\sigma_n$. Our definition of $\sigma_n$ was chosen so that two properties hold. First, $\sigma_n$ allows us to bound the Hausdorff dimension of sticky Kakeya sets. This will be described in Section \ref{sigmaNStickyKakeyaSec}. Second, collections $(\tubes,Y)_\delta$ that nearly extremize the infimum in \eqref{defnM} will have useful structural properties at many scales. This will be described in Section \ref{extremalFamiliesSubsection}.


\subsection{$\sigma_n$ and the dimension of sticky Kakeya sets}\label{sigmaNStickyKakeyaSec}
In this section we will relate $\sigma_n$ to the dimension of sticky Kakeya sets. Our goal is to prove the following.
\begin{prop}\label{sigmaNVsDimensionProp}
For all $\eps>0$, there exists $\eta>0$ so that the following holds. Let $K\subset\RR^n$ be a Kakeya set that is $\eta$-close to being sticky. Then $\dim_H K \geq n - \sigma_n - \eps$. In particular, if $K$ is a sticky Kakeya set then $\dim_H K \geq n - \sigma_n$. 
\end{prop}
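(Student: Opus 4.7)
The plan is, for each small scale $\delta$, to associate to $K$ a collection of $\delta$-tubes with shadings that is a valid configuration in the sense of Definition \ref{defnOfM}, so that $|K_\delta|\geq M(s,t,\delta)$. Combining with $\sigma_n=\inf_{(s,t)}N(s,t)$ then yields the Minkowski bound $\overline{\dim}_M K\geq n-\sigma_n-\eps$, and a Hausdorff-cover variant of the same discretization upgrades this to Hausdorff dimension.

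First, given $\eps>0$, I choose $s,t>0$ small enough that $N(s,t)\leq\sigma_n+\eps/4$; this is possible since $\sigma_n$ is the infimum of $N$. Let $\eta\ll t$, and let $K$ be $\eta$-close to sticky, witnessed by a line set $L\subset\mathcal{L}_n$ with $\dim_P L\leq n-1+\eta$. For each small $\delta>0$, I would select a maximal essentially distinct collection $\tubes_\delta$ of $\delta$-tubes whose coaxial lines lie in $L$, one per $\delta$-separated direction; since $L$ contains a line in every direction, $\#\tubes_\delta\gtrsim\delta^{-(n-1)}$. Define the shading $Y(T)$ to be the union of $\delta$-cubes in $T$ that meet $K$. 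Because each $\ell\in L$ carries a unit interval in $K$, $|Y(T)|\gtrsim\delta^{n-1}$, so conditions (a) and (c) of Definition \ref{defnOfM} hold immediately with any $s>0$.

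The heart of the argument is condition (b). Using the fact that packing dimension equals upper modified box dimension, I would decompose $L=\bigcup_i L_i$ with $\overline{\dim}_B L_i\leq n-1+\eta+\kappa$ for a small parameter $\kappa>0$; a direction pigeonhole on the Lipschitz map $\dir$ then isolates a single $L_i$ whose image has positive $\nu_{n-1}$-measure. The total $\rho$-covering number of $L_i$ is $\lesssim\rho^{-(n-1+\eta+\kappa)}$, and a Markov-type averaging over $\rho$-separated directions in $\mathcal{L}_n\cong\RR^{n-1}\times S^{n-1}$ shows that, outside an exceptional set of directions of measure $\lesssim\rho^{\kappa}$, the fiber $L_i\cap L_v$ has $\rho$-covering number $\lesssim\rho^{-(\eta+2\kappa)}\leq\delta^{-t}$ provided $t\geq\eta+2\kappa$. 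Taking a union bound over the $\log(1/\delta)$ intermediate dyadic scales $\rho\in[\delta,1]$ still leaves $\gtrsim\delta^{-(n-1)}$ ``good'' directions, and restricting $\tubes_\delta$ to these secures (b) while preserving (c). This yields $|K_\delta|\geq M(s,t,\delta)\geq\delta^{\sigma_n+\eps/2}$ for $\delta$ small, hence $\overline{\dim}_M K\geq n-\sigma_n-\eps/2$.

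To upgrade to Hausdorff dimension I would argue by contradiction: if $\dim_H K<n-\sigma_n-\eps$, then for every $\delta>0$ and $\lambda>0$ there is a cover $K\subset\bigcup B(x_i,r_i)$ with $r_i\leq\delta$ and $\sum r_i^{n-\sigma_n-\eps}<\lambda$. Running the same discretization with $Y(T)$ redefined as the union of $\delta$-cubes in $T$ meeting $\bigcup_i B(x_i,r_i)$, a dyadic regularization of the cover combined with the multi-scale condition (b) bounds the total shading volume by roughly $\lambda\,\delta^{\sigma_n+\eps}$, contradicting $M(s,t,\delta)\gtrsim\delta^{\sigma_n+\eps/2}$ once $\lambda$ is chosen small relative to $\delta$. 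The main obstacle is the slicing step for (b): packing dimension does not admit a clean slicing theorem, so the averaging argument must be executed carefully and coordinated simultaneously across all intermediate scales $\rho$ without sacrificing too many directions; a secondary challenge is exploiting the Hausdorff content of the small-ball cover (rather than $K_\delta$) to control the shading volume, which relies on the multi-scale regularity built into condition (b).
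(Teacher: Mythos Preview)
Your Minkowski argument is essentially the paper's: the decomposition $L=\bigcup_i L_i$ via upper modified box dimension, a direction pigeonhole to isolate one $L_i$, and then a Markov/Borel--Cantelli argument over dyadic scales $\rho$ to discard over-represented directions is exactly the content of Lemma~\ref{packingLinesVsFrakL}, which packages this as the statement that a suitable $L'\subset L$ lies in $\operatorname{QStick}(n,t/2,\rho_1)$ and thereby verifies condition~(b) uniformly for all $\delta<\rho_1$.

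The Hausdorff upgrade, however, has a real gap. You fix $\delta$ first and then choose a cover $K\subset\bigcup B(x_i,r_i)$ with $r_i\leq\delta$ and small $(n-\sigma_n-\eps)$-content. But this does \emph{not} let you bound $|E_\tubes|$: each ball, no matter how small its radius, forces $O(1)$ $\delta$-cubes into the union, and the content hypothesis gives no control on the \emph{number} of balls. Condition~(b) concerns how many parallel $\rho$-tubes are needed to cover $\tubes$ and says nothing about how many tiny balls can crowd into a single $\delta$-cube; the ``multi-scale regularity'' you appeal to does not do the job you need here.

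The paper's resolution is to reverse the order of quantifiers. One takes a Hausdorff cover of $K$ by cubes at \emph{all} dyadic scales $2^{-k}$, $k\geq k_1$, with $\#\mathcal{Q}_k\lesssim 2^{k(\dim_H K+\eps/2)}$. For each line $\ell\in L'$ the unit segment $K\cap\ell$ must be substantially covered by $\bigcup_{Q\in\mathcal Q_k}Q$ at \emph{some} scale $k$ (since $\sum_k k^{-2}<\infty$); a second pigeonhole over $\ell$ then picks out a single scale $\delta=2^{-k_2}$ for which a positive-measure set of directions is well covered by $\mathcal{Q}_{k_2}$ alone. At this $\delta$ one builds $(\tubes,Y)_\delta$ with $E_\tubes\subset\bigcup_{Q\in\mathcal Q_{k_2}}Q$, whence $|E_\tubes|\leq(\#\mathcal{Q}_{k_2})\,\delta^n\lesssim\delta^{\,n-\dim_H K-\eps/2}$. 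Because $N(s,t)$ is a $\limsup$, producing such a $\delta$ below every threshold suffices. The pigeonholing \emph{over the scales of the cover} to select $\delta$ is the idea your sketch is missing.
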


Our first task is to analyze families of lines in $\RR^n$ that point in many directions and have packing dimension close to $n-1$. For each $\rho>0$ and $\eps>0$, define the class of ``Quantitatively Sticky'' sets $\operatorname{QStick}(n, t, \rho)$ to be the collection of sets $L\subset\mathcal{L}_n$ that satisfy
 \[
\Big|\big\{\underline{p}\in \RR^{n-1} \colon (\underline{p},v)\in N_{\delta}(L)\big\}\Big| \leq \delta^{n-1-t}\quad\textrm{for all}\ v\in S^{n-1},\ \delta\in (0,\rho),
\]
where in the above expression, $N_{\delta}(L)$ denotes the $\delta$-neighborhood of $L$ in the metric space $\mathcal{L}_n$. 

For example, if $L = \{ (\underline p, v) \in \mathcal{L}_n\colon \underline p = f(v)\}$ for some Lipschitz function $f$, then for each $t>0$ we have $L\in \operatorname{QStick}(n, t, \rho)$ for all sufficiently small $\rho$. The next lemma explains how $\operatorname{QStick}(n, t, \rho)$ is related to sticky Kakeya sets. 
\begin{lem}\label{packingLinesVsFrakL}
Let $L\subset \mathcal{L}_n$ with $\nu_{n-1}(\dir(L))>0$. Then for all $t > \dim_P(L)-(n-1)$, there exists $L'\subset L$ and $\rho>0$ with $\nu_{n-1}(\dir(L'))>0$ and $L'\in \operatorname{QStick}(n, t, \rho)$. 
\end{lem}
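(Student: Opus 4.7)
The plan is to combine the characterization of packing dimension as modified upper box-counting dimension with a Fubini-plus-Chebyshev argument, extracting a positive-direction-measure subset $L' \subset L$ that satisfies the QStick bound uniformly in $v$ below some scale $\rho$.

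First, fix an auxiliary parameter $t' \in (\dim_P(L)-(n-1),\, t)$. Since $\dim_P$ equals the modified upper box dimension, I can write $L \subset \bigcup_i L_i$ with $\sup_i \overline{\dim}_B L_i = d < (n-1)+t'$. As $\dir(L) \subset \bigcup_i \dir(L \cap L_i)$ has positive $\nu_{n-1}$-measure, some piece $L^* := L \cap L_i$ inherits $\nu_{n-1}(\dir(L^*)) > 0$, and for every small $\eps > 0$ there is a threshold $\delta_0$ with $L^*$ coverable by at most $\delta^{-d-\eps}$ balls of radius $\delta$ in $\mathcal{L}_n$ whenever $\delta<\delta_0$. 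Next I would convert this covering bound into an averaged estimate on the direction fibers. Writing $F(v,\delta) := |\{\underline p \in \RR^{n-1} : (\underline p,v) \in N_\delta(L^*)\}|$, Fubini gives
\[
\int_{S^{n-1}} F(v,\delta)\,d\nu_{n-1}(v) \;=\; \lambda_n(N_\delta(L^*)) \;\lesssim\; \delta^{2(n-1)-d-\eps},
\]
since each ball of radius $\delta$ in $\mathcal{L}_n$ has $\lambda_n$-measure comparable to $\delta^{2(n-1)}$. Applying Chebyshev to the bad set $B(\delta) := \{v : F(v,\delta) > \delta^{(n-1)-t'}\}$ yields $\nu_{n-1}(B(\delta)) \lesssim \delta^\alpha$ with $\alpha := (n-1)+t'-d-\eps$, which is strictly positive once $\eps$ is chosen small enough.

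Summing this bound over dyadic scales $\delta_k = 2^{-k}$, I would pick $k_0$ so that $\sum_{k \geq k_0} \nu_{n-1}(B(2^{-k})) < \tfrac{1}{2}\nu_{n-1}(\dir(L^*))$, set $\rho := 2^{-k_0-2}$, and define $V := \dir(L^*) \setminus \bigcup_{k \geq k_0} B(2^{-k})$, which has $\nu_{n-1}(V)>0$. The candidate refinement is $L' := L^* \cap \dir^{-1}(V)$. To verify $L' \in \operatorname{QStick}(n,t,\rho)$, fix $v \in S^{n-1}$ and $\delta < \rho$. If no $v_0 \in V$ satisfies $\angle(v,v_0) \leq \delta$, then $\{\underline p : (\underline p,v) \in N_\delta(L')\}$ is empty. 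Otherwise, picking such a $v_0$ and invoking the triangle inequality gives
\[
\{\underline p : (\underline p,v) \in N_\delta(L')\} \;\subset\; \{\underline p : (\underline p,v_0) \in N_{2\delta}(L^*)\},
\]
whose $(n-1)$-dimensional measure is $F(v_0,2\delta)$. Since $v_0 \in V$ avoids every $B(2^{-k})$ with $k \geq k_0$, and $2\delta \in [2^{-k-1},2^{-k}]$ for some such $k$, we get $F(v_0,2\delta) \leq F(v_0,2^{-k}) \leq 4^{(n-1)-t'}\,\delta^{(n-1)-t'}$. Because $t' < t$, shrinking $\rho$ further so that $\delta^{t-t'}$ absorbs the constant $4^{(n-1)-t'}$ upgrades this to the required bound $F(v_0,2\delta) \leq \delta^{(n-1)-t}$.

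The main obstacle I expect is that QStick is a uniform-in-$v$ statement, whereas the covering bound on $L^*$ only delivers an averaged (over $v$) inequality. Bridging that gap requires both the pigeonholing that produces $V$ and the slight room $t-t'>0$, which absorbs the constants coming from the triangle inequality when $v$ lies close to but outside the ``good'' direction set $V$; this is also the reason the hypothesis is a strict inequality $t > \dim_P(L) - (n-1)$. Everything else is routine bookkeeping.
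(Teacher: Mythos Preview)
Your proof is correct and follows essentially the same route as the paper: decompose via the modified-box-dimension characterization of packing dimension, pigeonhole to a piece $L^*$ with positive direction measure, use Fubini--Chebyshev at each dyadic scale to bound the measure of ``bad'' directions, sum over scales, and prune. The only cosmetic differences are that you introduce an auxiliary exponent $t' < t$ to absorb constants (the paper instead builds a factor of $2$ into its over-represented threshold via $N_{2\delta}$), and you spell out the triangle-inequality step verifying the QStick bound for \emph{every} $v \in S^{n-1}$, which the paper leaves implicit.
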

\begin{proof}
Let $d = \dim_P L$, and let $\eps = t - d + (n-1)$. Since $d = \dim_P(L)=\overline{\dim}_{MB}(L)$, we can write $L$ as a countable union of sets $\{L_i\}$, with $\overline{\dim}_M L_i \leq d + \eps/4$ for each $i$. Since $\nu_{n-1}\big(\bigcup \dir(L_i)\big)=\nu_{n-1}(\dir(L))>0$, there exists an index $i$ so that $\nu_{n-1}(\dir L_i)>0$. Re-indexing if necessary, we can suppose that $\nu_{n-1}(\dir L_1)>0$. Since $\overline{\dim}_M L_1 \leq d + \eps/4$, we can select $\rho_1>0$ sufficiently small so that
\begin{equation}\label{uniformCoveringNumberBd}
\lambda_n\big(N_{2\delta}( L_1 )\big)\leq \delta^{(2n-2) - d-\eps/2}\quad \textrm{for all}\ \delta\in (0, \rho_1).
\end{equation}
We say a direction $v\in S^{n-1}$ is \emph{over-represented} at scale $\delta$ if
\[
\Big|\big\{\underline{p}\in \RR^{n-1} \colon (\underline{p},v)\in N_{2\delta}(L_1)\big\}\Big|> \delta^{n-1-t}=\delta^{(2n-2)-\delta-\eps}.
\]
By \eqref{uniformCoveringNumberBd} and Fubini (recall that $\lambda_n$ is a product measure on $\RR^{n-1}\times S^{n-1}$), for each $\delta\in (0, \rho_1)$, we have
\begin{equation}
\nu_{n-1}\big(\{ v\in S^{n-1}\colon v\ \textrm{is over-represented at scale}\ \delta \}\big) \lesssim  \delta^{\eps/2}.
\end{equation}
Thus for each integer $k_1$ with $2^{-k_1}\leq \rho_1$, we have
\begin{equation}\label{dyadicContributions}
\sum_{k\geq k_1} \nu_{n-1}\big(\{v \colon v\ \textrm{is over-represented at scale}\ 2^{-k} \}\big)\lesssim k_1^{-\eps/2}.
\end{equation}
Selecting $k_1$ appropriately, we can ensure that the LHS of \eqref{dyadicContributions} is at most $\nu_{n-1}(\dir(L_1))/2$. Note that if $v\in S^{n-1}$ is not over-represented at any dyadic scale $2^{-k}\leq 2^{-k_1}$, then 
\[
\Big|\big\{\underline{p}\in \RR^{n-1} \colon (\underline{p},v)\in N_{\delta}(L_1)\big\}\Big| \leq \delta^{n-1-t}\quad\textrm{for all}\ \ \delta\in (0,2^{-k_1}).
\]
To conclude the proof, let $\rho=2^{-k_1}$ and let
\[
L' = L_1\ \backslash\ \bigcup_{k=k_1}^\infty\operatorname{dir}^{-1}\big(\{v \colon v\ \textrm{is over-represented at scale}\ 2^{-k} \}\big).\qedhere
\]
\end{proof}

With this lemma, Proposition \ref{sigmaNVsDimensionProp} now follows from standard discretization arguments. The details are as follows.
\begin{proof}[Proof of Proposition \ref{sigmaNVsDimensionProp}]
Let $\eps>0$ be given. By the definition of $\sigma_n$, there exists $s,t\in (0,1)$ so that 
\begin{equation}\label{ChoiceOfNEpsS}
N(s,t) \leq \sigma_n+\eps/2.
\end{equation}
We will show that Proposition \ref{sigmaNVsDimensionProp} holds with $\eta = t/4$. 

Let $K$ be a Kakeya set that is $\eta$-close to being sticky. Since $K$ is compact, after a translation and dilation we can suppose that $K\subset [-\frac{1}{n},\frac{1}{n}]^n$. In particular, there exists a number $w>0$ and a set $L\subset\mathcal{L}_n$ with $\dim_P(L)\leq n-1 + \eta$ so that for each unit vector $v\in S^{n-1}$ with final coordinate $v_n\geq 1/2,$ there is a line $\ell\in L$ with $\dir(\ell) = v$ and $|\ell \cap K|\geq w$. Use Lemma \ref{packingLinesVsFrakL} to select a subset $L'\subset L$ and a number $\rho_1>0$ with $L'\in \operatorname{QStick}(n, t/2, \rho_1)$ and $\nu_{n-1}(\dir(L'))>0$. 

Let $k_1$ be an integer to specified below, with $2^{-k_1}< \rho_1^{2n/t}$. By the definition of Hausdorff dimension, we can cover $K$ by a union of cubes $K \subset \bigcup_{k=k_1}^\infty\bigcup_{Q\in\mathcal{Q}_k}Q$, where $\mathcal{Q}_k$ is a set of $2^{-k}$-cubes with 


\begin{equation}\label{numberOfBalls}
\# \mathcal{Q}_k\lesssim (2^{k})^{\dim_H K + \eps/2}.
\end{equation} 
For each $\ell\in L'$, we have
\[
K\cap\ell\ \subset\ \bigcup_{k= k_1}^\infty\Big(\ell\cap \bigcup_{Q\in\mathcal{Q}_k}Q\Big).
\]
Since $|K\cap\ell|\geq w$, there exists at least one index $k$ so that 
\begin{equation}\label{goodK}
\Big|\ell\cap \bigcup_{Q\in\mathcal{Q}_k}Q\Big|\geq \frac{w}{100 k^2}.
\end{equation}
For each $k\geq k_1$, let $L_k$ be the set of lines $\ell\in L'$ for which \eqref{goodK} holds for that choice of $k$. Then $L' = \bigcup_k L_k$, so there exists an index $k_2\geq k_1$ with $\nu_{n-1}(\dir(L_{k_2}))\geq \frac{\nu_{n-1}(\dir(L'))}{100k_2^2}$.

Let $\delta = 2^{-k_2}$, so in particular $\frac{1}{k_2}\geq \frac{\log 2}{\log(1/\delta)}$. Let $\Omega$ be a $\delta$-separated subset of $\dir(L_{k_2})$ of cardinality $\gtrsim \frac{\nu_{n-1}(\dir L')\delta^{1-n}}{\log(1/\delta)^2}$. For each direction $v\in \Omega$, let $T$ be a $\delta$-tube with coaxial line $\ell \in L_{k_2}$ pointing in direction $v$. By \eqref{goodK}, we can choose $T$ so that the set 
\[
Y(T) = \bigcup_{\substack{Q\in\mathcal{Q}_{k_2} \\ Q \cap \ell \neq\emptyset}}Q
\]
satisfies $Y(T)\subset T$ and $|Y(T)|\gtrsim  w\log(1/\delta)^{-2}\delta^{n-1}$. 

We claim that if $k_1$ is chosen sufficiently large, then $(\tubes,Y)_\delta$ satisfies Items \ref{tubesDistinct}, \ref{DefnOfMGoodCoveringItem}, and \ref{DefnOfMBigVolItem} from Definition \ref{defnOfM}. Item \ref{tubesDistinct} is immediate, since  $\Omega$ is $\delta$-separated. Item \ref{DefnOfMBigVolItem} follows from the inequality
\[
\sum_{T\in\tubes}|Y(T)|\gtrsim \frac{(\# \Omega) w\delta^{n-1}}{\log(1/\delta)^2} \gtrsim \frac{w\nu_{n-1}(\dir L')}{\log(1/\delta)^4}.
\]
This quantity is larger than $\delta^s$ provided we select $k_1$ sufficiently small large, depending on $\nu_{n-1}(\dir L'),$ $w$, and $n$. 

\medskip

Our final task is to show that $(\tubes,Y)_\delta$ satisfies Item \ref{DefnOfMGoodCoveringItem}. Let $\rho\in [\delta,1]$. Our goal is to find a set of $\rho$-tubes $\tilde \tubes$ that covers $\tubes$, so that at most $\delta^{-t}$ of them are pairwise parallel. We will divide our analysis into cases.

\medskip

\noindent {\bf Case 1: $\rho\in [\delta,\rho_1).$} let $\tilde\tubes$ be a collection of $\rho$-tubes, so that the set of coaxial lines $\tilde L$ has the following properties. 
\begin{enumerate}[label=(\roman*)]
\item Each $\tilde\ell\in\tilde L$ is contained in $N_\rho(L')$. 
\item The $\rho/2$-neighborhoods of the lines in $\tilde L$ cover $L'$. 
\item The $\rho/6$ neighborhoods of the lines in $\tilde L$ are disjoint. 
\end{enumerate}
Such a collection of tubes can be chosen greedily, as in the proof of the Vitali covering lemma. 

Each tube $T\in\tubes$ has a coaxial line $\ell\in L_1\subset L'$. Hence there is a tube $\tilde T\in\tilde\tubes$ so that $d(\ell,\tilde \ell)\leq\rho/2,$ and thus $\tilde T$ covers $T$. We conclude that $\tilde \tubes$ covers $\tubes$.

It remains to show that at most $\rho^{-t}$ tubes from $\tilde\tubes$ can all be essentially parallel to a common $\rho$ tube. Let $\tilde T_1,\ldots\tilde T_M\in\tilde\tubes$ be essentially parallel to a common $\rho$ tube $\tilde T_0$, and let $v = \dir(\tilde T_0)$. In particular, $|v - \dir(\tilde T_j)|\leq \rho$ for each index $j$. By Item (i) above, this means $\tilde T_j\in N_\rho(L') \cap \dir^{-1}(N_{\rho}(v))$. By item (iii) above, we conclude that
\[
\lambda_n\big(N_\rho(L') \cap \dir^{-1}(N_{\rho}(v))\big) \geq \sum_{i=1}^M \lambda_n\big( N_{\rho/6} (\tilde \ell_i)\big)\gtrsim M\rho^{2n-2}.
\]
Thus by Fubini, there exists $v'\in N_{\rho}(v)$ with 
\[
|N_\rho(L') \cap \dir^{-1}(v')| \gtrsim M\rho^{n-1}. 
\]
But since $L'\in \operatorname{QStick}(n, t/2, \rho_1)$ and $\rho<\rho_1$, if $\rho_1>0$ is selected sufficiently small (depending on the implicit constants above, which in turn depend on $n$) then $M\leq\rho^{-t}\leq \delta^{-t}$.

\medskip

{\bf \noindent Case 2: $\rho\in [\rho_1, 1]$.} Let $\tilde\tubes$ be a set of $\rho$-tubes, whose corresponding coaxial lines form a $\rho/2$-net in $\mathcal{L}_n$. Then $\tilde\tubes$ clearly covers $\tubes$. The maximum number of tubes in $\tilde\tubes$ that are all essentially parallel to a common $\rho$ tube is trivially bounded by $\#\tilde\tubes \sim \rho^{2-2n}$, and this is $\leq\delta^{-t}$ (provided $\rho_1$, and hence $\delta$ is selected sufficiently small compared to $n$), since $\delta\leq 2^{-k_1}\leq \rho_1^{2n/t}$.

\medskip

We have shown that $(\tubes,Y)_\delta$ satisfies Items \ref{tubesDistinct}, \ref{DefnOfMGoodCoveringItem}, and \ref{DefnOfMBigVolItem} from Definition \ref{defnOfM}. Thus we have that
\[
M(s, t, \delta) \leq \Big|\bigcup_{T\in\tubes}Y(T)\Big| \leq  \Big|\bigcup_{Q\in\mathcal{Q}_{k_2}} Q\Big|\leq (\# \mathcal{Q}_{k_2}) \delta^n \leq \delta^{n - \dim K - \eps/2},
\]
where the final inequality used \eqref{numberOfBalls}. We conclude that
\begin{equation}\label{witnessM}
\frac{\log M(s, t,\delta)}{\log\delta} \geq n - \dim_H K - \eps/2.
\end{equation}
We have just shown that for every $k_1$ sufficiently large, there exists $\delta\in (0,2^{-k_1}]$ for which \eqref{witnessM} holds. In particular, 
\begin{equation}\label{lowerBdNEpsS}
N( s,t) \geq n - \dim_H K - \eps/2.
\end{equation}
To conclude the proof, we compare \eqref{ChoiceOfNEpsS} and \eqref{lowerBdNEpsS}. 
\end{proof}

Proposition \ref{sigmaNVsDimensionProp} says that Theorems \ref{mainThm} and \ref{mainThm}$^\prime$ will follow from the estimate $\sigma_3=0$. 


\section{Extremal families of tubes and multi-scale structure}\label{extremalFamiliesSubsection}
In this section, we will study families of tubes that nearly extremize the quantity $M$ from Definition \ref{defnOfM}.
\begin{defn}\label{defnExtremal}
Let $\eps,\delta>0$. We say a pair $(\tubes,Y)_\delta$ is $\eps$-extremal if it satisfies Items \ref{tubesDistinct}, \ref{DefnOfMGoodCoveringItem}, and \ref{DefnOfMBigVolItem} from Definition \ref{defnOfM} with $s=t=\eps$, and furthermore $\Big|\bigcup_{T\in\tubes}Y(T)\Big|\leq \delta^{\sigma_n - \eps}$. 
\end{defn}

From the definition of $\sigma_n$, we immediately conclude that extremal collections of tubes always exist for small $\delta$. More precisely, we have the following.

\begin{lem}\label{existenceOfExtremalFamilies}
Let $\eps,\delta_0>0$. Then there exists a $\eps$-extremal collection of tubes $(\tubes,Y)_\delta$ for some $\delta\in (0, \delta_0]$. 
\end{lem}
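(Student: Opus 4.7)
The plan is to obtain the lemma directly from the definition of $\sigma_n$ together with the fact that $M(\eps,\eps,\delta)$ is defined as an infimum, so that near-minimizers of $M$ at suitable scales furnish the required $\eps$-extremal families.

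First, I would record that $N(\eps,\eps) \ge \sigma_n$. This is immediate from the identity $\sigma_n = \inf_{(s,t)\in(0,1)^2} N(s,t)$. Unpacking the definition
\[
N(\eps,\eps) = \limsup_{\delta \to 0^+} \frac{\log M(\eps,\eps,\delta)}{\log \delta},
\]
the limsup characterization yields, for every $\eta > 0$, the existence of arbitrarily small $\delta > 0$ with
\[
\frac{\log M(\eps,\eps,\delta)}{\log \delta} > N(\eps,\eps) - \eta \ge \sigma_n - \eta.
\]
Since $\log \delta < 0$ for $\delta \in (0,1)$, this is equivalent to $M(\eps,\eps,\delta) < \delta^{\sigma_n - \eta}$.

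Next, I would apply the previous display with $\eta = \eps/2$ and select $\delta \in (0,\delta_0]$ small enough that, in addition, $2\delta^{\sigma_n - \eps/2} \le \delta^{\sigma_n - \eps}$ (which reduces to $\delta \le (1/2)^{2/\eps}$, compatible with the previous constraint since the inequality on $M$ is available for arbitrarily small $\delta$). Because $M(\eps,\eps,\delta)$ is an infimum over pairs $(\tubes,Y)_\delta$ satisfying items \ref{tubesDistinct}, \ref{DefnOfMGoodCoveringItem}, and \ref{DefnOfMBigVolItem} from Definition \ref{defnOfM} with $s = t = \eps$, there exists such a pair with
\[
\Big|\bigcup_{T\in\tubes}Y(T)\Big| \le 2\,M(\eps,\eps,\delta) < 2\delta^{\sigma_n - \eps/2} \le \delta^{\sigma_n - \eps}.
\]
By Definition \ref{defnExtremal}, this pair $(\tubes,Y)_\delta$ is $\eps$-extremal, as desired.

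There is no real obstacle here: the lemma is a near-tautology given the setup. The only mild points of care are that $N(\eps,\eps)$ is a \emph{limsup} rather than a limit (so smallness of $M$ is only available along a subsequence of $\delta$, not eventually), and that $M$ is an infimum rather than a minimum (so a harmless multiplicative factor $2$ enters, absorbed by the slightly relaxed exponent $\sigma_n - \eps$ in Definition \ref{defnExtremal} compared to $\sigma_n - \eps/2$).
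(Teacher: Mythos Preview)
Your proof is correct and is precisely the unpacking of what the paper means by ``from the definition of $\sigma_n$, we immediately conclude\ldots'': the paper gives no explicit argument here, and your use of $N(\eps,\eps)\ge\sigma_n$ together with the limsup and infimum characterizations is exactly the intended justification. The minor care you take with the factor of $2$ and the limsup-versus-limit distinction is appropriate and fills in the details cleanly.
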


The main result of this section says that extremal collections of tubes must be coarsely self-similar at every scale. To state this precisely, we need the following definition.
\begin{defn}\label{unitRescalingDefn}
Let $(\tubes,Y)_\delta$ be a collection of $\delta$ tubes and let $\tilde T$ be a $\rho$ tube that covers each tube in $\tubes$. The \emph{unit rescaling of $(\tubes,Y)_\delta$ relative to $\tilde T$} is the pair $(\hat\tubes, \hat Y)_{\delta/\rho}$ defined as follows. The coaxial lines of the tubes $\hat T\in\hat\tubes$ are the images of the coaxial lines of the tubes $T\in\tubes$ under the map $\phi\colon\RR^n\to\RR^n$, defined as follows. If $\tilde\ell$ is the coaxial line of $\tilde T$, then $\phi$ is the composition of a rigid transformation that sends the point $\tilde\ell\cap\{x_n=0\}$ to the origin and sends the line $\tilde\ell$ to the $x_n$-axis, with the anisotropic dilation
\begin{equation}\label{scaling}
 (x_1,\ldots,x_n)\mapsto \big( cx_1/\rho,\ cx_2/\rho,\ldots,cx_{n-1}/\rho,\ cx_n\big).
\end{equation}
Each shading $\hat Y(\hat T)$ is the union of $\delta/\rho$ cubes that intersect $\phi(Y(T))$. The constant $c=c(n)\sim 1$ is chosen so that the coaxial lines of the tubes in $\hat T$ are in $\mathcal{L}_n$, and $\hat Y(\hat T)\subset\hat T$. See Figure \ref{figUnitRescaling}.
\end{defn}
\begin{figure}[h]
\centering
\includegraphics[scale=0.3]{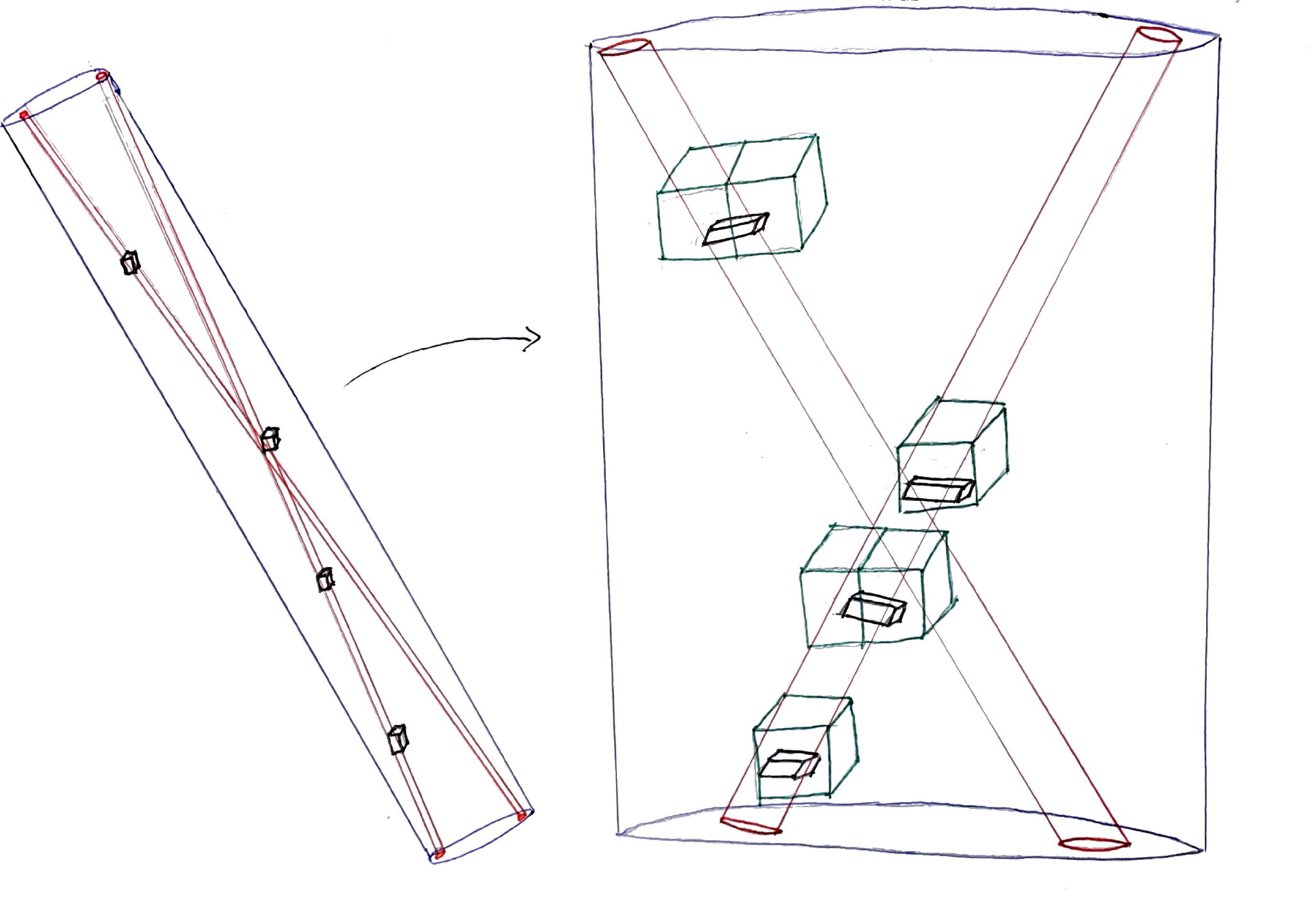}
\caption{The unit rescaling of $(\tubes,Y)_\delta$. For clarity, only a few cubes in $Y(T)$ have been drawn.}
\label{figUnitRescaling}
\end{figure}

We can now state the main result of this section. It says that extremal collections of tubes look self-similar at all scales.
\begin{prop}\label{multiScaleStructureExtremal}
For all $\eps>0$, there exists $\eta>0$ and $\delta_0>0$ so that the following holds for all $\delta\in (0, \delta_0]$. Let $(\tubes,Y)_\delta$ be a $\eta$-extremal collection of tubes, and let $\rho\in [\delta^{1-\eps}, \delta^{\eps}]$. Then there is a refinement $(\tubes',Y')_\delta$ of $(\tubes,Y)_\delta$ and a balanced cover $(\tilde\tubes,\tilde Y)_\rho$ of $(\tubes',Y')_\delta$ with the following properties.
\begin{enumerate}[label=(\roman*)]
\item\label{muCoarseBdItem} $(\tilde\tubes,\tilde Y)_\rho$ is $\eps$-extremal.

\item\label{fineExtremalItem} For each $\tilde T\in\tilde\tubes$, the unit rescaling of  $(\tubes'[\tilde T],Y')_\delta$ relative to $\tilde T$ is $\eps$-extremal.
\item\label{coarseScaleEstimateItem} For each $p\in\RR^n$, $\#\tilde\tubes(p)\leq \rho^{-\sigma_n-\eps}$
\item\label{fineEstimateItem} For each $p\in\RR^n$ and each $\tilde T\in\tilde\tubes$, $\#\tubes'[\tilde T](p)\leq (\delta/\rho)^{-\sigma_n-\eps}.$
\end{enumerate}

\end{prop}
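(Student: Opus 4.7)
The plan is to take the $\eta$-extremal family at scale $\delta$, build a cover at scale $\rho$ from condition \ref{DefnOfMGoodCoveringItem} of Definition \ref{defnOfM}, and exploit the fact that $\sigma_n = \lim_{(s,t)\to 0} N(s,t)$ is defined as an infimum to trap volumes at scales $\rho$ and $\delta/\rho$ in narrow ranges. The core estimate is an approximate identity $V \cdot W \leq |E_{\tubes'}|$, where $V = |E_{\tilde\tubes}|$ and $W$ is the (essentially common, after pigeonholing) normalized shading volume of each rescaled fiber $(\hat\tubes[\tilde T], \hat Y)_{\delta/\rho}$. Combined with the extremality $|E_{\tubes'}| \leq \delta^{\sigma_n - \eta}$ and the lower bounds $V \gtrsim \rho^{\sigma_n + o_\eps(1)}$, $W \gtrsim (\delta/\rho)^{\sigma_n + o_\eps(1)}$ furnished by the definition of $\sigma_n$, this pins $V$ and $W$ to their expected sizes, yielding conclusions \ref{muCoarseBdItem} and \ref{fineExtremalItem}. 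Parameters obey the hierarchy $\eta \ll \eps^2$, which matches the scale restriction $\rho \in [\delta^{1-\eps}, \delta^\eps]$.

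\smallskip

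Concretely, first invoke condition \ref{DefnOfMGoodCoveringItem} at scale $\rho$ to obtain a $\rho$-tube cover of $\tubes$ with at most $\delta^{-\eta}$ pairwise parallel tubes, assign each $T \in \tubes$ to a single parent $\tilde T \in \tilde\tubes$, and set $\tilde Y(\tilde T)$ to be the union of $\rho$-cubes meeting $\bigcup_{T \in \tubes[\tilde T]} Y(T)$. A round of dyadic pigeonholing -- costing only $(\log 1/\delta)^{O(1)}$ factors of shading mass, safely absorbed by the gap between $\eta$ and $\eps$ -- produces a refinement $(\tubes', Y')_\delta$ so that the induced cover is balanced, has constant multiplicity, has pointwise tube-multiplicity controlled by its average (via throwing away shading near high-multiplicity points), and has the quantities $\#\tubes'[\tilde T]$, $|E_{\tubes'[\tilde T]}|$, $|\tilde Y(\tilde T)|$ independent of $\tilde T$ up to log factors. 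The verifications that $(\tilde\tubes, \tilde Y)_\rho$ and each rescaled fiber satisfy the hypotheses \ref{tubesDistinct}--\ref{DefnOfMBigVolItem} with $s = t = \eps$ are routine: essential distinctness and \ref{DefnOfMGoodCoveringItem} descend from their analogues at scale $\delta$ via inclusion of coaxial lines and the anisotropic rescaling of Definition \ref{unitRescalingDefn} (using $\delta^{-\eta} \leq \rho^{-\eps}$ and $\delta^{-\eta} \leq (\delta/\rho)^{-\eps}$, both of which hold when $\eta \leq \eps^2$), while \ref{DefnOfMBigVolItem} is secured by the uniform distribution of mass enforced during pigeonholing.

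\smallskip

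For conclusions \ref{muCoarseBdItem} and \ref{fineExtremalItem}, the identity $V \cdot W \leq |E_{\tubes'}|$ is a cube-counting consequence of the balanced structure: if $\mu = |E_{\tubes'}|/V$ is the balance density, then $W \rho^{n-1} = |E_{\tubes'[\tilde T]}| \leq \mu\,|\tilde Y(\tilde T)| \leq \mu \rho^{n-1}$, hence $W \leq \mu$ and $V \cdot W \leq \mu V = |E_{\tubes'}|$. Combining with $|E_{\tubes'}| \leq \delta^{\sigma_n - \eta}$ and the lower bounds $V \gtrsim \rho^{\sigma_n + o_\eps(1)}$, $W \gtrsim (\delta/\rho)^{\sigma_n + o_\eps(1)}$ (from $N(\eps,\eps) \to \sigma_n$), a short arithmetic check using $\rho \in [\delta^{1-\eps}, \delta^\eps]$ and $\eta \leq \eps^2/2$ yields the desired upper bounds $V \leq \rho^{\sigma_n - \eps}$ and $W \leq (\delta/\rho)^{\sigma_n - \eps}$. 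For \ref{coarseScaleEstimateItem}, the pointwise-multiplicity refinement in Step 1 gives $\#\tilde\tubes(p) \lesssim \#\tilde\tubes \cdot \rho^{n-1}/V$, which with $\#\tilde\tubes \leq \rho^{-(n-1)-\eps}$ (from \ref{DefnOfMGoodCoveringItem} and the $\rho^{-(n-1)}$ direction-bins) and the lower bound on $V$ gives $\rho^{-\sigma_n - O(\eps)}$; shrinking $\eps$ up front yields \ref{coarseScaleEstimateItem}, and the analogous calculation in the rescaled problem yields \ref{fineEstimateItem}. The main obstacle is engineering the pigeonholing in Step 1 to enforce all of these structural conditions simultaneously while losing only log factors of mass -- in particular, ensuring the rescaled fiber families satisfy \ref{DefnOfMBigVolItem}, since a pathological distribution of shading among parents would void the lower bound on $W$ and collapse the entire argument.
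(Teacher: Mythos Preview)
Your proposal is correct and follows essentially the same approach as the paper: build the $\rho$-cover from condition~\ref{DefnOfMGoodCoveringItem}, pigeonhole to a balanced constant-multiplicity structure, and then use the product inequality $V\cdot W \lesssim |E_{\tubes'}|\le \delta^{\sigma_n-\eta}$ together with the lower bounds $V\gtrsim \rho^{\sigma_n+o_\eps(1)}$, $W\gtrsim(\delta/\rho)^{\sigma_n+o_\eps(1)}$ coming from the definition of $\sigma_n$ to pin both factors and deduce the multiplicity bounds. The only organizational difference is that the paper first isolates the fine-scale analysis as a separate lemma (Lemma~\ref{rescaledCoveredTube}), which cleanly delivers~\ref{fineEstimateItem} and the verification that each rescaled fiber satisfies~\ref{tubesDistinct}--\ref{DefnOfMBigVolItem}, and then runs the coarse-scale argument; your sketch keeps these interleaved but the content is the same.
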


Before proving Proposition \ref{multiScaleStructureExtremal}, we will analyze the unit rescaling of extremal collections of tubes that are covered by a single $\rho$-tube. A precise formulation is given below. In the arguments that follow, we will make frequent use of the fact that if $(\tubes,Y)_\delta$ is $\eps$-extremal, then $\delta^{1-n+\eps}\lesssim \#\tubes\lesssim \delta^{1-n-\eps},$ and if $(\tubes',Y')_\delta$ is a sub-collection with $\sum_{T\in\tubes'}|Y'(T)|\geq\delta^{\eps'}$ for some $\eps'\geq\eps$, then $(\tubes',Y')$ is $\eps'$-extremal.

\begin{lem}\label{rescaledCoveredTube}
For all $\eps>0$, there exists $\eta,\delta_0>0$ so that the following holds for all $\delta\in (0,\delta_0].$ Let $(\tubes,Y)_\delta$ be a set of tubes that are covered by a $\rho$-tube $\tilde T$, with $\rho\in [\delta^{1-\eps},1]$. 

Suppose that $(\tubes,Y)_\delta$ satisfies Items \ref{tubesDistinct} and \ref{DefnOfMGoodCoveringItem} from Definition \ref{defnOfM} with $\eta$ in place of $s$, and that$(\tubes,Y)_\delta$ satisfies the following analogue of Item \ref{DefnOfMBigVolItem}: $\sum_{T\in\tubes}|Y(T)|\geq\delta^\eta\rho^{n-1}$. 

Then there is a refinement $(\tubes',Y')_\delta$ so that the unit rescaling of $(\tubes',Y')_\delta$ relative to $\tilde T$ satisfies Items \ref{tubesDistinct}, \ref{DefnOfMGoodCoveringItem}, and \ref{DefnOfMBigVolItem} from Definition \ref{defnOfM} with $\eps$ in place of $s$ and $t$. Furthermore, 
\begin{equation}\label{pointwiseBdRescaledTubes}
\#\tubes'(p)\leq(\rho/\delta)^{\sigma+\eps}\quad\textrm{for all}\ p\in\RR^n.
\end{equation}
\end{lem}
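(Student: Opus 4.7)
The plan is to set $\delta_1 := \delta/\rho$, which lies in $[\delta, \delta^\eps]$ by the hypothesis $\rho \in [\delta^{1-\eps}, 1]$; to choose $\eta \leq \eps^2/C$ for a constant $C = C(n)$; and to split the proof into (i) verifying that the rescaled collection satisfies Items \ref{tubesDistinct}--\ref{DefnOfMBigVolItem} of Definition \ref{defnOfM} with $\eps$ in place of $s$ and $t$, and (ii) establishing the pointwise bound \eqref{pointwiseBdRescaledTubes}. The key quantitative relation throughout is $\log(1/\delta_1)/\log(1/\delta) \in [\eps, 1]$, which translates powers of $\delta$ into powers of $\delta_1$.

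For part (i), each item follows directly from the geometry of Definition \ref{unitRescalingDefn}. Essential distinctness holds because the anisotropic rescaling $\phi$ scales both the positional and angular components of the metric on $\mathcal{L}_n$ by $1/\rho$, sending $\delta$-separation to $\delta_1$-separation. For Item \ref{DefnOfMGoodCoveringItem}, a covering of $\tubes$ at scale $\rho_0 = \rho_1\rho \in [\delta, \rho]$ pushes forward to a covering of $\hat\tubes$ at scale $\rho_1 \in [\delta_1, 1]$ with the same parallel-count bound $\delta^{-\eta} \leq \delta_1^{-\eps}$, using $\eta \leq \eps^2$ and $\delta_1 \geq \delta^\eps$. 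For Item \ref{DefnOfMBigVolItem}, the Jacobian of $\phi$ is $c^n/\rho^{n-1}$, giving $\sum|\hat Y(\hat T)| = c^n \rho^{-(n-1)}\sum|Y(T)| \geq c^n\delta^\eta \geq \delta_1^\eps$.

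For part (ii), I would first apply a dyadic pigeonhole on the shading-multiplicity $\mu(p) := \#\{T \in \tubes : p \in Y(T)\}$ to pass to a constant-multiplicity refinement $(\tubes'', Y'')_\delta$ with $\mu \equiv \mu_0$ on $E_{\tubes''}$, losing only a polylogarithmic factor in total shading. Part (i) applied to this refinement, combined with the defining infimum for $\sigma_n$, yields $|\hat E_{\tubes''}| \geq \delta_1^{\sigma_n + \eps/4}$ once $\delta_0$ is sufficiently small. Combined with the trivial bound $\sum|\hat Y''| \leq \delta^{-\eta}$ (obtained from Item \ref{DefnOfMGoodCoveringItem} at scale $\delta$, which gives $\#\tubes \leq \delta^{-\eta}\delta_1^{1-n}$, and then multiplying by the $\delta_1$-tube volume), this establishes the shading-multiplicity bound $\mu_0 \leq \delta_1^{-\sigma_n - \eps/2}$.

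The main obstacle is converting this shading-multiplicity bound into the pointwise tube-multiplicity bound $\#\tubes'(p) \leq (\rho/\delta)^{\sigma_n + \eps}$ for all $p \in \RR^n$, since a tube can contain $p$ without shading $p$. I would handle this with a further dyadic pigeonhole on the tube-wise shading density $\alpha(T) := |Y''(T)|/|T|$, pinning $\alpha$ down so that every surviving tube is shaded on a definite fraction of its length. A bush argument at any hypothetical point $p$ with $\#\tubes'(p) > M^* := (\rho/\delta)^{\sigma_n + \eps}$ would then show that the $M^*$ tubes through $p$ collectively produce a shading-multiplicity near $p$ that exceeds $\mu_0$: this uses that the $\delta$-separated directions of the bush tubes are confined to a cone of angle $\rho$, so the tubes cluster near $p$ and force overlap among their shadings. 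Executing this final step quantitatively---making the bush overlap tight enough to contradict the bound on $\mu_0$ while absorbing the slack between $\eps/4$, $\eps/2$, and $\eps$---is the central technical difficulty, and is where the single-scale gap $\rho \in [\delta^{1-\eps}, 1]$ must be used.
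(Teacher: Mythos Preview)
Your outline for part (i) and the first half of part (ii) matches the paper's argument closely: verify Items \ref{tubesDistinct}--\ref{DefnOfMBigVolItem} for the rescaled collection via the distortion properties of $\phi$, then take a constant-multiplicity refinement and use the definition of $\sigma_n$ to lower-bound $|E_{\hat\tubes}|$ and hence upper-bound the multiplicity $\mu_0$.

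The final paragraph, however, rests on a misreading of the notation. In this paper, $\#\tubes'(p)$ counts tubes $T$ with $p\in Y'(T)$, not tubes that geometrically contain $p$; this is the convention implicit in the paper's definition of a constant-multiplicity refinement ($\sum_{T\in\tubes'}\chi_{Y'(T)}=\mu\chi_{E_{\tubes'}}$). Consequently, once $(\tubes',Y')_\delta$ is a constant-multiplicity refinement with multiplicity $\mu_0$, you have $\#\tubes'(p)=\mu_0$ for $p\in E_{\tubes'}$ and $\#\tubes'(p)=0$ otherwise, so \eqref{pointwiseBdRescaledTubes} is \emph{exactly} the statement $\mu_0\le(\rho/\delta)^{\sigma_n+\eps}$. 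The paper makes this explicit: ``All that remains is to establish \eqref{pointwiseBdRescaledTubes}, i.e.\ to bound $\mu\leq (\rho/\delta)^{\sigma_n+\eps}$.'' Your proposed bush argument and the second pigeonhole on shading density are therefore unnecessary; there is no gap to bridge between ``shading multiplicity'' and ``tube multiplicity.''

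One minor point on part (i): essential distinctness of the rescaled tubes is not automatic from essential distinctness of the original tubes, because the distortion bound in \eqref{phiDistortion} only gives $\rho^{-1}d(\ell_1,\ell_2)\lesssim d(\phi(\ell_1),\phi(\ell_2))$ with an implicit constant. The paper handles this by first passing to a refinement whose rescaling consists of essentially distinct tubes; you should do the same before taking the constant-multiplicity refinement.
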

\begin{proof}
To begin, observe that the transformation $\phi$ from Definition \ref{unitRescalingDefn} has the following properties. 
\begin{itemize}

\item If $\ell_1,\ell_2$ are lines with $d(\tilde\ell, \ell_i)\leq \rho$ for $i=1,2$. Then
\begin{equation}\label{phiDistortion}
\begin{split}
\rho^{-1}d(\ell_1, \ell_2)\lesssim d(\phi(\ell_1),&\ \phi(\ell_2))\leq \rho^{-1}d(\ell_1, \ell_2),\\
\rho^{-1}\angle\big(\dir(\ell_1),\ \dir(\ell_2)\big)\lesssim  \angle\big(\dir(\phi(\ell_1)),\ &\dir(\phi(\ell_2))\big)\leq \rho^{-1}\angle\big(\dir(\ell_1),\ \dir(\ell_2)\big).
\end{split}
\end{equation}

\item If $\tau\in[\delta,\rho]$ and $T^\dag$ is a $\tau$-tube covered by $2\tilde T$, then $\phi(T^\dag)$ is contained in the $\tau/\rho$ tube with coaxial line $\phi(\ell^\dag)$.

\item For every measurable $X\subset\RR^n$ we have
\begin{equation}\label{PhiDistortsVolume}
|\phi(X)| \sim  \rho^{1-n}|X|.
\end{equation} 

\end{itemize}
By \eqref{phiDistortion}, there is a refinement of $(\tubes,Y)_\delta$ whose unit rescaling relative to  $\tilde T$ consists of essentially distinct tubes. Let $(\tubes',Y')_\delta$ be a refinement with this property, which is also constant multiplicity, i.e.~there is a number $\mu$ so that $\#\tubes'(p)=\mu$ for all $p\in E_{\tubes'}$. Let $(\hat\tubes, \hat Y)_{\delta/\rho}$ be the unit rescaling of $(\tubes',Y')_\delta$ relative to $\tilde T$. In particular, $(\hat\tubes, \hat Y)_{\delta/\rho}$ satisfies Property \ref{tubesDistinct} from Definition \ref{defnOfM}.

By \eqref{PhiDistortsVolume}, we have
\begin{equation}\label{boundDilatedTubes}
\Big|\bigcup_{T\in\tubes}Y(T)\Big|\sim \rho^{n-1}\Big|\bigcup_{\hat T \in\hat \tubes}\hat Y(\hat T)\Big|.
\end{equation}

Our next task is to obtain a lower bound on the RHS of \eqref{boundDilatedTubes}. Let $s,t>0$ be chosen so that 
\begin{equation}\label{defnSTInProof}
N(s,t) \leq \sigma_n + \eps/4.
\end{equation} 
Decreasing $s$ and $t$ if necessary, we may suppose that $s,t\leq\eps/2$. We will show that $(\hat\tubes, \hat Y)_{\delta/\rho}$ satisfies Properties \ref{DefnOfMGoodCoveringItem} and \ref{DefnOfMBigVolItem} from Definition \ref{defnOfM} for this value of $s$ and $t$, and all sufficiently small $\delta$. 

Property \ref{DefnOfMBigVolItem} is straightforward: by \eqref{PhiDistortsVolume} we have 
\begin{equation}\label{volumeStarredTubes}
\sum_{\hat T\in\hat\tubes}|\hat Y(\hat T)|\gtrsim \rho^{1-n}\sum_{T\in\tubes}|Y(T)|\geq \delta^\eta \geq (\rho/\delta)^{\eta/\eps}.
\end{equation}
If $\eta,\delta_0>0$ are selected sufficiently small, then the LHS of \eqref{volumeStarredTubes} is at most $(\rho/\delta)^s$. 

For property \ref{DefnOfMGoodCoveringItem}, let $\tau\in (\delta/\rho, 1)$. Let $\tubes^*$ be a collection of $\tau\rho$-tubes that cover $\tubes$, at most $\delta^{-\eta}$ of which are essentially parallel to a common $\tau\rho$-tube (such a collection exists by hypothesis, since $\delta\leq \tau\rho\leq1$). We can suppose that each $T^*\in\tubes^*$ covers at least one tube from $\tubes$, and thus by the triangle inequality, each $T^*$ is covered by $2\tilde T$. Define $\hat{\tubes}^*$ to be the set of $\tau$ tubes whose coaxial lines are given by $\{ \phi(\ell^*)\colon \ell^*\in\tubes^*\}$. By \eqref{phiDistortion}, if $T\in\tubes$ is covered by $T^*\in\tubes^*$, then the corresponding tube $\hat T$ is covered by $\hat{T}^*$; in particular, $\hat{\tubes}^*$ covers $\hat\tubes$. By \eqref{phiDistortion}, if $\delta_0$ is chosen sufficiently small then at most $(\delta/\rho)^{-2\eta /\eps}$ tubes from $\hat{\tubes}^*$ are essentially parallel to a common tube. Thus if $\eta>0$ is chosen sufficiently small, then $(\hat \tubes,\hat Y)_{\delta/\rho}$ satisfies property \ref{DefnOfMGoodCoveringItem} for the value of $t$ specified in \eqref{defnSTInProof}.

We have shown that $(\hat \tubes, \hat Y)_{\delta/\rho}$ satisfies Properties \ref{tubesDistinct}, \ref{DefnOfMGoodCoveringItem}, and \ref{DefnOfMBigVolItem} from Definition \ref{defnOfM} with $\eps$ in place of $s$ and $t$.

All that remains is to establish \eqref{pointwiseBdRescaledTubes}, i.e. to bound $\mu\leq (\rho/\delta)^{\sigma_n+\eps}$. We have also shown that $(\hat \tubes, \hat Y)_{\delta/\rho}$ satisfies Properties \ref{tubesDistinct}, \ref{DefnOfMGoodCoveringItem}, and \ref{DefnOfMBigVolItem} from Definition \ref{defnOfM} for the values of $s$ and $t$ specified by \eqref{defnSTInProof}. Thus if $\delta/\rho$ is sufficiently small, then 

\begin{equation}\label{lowerBdYStar}
\Big|\bigcup_{\hat T\in\hat \tubes}\hat Y(\hat T)\Big|\geq (\delta/\rho)^{\sigma_n + \eps/2}. 
\end{equation}
We can ensure that $\delta/\rho$ is sufficiently small for \eqref{lowerBdYStar} to hold by selecting $\delta_0>0$ appropriately, since the hypothesis $\rho\geq\delta^{1-\eps}$ forces $\delta/\rho\searrow 0$ as $\delta\searrow 0$. 

On the other hand, since $(\hat \tubes, \hat Y)_{\delta/\rho}$ satisfies Property \ref{DefnOfMGoodCoveringItem} from Definition \ref{defnOfM} with $s\leq\eps/2$, we have 
\[
\mu= \Big(\sum_{\hat T\in\hat\tubes}|\hat Y(\hat T)|\Big)\Big(\Big|\bigcup_{\hat T \in\hat \tubes}\hat Y(\hat T)\Big|\Big)^{-1}\lesssim (\delta/\rho)^{\eps/2}\Big(\Big|\bigcup_{\hat T \in\hat \tubes}\hat Y(\hat T)\Big|\Big)^{-1}\leq (\delta/\rho)^{\sigma_n + \eps}.
\qedhere
\]

\end{proof}

\begin{proof}[Proof of Proposition \ref{multiScaleStructureExtremal}]
$\phantom{1}$\\
\noindent{\bf Step 1: Constructing $\tilde \tubes$}.\\
To begin, we will construct a set of $\rho$-tubes that covers a refinement of $\tubes$, with the property that each $T\in\tubes$ is contained in exactly one $\rho$-tube, and $T$ is close to the coaxial line of this $\rho$-tube. Here are the details. Let $\tilde\tubes_0$ be a set of $\rho/(2 n)$ tubes that covers $\tubes$, so that the coaxial line of each $\tilde T\in\tilde\tubes_0$ coincides with the coaxial line of a tube from $\tubes$, and at most $O(\delta^{-\eta})$ tubes from $\tilde\tubes_0$ are essentially pairwise parallel to a common tube. $\tilde\tubes_0$ can be constructed by first selecting a cover of $\tubes$ as described in Item \ref{DefnOfMGoodCoveringItem} of Definition \ref{defnM}, and then replacing each tube $\tilde T$ in this cover by $O(1)$ tubes $\tilde T'$ with $d(\tilde T, \tilde T')\lesssim\rho$.

Next, let $\tilde\tubes_1\subset\tilde\tubes_0$ be chosen so that the tubes in $2  n\tilde\tubes_1$ are essentially distinct; each $T\in\tubes$ is covered by at most one tube in $2 n \tilde\tubes_1$; and 
\[
\sum_{\tilde T\in\tilde\tubes_1}\sum_{T\in\tubes[\tilde T]}|Y(T)|\sim \sum_{T\in\tubes}|Y(T)|.
\]
Define $\tilde\tubes_2 =2  n\tilde\tubes_1$. Then $\tilde\tubes_2$ is a set of essentially distinct $\rho$-tubes, and hence they satisfies Property \ref{tubesDistinct} from Definition \ref{defnOfM}. Our final set $\tilde\tubes$ will be a subset of $\tilde\tubes_2$, and thus it will continue to satisfy Property \ref{tubesDistinct} from Definition \ref{defnOfM}.

Next, we will show that $\tilde\tubes_2$ satisfies Property \ref{DefnOfMGoodCoveringItem} from Definition \ref{defnOfM} with $t = \eta/\eps$. To do this, let $\tau\in[\rho, 1].$ Then there is a set $\tubes^\dag$ of $\tau$ tubes that covers $\tubes$, at most $\delta^{-\eta}\leq \rho^{-\eta/\eps}$ of which are essentially parallel to a common tube. But since each $\tilde T\in\tilde\tubes_2$ shares a coaxial line with some $T\in\tubes$, and $\tau\geq\rho$, $\tubes^\dag$ also covers $\tilde\tubes_2.$ Our final set $\tilde\tubes$ will be a subset of $\tilde\tubes_2$, and thus it will continue to satisfy Property \ref{DefnOfMGoodCoveringItem} from Definition \ref{defnOfM} with this same value of $t$.

Define $\tubes_2\subset\tubes$ to be the set of tubes that are covered by some tube from $\tilde\tubes_1$, and let $Y_2(T) = Y(T)$ for each $T\in\tubes_2$. Then $(\tubes_2,Y_2)$ is a refinement of $(\tubes,Y)$. Furthermore, each $T\in\tubes_2$ is covered by exactly one tube from $\tilde\tubes_2$; and if $\tilde T$ covers $T$, then $T$ is close to the coaxial line of $\tilde T$, in the sense that $\frac{1}{2 n}\tilde T$ also covers $T$.

\medskip
\noindent{\bf Step 2: Fine scale structure}.
In this step we will analyze the structure of the sets $\tubes_2[\tilde T]$. By dyadic pigeonholing, we can select a set $\tilde\tubes_3\subset \tilde\tubes_2$ and a refinement $(\tubes_3,Y_3)_\delta$ of $(\tubes_2,Y_2)_\delta$ so that for each $\tilde T\in\tilde\tubes_3$, we have
\begin{equation}\label{avgWeight}
\sum_{T\in\tubes_3[\tilde T]}|Y_3(T)| =(\# \tilde\tubes_3)^{-1} \sum_{T\in\tubes_3}|Y_3(T)|.
\end{equation}
Since $O(\delta^{-\eta})$ tubes from $\tilde\tubes_3$ are essentially parallel to a common tube, we have $\# \tilde\tubes_3\lesssim \delta^{-\eta}\rho^{1-n}$ and thus $\sum_{T\in\tubes_3[\tilde T]}|Y_3(T)| \gtrapproxdelta \delta^{2\eta}\rho^{n-1}$.

If $\eta$ and $\delta_0$ are chosen sufficiently small, then we can apply Lemma \ref{rescaledCoveredTube} to each set $(\tubes_3[\tilde T], Y_3)_\delta$ with $\eps^2/3$ in place of $\eps$. Let $(\tubes_4, Y_4)_\delta$ be the union of the refined collections $\{(\tubes_3'[\tilde T], Y_3')_\delta\colon \tilde T\in\tilde\tubes_3\}$ that are the output of Lemma \ref{rescaledCoveredTube}. Let $\tilde \tubes _4=\tilde \tubes_3$. Then for each $\tilde T\in\tilde\tubes_4$, we have
\begin{equation}\label{veryGoodFineEstimate}
\#\tubes_4[\tilde T](p)\leq (\rho/\delta)^{\sigma_n+\eps^2/3},
\end{equation}
so in particular $(\tubes_4,Y_4)_\delta$ and $\tilde\tubes_4$ satisfy Item \ref{fineEstimateItem} from the statement of Proposition \ref{multiScaleStructureExtremal}. 

Furthermore, for each $\tilde T\in\tilde\tubes_4$, the unit rescaling of $(\tubes_4[\tilde T],Y_4)_\delta$ relative to $\tilde T$ satisfies Items \ref{tubesDistinct}, \ref{DefnOfMGoodCoveringItem}, and \ref{DefnOfMBigVolItem} from Definition \ref{defnOfM} with $s=t=\eps^2$. Items \ref{tubesDistinct} and \ref{DefnOfMGoodCoveringItem} will continue to hold if $\tubes_4$ is replaced by a refinement.

\medskip
\noindent{\bf Step 3: Coarse scale structure}.\\
Thus far, we have analyzed the fine-scale behavior of the ``thin'' tubes $\tubes_4[\tilde T]$ for each $\tilde T\in\tilde\tubes_4$. Our next task is to analyze the coarse-scale behavior of the ``fat'' tubes $\tilde\tubes_4$. 

First, we will construct a shading on the fat tubes. Observe that for each $\tilde T\in\tilde\tubes_4$ and each $\rho$-cube $Q$, we have that 
\begin{equation}\label{QSubsetT}
Q\ \cap\!\bigcup_{T\in\tubes_4[\tilde T]}Y_4(T)\neq\emptyset\ \ \textrm{implies}\ \ Q\subset \tilde T.
\end{equation} 
Indeed, as noted at the end of Step 1, if $T\in\tubes_4[\tilde T]$, then $T$ is covered by $\frac{1}{2 n}\tilde T$, and thus $Q$ intersects $\frac{1}{2 n}\tilde T$. But this implies $Q\subset \tilde T$. 

Each $\tilde T\in\tilde\tubes_4$ contains $\sim \rho^{-1}$ $\rho$-cubes, and for each such cube we have 
\[
\sum_{T\in\tubes_4[\tilde T]}|Y_4(T)\cap Q|\lesssim \rho \sum_{T\in\tubes_4[\tilde T]}|T|.
\]
The directions $\{\dir(T)\colon T\in\tubes_4[\tilde T]\}$ are contained in a disk of radius $\rho$ in $S^{n-1}$. Since at most $\delta^{-\eta}$ tubes from $\tubes_4$ can be essentially parallel to a common tube, we have 
\begin{equation}\label{upperBdTubes3TildeT}
\#\tubes_4[\tilde T]\lesssim \delta^{-\eta}(\rho/\delta)^{n-1},
\end{equation}
and thus
\begin{equation}\label{upperBdWeightOfQ}
\sum_{T\in\tubes_4[\tilde T]}|Y_4(T)\cap Q|\lesssim \delta^{-\eta}\rho^n.
\end{equation}

Let $c_1\sim 1$ be a constant to be chosen below. We say a $\rho$-cube is \emph{heavy} for the tube $\tilde T$ if
\[
\sum_{T\in\tubes_4[\tilde T]}|Y_4(T)\cap Q| \geq c_1 \rho \sum_{T\in\tubes_4[\tilde T]}|Y_4(T)|.
\]
Denote the set of heavy cubes by $\mathcal{Q}_0(\tilde T)$. If the constant $c_1$ is chosen appropriately, then for each $\tilde T\in\tilde\tubes_4$ we have
\[
\sum_{Q\in\mathcal{Q}_0(\tilde T)} \sum_{T\in\tubes_4[\tilde T]}|Y_4(T)\cap Q| \geq \frac{1}{2}\sum_{T\in\tubes_4[\tilde T]}|Y_4(T)|.
\]
By dyadic pigeonholing, there exists a weight $w$ so that if we define
\begin{equation}\label{defnMathcalQ}
\mathcal{Q}_1(\tilde T)=\Big\{Q\in\mathcal{Q}_0(\tilde T)\colon w \leq \sum_{T\in\tubes_4[\tilde T]}|Y_4(T)\cap Q| < 2w \Big\},
\end{equation}
then
\begin{equation}\label{sumOverQandTubes}
\sum_{\tilde T\in\tilde\tubes_4}\sum_{Q\in\mathcal{Q}_1(\tilde T)} \sum_{T\in\tubes_4[\tilde T]}|Y_4(T)\cap Q| \gtrapproxdelta\sum_{\tilde T\in\tilde\tubes_4}\sum_{T\in\tubes_4[\tilde T]}|Y_4(T)|=\sum_{T\in\tubes_4}|Y_4(T)|.
\end{equation}
Comparing \eqref{upperBdWeightOfQ} and \eqref{sumOverQandTubes}, we have
\begin{equation}\label{numerOfRhoCubes}
\sum_{\tilde T\in\tilde\tubes_4}\#\mathcal{Q}_1(\tilde T) \gtrapproxdelta \delta^{2\eta} \rho^{-n}.
\end{equation}

For each $\tilde T\in\tilde\tubes_4$, define $\tilde Y_4(\tilde T)=\bigcup_{Q\in\mathcal{Q}_1(\tilde T)}Q$. By \eqref{QSubsetT}, $Y_4$ is a shading of $\tubes_4$. \eqref{numerOfRhoCubes} is precisely the statement that
\begin{equation}\label{lowerBdWeightShading}
\sum_{\tilde T\in\tilde \tubes_4}|\tilde Y_4(\tilde T)|\gtrapproxdelta \delta^{2\eta}.
\end{equation}

Note that $(\tilde\tubes_4, \tilde Y_4)_\rho$ satisfies Items \ref{tubesDistinct}, \ref{DefnOfMGoodCoveringItem}, and \ref{DefnOfMBigVolItem} from Definition \ref{defnOfM} with $t = \eta/\eps$ and $s = 3\eta/\eps$.

Let $(\tubes_5, Y_5)_\delta$ be a refinement of $(\tubes_4,Y_4)_\delta$ and let $(\tilde\tubes_5, \tilde Y_5)_\rho$ be a refinement of $(\tilde\tubes_4,\tilde Y_4)_\rho$, so that the following holds

\begin{itemize}
\item There is a number $\mu_{\operatorname{fine}}$ so that for each $\tilde T\in \tilde\tubes_5$ and each $p \in E_{\tubes_5[\tilde T]}$, we have $\#\tubes_5[\tilde T](p) = \mu_{\operatorname{fine}}$; we do this by refining the shading $Y_4$ to ensure that this property holds.
\item There is a number $w>0$ so that for each $\tilde T\in\tilde\tubes_5$ and each $\rho$-cube $Q\subset \tilde Y_5(\tilde T)$, we have $|Q\cap E_{\tubes_5[\tilde T]}| = w$; we do this by refining the shading $Y_4$, while preserving the previous property (each $\delta$-cube is either preserved or deleted from all shadings in $\tubes_5[\tilde T]$).
\item There is a number $\mu_{\operatorname{coarse}}$ so that for each $p\in E_{\tilde\tubes_5}$ we have $\mu_{\operatorname{coarse}}\leq \#\tilde\tubes_5(p)  < 2\mu_{\operatorname{coarse}}$; we do this by refining the shading $\tilde Y_4$, while preserving the previous properties.

\item $(\tilde\tubes_5,\tilde Y_5)_\rho$ covers $(\tubes_5,Y_5)_\delta$.
\end{itemize}

Since $(\tilde\tubes_5, \tilde Y_5)_\rho$ satisfies Items \ref{tubesDistinct}, \ref{DefnOfMGoodCoveringItem}, and \ref{DefnOfMBigVolItem} from Definition \ref{defnOfM} with $t = \eta/\eps$ and $s = 4\eta/\eps$, if $\eta$ and $\delta_0$ are chosen sufficiently small (which forces $\rho$ to be sufficiently small), then $|E_{\tilde\tubes_5}|\geq \rho^{\sigma_n+\eps/2}$, and thus 
\begin{equation}\label{bdMuCoarse}
\mu_{\operatorname{coarse}}\lesssim \rho^{-\sigma_n - \eps/2 - \eta/\eps}.
\end{equation}
Thus if $\eta$ is chosen sufficiently small, then Item \ref{coarseScaleEstimateItem} from the statement of Proposition \ref{multiScaleStructureExtremal} holds. 

At this point, $(\tilde\tubes_5,\tilde Y_5)_\rho$ is a balanced cover of $(\tubes_5,Y_5)_\delta$, and this pair satisfies Items \ref{fineEstimateItem} and \ref{coarseScaleEstimateItem}. In addition, $(\tilde\tubes_5,\tilde Y_5)_\rho$ satisfies Items \ref{tubesDistinct}, \ref{DefnOfMGoodCoveringItem}, and \ref{DefnOfMBigVolItem} from Definition \ref{defnOfM} with $t = \eta/\eps$ and $s = 4\eta/\eps$, and for each $\tilde T\in\tilde\tubes_5$, the unit rescaling of  $(\tubes_5[\tilde T], Y_5)_\delta$ relative to $\tilde T$ satisfies Items \ref{tubesDistinct} and \ref{DefnOfMGoodCoveringItem} from Definition \ref{defnOfM} with $t=\eps^2.$

Next we will analyze the multiplicities $\mu_{\operatorname{fine}}$ and $\mu_{\operatorname{coarse}}$ defined above. \eqref{bdMuCoarse} gives an upper bound on $\mu_{\operatorname{coarse}}$, while by \eqref{veryGoodFineEstimate} we have $\mu_{\operatorname{fine}}\leq (\rho/\delta)^{\sigma_n+\eps^2/3}\leq \rho^{-\eps/3}(\rho/\delta)^{\sigma_n}$. On the other hand,

\begin{equation}\label{mostMassPreserved}
\int \sum_{T\in\tubes_5}\chi_{Y_5(T)}(p)dp \gtrapproxdelta \delta^{\eta}.
\end{equation}
Since the original collection of tubes $(\tubes,Y)_\delta$ is $\eta$-extremal, the integrand in \eqref{mostMassPreserved} is supported on a set of size at most $\delta^{\sigma_n - \eta}$, and it is pointwise bounded by $\mu_{\operatorname{fine}}\mu_{\operatorname{coarse}}$. We conclude that
\begin{equation}\label{lowerBdMuCoarse}
\mu_{\operatorname{fine}} \gtrapprox_\delta \delta^{2\eta}\rho^{\eps/2+\eta/\eps}(\rho/\delta)^{\sigma_n},\quad \mu_{\operatorname{coarse}}\gtrapprox_\delta \delta^{2\eta}\rho^{-\sigma_n+\eps/3}.
\end{equation}
Selecting $\eta>0$ sufficiently small, we conclude that $|E_{\tilde \tubes_5}|\leq\rho^{\sigma_n-\eps}$, and for each $\tilde T\in\tilde\tubes_5$, the unit-rescaling $(\hat\tubes,\hat Y)_{\delta/\rho}$ of each set $(\tubes_5[\tilde T], Y_5)_\delta$ satisfies $|E_{\hat T}|\leq(\delta/\rho)^{\sigma_n-\eps}$. 

We are almost done, except it is possible that the unit-rescaling described above might fail to satisfy 
\begin{equation}\label{largeMass}
\sum_{\hat T\in\hat \tubes}|\hat Y(\hat T)|\geq(\rho/\delta)^{\eps}
\end{equation}
 for some choices of $\tilde T\in\tilde\tubes_5$. To fix this problem, we define $\tilde\tubes_6\subset\tilde\tubes_5$ to be the set of tubes for which \eqref{largeMass} holds. We define $\tilde Y_6 = \tilde Y_5$, $Y_6 = Y_5$, and $\tubes_6 = \bigcup_{\tilde T\in\tilde\tubes_6}\tubes_5[\tilde T]$. Then $(\tubes_6,Y_6)$ is a refinement of $(\tubes,Y)$ which, together with $(\tilde\tubes_6,\tilde Y_6)_\rho$, satisfies the conclusions of Proposition \ref{multiScaleStructureExtremal}.
\end{proof}


\begin{rem}
The conclusions of Lemma \ref{existenceOfExtremalFamilies} and Proposition \ref{multiScaleStructureExtremal} are the only consequences of stickiness that we will use to prove Theorem \ref{mainThm}. In particular, if Lemma \ref{existenceOfExtremalFamilies} and Proposition \ref{multiScaleStructureExtremal} hold for some other class of Kakeya sets, then it should be possible to prove the analogue of Theorem \ref{mainThm} in that setting as well. 
\end{rem}

We conclude this section with a few final observations about extremal collections of tubes. First, if $(\tubes,Y)_\delta$ is an extremal collection of tubes, then the directions of the tubes passing through a typical point cannot focus too tightly. This phenomena is sometimes called ``robust transversality.'' We will record a precise version below.

\begin{lem}\label{robustTransLem}
For all $\eps>0$, there exists $\eta>0$ and $\delta_0>0$ so that the following holds for all $\delta\in (0, \delta_0]$. Let $(\tubes,Y)_\delta$ be a $\eta$-extremal collection of tubes. Then after replacing $(\tubes,Y)_\delta$ by a refinement, for each $p\in\RR^n$ and each $v\in S^{n-1}$, we have
\begin{equation}\label{focussedCount}
\#\{ T\in\tubes(p)\colon \angle(v, \dir(T) < \delta^{\eps}\} \leq \delta^{-\sigma_n +\eps\sigma_n/2}.
\end{equation}
\end{lem}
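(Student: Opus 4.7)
The plan is to derive Lemma \ref{robustTransLem} from Proposition \ref{multiScaleStructureExtremal}, applied at the intermediate scale $\rho=\delta^\eps$. The case $\sigma_n=0$ is essentially trivial: in that regime $|\bigcup Y(T)|$ is comparable to $1$, so after passing to a constant-multiplicity refinement one can arrange $\#\tubes(p)=O(1)$ everywhere, and a further logarithmic refinement forces the target bound $\delta^{-\sigma_n+\eps\sigma_n/2}=1$. So assume $\sigma_n>0$ from now on.

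Given $\eps>0$, choose the auxiliary parameter $\eps_1=\eps\sigma_n/10$, apply Proposition \ref{multiScaleStructureExtremal} with $\eps_1$ in place of $\eps$ to obtain $\eta,\delta_0>0$, and observe that for $\delta\leq\delta_0$ the scale $\rho=\delta^\eps$ lies in the required range $[\delta^{1-\eps_1},\delta^{\eps_1}]$. This yields a refinement $(\tubes',Y')_\delta$ of the given $\eta$-extremal collection together with a balanced cover $(\tilde\tubes,\tilde Y)_\rho$; this $(\tubes',Y')_\delta$ is the refinement the lemma promises. The main geometric step is to argue that the focussing hypothesis confines the relevant $\delta$-tubes to only boundedly many $\rho$-tubes of $\tilde\tubes$. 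Fix $p$ and $v$, and let $\ell_{p,v}$ denote the line through $p$ with direction $v$. Any $T\in\tubes'(p)$ with $\angle(v,\dir(T))<\delta^\eps$ has coaxial line $\ell$ satisfying $d(\ell,\ell_{p,v})\lesssim\delta+\delta^\eps\lesssim\rho$, and by the construction of $\tilde\tubes$ in Step 1 of the proof of Proposition \ref{multiScaleStructureExtremal} each $T\in\tubes'$ is covered by a \emph{unique} $\tilde T\in\tilde\tubes$, whose coaxial line then satisfies $d(\tilde\ell,\ell_{p,v})\lesssim\rho$. Because the $\rho$-tubes in $\tilde\tubes$ are essentially distinct, i.e.\ their coaxial lines are pairwise $\rho$-separated in $(\mathcal{L}_n,d)$, at most $O_n(1)$ of them can lie in this $O(\rho)$-ball around $\ell_{p,v}$.

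Applying Item \ref{fineEstimateItem} of Proposition \ref{multiScaleStructureExtremal} to each of these $O_n(1)$ covering tubes then gives
\[
\#\{T\in\tubes'(p) : \angle(v,\dir(T))<\delta^\eps\}\lesssim (\delta/\rho)^{-\sigma_n-\eps_1}=\delta^{-(1-\eps)(\sigma_n+\eps_1)}.
\]
Expanding yields $(1-\eps)(\sigma_n+\eps_1)=\sigma_n-\eps\sigma_n+\eps_1(1-\eps)\leq\sigma_n-(9/10)\eps\sigma_n<\sigma_n-\eps\sigma_n/2$, so shrinking $\delta_0$ absorbs the implicit $O_n(1)$ factor into $\delta^{-\eps\sigma_n/100}$ and \eqref{focussedCount} follows. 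The only delicate point is the arithmetic balancing: $\eps_1$ must be small enough that the exponent comparison beats the target margin $\eps\sigma_n/2$, yet positive enough that $\rho=\delta^\eps$ still lies in the allowed range $[\delta^{1-\eps_1},\delta^{\eps_1}]$ of the proposition. The choice $\eps_1=\eps\sigma_n/10$ satisfies both constraints uniformly in $\delta$.
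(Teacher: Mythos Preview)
Your proof is correct and follows essentially the same approach as the paper: apply Proposition~\ref{multiScaleStructureExtremal} at scale $\rho=\delta^{\eps}$, observe that the focused $\delta$-tubes are covered by $O(1)$ essentially distinct $\rho$-tubes, and bound each contribution via Item~\ref{fineEstimateItem}. (The paper's terse proof cites Item~\ref{muCoarseBdItem}, which appears to be a slip---Item~\ref{fineEstimateItem} is what is actually needed, as you correctly identify; your separate treatment of $\sigma_n=0$ is slightly loose, since a constant-multiplicity refinement only gives $\mu\lesssim\delta^{-O(\eta)}$ rather than $O(1)$, but this case is irrelevant to the paper's application and the paper's own proof does not address it either.)
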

\begin{proof}
Apply Proposition \ref{multiScaleStructureExtremal} with $\eps\sigma_n/2$ in place of $\eps$, and $\rho = \delta^{\eps}$. Since the resulting collection $(\tilde\tubes, \tilde Y)_{\rho}$ of tubes are essentially distinct, the $\delta$-tubes contributing to \eqref{focussedCount} must be covered by $O(1)$ $\rho$-tubes from  $\tilde\tubes$. \eqref{focussedCount} now follows from Item \ref{muCoarseBdItem} from Proposition \ref{multiScaleStructureExtremal}.
\end{proof}

The next lemma says that extremal collections of tubes remain extremal after a mild re-scaling. The proof is straightforward (though somewhat tedious), and is omitted.
\begin{lem}\label{rescalingPreservesExtremality}
For all $\eps>0$, there exists $\eta>0$ and $\delta_0>0$ so that the following holds for all $\delta\in (0, \delta_0]$. Let $(\tubes,Y)_{\delta}$ be a $\eta$-extremal collection of tubes. Let $Q\subset[-1,1]^n$ be an axis-parallel rectangular prism, and suppose $\sum_{T\in\tubes}|Y(T)\cap Q|\geq\delta^{\eta}$. Let $\phi$ be a translation composed with a dilation of the form $(x_1,\ldots,x_n)\mapsto (r_1x_1,\ldots,r_nx_n)$, where $\delta^{\eta}\leq r_i\leq \delta^{-\eta}$, and suppose $\phi(Q)\subset[-1,1]^n$.

Then the image of $(\tubes,Y\cap Q)_{\delta}$ under $\phi$ is $\eps$-extremal. More precisely, there exists an $\eps$-extremal collection of tubes $(\tilde\tubes, \tilde Y)_{\rho}$ for some $\delta \leq \rho \leq \delta^{1-\eta}$ so that each $\tilde T\in \tilde\tubes$ has a coaxial line of the form $\phi(\ell)$, where $\ell$ is the coaxial line of a tube $T\in\tubes$. Furthermore, for each such pair $T, \tilde T$, we have that $\tilde Y(\tilde T)$ is the set of $\rho$-cubes that intersect $\phi(Y(T)\cap Q).$
\end{lem}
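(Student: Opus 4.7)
My plan is to construct the target collection $(\tilde\tubes,\tilde Y)_\rho$ explicitly by rescaling, and then verify one by one that each clause of Definition \ref{defnExtremal} survives the transformation, with every distortion absorbed into the slack between $\eta$ and $\eps$. Since $\phi$ dilates the $i$-th coordinate by $r_i\in [\delta^\eta,\delta^{-\eta}]$, the image of a $\delta$-tube $T$ with coaxial line $\ell$ is not itself a tube, but is comparable to the $\rho$-neighborhood of $\phi(\ell)$ when $\rho$ is taken to be (a dimensional constant times) $\max_{i<n} r_i\cdot \delta$, enlarged to $\delta$ if this would be smaller. This places $\rho$ in $[\delta,\delta^{1-\eta}]$, as required. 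For each $T\in\tubes$, I would let $\tilde T$ be the $\rho$-tube whose coaxial line is $\phi(\ell)$ and set $\tilde Y(\tilde T)$ equal to the union of $\rho$-cubes intersecting $\phi(Y(T)\cap Q)$. The normalization $v_n\geq 1/2$ built into $\mathcal{L}_n$ can fail for some of the new direction vectors; any such tubes can be discarded at the cost of a bounded factor in the shading mass, or fixed by post-composing $\phi$ with a rotation.

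Next I would verify Item \ref{tubesDistinct} of Definition \ref{defnOfM}: the map $\phi$ distorts the metric on $\mathcal{L}_n$ by a factor at most $\delta^{-O(\eta)}$, so the images of $\delta$-separated coaxial lines are $\delta^{1+O(\eta)}$-separated, and a single dyadic pigeonholing (costing only a polylog-in-$\delta^{-1}$ factor in total shading mass) gives a sub-collection of essentially distinct $\rho$-tubes. For Item \ref{DefnOfMGoodCoveringItem}, given a scale $\tau\in[\rho,1]$, the $\eta$-extremality of $(\tubes,Y)_\delta$ yields a cover of $\tubes$ at scale $\tau\delta/\rho$ with at most $\delta^{-\eta}$ pairwise parallel members; applying $\phi$ and thickening by the distortion factor to $\tau$-tubes gives a cover of $\tilde\tubes$ with at most $\delta^{-\eta-O(\eta)}\leq \rho^{-\eps}$ pairwise parallel members once $\eta\ll\eps$.

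For Item \ref{DefnOfMBigVolItem} and the upper-bound clause $|\bigcup \tilde Y(\tilde T)|\leq \rho^{\sigma_n-\eps}$, I would use the volume identity $|\phi(X)|=(\prod_i r_i)|X|$, where $\prod_i r_i\in [\delta^{n\eta},\delta^{-n\eta}]$. The hypothesis $\sum_T|Y(T)\cap Q|\geq\delta^\eta$ then yields $\sum_{\tilde T}|\tilde Y(\tilde T)|\gtrsim \delta^{(n+1)\eta}$, which exceeds $\rho^\eps$ because $\rho\leq\delta^{1-\eta}$ and $(n+1)\eta\leq(1-\eta)\eps$ for $\eta$ small. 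Conversely, the $\eta$-extremality bound $|\bigcup_T Y(T)|\leq \delta^{\sigma_n-\eta}$ translates to $|\bigcup_{\tilde T}\tilde Y(\tilde T)|\leq \delta^{\sigma_n-(n+1)\eta}$; since $\rho\geq\delta$, this is at most $\rho^{\sigma_n-\eps}$ whenever $(\sigma_n-\eps)(1-\eta)\geq \sigma_n-(n+1)\eta$, which again holds for $\eta$ sufficiently small relative to $\eps$.

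The main obstacle is not conceptual but bookkeeping: tracking how the exponent $\eta$ accumulates across the pigeonholing, covering, and volume-distortion steps, and then selecting $\eta$ (and $\delta_0$, so that any polylog-in-$\delta^{-1}$ factors get absorbed into $\delta^{O(\eps)}$) small enough that all four clauses close simultaneously. This is the ``straightforward but tedious'' computation alluded to in the statement.
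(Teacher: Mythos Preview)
The paper omits this proof entirely (calling it ``straightforward (though somewhat tedious)''), and your outline is precisely the natural argument one would write down: push each coaxial line through $\phi$, take $\rho$ on the order of $\delta\max_i r_i$ so that $\phi(T)$ sits inside a $\rho$-tube, and verify each clause of Definition~\ref{defnExtremal} survives with losses of size $\delta^{O(\eta)}$ that get absorbed into the gap between $\eta$ and $\eps$.

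Two minor bookkeeping corrections that do not affect the scheme. First, the pigeonholing to obtain essentially distinct $\rho$-tubes costs a factor $\delta^{O(\eta)}$ in shading mass, not merely polylog: up to $(\rho/\delta)^{2(n-1)}\lesssim\delta^{-O(\eta)}$ of the original $\delta$-separated coaxial lines can land in a single $\rho$-ball in $\mathcal{L}_n$ after applying $\phi$. This is still harmlessly absorbed. Second, your displayed inequality $(\sigma_n-\eps)(1-\eta)\geq\sigma_n-(n+1)\eta$ is written with the wrong sign; as stated it forces $\eta$ large rather than small. What you actually need (in the interesting case $\sigma_n>\eps$, using $\rho\geq\delta$) is simply $(n+1)\eta\leq\eps$, which gives $\delta^{\sigma_n-(n+1)\eta}\leq\delta^{\sigma_n-\eps}\leq\rho^{\sigma_n-\eps}$. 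Your conclusion that it ``holds for $\eta$ sufficiently small'' is correct, just the intermediate condition is mistyped. Also note that you want $\rho\sim\delta\max_i r_i$ with the maximum over all $i$ (not just $i<n$), since $\phi$ is $\max_i r_i$-Lipschitz and you need every $\rho$-cube meeting $\phi(Y(T)\cap Q)$ to lie inside $\tilde T$; the distinction is again only a $\delta^{O(\eta)}$ factor.
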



\section{Planiness and graininess}\label{planinessAndGraininessSection}
For the remainder of the paper we will specialize to the case $n=3$, and we will define $\sigma=\sigma_3$. Our goal is to prove that $\sigma=0$. In this section we will establish the existence of extremal collections of tubes $(\tubes,Y)_\delta$ with additional structural properties. The first of these is the existence of a Lipschitz plane map.
\begin{defn}
Let $(\tubes,Y)_\delta$ be a set of $\delta$-tubes. A \emph{plane map} for $(\tubes,Y)_\delta$ is a function $V\colon E_\tubes \to S^2$ so that
\[
|\dir(T)\cdot V(p)|\leq\delta\quad\textrm{for all}\ p\in E_\tubes,\ T\in \tubes(p).
\]
\end{defn}
Next, we will show that each $\{z=z_0\}$ slice of the Kakeya set is contained in a union of long thin rectangles, which we will call global grains. These global grains will be arranged like an Ahlfors-David regular set. The specific property we need is the following.
\begin{defn}\label{ADSetDefn}
For $n\geq 1,\ \alpha\in (0, n]$, $C\geq 1$ and $\delta>0$, We say a set $E\subset\RR^n$ is a $(\delta,\alpha, C)_n$-ADset if for all $x\in\RR^n$, all $\rho\geq\delta$, and all $r\geq\rho$, we have
\begin{equation}\label{ERhoBoundAD}
\mathcal{E}_\rho( E \cap B(x,r) ) \leq C (r/\rho)^\alpha.
\end{equation}
\end{defn}
\begin{rem}
Note that if $E$ is a $(\delta,\alpha, C)_n$-ADset, then so is every subset of $E$. This observation will be used frequently in the arguments that follow. Definition \ref{ADSetDefn} only imposes upper bounds on the size of $\mathcal{E}_\rho( E \cap B(x,r) )$, while Ahlfors regularity usually requires a matching lower bound. However, if $E$ is a $(\delta,\alpha, C)_n$-ADset that has bounded diameter and $\mathcal{E}_\delta(E)$ has size roughly $\delta^{-\alpha}$, then a large subset of $E$ will satisfy a lower bound analogue of \eqref{ERhoBoundAD}. 
\end{rem}

With these definitions, we can now state the main result of this section.

\begin{prop}\label{plainyGrainyProp}
For all $\eps,\delta_0>0$, there exists $\delta\in (0,\delta_0]$ and a $\eps$-extremal set of tubes $(\tubes,Y)_\delta$ with the following two properties.

\begin{enumerate}
\item\label{plainyGrainyPropItem1} {\bf $E_{\tubes}$ is a union of global grains with Lipschitz slope function.}\\
There is a 1-Lipschitz function $f\colon [-1,1]\to\RR$ so that $\big( E_{\tubes} \cap \{z = z_0\}\big) \cdot (1, f(z_0), 0)$ is a $(\delta, 1-\sigma, \delta^{-\eps})_1$-ADset for each $z_0\in [-1,1]$. 

\item\label{plainyGrainyPropItem2} {\bf $E_{\tubes}$ is a union of local grains with Lipschitz plane map.} \\
$(\tubes,Y)_{\delta}$ has a 1-Lipschitz plane map $V$. For all $\rho\in[\delta,1]$ and all $p\in \RR^3$, $V(p) \cdot \big( B(p,\rho^{1/2}) \cap E_\tubes\big)$ is a $(\rho, 1-\sigma, \delta^{-\eps})_1$-ADset. 

\end{enumerate}
\end{prop}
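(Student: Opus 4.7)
The strategy follows the outline in Section 1.1 (Step 1) and implements the Katz--Tao program of \cite{TaoBlog} in the sticky setting, with Wolff's hairbrush estimate \cite{Wo95} providing the planiness input. There are four main steps: (i) extract pointwise planiness from $\eta$-extremality via a near-saturated hairbrush argument, (ii) promote the pointwise plane data to a Lipschitz plane map $V$ using the multi-scale self-similarity Proposition \ref{multiScaleStructureExtremal}, (iii) deduce the local grain decomposition and its AD-regularity from planiness together with the pointwise multiplicity bounds (iii)--(iv) of Proposition \ref{multiScaleStructureExtremal}, and finally (iv) produce the global slope function $f$ by applying an anisotropic rescaling (Lemma \ref{rescalingPreservesExtremality}) to straighten the horizontal grains.

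I would begin by choosing $\eta$ small compared to $\eps$, extracting an $\eta$-extremal $(\tubes,Y)_\delta$ via Lemma \ref{existenceOfExtremalFamilies}, and applying Proposition \ref{multiScaleStructureExtremal} simultaneously at every dyadic scale $\rho_k = 2^{-k}$ between $\delta$ and $1$ to pass to a refinement which is $O(\eps)$-extremal at each intermediate scale (and hence satisfies the pointwise coarse and fine multiplicity bounds of that proposition). Planiness at a point $p$ is then obtained by a hairbrush stacking argument: if a positive fraction of the tubes through $p$ were uniformly transverse to every plane, one could combine them with Wolff's hairbrush estimate on the remaining tube collection to produce a union of tubes of volume substantially larger than $\delta^{\sigma-\eta}$, contradicting Definition \ref{defnExtremal}. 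After a harmless refinement this yields a plane map $V_0$ with $|\dir(T)\cdot V_0(p)|\leq \delta$ on $E_\tubes$.

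To promote $V_0$ to a 1-Lipschitz map $V$, I would rerun the planiness argument at each scale $\rho$ using the coarse $\rho$-tube system $(\tilde\tubes,\tilde Y)_\rho$ produced by Proposition \ref{multiScaleStructureExtremal}, obtaining a coarse plane map $V_\rho$. The compatibility $V_0(p)\approx V_\rho(p)$ on a typical $\rho^{1/2}$-ball follows because any substantial disagreement would force many tubes through a $\rho$-cube to be transverse to both planes, again contradicting coarse extremality; telescoping this compatibility across dyadic scales produces a 1-Lipschitz $V$ after a further refinement. Local graininess is now immediate: inside $B(p,\rho^{1/2})$, every tube of $\tubes$ lies within angle $O(\rho^{1/2})$ of $V(p)^\perp$, so the $\delta$-tubes cluster into parallel slabs, and the AD-regular bound on $V(p)\cdot(B(p,\rho^{1/2})\cap E_\tubes)$ at any intermediate scale $\tau\in[\rho,1]$ is exactly the pointwise multiplicity bound Item \ref{coarseScaleEstimateItem} of Proposition \ref{multiScaleStructureExtremal} transported through the plane map.

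For Part \ref{plainyGrainyPropItem1}, I would restrict to a typical $\delta^{1/2}$-ball $B$ on which $V$ is nearly constant, apply a rigid motion sending $V|_B$ to $(0,0,1)$, and then apply Lemma \ref{rescalingPreservesExtremality} with the anisotropic dilation $(x,y,z)\mapsto (x,y,\delta^{-1/2}z)$ (composed with a horizontal dilation) to stretch the $\delta^{1/2}\times\delta^{1/2}\times\delta$ grains into genuine $1\times 1\times \delta$ slabs. The resulting $\eps$-extremal collection has the property that each horizontal slice is a union of parallel $1\times\delta$ rectangles whose common in-plane direction $(-f(z_0),1,0)$ is determined by the rescaled plane map at height $z_0$; this gives the slope function $f$, with 1-Lipschitz regularity inherited from that of $V$. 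The $(\delta,1-\sigma,\delta^{-\eps})_1$-AD property of the slice is then an application of Part \ref{plainyGrainyPropItem2} (now established) at each intermediate scale. The main obstacle will be Steps (i)--(ii): Wolff's hairbrush argument is traditionally phrased at exact dimension $5/2$, and here one needs a quantitative near-extremal version compatible with the balanced-cover output of Proposition \ref{multiScaleStructureExtremal}, together with enough care in the refinements at different scales to make the plane maps telescope into a single Lipschitz $V$.
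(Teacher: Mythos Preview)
Your overall architecture is right, but two of the key mechanisms are mis-identified.

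First, planiness does not follow from Wolff's hairbrush estimate. The hairbrush argument bounds the volume of the union of tubes meeting a fixed tube $T$ and yields the dimension-$5/2$ lower bound, but it gives no control on the angular distribution of tubes through a single point $p$. The paper instead uses the multilinear Kakeya inequality of Bennett--Carbery--Tao (Theorem \ref{multilinearKakeyaThm}): if a positive-measure set of points $p\in E_{\tubes}$ were ``broad,'' meaning many triples $T_1,T_2,T_3\in\tubes(p)$ have $|v_1\wedge v_2\wedge v_3|$ bounded below, then \eqref{trilinearEstimate} would force $|E_{\tubes}|\gtrapprox 1$, contradicting $\eta$-extremality for any $\sigma>0$. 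Hence a typical point is ``narrow,'' and together with robust transversality (Lemma \ref{robustTransLem}) this produces the weak plane map of Lemma \ref{existenceOfWeakPlaneMap}. Your hairbrush stacking cannot detect this trilinear obstruction; at best it would recover $\sigma\le 1/2$, not planiness.

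Second, your Step (iii) claims the local-grain AD-regularity is ``exactly the pointwise multiplicity bound\ldots transported through the plane map,'' but this skips a genuine geometric step. The bound $\#\tilde\tubes(p)\le\rho^{-\sigma-\eps}$ controls how many coarse tubes pass through a point; it does not by itself bound $\mathcal{E}_\rho\bigl(J\cap V(p)\cdot(E_\tubes\cap B(p,\rho^{1/2}))\bigr)$ for an arbitrary sub-interval $J$. What is missing is that each $\tau\times\tau\times\rho$ slab touched by $E_\tubes$ must be \emph{nearly full}, i.e.\ have mass $\gtrapprox\tau^2\rho$. The paper establishes this in Lemma \ref{coveringNumBdOneScale} via a Cordoba-type $L^2$ estimate \eqref{CordobaEstimate}: robust transversality furnishes a fan of $\rho$-tubes inside each slab whose union nearly fills it. Only then does dividing the coarse volume bound by the volume-per-grain give the grain count, and only after iterating over all $\tau\in[\rho,\rho^{1/2}]$ and routing through an auxiliary line (Lemma \ref{coveringNumBdOneScaleInterval}) does the full $(\rho,1-\sigma,\delta^{-\eps})_1$-ADset property emerge.

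Your Steps (ii) and (iv) are broadly in line with the paper, though note that in Step (iv) the paper rescales relative to a single coarse \emph{tube} $\tilde T$ (via Definition \ref{unitRescalingDefn}) rather than a ball; this is what turns the fine tubes inside $\tilde T$ into genuine unit-length tubes after rescaling, rather than short segments.
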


\begin{rem}
Item \ref{plainyGrainyPropItem1} implies that each slice $E_\tubes\cap \{z=z_0\}$ is contained in a union of at most $\delta^{\sigma-1-\epsilon}$ rectangles of dimensions roughly $1\times\delta$, and thus by Fubini, $|E_\tubes|\lesssim \delta^{\sigma-\epsilon}$. On the other hand, this containment is nearly sharp, since $|E_\tubes|\geq\delta^{\sigma+\eps}$.

Item \ref{plainyGrainyPropItem2} implies that each ball $B(p,\rho^{1/2})\cap N_\rho(E_\tubes)$ is a union of at most $\rho^{\sigma/2 - 1/2}$ parallel rectangular slabs of dimensions roughly $\rho^{1/2} \times \rho^{1/2} \times \rho$. Since $E_\tubes$ can be covered by $\delta^{-O(\eps)}\rho^{\sigma/2 - 3/2}$ balls of radius $\rho^{1/2}$, this implies that $|N_\rho(E_\tubes)|\lesssim \delta^{-O(\eps)}\rho^{\sigma}$. On the other hand, this containment is nearly sharp, since $|N_\rho(E_\tubes)| \gtrsim \delta^{-O(\eps)}\rho^{\sigma}$. 
\end{rem}

\begin{rem}
All of the arguments in this and previous sections also apply to the Heisenberg group example \eqref{defnOfH} (this would involve working in $\CC^3$ rather than $\RR^3$, but all of the arguments thus far could be translated to that setting). For the Heisenberg group example, we would have that the global slope function $f\colon \CC\to\CC$ is given by $f(z) = \bar z$, which is 1-Lipschitz.
\end{rem}

Our main tool for proving Proposition \ref{plainyGrainyProp} will be the multilinear Kakeya theorem of Bennett-Carbery-Tao \cite{BCT}. The version stated below can be found in \cite{CV}.
\begin{thm}[Multilinear Kakeya]\label{multilinearKakeyaThm}
Let $\tubes$ be a set of $\delta$-tubes in $\RR^3$. Then
\begin{equation}\label{trilinearEstimate}
\int \Big(\sum_{T_1,T_2,T_3\in\tubes } \chi_{T_1}\chi_{T_2}\chi_{T_3}|v_1\wedge v_2\wedge v_3|\Big)^{1/2}
\lesssim  \big(\delta^2 \#\tubes)^{3/2},
\end{equation}
where in the above expression, $v_i=\dir(T_i)$ is a unit vector pointing in the direction of the tube $T_i$, and $|v_1\wedge v_2\wedge v_3|$ is the area of the parallelepiped spanned by $v_1,v_2$, and $v_3$.
\end{thm}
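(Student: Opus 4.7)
My plan is to deduce Theorem~\ref{multilinearKakeyaThm} from the Loomis-Whitney inequality via a reduction to the axis-parallel case, following the strategy of Bennett-Carbery-Tao~\cite{BCT} and its simplification by Guth. The wedge factor $|v_1\wedge v_2\wedge v_3|$ inside the square root is precisely the Jacobian weight that makes the estimate invariant under linear coordinate changes, which is what allows the reduction to the axis-parallel model case.

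The model case is three families of axis-parallel tubes $\tubes = \tubes_x \sqcup \tubes_y \sqcup \tubes_z$. Here $|v_1\wedge v_2\wedge v_3|\in\{0,1\}$ equals one exactly when one tube is drawn from each family, so the integrand becomes a constant multiple of $(\sum_{\tubes_x}\chi_T)(\sum_{\tubes_y}\chi_T)(\sum_{\tubes_z}\chi_T)$. Since $(\sum_{\tubes_x}\chi_T)^{1/2}$ depends essentially only on $(y,z)$, Loomis-Whitney in the form $\int f(y,z)g(x,z)h(x,y)\,dx\,dy\,dz\le\|f\|_2\|g\|_2\|h\|_2$ gives a bound of $\delta^3(\#\tubes_x\,\#\tubes_y\,\#\tubes_z)^{1/2}$ after computing $\|(\sum_{\tubes_x}\chi_T)^{1/2}\|_2\sim\delta(\#\tubes_x)^{1/2}$. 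AM-GM together with $\#\tubes_x+\#\tubes_y+\#\tubes_z=\#\tubes$ then yields $(\delta^2\#\tubes)^{3/2}$.

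For the general case, decompose $S^2$ into caps of diameter $\tau$ and group $\tubes$ accordingly. For each triple of caps $(V_{i_1},V_{i_2},V_{i_3})$ with transversality $|v_1\wedge v_2\wedge v_3|\sim\nu$ on $V_{i_1}\times V_{i_2}\times V_{i_3}$ (taking $\tau\ll\nu$), apply the linear map $\Phi$ sending representative directions $v_{i_r}^{\ast}\in V_{i_r}$ to the coordinate axes $e_r$; the Jacobian of $\Phi^{-1}$ is $\sim\nu^{-1}$ and the tubes in $\tubes^{(i_r)}$ become approximately axis-parallel in direction $e_r$. Pulling back the model-case bound yields a direction-localized trilinear estimate $\lesssim \nu^{-1/2}\delta^3(\#\tubes^{(i_1)}\#\tubes^{(i_2)}\#\tubes^{(i_3)})^{1/2}$, and the wedge weight $\sim\nu^{1/2}$ extracted from the square root in the integrand exactly cancels this $\nu^{-1/2}$ Jacobian loss.

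The main obstacle is combining these direction-localized estimates into a single global bound. Naive subadditivity $(\sum a)^{1/2}\le\sum a^{1/2}$ followed by Cauchy-Schwarz over cap triples fails to recover $\#\tubes^{3/2}$: the $\sim\nu\cdot\tau^{-6}$ effective cap triples at transversality scale $\nu$, combined with the constraint $\tau\ll\nu$, leave a polynomial loss in $\nu^{-1}$ that is not absorbed by the wedge cancellation. The actual proof therefore requires deeper input: either one invokes the Brascamp-Lieb formulation of BCT, which encodes the wedge-weighted integrand as a single multilinear inequality and proves it by factorization of the kernel, or one runs Guth's induction on scales, where the estimate at scale $\delta$ is built from a coarser scale $K\delta$ by Loomis-Whitney applied after a polynomial partition. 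Either route yields the clean bound in the form stated, without logarithmic losses.
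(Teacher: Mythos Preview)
The paper does not prove this theorem: it is quoted as a known result, attributed to Bennett--Carbery--Tao \cite{BCT} with the specific formulation taken from Carbery--Valdimarsson \cite{CV}. So there is no ``paper's own proof'' to compare against; the intended reading is that this is a black box input.

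Your discussion is an accurate high-level survey of how the literature proves this, and you are honest about where the difficulty lies. The Loomis--Whitney model case and the role of the wedge factor as the Jacobian of the normalizing linear map are exactly right, and your diagnosis that naive cap decomposition plus subadditivity of the square root loses a power of $\nu^{-1}$ is correct. But what you have written is not itself a proof: you reduce the problem to ``invoke BCT's Brascamp--Lieb argument or Guth's induction on scales'' without carrying out either. Since the theorem is precisely the content of those references, this is circular as a standalone argument. If you intend to supply a genuine proof rather than a citation, you would need to actually run one of those two routes---for instance, Guth's argument (see also \cite{CV} for the Borsuk--Ulam approach) proceeds by showing the estimate is stable under passing from scale $\delta$ to scale $K\delta$ with a loss that can be iterated away, and the induction step is where Loomis--Whitney is applied inside each $K\delta$-cube. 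As written, your proposal correctly identifies the architecture but stops short of the substance.
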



\subsection{Finding a plane map}\label{findingPlaneMapSec}
In this section, we will show that every extremal set of tubes has a refinement with a plane map. First, we will show that every extremal set of tubes has a refinement that ``weakly'' agrees with a plane map.

\begin{lem}\label{existenceOfWeakPlaneMap}
There exists $\eta,\delta_0>0$ so that the following holds for all $\delta\in (0, \delta_0]$. Let $(\tubes,Y)_\delta$ be a $\eta$-extremal set of $\delta$-tubes. Then there is a refinement $(\tubes',Y')_\delta$ and a function $V\colon E_{\tubes}\to S^2$ with the following property: for each $p\in E_{\tubes'}$, we have
\begin{equation}\label{vectorAngleCondition}
|V(p)\cdot \dir (T)|\leq\delta^{\sigma/2}\quad\textrm{for all}\ T\in\tubes'(p).
\end{equation}
\end{lem}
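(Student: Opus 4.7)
The case $\sigma = 0$ is immediate, since any choice of $V(p)$ satisfies $|V(p)\cdot\dir(T)|\leq 1 = \delta^{\sigma/2}$, so I assume $\sigma>0$. The two main tools will be the Bennett--Carbery--Tao multilinear Kakeya inequality (Theorem \ref{multilinearKakeyaThm}) and the robust transversality estimate (Lemma \ref{robustTransLem}). The rough idea is that multilinear Kakeya will show that the directions of tubes through a typical point cannot span a full three-dimensional cone, while robust transversality will prevent them from collapsing to a single direction; together these force a two-dimensional plane structure.

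After a constant multiplicity refinement, extremality forces $\mu := \#\tubes(p) \sim \delta^{-\sigma + O(\eta)}$ on $E_\tubes$, with $|E_\tubes| \sim \delta^{\sigma + O(\eta)}$. Writing $v_i = \dir(T_i)$, apply Theorem \ref{multilinearKakeyaThm} with $\chi_{Y(T)}$ in place of $\chi_T$ and set
\[
W(p) = \sum_{\substack{T_1,T_2,T_3 \in \tubes \\ p \in Y(T_1)\cap Y(T_2)\cap Y(T_3)}} |v_1 \wedge v_2 \wedge v_3|.
\]
This yields $\int W^{1/2} \lesssim (\delta^2 \#\tubes)^{3/2} \leq \delta^{-O(\eta)}$. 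Chebyshev's inequality then produces a subset $E' \subset E_\tubes$ with $|E'| \geq |E_\tubes|/2$ on which $W(p) \leq \delta^{-2\sigma - O(\eta)}$; restricting the shading to $E'$ is a valid refinement.

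Now fix an auxiliary parameter $\eta_0$ with $\eta \ll \eta_0 \ll \sigma$. For each $p \in E'$, the bound on $W(p)$ forces at most a $\delta^{\eta_0}$-fraction of the $\mu^3$ ordered triples through $p$ to satisfy $|v_1 \wedge v_2 \wedge v_3| \geq \delta^{\sigma - \eta_0}$. Meanwhile, Lemma \ref{robustTransLem} (applied at scale $\delta^{\eta_0}$) implies, after a further refinement, that at every $p$ at most a $\delta^{\eta_0\sigma/2}$-fraction of pairs $(T_1,T_2) \in \tubes(p)^2$ have $|v_1 \wedge v_2| \leq \delta^{\eta_0}$. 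A pigeonhole over pairs and triples then produces, for every $p\in E'$, a pair $(T_1,T_2)\in\tubes(p)^2$ with $|v_1\wedge v_2|\geq \delta^{\eta_0}$ and such that at most a $\delta^{\eta_0/2}$-fraction of $T_3\in\tubes(p)$ satisfy $|v_1\wedge v_2\wedge v_3| \geq \delta^{\sigma-\eta_0}$. Define
\[
V(p) = \frac{v_1\wedge v_2}{|v_1\wedge v_2|}.
\]
For each remaining ``good'' $T_3$ we have
\[
|V(p)\cdot v_3| = \frac{|v_1\wedge v_2\wedge v_3|}{|v_1\wedge v_2|} \leq \delta^{\sigma - 2\eta_0} \leq \delta^{\sigma/2},
\]
provided $\eta_0 \leq \sigma/4$.

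To finish, I would refine the shading a final time: for each tube $T$ and each $p \in Y(T)\cap E'$ already kept, retain $p$ only if $|V(p)\cdot\dir(T)|\leq \delta^{\sigma/2}$. At most a $\delta^{\eta_0/2}$-fraction of the mass is removed, which is an admissible loss. The main technical nuisance will be keeping the chain of small parameters $\eta \ll \eta_0 \ll \sigma$ internally consistent, and arranging a measurable choice of $V(p)$; the latter can be handled by partitioning $E'$ according to which ordered pair $(T_1,T_2)$ is selected at $p$ and making a single choice on each piece of the partition.
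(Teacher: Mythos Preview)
Your proposal is correct and follows essentially the same approach as the paper: combine multilinear Kakeya (to bound the trilinear wedge at most points) with robust transversality (Lemma~\ref{robustTransLem}, to rule out bilinear degeneracy), pigeonhole to select a well-separated pair $(T_1,T_2)$ at each point, and set $V(p)$ to be their normalized cross product. The only cosmetic difference is the order of refinements---the paper applies Lemma~\ref{robustTransLem} before the constant-multiplicity step rather than after, which saves you the extra constant-multiplicity pass you would otherwise need to restore $\#\tubes(p)\sim\delta^{-\sigma}$ after the robust-transversality refinement.
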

\begin{proof}
Define $\eps =\sigma/4$. We will select $\eta$ sufficiently small to ensure that $\eta\leq \min(\eps/40,\ \eps\sigma/24)$. Apply Lemma \ref{robustTransLem} to $(\tubes,Y)_\delta$ with $\eps/4$ in place of $\eps$ to obtain a refinement $(\tubes_1, Y_1)_{\delta}$ so that for each $p\in\RR^3$ and each unit vector $v\in S^2$, we have
\begin{equation}\label{transEstimate}
\# \{ T\in\tubes_1(p)\colon \angle(v, \dir(T))\leq \delta^{\eps/4}\}\leq  \delta^{-\sigma+(\eps/4)\sigma/2}\leq \delta^{-\sigma+3\eta}.
\end{equation}
Let $(\tubes_2,Y_2)$ be a constant multiplicity refinement of $(\tubes_1,Y_1)$, with multiplicity $\delta^{-\sigma+2\eta}\lessapprox\mu\lesssim\delta^{-\sigma-2\eta}$. 
By Theorem \ref{multilinearKakeyaThm} we have
\begin{equation}\label{trilinearEstimateT1}
\int \Big(\sum_{T_1,T_2,T_3\in\tubes_2 } \chi_{Y_2(T_1)}\chi_{Y_2(T_1)}\chi_{Y_2(T_3)}|v_1\wedge v_2\wedge v_3|\Big)^{1/2}
\lesssim  \big(\delta^2 \#\tubes_2)^{3/2}\lesssim \delta^{-(3/2)\eta}.
\end{equation}

We say a point $p\in E_{\tubes_2}$ is \emph{broad} if
\[
\#\{(T_1,T_2,T_3)\in (\tubes_2(p))^3 \colon |v_1\wedge v_2\wedge v_3| \geq \delta^{\sigma - 10\eta} \} \geq \delta^{-3\sigma+3\eta},
\]
otherwise we say it is narrow (in the above expression and the expressions to follow, we set $v_i = \dir(T_i)$). Both the set of broad and the set of narrow points are unions of $\delta$-cubes. Observe that each broad point contributes at least $\delta^{-\sigma-(7/2)\eta}$ to the integrand on the LHS of \eqref{trilinearEstimateT1}, and thus by \eqref{trilinearEstimateT1} the measure of the set of broad points $\lesssim \delta^{\sigma+2\eta}$. On the other hand, $E_{\tubes_2}$ has measure $\gtrapprox \delta^{\sigma+\eta}$. 

Define $Y_3(T) = \{p \in Y_2(T)\colon p\ \textrm{narrow}\}$, and define $\tubes_3=\tubes_2$. Then it is still true that $\#\tubes_3(p)=\mu$ for each $p\in E_{\tubes_3}$, and $|E_{\tubes_3}|\geq\frac{1}{2}|E_{\tubes_2}|$.

For each $p\in E_{\tubes_3}$, we have 
\begin{equation}\label{lotsOfLowTrilin}
\#\{(T_1,T_2,T_3)\in \tubes_3(p)^3 \colon |v_1\wedge v_2\wedge v_3| < \delta^{\sigma - 10\eta} \} \geq \mu^3 - \delta^{-3\sigma+3\eta} \geq \frac{1}{2}\mu^3.
\end{equation}
On the other hand, by \eqref{transEstimate}, for each $i,j\in \{1,2,3\}$ with $i\neq j$ we have 
\begin{equation}\label{lotsOfBilin}
\#\{(T_1,T_2,T_3)\in \tubes_3(p)^3 \colon |v_i\wedge v_j| < \delta^{\eps/4} \} \leq \mu^2 \delta^{-\sigma+3\eta} \leq \frac{1}{100}\mu^3.
\end{equation}
Comparing \eqref{lotsOfLowTrilin} and \eqref{lotsOfBilin}, we conclude that there are at least $\mu^3/4$ triples $(T_1,T_2,T_3)$ with $|v_1\wedge v_2\wedge v_3| < \delta^{\sigma - 2\eta}\leq\delta^{\sigma-\eps/4}$ and $|v_i\wedge v_j| \geq \delta^{\eps/4}$ for each $i\neq j$. By pigeonholing, there exists $T_1,T_2$ so that there are at least $\mu/4$ choices of $T_3$ so that $(T_1,T_2,T_3)$ is a triple of the above form. Define 
\[
V(p)=\frac{\dir(T_1)\times \dir(T_2)}{|\dir(T_1)\times \dir(T_2)|}.
\]
Then for each $T_3$ of the type described above, we have 
\begin{equation}\label{VpEqn}
|V(p)\cdot v(T_3)| = \frac{|\dir(T_1)\wedge \dir(T_2) \wedge \dir(T_3)|}{|\dir(T_1)\times \dir(T_2)|}\leq \frac{\delta^{\sigma - 10\eta}}{\delta^{\eps/4}} \leq\delta^{\sigma/2}.
\end{equation}
Finally, let $\tubes'=\tubes_3$, and define the shading 
\begin{equation}\label{defnY5}
Y'(T) = \{p \in Y_3(T)\colon |V(p)\cdot v(T)| \leq \delta^{\sigma/2} \}.
\end{equation}
$(\tubes',Y)_\delta$ is a refinement of $(\tubes,Y)_\delta$ that satisfies \eqref{vectorAngleCondition}.
\end{proof}

The next lemma shows that by moving to a coarser scale, we can ensure that a suitable thickening of the tubes from Lemma \ref{existenceOfWeakPlaneMap} strongly agrees with their plane map.
\begin{lem}\label{existenceOfPlaneMap}
For all $\eps>0$, there exists $\eta,\delta_0>0$ so that the following holds for all $\delta\in (0, \delta_0]$. Let $(\tubes,Y)_\delta$ be a $\eta$-extremal set of $\delta$-tubes, and let $\rho\in [\delta^{1-\eps}, \delta^{\eps}]$. Suppose there exists a function $V\colon E_{\tubes}\to S^2$ so that for each $p\in E_{\tubes}$, we have
\begin{equation}\label{vectorAngleConditionHypothesis}
|V(p)\cdot v(T)|\leq \rho/2\quad\textrm{for all}\ T\in\tubes(p).
\end{equation}
Then there exists a refinement $(\tubes',Y')_\delta$ of $(\tubes,Y)_\delta$ and a $\eps$-extremal set of $\rho$-tubes $(\tilde\tubes, \tilde Y)_\rho$ with plane map $\tilde V$ so that $(\tilde\tubes, \tilde Y)_\rho$ covers $(\tubes', Y')_\delta$. Furthermore, $\tilde V$ is constant on each $\rho$-cube, and $V$ and $\tilde V$ are consistent on each $\rho$-cube, in the sense that there exists $p\in Q$ with $\tilde V(Q) = V(p)$.
\end{lem}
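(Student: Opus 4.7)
The plan is to apply Proposition~\ref{multiScaleStructureExtremal} at scale $\rho$ and then, on each $\rho$-cube $Q$ in the resulting balanced cover, define $\tilde V(Q)$ as $V(p_Q)$ for a carefully chosen witness point $p_Q\in Q$. The shading of the $\rho$-tubes is then pruned on those pairs $(Q,\tilde T)$ where $p_Q$ fails to witness $\tilde T$.

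\textbf{Step 1 (Multi-scale structure).} Apply Proposition~\ref{multiScaleStructureExtremal} to $(\tubes,Y)_\delta$ at scale $\rho$, with $\eps/2$ in place of $\eps$, obtaining a refinement $(\tubes_1,Y_1)_\delta$ and an $\eps/2$-extremal balanced cover $(\tilde\tubes_1,\tilde Y_1)_\rho$. From the construction in the proof of Proposition~\ref{multiScaleStructureExtremal}, each $T\in\tubes_1$ is covered by a unique $\tilde T\in\tilde\tubes_1$, and the covering condition $d(\tilde\ell,\ell)\leq\rho/2$ gives $|\dir(T)-\dir(\tilde T)|\leq\rho/2$. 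Moreover, by the heavy-cube construction of Step~3 in that proof, for every pair $(Q,\tilde T)$ with $Q\subset\tilde Y_1(\tilde T)$ the set $Q\cap E_{\tubes_1[\tilde T]}$ has (uniform) positive measure.

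\textbf{Step 2 (Witness points and definition of $\tilde V$).} Say that $p\in Q\cap E_{\tubes_1}$ \emph{witnesses} $\tilde T\in\tilde\tubes_1(Q)$ if some $T\in\tubes_1(p)$ lies in $\tubes_1[\tilde T]$. For such a pair, combining the hypothesis $|V(p)\cdot\dir(T)|\leq\rho/2$ with the angle estimate of Step~1 yields $|V(p)\cdot\dir(\tilde T)|\leq\rho$. For each $\rho$-cube $Q\subset E_{\tilde\tubes_1}$, pick a point $p_Q\in Q\cap E_{\tubes_1}$ and set $\tilde V(Q)=V(p_Q)$; declare $Q$ to be part of the pruned shading $\tilde Y(\tilde T)$ if and only if $p_Q$ witnesses $\tilde T$. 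By construction, $\tilde V$ is constant on $\rho$-cubes, satisfies the required consistency with $V$, and obeys the plane-map inequality on every pair in the pruned shading.

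\textbf{Step 3 (Controlling the loss---the main obstacle).} The crux is to choose $p_Q$ so that pruning removes only a $\delta^{O(\eta)}$ fraction of the total mass. Because each $T\in\tubes_1$ has a unique $\rho$-cover, the number of $\tilde T$ that a given $p$ witnesses equals the number of distinct $\rho$-covers appearing among $\tubes_1(p)$. A double-counting argument over $Q$ gives
\[
\int_{Q\cap E_{\tubes_1}}\#\{\tilde T\in\tilde\tubes_1(Q):\ p\ \textrm{witnesses}\ \tilde T\}\,dp = \sum_{\tilde T\in\tilde\tubes_1(Q)}|Q\cap E_{\tubes_1[\tilde T]}|,
\]
and the heavy-cube lower bound combined with $|Q\cap E_{\tubes_1}|\leq|Q|$ shows that the average number of witnessed tubes is at least $\delta^{O(\eta)}\,\#\tilde\tubes_1(Q)$. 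After dyadic pigeonholing to pick $p_Q$ achieving (close to) this average, and taking $\eta$ small relative to $\eps$, the total mass pruned is at most $\delta^{O(\eta)}\sum_{\tilde T}|\tilde Y_1(\tilde T)|$, so $(\tilde\tubes_1,\tilde Y)_\rho$ retains $\eps$-extremality.

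\textbf{Step 4 (Defining $\tubes'$).} Let $\tilde\tubes\subset\tilde\tubes_1$ consist of those $\tilde T$ whose pruned shading is still $\gtrapproxdelta$-large, let $\tubes'=\{T\in\tubes_1:T\in\tubes_1[\tilde T]\ \textrm{for some}\ \tilde T\in\tilde\tubes\}$, and set $Y'(T)=Y_1(T)\cap\tilde Y(\tilde T)$, where $\tilde T$ denotes the unique $\rho$-cover of $T$. Then $(\tubes',Y')_\delta$ is a refinement of $(\tubes,Y)_\delta$ covered by the $\eps$-extremal collection $(\tilde\tubes,\tilde Y)_\rho$, and $\tilde V$ is a plane map for the latter, constant on $\rho$-cubes and satisfying $\tilde V(Q)=V(p_Q)$ for the chosen $p_Q\in Q$.
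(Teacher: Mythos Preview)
Your overall strategy---apply Proposition~\ref{multiScaleStructureExtremal}, choose a witness point $p_Q$ in each $\rho$-cube, set $\tilde V(Q)=V(p_Q)$, and prune the coarse shading to those $(Q,\tilde T)$ that $p_Q$ witnesses---is exactly the paper's. The gap is in Step~3, where the mass estimate does not go through as written.

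Your double-count gives
\[
\textrm{average }\#\{\tilde T\ \textrm{witnessed by}\ p\}
=\frac{\sum_{\tilde T\in\tilde\tubes_1(Q)}|Q\cap E_{\tubes_1[\tilde T]}|}{|Q\cap E_{\tubes_1}|}.
\]
Using only the heavy-cube bound $|Q\cap E_{\tubes_1[\tilde T]}|\gtrapprox \delta^{O(\eta)}\rho^{3}(\delta/\rho)^{\sigma}$ and the crude denominator bound $|Q\cap E_{\tubes_1}|\leq|Q|=\rho^{3}$, the average comes out $\gtrapprox \delta^{O(\eta)}(\delta/\rho)^{\sigma}\,\#\tilde\tubes_1(Q)$, which is smaller than the claimed $\delta^{O(\eta)}\#\tilde\tubes_1(Q)$ by the factor $(\delta/\rho)^{\sigma}$. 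Since $\rho$ may be as large as $\delta^{\eps}$, this factor is $\delta^{(1-\eps)\sigma}$, which is not absorbable into $\delta^{O(\eta)}$ unless $\sigma=0$ (the very thing one is trying to prove).

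The paper closes this gap by using the two multiplicity conclusions of Proposition~\ref{multiScaleStructureExtremal} directly rather than a measure average. One first refines so that every $\rho$-cube $Q\subset E_{\tilde\tubes_1}$ contains a point $p_Q$ with $\#\tubes_1(p_Q)\gtrapprox\delta^{-\sigma+2\eta}$. By Item~\ref{fineEstimateItem}, each $\tilde T$ contributes at most $(\rho/\delta)^{\sigma+\eps_1}$ tubes to $\tubes_1(p_Q)$, so the number of $\tilde T$ witnessed by $p_Q$ is $\gtrapprox \delta^{2\eta+\eps_1}\rho^{-\sigma}$. By Item~\ref{coarseScaleEstimateItem}, $\#\tilde\tubes_1(Q)\leq\rho^{-\sigma-\eps_1}$, so the \emph{fraction} of $\tilde T$'s witnessed is $\gtrapprox\delta^{2\eta+2\eps_1}$, and the pruned coarse shading retains mass $\gtrapprox\delta^{2\eta+2\eps_1}\sum_{\tilde T}|\tilde Y_1(\tilde T)|$. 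Equivalently, your averaging route can be repaired by replacing $|Q\cap E_{\tubes_1}|\leq|Q|$ with the sharp bound $|Q\cap E_{\tubes_1}|\lesssim\delta^{-O(\eta)}(\delta/\rho)^{\sigma}\rho^{3}$, which follows from $|E_{\tubes}|\leq\delta^{\sigma-\eta}$ together with the lower bound $|E_{\tilde\tubes_1}|\gtrapprox\rho^{\sigma+O(\eps_1)}$ on the number of $\rho$-cubes; but this is essentially the same multiplicity information packaged differently. (A minor aside: your sentence ``the total mass pruned is at most $\delta^{O(\eta)}\sum|\tilde Y_1|$'' should read ``the mass \emph{retained} is at least $\delta^{O(\eta)}\sum|\tilde Y_1|$''.)
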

\begin{proof}
Let $\eps_1>0$ be a small constant to be chosen below. We will select $\eta$ very small compared to $\eps_1$, and $\eps_1$ small compared to $\eps$. Apply Proposition \ref{multiScaleStructureExtremal} to $(\tubes, Y)_{\delta}$, with $\eps_1$ in place of $\eps$ and with $\rho$ as stated in the lemma. We obtain a refinement $(\tubes_1,  Y_1)_{\delta}$ of $(\tubes, Y)_{\delta}$ and a $\eps_1$-extremal collection of $\rho$-tubes $(\tilde\tubes_1,\tilde Y_1)_\rho$ that covers $(\tubes_1,Y_1)_\delta$. 

After refining $(\tilde\tubes_1,\tilde Y_1)_\rho$ (which induces a refinement on $(\tubes_1,  Y_1)_{\delta}$), we may suppose that for each $\rho$-cube $Q\subset E_{\tilde \tubes_1}$, there is a point $p_Q\in Q$ with $\#\tubes_1(p_Q)\gtrapprox \delta^{-\sigma+2\eta}$. We say the tube $\tilde T\in\tilde\tubes_1$ is \emph{associated} to the $\rho$-cube $Q$ if $p_Q \in \bigcup_{T\in\tubes_1[\tilde T]}Y_1(T)$; this necessarily implies $Q\subset\tilde Y_1(\tilde T)$. By Item \ref{fineEstimateItem} from Proposition \ref{multiScaleStructureExtremal}, each $\rho$-cube $Q\subset E_{\tilde T_1}$ satisfies
\[
\#\{\tilde T\in \tilde T_1\colon \tilde T\ \textrm{associated to}\ Q\} \gtrapprox \frac{\delta^{-\sigma+2\eta}}{(\rho/\delta)^{\sigma+\eps_1}}\gtrapprox \delta^{2\eta+\eps_1}\rho^{-\sigma}.
\]
On the other hand, by Item \ref{coarseScaleEstimateItem} from Proposition \ref{multiScaleStructureExtremal}, the number of tubes in $\tilde \tubes_1$ whose shading contains $Q$ is at most $\rho^{-\sigma-\eps_1}$. Thus if we define $\tilde Y_2(\tilde T)$ to be the union of $\rho$-cubes associated to $\tilde T$ and define $\tilde\tubes_2=\tilde\tubes_1$ then 
\begin{equation}\label{massT3}
\sum_{\tilde T\in\tilde\tubes_2}|\tilde Y_2(\tilde T)|\gtrapprox \delta^{2\eta+2\eps_1}\sum_{\tilde T\in\tilde\tubes_1}|\tilde Y_1(T)|\gtrapprox \delta^{2\eta+3\eps_1}.
\end{equation}
Selecting $\eps_1$ sufficiently small, we can ensure that the LHS of \eqref{massT3} is at least $\rho^{\eps}$. 

Next, we shall define the function $\tilde V(p)\colon E_{\tilde\tubes_2}\to S^2$ as follows. For each $\rho$-cube $Q\subset E_{\tilde\tubes_2}$, we define $\tilde V(p) = V(p_Q)$ for all $p\in Q$. Clearly $\tilde V$ is constant on $\rho$-cubes, and satisfies the consistency condition with $V$ stated in Lemma \ref{existenceOfPlaneMap}.
It remains to show that $\tilde V$ is a plane map for $(\tilde\tubes_2, \tilde Y_2)_\rho.$ Let $p\in E_{\tilde\tubes_2}$ be contained in a $\rho$-cube $Q$, and let $\tilde T\in \tilde\tubes_2(p)$. Then there exists a tube $T\in  \tubes_2(T)\cap \tubes(p)$. We have
\[
|\tilde V(p)\cdot \dir(\tilde T)| = |V(p_Q)\cdot \dir(\tilde T)|\leq |V(p_Q)\cdot \dir(T)| + | V(p_Q)\cdot (\dir(\tilde T) - \dir( T)| \leq \rho/2 + |\dir(\tilde T) - \dir( T)| \leq \rho,
\]
where the final inequality used \eqref{vectorAngleConditionHypothesis} and the observation that $|\dir(\tilde T) - \dir( T)|\leq\rho/2$ (since $\tilde T$ covers $T$).  
\end{proof}


\subsection{Lipschitz regularity of the plane map}\label{lipRegularityPlaneMapSec}
In this section we will show that if an extremal collection of tubes has a plane map, then after restricting to a sub-collection, this plane map must be Lipschitz with controlled Lipschitz norm. 

\begin{lem}\label{lipschitzContRhoCube}
For all $\eps>0,$ there exists $\eta>0,\delta_0>0$ so that the following holds for all $\delta\in (0, \delta_0]$. Let $(\tubes,Y)_\delta$ be a $\eta$-extremal collection of tubes with plane map $V$, and let $\rho\in [\delta^{1-\eps}, \delta^{\eps}]$. Then there exists a $\eps$-extremal sub-collection $(\tubes',Y')_\delta$, so that if $p,q\in E_{\tubes'}$ are contained in a common $\rho$-cube, then $|V(p)-V(q)|\leq \rho$. 
\end{lem}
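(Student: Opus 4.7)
The plan is to combine the multi-scale structure from Proposition \ref{multiScaleStructureExtremal} with the plane-map transfer of Lemma \ref{existenceOfPlaneMap}, and then close the argument using the quantitative robust transversality of Lemma \ref{robustTransLem}. First, I would select an auxiliary parameter $\eps'$ much smaller than $\eps$ and apply Proposition \ref{multiScaleStructureExtremal} at scale $\rho$ with parameter $\eps'$, obtaining a refinement $(\tubes_1,Y_1)_\delta$ of $(\tubes,Y)_\delta$ together with a balanced, $\eps'$-extremal cover $(\tilde\tubes,\tilde Y)_\rho$. Since $|V(p)\cdot\dir(T)|\leq\delta\leq\rho/2$ for every $T\in\tubes_1(p)$, the hypothesis of Lemma \ref{existenceOfPlaneMap} is satisfied; applying it at scale $\rho$ promotes $V$ to a coarse plane map $\tilde V$ on $(\tilde\tubes,\tilde Y)_\rho$ that is constant on each $\rho$-cube $Q\subset E_{\tilde\tubes}$, with $\tilde V(Q)=V(p_Q)$ for some representative $p_Q\in Q$. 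Finally, I would apply Lemma \ref{robustTransLem} to $(\tilde\tubes,\tilde Y)_\rho$ with a third parameter $\eps''$.

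Next, I would pass to a further constant-multiplicity refinement so that, for every $\rho$-cube $Q\subset E$ and every $p\in Q\cap E$, each coarse tube $\tilde T\in\tilde\tubes(Q)$ covers at least one fine tube $T\in\tubes_1(p)$. This is the most delicate step: it uses the balanced-cover and constant-multiplicity conclusions of Proposition \ref{multiScaleStructureExtremal} together with a pigeonhole against the multiplicities $\mu_{\operatorname{fine}}$ and $\mu_{\operatorname{coarse}}$ produced there, and discards the atypical pairs $(\tilde T,p)$ for which the containment fails while preserving $\eps$-extremality. Once this refinement is in place, the plane-map inequality at the fine scale propagates to every coarse tube through $Q$:
\[
|V(p)\cdot\dir(\tilde T)|\leq|V(p)\cdot\dir(T)|+|\dir(T)-\dir(\tilde T)|\leq\delta+\rho/2\leq\rho,
\]
for every $\tilde T\in\tilde\tubes(Q)$ and every $p\in Q\cap E$; the same bound holds for any $q\in Q\cap E$ in place of $p$.

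To conclude, suppose for contradiction that $p,q\in E\cap Q$ satisfy $|V(p)-V(q)|>\rho$, and set $\theta=\angle(V(p),V(q))$. A short spherical-geometry computation shows that $N_{\rho}(V(p)^\perp)\cap N_{\rho}(V(q)^\perp)$ is contained in the union of two antipodal spherical caps of radius $O(\rho/\sin\theta)$ centered at $\pm V(p)\times V(q)/\|V(p)\times V(q)\|$, so choosing $\eps''$ with $\rho/\sin\theta\lesssim\rho^{\eps''}$ confines all coarse directions through $Q$ to two $\rho^{\eps''}$-balls, which Lemma \ref{robustTransLem} bounds in total by $\rho^{-\sigma+\eps''\sigma/2}$. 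This contradicts the extremality lower bound $\#\tilde\tubes(Q)\gtrsim\rho^{-\sigma+O(\eps')}$ as soon as $\eps'\ll\eps''\sigma/2$. The main obstacle is twofold: the delicate pigeonhole step forcing each coarse tube through $Q$ to cover a fine tube through each $p\in Q\cap E$ while preserving $\eps$-extremality, and the fact that the robust-transversality argument most directly yields $|V(p)-V(q)|\lesssim\rho^{1-O(\eps'')}$ rather than the clean $\leq\rho$ of the lemma, with the small polynomial gap absorbed into the hierarchy of parameters $\eps',\eps''$ and $\eps$.
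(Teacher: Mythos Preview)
Your overall architecture---Proposition \ref{multiScaleStructureExtremal} for the two-scale structure, Lemma \ref{robustTransLem} for robust transversality, and the plane-map constraint---matches the paper's. The genuine gap is your Step 4. You ask that, after refinement, \emph{every} coarse tube $\tilde T\in\tilde\tubes(Q)$ covers a fine tube through \emph{every} $p\in Q\cap E$. The bounds from Proposition \ref{multiScaleStructureExtremal} only say that for each fixed $p$ the set of ``good'' coarse tubes (those covering some $T\in\tubes_1(p)$) has size $\gtrsim\delta^{O(\eps')}\rho^{-\sigma}$: dividing the fine multiplicity $\mu\gtrsim\delta^{-\sigma+O(\eta)}$ by the upper bound $(\delta/\rho)^{-\sigma-\eps'}$ from Item \ref{fineEstimateItem} gives only a $\delta^{O(\eps')}$-\emph{fraction} of $\#\tilde\tubes(Q)\lesssim\rho^{-\sigma-\eps'}$, not all of it. To force the universal statement you would need $Q\cap E\subset\bigcap_{\tilde T\in\tilde\tubes(Q)}E_{\tubes[\tilde T]}$, an intersection of $\sim\rho^{-\sigma}$ sets with no control on their overlap, and there is no pigeonhole that secures this while preserving extremality.

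The paper sidesteps this by asking for only \emph{two} good coarse tubes per cube rather than all of them. It applies Lemma \ref{robustTransLem} at the \emph{fine} scale (not the coarse scale), so that each $p$ supports $\gtrsim\mu^2/2$ pairs $T_1,T_2\in\tubes(p)$ with $\angle(\dir(T_1),\dir(T_2))\geq\delta^{\eps_1}$; by Item \ref{fineEstimateItem} this yields $\gtrsim\delta^{O(\eps')}\rho^{-2\sigma}$ transverse pairs of coarse tubes good for $p$. Summing over $p\in Q\cap E$ and pigeonholing over the $\lesssim\rho^{-2\sigma-2\eps'}$ possible pairs produces a single transverse pair $(\tilde T_Q,\tilde T'_Q)$ good for a $\delta^{O(\eps')}$-dense subset $F_Q\subset Q\cap E$. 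For $p\in F_Q$ one gets $|V(p)\cdot\dir(\tilde T_Q)|,\ |V(p)\cdot\dir(\tilde T'_Q)|\leq\delta+\rho$, pinning $V(p)$ to within $O(\delta^{-\eps_1}\rho)$ of the normalized cross product; a final pigeonhole on $\rho$-caps in $S^2$ (not absorption into the parameter hierarchy) then closes the gap to $\leq\rho$. Incidentally, the coarse plane map $\tilde V(Q)$ you construct via Lemma \ref{existenceOfPlaneMap} could itself serve as the reference direction---comparing $V(p)$ to $\tilde V(Q)$ needs only that the good-for-$p$ set is large, which you do have---but you do not use $\tilde V$ this way in your contradiction step.
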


\begin{proof}
Let $\eps_1,\eps_2>0$ be small constants to be chosen below. We will select $\eta$ very small compared to $\eps_1$, $\eps_1$ very small compared to $\eps_2$, and $\eps_2$ very small compared to $\eps$. 

Let $(\tubes_1,Y_1)_\delta$ be the refinement obtained by applying Lemma \ref{robustTransLem} to $(\tubes,Y)_\delta$ with $\eps_1$ in place of $\eps$. Next, apply Proposition \ref{multiScaleStructureExtremal} to $(\tubes_1,Y_1)_\delta$ with $\eps_2$ in place of $\eps$, and $\rho$ as specified in the lemma. We obtain a refinement $(\tubes_2,Y_2)_\delta$ of $(\tubes_1,Y_1)$ and a $\eps_2$-extremal set of $\rho$-tubes $(\tilde\tubes, \tilde Y)_{\rho}$.

After refining $(\tubes_2,Y_2)_\delta$ and $(\tilde\tubes, \tilde Y)_{\rho}$, we may suppose that $(\tubes_2,Y_2)_\delta$ has constant multiplicity $\mu\gtrapprox \delta^{-\sigma+2\eta}$, and $(\tilde\tubes, \tilde Y)_{\rho}$ is still a balanced cover of $(\tubes_2,Y_2)_\delta$. If $\eta>0$ is selected sufficiently small (depending on $\eps_1$ and $\sigma$), then for each $p\in E_{\tubes_2}$, there are at least $\mu^2/2$ pairs $T,T'\in\tubes_2(p)$ with $\angle(\dir(T),\dir(T'))\geq\delta^{\eps_1}$. Thus by Item \ref{fineEstimateItem} from Proposition \ref{multiScaleStructureExtremal}, there are at least $(\mu^2/2) / \big((\rho/\delta)^{\sigma+\eps_2}\big)^2\gtrapprox \delta^{4\eta+2\eps_2}\rho^{-2\sigma}$ pairs $\tilde T,\tilde T'\in\tilde\tubes$ with $p\in \bigcup_{T\in\tubes_2[\tilde T]}Y_2(T)$, $p\in \bigcup_{T\in\tubes_2[\tilde T']}Y_2(T)$, and $\angle\big(\dir(\tilde T),\dir(\tilde T')\big)\geq\delta^{\eps_1}-2\rho$; we will choose $\eps_1$ sufficiently small so that $\delta^{\eps_1}-2\rho\geq\delta^{\eps_1}/2$. In particular, for each $\rho$-cube $Q\subset E_{\tilde\tubes}$, we have 
\begin{equation}\label{numberOfTriples}
\begin{split}
\Big|\Big\{(p, \tilde T, \tilde T')\in (Q\cap E_{\tubes_2}) \times \tilde\tubes^2\colon p\in \bigcup_{T\in\tubes_2[\tilde T]}Y_2(T) \cap \bigcup_{T\in\tubes_2[\tilde T']}Y_2(T),\ &\angle\big(\dir(\tilde T),\dir(\tilde T')\big)\geq \delta^{\eps_1}/2\Big\} \Big|\\
&\gtrapprox |Q\cap E_{\tubes_2}| \delta^{4\eta+2\eps_2}\rho^{-2\sigma}\\
&\gtrapprox \delta^{4\eta+4\eps_2} |Q\cap E_{\tubes_2}| \big(\#\tilde\tubes(Q)\big)^2,
\end{split}
\end{equation}
where $|\cdot|$ on the LHS of \eqref{numberOfTriples} denotes the product of Lebesgue measure in $\RR^3$ and counting measure, and the final inequality used Item \ref{coarseScaleEstimateItem} from Proposition \ref{multiScaleStructureExtremal}. Thus by pigeonholing, we can select two tubes from $\tilde\tubes$, which we will denote by $\tilde T_Q$ and $\tilde T'_Q,$ with $\angle\big(\dir(\tilde T_Q),\dir(\tilde T_Q')\big)\geq\delta^{\eps_1}/2,$ so that
\begin{equation}\label{defnFQ}
\Big|Q\ \cap \Big(\bigcup_{T\in\tubes_2[\tilde T_Q]}Y_2(T)\Big) \cap \Big(\bigcup_{T\in\tubes_2[\tilde T_Q']}Y_2(T)\Big)\Big|\gtrapprox \delta^{4\eta+4\eps_2}|Q\cap E_{\tubes_2}|.
\end{equation}

\begin{figure}[h]
\centering
\begin{overpic}[scale=0.3]{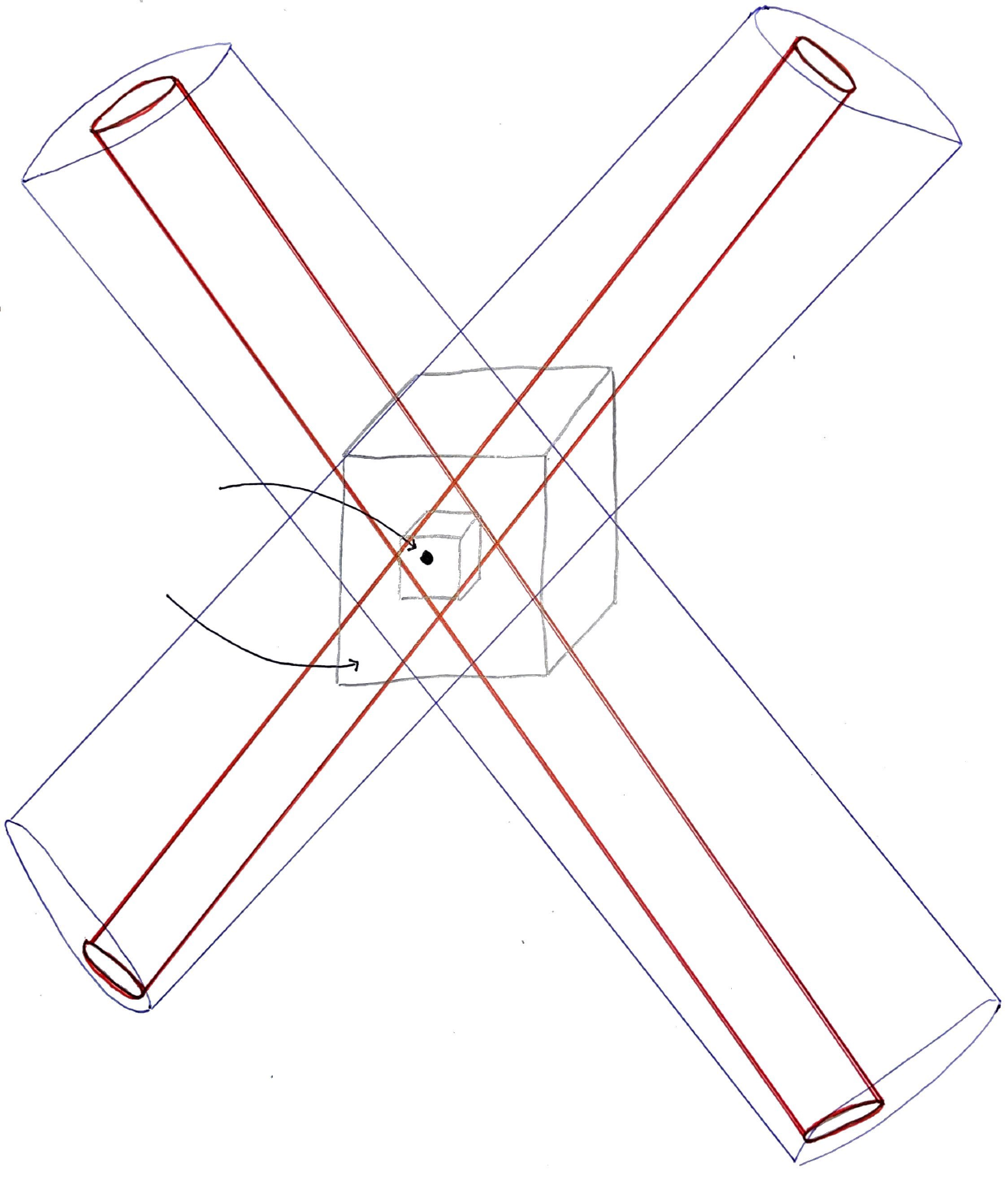}
 \put (15,57) {$p$}
 \put (10,52) {$Q$}
 \put (70,35) {$\tilde{T}_Q$}
 \put (67,68) {$\tilde{T}_Q'$}
 \put (55,24) {$\tilde{T}_p$}
 \put (55,78) {$\tilde{T}_p'$}
\end{overpic}
\caption{$V(p)$ is almost orthogonal to $\dir(\tilde T_Q)$ and $\dir(\tilde T_Q')$.}
\label{lipInCubeFig}
\end{figure}

Define $F_Q$ to be the set on the LHS of \eqref{defnFQ}. Let $\tubes_3=\tubes_2$, and for each $T\in\tubes_3$ define
\[
Y_3(T) = Y_2(T) \cap  \bigcup_Q F_Q.
\]
Then $\sum_{T\in\tubes_3}|Y_3(T)| \gtrapprox\delta^{5\eta+4\eps_2}$. Let $Q$ be a $\rho$-cube, and let $p\in E_{\tubes_3}\cap Q$. Then there exists $T_p\in\tubes_3[\tilde T_Q]$ and $T_p'\in\tubes_3[\tilde T_Q']$ so that $p\in Y_3(T_p)\cap Y_3(T_p')$, and hence
\[
|V(p)\cdot \dir(\tilde T_Q)|\leq |V(p)\cdot \dir(T_p)| + |V(p)\cdot (\dir(\tilde T_Q) - \dir(T_p))|\leq \delta + \rho,
\]
and similarly for $|V(p)\cdot \dir(\tilde T'_Q)|$. See Figure \ref{lipInCubeFig}. Since $\angle\big( \dir(\tilde T_Q),\ \dir(\tilde T'_Q)\big)\gtrsim\delta^{\eps_1}$, we conclude that for all $p\in Q$, we have
\begin{equation}\label{almostGoodAngle}
\angle\Big(V(p),\ \frac{\dir(\tilde T_Q)\times \dir(\tilde T'_Q)}{|\dir(\tilde T_Q)\times \dir(\tilde T'_Q)|}\Big)\lesssim \delta^{-\eps_1}\rho.
\end{equation}
\eqref{almostGoodAngle} says that $(\tubes_3,Y_3)$ almost satisfies the conclusion of Lemma \ref{lipschitzContRhoCube}, except the RHS of \eqref{almostGoodAngle} is  $\delta^{-\eps_1}\rho$ rather than $\rho$. To fix this, we will pigeonhole to select a set $E_4\subset E_{\tubes_3}$ that is a union of $\delta$-cubes so that (i): for each $\rho$-cube $Q$, $\angle(V(p),V(q))\leq\rho$ for all $p,q\in Q$, and (ii) if we define $Y_4(T) = Y_3(T)\cap E_4$ and $\tubes_4 = \tubes_3$, then $\sum_{T\in\tubes_4}|Y_4(T)|\gtrsim \delta^{2\eps_1}\sum_{T\in\tubes_3}|Y_3(T)|$. If we choose $\eps_1,\eps_2,\eta$ sufficiently small, then $(\tubes_4,Y_4)_\delta$ will be $\eps$-extremal. 
\end{proof}


Applying Lemma \ref{lipschitzContRhoCube} and then deleting a constant fraction of the $\rho$-cubes to obtain a $2\rho$-separated set of cubes, we obtain the following
\begin{cor}\label{lipschitzAtOneScaleCor}
For all $\eps>0$, there exists $\eta>0,\delta_0>0$ so that the following holds for all $\delta\in (0, \delta_0]$. Let $(\tubes,Y)_\delta$ be a $\eta$-extremal collection of tubes with plane map $V$, and let $\rho\in [\delta^{1-\eps}, \delta^{\eps}]$. Then there exists a $\eps$-extremal sub-collection $(\tubes',Y')_\delta$, so that if $p,q\in E_{\tubes'}$ satisfy $|p-q|\leq\rho$, then $|V(p)-V(q)|\leq 2\rho$. 
\end{cor}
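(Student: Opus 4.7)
The task is to upgrade Lemma \ref{lipschitzContRhoCube}, which only controls $|V(p)-V(q)|$ when $p$ and $q$ share a common $\rho$-cube of the fixed standard dyadic grid, to a genuinely local bound that holds for every pair of points within distance $\rho$. The obstruction is purely geometric: two such points may lie in adjacent cubes of the standard grid, and the lemma says nothing in that situation.

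The plan is to invoke Lemma \ref{lipschitzContRhoCube} iteratively to a bounded family of translated grids, chosen so that every pair of nearby points is captured inside a common cube of at least one translate. Concretely, consider the eight translates of the standard grid of $2\rho$-cubes obtained by shifting by vectors in $\{0,\rho\}^3$. For any $p,q\in\RR^3$ with $|p-q|\leq\rho$ and any coordinate $i$, at least one of the two one-dimensional shifts $\{0,\rho\}$ places both $p_i$ and $q_i$ in a common $2\rho$-interval; taking the product over the three coordinates shows that at least one of the eight translated grids contains both $p$ and $q$ in a common $2\rho$-cube. (The variant of Lemma \ref{lipschitzContRhoCube} at scale $2\rho$ relative to a translated grid follows from the original statement by a harmless change of coordinates, and by a small enlargement of the allowable range of $\rho$ one can ensure $2\rho\in [\delta^{1-\eps'},\delta^{\eps'}]$ for a slightly smaller auxiliary parameter $\eps'$.)

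Starting from $(\tubes,Y)_\delta$, I would then apply Lemma \ref{lipschitzContRhoCube} eight times in sequence, once for each translated grid, at each stage passing to the refinement produced by the lemma. Since only $O(1)$ iterations are required, the cumulative loss in the extremality parameter is a single controlled constant; choosing the $\eta$ in Corollary \ref{lipschitzAtOneScaleCor} small enough in terms of $\eps$ to absorb eight successive invocations of Lemma \ref{lipschitzContRhoCube} (each started with a suitable intermediate extremality parameter), we arrive at a final $\eps$-extremal subcollection $(\tubes',Y')_\delta$ that is simultaneously a refinement of the eight intermediate families. For any $p,q\in E_{\tubes'}$ with $|p-q|\leq\rho$, the geometric observation above lets us select a translated grid that contains both points in a common $2\rho$-cube; the conclusion of Lemma \ref{lipschitzContRhoCube} for that translate (with scale $2\rho$) yields $|V(p)-V(q)|\leq 2\rho$.

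The only point that needs care is the bookkeeping of extremality under iteration: each call to Lemma \ref{lipschitzContRhoCube} requires the input to be $\eta_k$-extremal for some $\eta_k$ depending on the desired output quality $\eps_k$, and we must chain these eight parameter pairs so that the initial extremality hypothesis is a single $\eta$ depending only on $\eps$. This is routine, since the chain has bounded length, but it is the only nontrivial detail; everything else reduces to the covering lemma for translated grids described above.
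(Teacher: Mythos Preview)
Your proposal is correct and matches the paper's approach exactly: the paper states only that Corollary \ref{lipschitzAtOneScaleCor} follows by ``applying Lemma \ref{lipschitzContRhoCube} iteratively $O(1)$ times to various translates of the standard grid,'' which is precisely the translated-grid covering argument you have spelled out. Your choice of the eight shifts in $\{0,\rho\}^3$ at scale $2\rho$, together with the routine chaining of extremality parameters, is exactly the intended fleshing-out of that one-line justification.
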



Corollary \ref{lipschitzAtOneScaleCor} says that after restricting to a sub-collection, the plane map $V$ will be Lipschitz at a specific scale $\rho$. The next lemma iterates this result at many scales. 

\begin{lem}
For all $\eps>0$, there exists $\eta>0,\delta_0>0$ so that the following holds for all $\delta\in (0, \delta_0]$. Let $(\tubes,Y)_\delta$ be a $\eta$-extremal collection of tubes with plane map $V$ that is constant on $\delta$-cubes. Then there exists a $\eps$-extremal sub-collection $(\tubes',Y')_\delta$ so that $V$ is $\delta^{-\eps}$-Lipschitz on $E_{\tubes'}$. 
\end{lem}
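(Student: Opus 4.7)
The plan is to install the Lipschitz property at one scale at a time by iterating Corollary \ref{lipschitzAtOneScaleCor}, exploiting the fact that the single-scale Lipschitz property at scale $\rho$ is inherited by any sub-collection. Thanks to the $\delta^{-\eps}$ slack in the target Lipschitz constant, only $K = O(1/\eps)$ scales are needed, so the number of iterations is a constant depending only on $\eps$ rather than growing with $\delta$.

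First I would choose scales $\rho_1 > \cdots > \rho_K$ inside $[\delta^{1-\eps/10},\delta^{\eps/10}]$ with common ratio $\rho_k/\rho_{k+1}=\delta^{-\eps/2}$, giving $K \lesssim 1/\eps$. Starting from $(\tubes^{(0)},Y^{(0)}) := (\tubes,Y)_\delta$, at step $j=1,\ldots,K$ I would apply Corollary \ref{lipschitzAtOneScaleCor} at scale $\rho_j$ to $(\tubes^{(j-1)},Y^{(j-1)})_\delta$, producing a sub-collection $(\tubes^{(j)},Y^{(j)})_\delta$ on which $|V(p)-V(q)|\leq 2\rho_j$ whenever $|p-q|\leq \rho_j$. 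Setting $(\tubes',Y') := (\tubes^{(K)},Y^{(K)})$, the claimed $\delta^{-\eps}$-Lipschitz property then follows by a case analysis on $|p-q|$: for $|p-q|\geq \delta^{\eps/10}$ the trivial bound $|V|\leq 1$ gives $|V(p)-V(q)|\leq 2\leq\delta^{-\eps}|p-q|$ for $\delta$ small; for $|p-q|\in[\rho_{k+1},\rho_k]$ the scale-$\rho_k$ bound yields $|V(p)-V(q)|\leq 2\rho_k = 2\delta^{-\eps/2}\rho_{k+1}\leq\delta^{-\eps}|p-q|$; and for $|p-q|\in[\delta,\rho_K]$ the scale-$\rho_K$ bound gives $|V(p)-V(q)|\leq 2\rho_K\leq 2\delta^{1-\eps/10}\leq\delta^{-\eps}|p-q|$ since $|p-q|\geq \delta$ and $\delta\geq 2\delta^{1+9\eps/10}$ for $\delta$ small.

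The hard part will be controlling the extremality through the $K$ iterations: Corollary \ref{lipschitzAtOneScaleCor} with output parameter $\eps_1$ requires input that is $\eta(\eps_1)$-extremal but produces only $\eps_1$-extremal output, and typically $\eta(\eps_1)<\eps_1$, so the bare statement does not chain. My resolution is to track the shading mass directly and exploit that $K$ depends only on $\eps$, not on $\delta$. Items \ref{tubesDistinct} and \ref{DefnOfMGoodCoveringItem} of Definition \ref{defnOfM}, together with the upper bound $|E|\leq \delta^{\sigma-\eps}$, are automatically inherited under refinement, so only Item \ref{DefnOfMBigVolItem} (the shading lower bound) requires attention. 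Tracing through the proofs of Lemma \ref{lipschitzContRhoCube} and Proposition \ref{multiScaleStructureExtremal}, each application of Corollary \ref{lipschitzAtOneScaleCor} sends an input with shading exponent $\eta_{\text{in}}$ to an output with shading exponent at most $c\eta_{\text{in}}+C\eps_1$ for absolute constants $c, C$. Iterating $K$ times yields a final shading exponent at most $c^K\eta + \frac{c^K-1}{c-1}C\eps_1$; because $K$ depends only on $\eps$, so does $c^K$, and choosing $\eta$ and $\eps_1$ sufficiently small (with $\eta\lesssim \eps\cdot c^{-K}$ and $\eps_1\lesssim \eps\cdot(KCc^{K-1})^{-1}$) ensures this is at most $\eps$. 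Thus $(\tubes',Y')$ is $\eps$-extremal, completing the proof.
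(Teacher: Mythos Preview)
Your approach is essentially the same as the paper's: iterate Corollary~\ref{lipschitzAtOneScaleCor} at $O(1/\eps)$ geometrically spaced scales, then do a case analysis on $|p-q|$. The paper takes $N=2/\eps$ scales $\rho_j=\delta^{j/N}$ and obtains a $2\delta^{-1/N}$-Lipschitz constant via the same three-case argument.

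Two remarks. First, your resolution of the chaining issue is more laborious than necessary, and the claimed linear relation ``$\eta_{\text{out}}\le c\eta_{\text{in}}+C\eps_1$'' is not something you can simply read off from the proofs of Lemma~\ref{lipschitzContRhoCube} and Proposition~\ref{multiScaleStructureExtremal} (the latter invokes Lemma~\ref{rescaledCoveredTube} and the definition of $\sigma_n$ in a way that is not transparently linear). The paper instead uses straightforward backward chaining: since $K$ depends only on $\eps$, set $\eps^{(K)}=\eps/2$ and recursively let $\eps^{(j-1)}$ be the $\eta$-value that Corollary~\ref{lipschitzAtOneScaleCor} requires to output $\eps^{(j)}$-extremal; then take $\eta=\eps^{(0)}$. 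This is a composition of $K$ functions each depending only on its argument, so $\eta$ depends only on $\eps$. No need to open any black boxes.

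Second, a small gap: your case analysis begins at $|p-q|\ge\delta$, but $V$ being constant on $\delta$-cubes does not by itself control $|V(p)-V(q)|$ when $|p-q|<\delta$ and $p,q$ lie in adjacent cubes. The paper handles this with an initial refinement ensuring that any two points of $E_{\tubes'}$ within distance $\delta$ lie in a common $\delta$-cube (e.g.\ keep only cubes whose neighbors in the grid are absent), after which $V(p)=V(q)$ whenever $|p-q|\le\delta$.
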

\begin{proof}
After a refinement, we can suppose that if $p,q\in E_{\tubes}$ satisfy $|p-q|\leq\delta$, then $p$ and $q$ are contained in a common $\delta$-cube, and hence $V(p)=V(q)$. Let $N = 2/\eps$. If $\eta,\delta_0$ are selected sufficiently small, then we can apply Corollary \ref{lipschitzAtOneScaleCor} iteratively to $(\tubes,Y)_\delta$ for $\rho=\rho_j = \delta^{j/N},$ $j=1,\ldots,N-1$, and the resulting sub-collection $(\tubes',Y')_\delta$ will be $\eps$-extremal. We claim that $V$ is $2\delta^{-1/N}$-Lipschitz on $(\tubes',Y')_\delta$. To see this, let $p,q\in E_{\tubes'}$. If $|p-q|\leq\delta$ then $V(p)=V(q)$ and we are done. If $|p-q|> \delta^{1/N},$ then $|V(p)-V(q)|\leq 2$, and again we are done. Otherwise, there is an index $1\leq k\leq N-1$ so that $\delta^{(k+1)/N}<|p-q|\leq \delta^{k/N}$, and
\[
|V(p)-V(q)|\leq \delta^{k/N} \leq \delta^{-1/N}|p-q|.\qedhere
\] 
\end{proof}

Combining the results of this section with Lemma  \ref{existenceOfExtremalFamilies}, we have the following
\begin{lem}\label{existenceOfLipschitzPlaneMap}
For all $\eps,\delta_0>0$, there exists $\delta\in (0,\delta_0]$ and a $\eps$-extremal set of tubes $(\tubes,Y)_\delta$ that has a $\delta^{-\eps}$-Lipschitz plane map. 
\end{lem}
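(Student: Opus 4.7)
The plan is to chain together the four preceding results of this section with a carefully chosen cascade of parameters. Concretely, given $\eps>0$ and $\delta_0>0$, I will choose auxiliary parameters $\eta_3 \ll \eta_2 \ll \eta_1$ (all small compared to $\eps$ and to $\sigma$) by working backwards: first I apply the last (unnamed) lemma with output tolerance $\eps$, obtaining an input tolerance $\eta_3$ and a threshold scale $\delta_0^{(3)}$. Then I apply Lemma \ref{existenceOfPlaneMap} with output tolerance $\eta_3$, obtaining an input tolerance $\eta_2$ and threshold scale $\delta_0^{(2)}$. Finally, I apply Lemma \ref{existenceOfWeakPlaneMap} with output tolerance $\eta_2$, obtaining input tolerance $\eta_1$ and threshold scale $\delta_0^{(1)}$.

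Next, invoke Lemma \ref{existenceOfExtremalFamilies} with tolerance $\eta_1$ and threshold $\min(\delta_0^{(1)}, \delta_0^{(2)}, \delta_0^{(3)}, \delta_0^{1/\eps})$ (the last factor gives room to absorb the coarsening step below). This yields a scale $\delta_{\mathrm{fine}} > 0$ and an $\eta_1$-extremal collection $(\tubes_0, Y_0)_{\delta_{\mathrm{fine}}}$. Apply Lemma \ref{existenceOfWeakPlaneMap} to produce a refinement $(\tubes_1, Y_1)_{\delta_{\mathrm{fine}}}$ and a function $V\colon E_{\tubes_1}\to S^2$ with $|V(p)\cdot \dir(T)| \leq \delta_{\mathrm{fine}}^{\sigma/2}$ for every $T\in\tubes_1(p)$. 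Now choose the intermediate scale $\rho := 2\delta_{\mathrm{fine}}^{\sigma/2}$, which lies in the admissible range $[\delta_{\mathrm{fine}}^{1-\eta_2}, \delta_{\mathrm{fine}}^{\eta_2}]$ provided $\eta_2 < \sigma/2$. The angle bound satisfies $|V(p)\cdot \dir(T)| \leq \rho/2$, so Lemma \ref{existenceOfPlaneMap} applies: it outputs a further refinement $(\tubes_2, Y_2)_{\delta_{\mathrm{fine}}}$ and an $\eta_3$-extremal collection $(\tilde\tubes, \tilde Y)_{\rho}$ of $\rho$-tubes covering it, carrying a plane map $\tilde V$ which is constant on $\rho$-cubes.

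The collection $(\tilde\tubes, \tilde Y)_\rho$ now satisfies exactly the hypotheses of the final unnamed lemma of Section \ref{lipRegularityPlaneMapSec}: it is $\eta_3$-extremal with plane map constant on its ambient cube-scale. Applying that lemma produces a further $\eps$-extremal sub-collection $(\tilde\tubes', \tilde Y')_\rho$ on which $\tilde V$ is $\rho^{-\eps}$-Lipschitz. Setting $\delta := \rho$, $(\tubes, Y)_\delta := (\tilde\tubes', \tilde Y')_\rho$, and retaining $\tilde V$ as the plane map, we obtain the required object. The condition $\delta \in (0, \delta_0]$ is guaranteed because $\rho = 2\delta_{\mathrm{fine}}^{\sigma/2}$ and we chose $\delta_{\mathrm{fine}}$ much smaller than $\delta_0^{2/\sigma}$.

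The only real subtlety (and the step I would check most carefully) is the parameter management: we must verify that $\eta_1$ is small enough that after the two refinement/coarsening steps the output inherits $\eps$-extremality, and that $\rho$ simultaneously satisfies $\rho \geq 2\delta_{\mathrm{fine}}^{\sigma/2}$ (so Lemma \ref{existenceOfPlaneMap} accepts the weak plane map from Lemma \ref{existenceOfWeakPlaneMap}) and $\rho \in [\delta_{\mathrm{fine}}^{1-\eta_2}, \delta_{\mathrm{fine}}^{\eta_2}]$ (the admissibility range). Both constraints are compatible as long as $\eta_2 < \sigma/2$, which we are free to impose from the outset. No genuinely new argument is required beyond threading these parameters through the four lemmas in the correct order.
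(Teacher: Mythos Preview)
Your proposal is correct and follows essentially the same approach as the paper, which simply asserts that the lemma follows by ``combining the results of this section with Lemma \ref{existenceOfExtremalFamilies}.'' You have faithfully spelled out that combination: existence of an extremal family, then Lemma \ref{existenceOfWeakPlaneMap} for a weak plane map, then Lemma \ref{existenceOfPlaneMap} to coarsen to a genuine plane map at scale $\rho \sim \delta_{\mathrm{fine}}^{\sigma/2}$, and finally the unnamed iteration lemma to upgrade to a $\rho^{-\eps}$-Lipschitz plane map. One cosmetic slip: the threshold you wrote as $\delta_0^{1/\eps}$ should be roughly $(\delta_0/2)^{2/\sigma}$ to guarantee $\rho \leq \delta_0$, as you yourself note correctly in the final paragraph.
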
 


\subsection{Grains}\label{grainSec}

In this section, we will show that if an extremal set of tubes has a Lipschitz plane map, then after restricting to a sub-collection, the tubes arrange themselves into parallel slabs, which are called grains. In what follows, $V$ will be a plane map for a collection of tubes. If $Q$ is a cube with center $p$, we define $V(Q) = V(p_Q)$ (this is a unit vector in $\RR^3$), and we define the projection $\pi_Q(w) = w\cdot V(Q)$.

Our goal is to show that the set of grains inside a cube arrange themselves into a AD-regular set, in the sense of Definition \ref{ADSetDefn}. As a first step, the next lemma controls how many grains can occur inside a cube at a given scale.
\begin{lem}\label{coveringNumBdOneScale}
For all $\eps>0,$ there exists $\eta>0,\delta_0>0$ so that the following holds for all $\delta\in (0, \delta_0]$. Let $(\tubes,Y)_\delta$ be a $\eta$-extremal collection of tubes with $\delta^{-\eta}$-Lipschitz plane map $V$. Let $\rho\in [\delta^{1-\eps}, \delta^{\eps}]$ and let $\tau\in [\rho^{1-\eps}, \rho^{1/2}]$. 

Then there exists a $\eps$-extremal sub-collection $(\tubes',Y')_\delta$, so that for each $\tau$-cube $Q$, we have
\begin{equation}\label{rhoNbhdProjBd}
\mathcal{E}_\rho\Big( V(Q) \cdot \big( E_{\tubes'} \cap Q\big) \Big) \leq \rho^{-\eps}(\tau/\rho)^{1-\sigma}.
\end{equation}
See Figure \ref{smallProjectionGrainDirFig}.
\end{lem}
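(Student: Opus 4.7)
The plan is to combine the self-similarity from Proposition~\ref{multiScaleStructureExtremal} with C\'ordoba's 2D Kakeya theorem, applied to slices of $E_{\tubes'}\cap Q$ perpendicular to $V(Q)$. I would first apply Proposition~\ref{multiScaleStructureExtremal} at scale $\tau$, with a parameter $\eps_1$ chosen much smaller than $\eps$, to obtain a refinement $(\tubes_1,Y_1)_\delta$ of $(\tubes,Y)_\delta$ together with a balanced $\eps_1$-extremal cover $(\tilde\tubes,\tilde Y)_\tau$. The balanced property, combined with the extremality of $(\tilde\tubes,\tilde Y)_\tau$ (in particular the multiplicity bound from item~\ref{coarseScaleEstimateItem}, which forces $|E_{\tilde\tubes}|\gtrsim \tau^{\sigma+O(\eps_1)}$) yields the pointwise estimate
\[
|E_{\tubes_1}\cap Q|\;\leq\;\delta^{\sigma-\eta}\,\tau^{3-\sigma-O(\eps_1)}
\]
for every $\tau$-cube $Q\subset E_{\tilde\tubes}$; all other $\tau$-cubes miss $E_{\tubes_1}$, so the conclusion of the lemma is trivial for them.

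Fix such a $Q$. The $\delta^{-\eta}$-Lipschitz continuity of $V$ combined with the plane-map condition $|V(p)\cdot\dir(T)|\leq\delta$ shows that every tube $T$ contributing to $E_{\tubes_1}\cap Q$ obeys $|\dir(T)\cdot V(Q)|\leq \delta+\tau\delta^{-\eta}$, so it is essentially perpendicular to $V(Q)$ (recall $\tau\leq\rho^{1/2}$). Foliate $Q$ by 2D slices $V(Q)^{-1}(s)\cap Q$. Each 3D tube meets a slice in a 2D tube of width $\sim\delta$, and because the 3D directions cover $S^2$ at $\delta$-separation, their projections onto $V(Q)^\perp$ span the unit circle of that plane at separation $\sim\delta$ (after subsampling one representative per fiber of the projection from the band of width $\tau\delta^{-\eta}$ about $V(Q)^\perp$). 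C\'ordoba's 2D Kakeya estimate applied to this subsampled family and rescaled to the $\tau\times\tau$ slice then produces a lower bound of order $\tau^2/|\log\delta|^{O(1)}$ on the 2D area of any slice that carries the subsampled collection of 2D tubes. A dyadic pigeonholing step removes the remaining (bad) slices from the shading; the resulting sub-collection $(\tubes',Y')_\delta$ remains $\eps$-extremal provided $\eta$ and $\eps_1$ are small enough relative to $\eps$.

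By Fubini,
\[
\bigl|V(Q)\cdot(E_{\tubes'}\cap Q)\bigr|\;\leq\;\delta^{\sigma-\eta}\,\tau^{1-\sigma-O(\eps_1)}\,|\log\delta|^{O(1)},
\]
and combining this with $\mathcal{E}_\rho(A)\leq |A|/\rho+1$, $\delta\leq\rho$, and $\tau\leq\rho^{1/2}$, a direct calculation gives the claimed bound $\mathcal{E}_\rho\bigl(V(Q)\cdot(E_{\tubes'}\cap Q)\bigr)\leq \rho^{-\eps}(\tau/\rho)^{1-\sigma}$.

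\textbf{Main obstacle.} The delicate step is the slicing argument. After all the refinements one must ensure that a large fraction of the mass of $E_{\tubes'}\cap Q$ actually lies in slices where C\'ordoba's theorem delivers the promised lower bound---i.e., in slices whose 2D tube collection genuinely spans a $\delta$-separated set of directions in $V(Q)^\perp$, rather than collapsing to a handful of nearly parallel 2D tubes. Handling this requires pushing the direction-covering property of the 3D extremal collection (at scale $\delta$) through the projection onto $V(Q)^\perp$, and coupling it with the slab-mass information supplied by the balanced cover at scale $\tau$, all while retaining $\eps$-extremality of the final sub-collection $(\tubes',Y')_\delta$.
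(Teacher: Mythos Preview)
Your final step contains a decisive error: the inequality $\mathcal{E}_\rho(A)\leq |A|/\rho+1$ is simply false for general sets $A\subset\RR$. A union of $N$ intervals of length $\delta$, placed $\rho$-apart inside $[0,\tau]$, has measure $N\delta$ but $\rho$-covering number $N$; taking $N=(\tau/\rho)^{1-\sigma}$ and $\delta\ll\rho$ shows that your measure bound $|V(Q)\cdot(E_{\tubes'}\cap Q)|\lesssim \delta^{\sigma}\tau^{1-\sigma}$ carries no information whatsoever about $\mathcal{E}_\rho$. Indeed, the expected structure of $E_{\tubes'}\cap Q$ is exactly this: roughly $(\tau/\rho)^{1-\sigma}$ slabs of thickness $\sim\delta$ in the $V(Q)$ direction, spaced out over a range of length $\tau$. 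Your argument, even granting the Cordoba step, only sees the total thickness of these slabs, not how many $\rho$-intervals they occupy.

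The paper avoids this by working at scale $\rho$ rather than $\delta$. It first passes to an $\eps_1$-extremal cover $(\tilde\tubes,\tilde Y)_\rho$ (via Proposition~\ref{multiScaleStructureExtremal} applied at scale $\rho$, then again at scale $\tau$), and then runs a hairbrush argument \emph{on the $\rho$-tubes}: for each $\rho$-tube $\tilde T$ meeting $Q$, robust transversality at scale $\rho$ (Lemma~\ref{robustTransLem}) produces, at $\sim\tau/\rho$ points along $\tilde T\cap Q$, companion $\rho$-tubes $\tilde T_i$ making angle $\gtrsim\rho^{\eps_3}$ with $\tilde T$. A Cordoba-type $L^2$ estimate on these $\tilde T_i$ shows that $\bigcup_i(\tilde Y_1(\tilde T_i)\cap Q)$ has volume $\gtrapprox\tau^2\rho$, and the Lipschitz plane map confines this union to a single slab of width $\lesssim\delta^{-\eta}\rho$ in the $V(Q)$ direction. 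Hence every point $t\in V(Q)\cdot(E_{\tilde\tubes_2}\cap Q)$ accounts for $\gtrapprox\tau^2\rho$ of the volume $|E_{\tilde\tubes_1}\cap Q|\lesssim\rho^{\sigma}\tau^{3-\sigma}$, which directly bounds the number of $\rho$-separated such $t$. The point is that ``each $\rho$-grain is nearly full'' is exactly the statement needed to control a $\rho$-covering number; ``each $\delta$-slice is nearly full'' is not.

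Your slicing-plus-2D-Cordoba idea also has the secondary problem you flagged: there is no mechanism forcing a generic slice to contain $\sim\tau/\delta$ direction-separated 2D tubes (the multiplicity through a point is only $\delta^{-\sigma}$, and robust transversality only rules out $\delta^{\eps}$-concentration). But even if that obstacle were overcome, the $\mathcal{E}_\rho$-versus-measure gap above would remain.
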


\begin{figure}[h]
\centering
\begin{overpic}[scale=0.4]{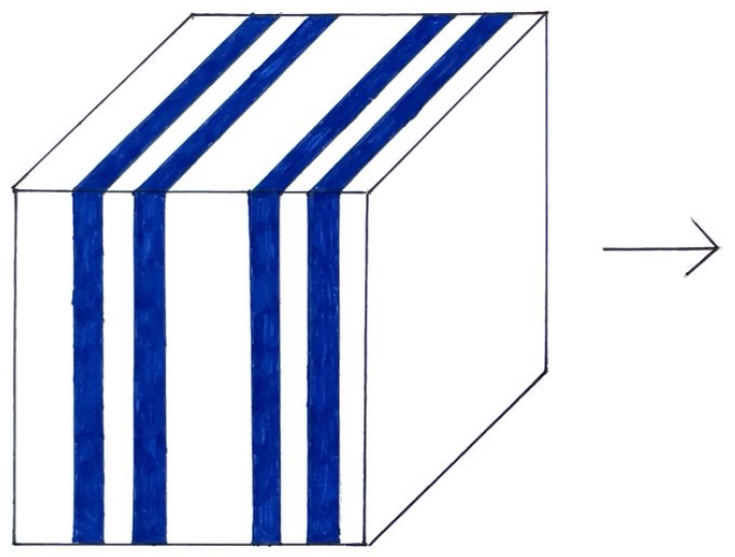}
  \put (60,40) {$Q$}
   \put (80,49) {$V(Q)$}

\end{overpic}
\caption{The set $E_{\tubes'} \cap Q$ (blue) at scale $\rho$, and the unit vector $V(Q)$}
\label{smallProjectionGrainDirFig}
\end{figure}

\begin{proof}
Let $\eps_1,\eps_2,\eps_3>0$ be small constants to be chosen below. We will select $\eta$ very small compared to $\eps_1$, $\eps_1$ very small compared to $\eps_2$, $\eps_2$ very small compared to $\eps_3$, and $\eps_3$ very small compared to $\eps$. 

Applying Proposition \ref{multiScaleStructureExtremal} to $(\tubes,Y)_\delta$ with $\eps_1/2$ in place of $\eps$, and then again to the output $(\tilde\tubes,\tilde Y)_\rho$ with $\tau$ in place of $\rho$ and $\eps_2$ in place of $\eps$, we obtain a refinement $(\tubes_1,Y_1)_\delta$ of $(\tubes,Y)_\delta$ that is covered by a $\eps_1$-extremal set $(\tilde\tubes, \tilde Y)_\rho$, which in turn is covered by a $\eps_2$-extremal set $(\dbtilde\tubes, \dbtilde Y)_\tau$. Note that $V$ is also a plane map for $(\tilde\tubes, \tilde Y)_\rho$ and $(\dbtilde\tubes, \dbtilde Y)_\tau$.

After refining $(\tubes_1,Y_1)_\delta$ and $(\tilde\tubes, \tilde Y)_\rho$, we can ensure that $(\tilde\tubes, \tilde Y)_\rho$ is a balanced cover of $(\tubes_1,Y_1)_\delta$. In the arguments below, we will find a sub-collection of $(\tilde\tubes, \tilde Y)_\rho$ with certain favorable properties. Since $(\tilde\tubes, \tilde Y)_\rho$ is a balanced cover of $(\tubes_1,Y_1)_\delta$, this sub-collection of $(\tilde\tubes, \tilde Y)_\rho$ will induce a sub-collection of $(\tubes_1,Y_1)_\delta$ of the same relative density. 

Apply Lemma \ref{robustTransLem} to $(\tilde\tubes,\tilde Y)_\rho$, with $\eps_3$ in place of $\eps$. After further refinement, we can find a collection $(\tilde\tubes_1,\tilde Y_1)_\rho$ with the following property (\emph{P}): For each $\tilde T\in \tilde\tubes_1$ and each $\tau$-cube $Q$ that intersects $\tilde Y_1(\tilde T)$, we have $|Q\cap \tilde Y_1(\tilde T)|\geq \rho^{2+2\eps_1}\tau$ (for comparison, the maximal size of an intersection of the form $\tilde T\cap Q$ is bounded by $100\rho^2\tau$).

Let $(\tilde\tubes_2,\tilde Y_2)_\rho$ be a constant multiplicity refinement of $(\tilde\tubes_1,\tilde Y_1)_\rho$, and let $(\tilde\tubes_3,\tilde Y_3)_\rho$ be a refinement of $(\tilde\tubes_2,\tilde Y_2)_\rho$ that again has property (P).  The reason for the above refinements is as follows: If $\tilde T\in\tilde \tubes_3$ and if $Q$ is a $\tau$-cube that intersects $\tilde Y_3(\tilde T)$, then $|Q\cap \tilde Y_3(\tilde T)|\geq \rho^{2+2\eps_1}\tau$. Thus we can choose a set of points $p_1,\ldots p_k \in Q\cap \tilde Y_3(\tilde T)$ that are $\geq \rho^{1-\eps_3}$ separated, with $k\gtrsim \rho^{2\eps_1+\eps_3}\rho^{-1}\tau$. For each such point $p_i$, we have $\#\tilde\tubes_2(p_i)=\mu\gtrapprox\rho^{-\sigma+\eps_1}$. But since we refined $(\tilde\tubes,\tilde Y)_\rho$ using Lemma \ref{robustTransLem}, if $\eps_1$ is sufficiently small compared to $\eps_3$, then there is a tube $\tilde T_i\in \tilde\tubes_1(p_i)$ with $\angle(\dir(\tilde T), \dir(\tilde T_i))\geq\rho^{\eps_3}$.

We claim that the union of the sets $\{Q\cap \tilde Y(\tilde T_i)\}_{i=1}^k$ has large volume. Since the points $p_i$ are $\rho^{1-\eps_3}$ separated and the tubes $\tilde T_i$ make angle $\geq\rho^{\eps_3}$ with $\tilde T$, we can label the points $p_1,\ldots,p_k$ so that $\dist\big(\tilde T_i\cap\tilde T,\ \tilde T_j\cap\tilde T\big)\gtrsim \rho^{1-\eps_3}|i-j|$. Then if distinct tubes $\tilde T_i$ and $\tilde T_j$ intersect inside $Q$, we must have $\angle(v (\tilde T_i), v(\tilde T_j))\gtrsim \rho^{1-\eps_3}|i-j|/\tau$. But this implies that for distinct $\tilde T_i$ and $\tilde T_j$ we have
\[
|Q\cap\tilde T_i\cap\tilde T_j|\lesssim \frac{\rho^3}{\angle(v (\tilde T_i), v(\tilde T_j))+\rho}\lesssim \frac{\rho^{2+\eps_3}\tau}{|i-j|}.
\]
Thus we have the following Cordoba-type $L^2$ estimate.
\begin{equation}\label{CordobaEstimate}
\Big| Q \cap \bigcup_{i=1}^k \tilde Y_1(\tilde T_i)\Big| 
\geq 
\frac{\Big(\sum_{i=1}^k |Q\cap \tilde Y_1(\tilde T_i)|\Big)^2}
{\big\Vert\sum_{i=1}^k \chi_{\tilde T_i}\big\Vert_{L^2(Q)}^2}
\gtrapprox \frac{k^2 \big(\rho^{2\eps_1+2}\tau\big)^2}
{ (k\rho^{2+\eps_3}\tau)}
\geq \rho^{6\eps_1+\eps_3}\tau^2\rho.
\end{equation}

\begin{figure}[h]
\centering
\begin{overpic}[scale=0.4]{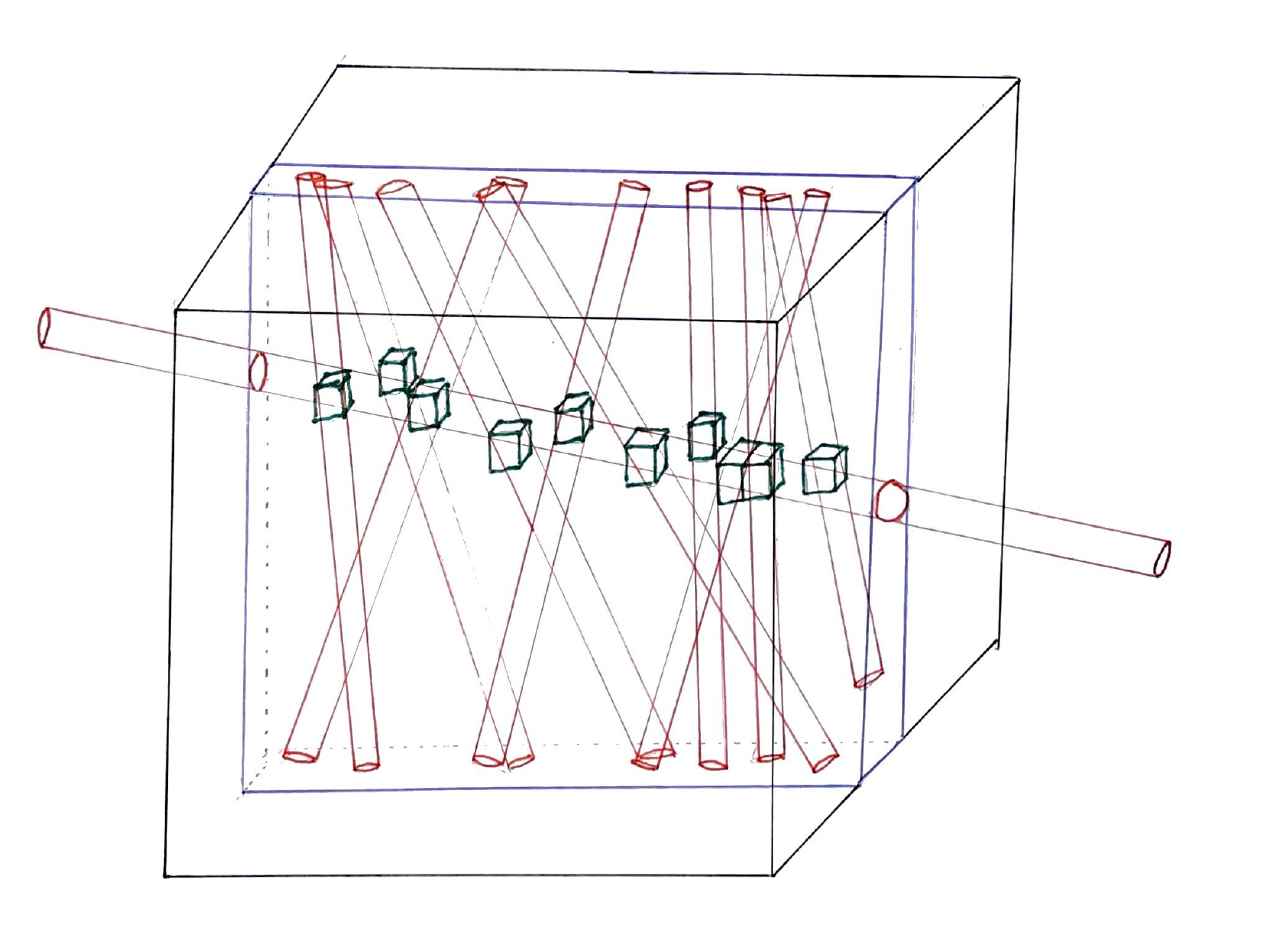}
 \put (26,35) {{\tiny $\tilde T_1$}}
 \put (33,35) {{\tiny $\tilde T_2$}}
 \put (65,33) {{\tiny $\tilde T_k$}}

\end{overpic}
\caption{The union of the sets $Q\cap \tilde Y_1(T_i),\ i=1,\ldots,k$ fill out most of the slab (blue) inside $Q$.}
\label{fullGrainFig}
\end{figure}

\eqref{CordobaEstimate} says that $\bigcup_{i=1}^k \tilde Y_1(\tilde T_i)$ has large volume inside of $Q$. On the other hand, we claim that this set is contained inside a rectangular prism of dimensions roughly $\delta^{-\eta}\rho\times\tau\times\tau$. Indeed, since the plane map $V$ is $\delta^{-\eta}$-Lipschitz and $Q$ has diameter $\leq 2\tau$, if $q\in Q\cap \tilde Y_3(\tilde T)$ then we have
\[
Q \cap\bigcup_{i=1}^k \tilde T_i\ \subset\ Q\cap N_{4\delta^{-\eta}\rho}(q + V(Q)^\perp). 
\]
This is illustrated in Figure \ref{fullGrainFig}. Thus
\begin{equation}\label{unionContainedSlab}
V(q) \cdot \Big( Q \cap\bigcup_{i=1}^k \tilde T_i\Big) \subset [V(Q)\cdot q - 4\delta^{-\eta}\rho,\ V(Q)\cdot q + 4\delta^{-\eta}\rho].
\end{equation}
Note that $V(Q)\cdot (Q \cap \tilde T)$ is also contained in this same interval.

Combining \eqref{CordobaEstimate} and \eqref{unionContainedSlab}, we conclude that if $t\in V(Q) \cdot \big(Q \cap E_{\tilde\tubes_3}\big)$, then
\begin{equation}\label{lowerBdInverseImageT}
\Big| Q \cap E_{\tilde\tubes_1} \cap \pi_Q^{-1}( [t - 4\delta^{-\eta}\rho,\ t + 4\delta^{-\eta}\rho] )\Big| \gtrapprox \rho^{6\eps_1+\eps_3}\tau^2\rho.
\end{equation}
Informally, \eqref{lowerBdInverseImageT} says that each $\rho$-separated point $t\in V(Q) \cdot \big(Q \cap E_{\tilde\tubes_3}\big)$ corresponds to a $\rho\times \tau\times\tau$ grain (i.e.~rectangular prism whose short axis is parallel to $V(Q)$) that has nearly full intersection with $ Q\cap E_{\tilde\tubes_1}$. 

If $|t-t'|>8\delta^{-\eta}\rho$ then the sets $\pi_Q^{-1}( [t - 4\delta^{-\eta}\rho,\ t + 4\delta^{-\eta}\rho]$ and $\pi_Q^{-1}( [t'- 4\delta^{-\eta}\rho,\ t' + 4\delta^{-\eta}\rho]$ are disjoint, and hence \eqref{lowerBdInverseImageT} implies that
\[
\mathcal{E}_{8\delta^{-\eta}\rho}\Big(V(Q) \cdot \big(Q \cap E_{\tilde\tubes_3}\big)\Big) \lessapprox \frac{|Q\cap E_{\tilde\tubes_1}|}{\rho^{6\eps_1+\eps_3}\tau^2\rho}\leq \frac{\rho^{\sigma-\eps_1}\tau^{3-\sigma-\eps_2}}{\rho^{6\eps_1+\eps_3}\tau^2\rho}\lesssim\rho^{-7\eps_1-\eps_2-\eps_3}(\tau/\rho)^{1-\sigma},
\]
and thus
\begin{equation}\label{cubeEstimate}
\mathcal{E}_{\rho}\Big(V(Q) \cdot \big(Q \cap E_{\tilde\tubes_3}\big)\Big) \leq 8\delta^{-\eta}\mathcal{E}_{8\delta^{-\eta}\rho}\Big(V(Q) \cdot \big(Q \cap E_{\tilde\tubes_3}\big)\Big) \lessapprox \delta^{-\eta}\rho^{-7\eps_1-\eps_2-\eps_3}(\tau/\rho)^{1-\sigma}.
\end{equation}
To conclude the proof, select $\eps_1,\eps_2,\eps_3,\eta$ sufficiently small (depending on $\eps$) so that the LHS of \eqref{cubeEstimate} is at most $\rho^{-\eps}(\tau/\rho)^{1-\sigma}$ (this is possible since $\rho\leq\delta^{\eps}$). Let $(\tubes_3,Y_3)_\delta$ be the refinement of $(\tubes_1,Y_1)_\delta$ induced by the refinement $(\tilde\tubes_3, \tilde Y_3)_\rho$. Since $E_{\tubes_3}\subset E_{\tilde\tubes_3}$, \eqref{cubeEstimate} implies that $(\tubes_3,Y_3)_\delta$ satisfies \eqref{rhoNbhdProjBd}.
\end{proof}

Lemma \ref{coveringNumBdOneScale} says that when restricted to a $\tau$-cube and examined at scale $\rho$, the set $E_{\tubes'}$ looks like a union of about $(\tau/\rho)^{1-\sigma}$ parallel grains, whose normal directions all coincide and are given by the plane map $V$. The next lemma asserts that a similar statement continues to hold if we replace a $\tau$-cube with a (potentially much larger) $\tau\times\rho^{1/2}\times\rho^{1/2}$ rectangular prism, where the two ``long'' directions of the rectangular prism are orthogonal to the plane map $V$. To prove this lemma, we need to show that at scale $\rho$, the grains inside a $\tau$-cube are pieces of the grains inside a $\rho^{1/2}$-cube. 

\begin{lem}\label{coveringNumBdOneScaleInterval}
For all $\eps>0,$ there exists $\eta>0,\delta_0>0$ so that the following holds for all $\delta\in (0, \delta_0]$. Let $(\tubes,Y)_\delta$ be a $\eta$-extremal collection of tubes with $\delta^{-\eta}$-Lipschitz plane map $V$. Let $\rho\in [\delta^{1-\eps}, \delta^{\eps}]$ and let $\tau\in [\rho^{1-\eps}, \rho^{1/2}]$. 

Then there exists a $\eps$-extremal sub-collection $(\tubes',Y')_\delta$, so that the following holds. Let $Q$ be a $\rho^{1/2}$-cube and let $F = V(Q) \cdot ( E_{\tubes'} \cap Q)$. Then for each interval $J$ of length $\tau$, we have
\begin{equation}\label{rhoNbhdProjBdInterval}
\mathcal{E}_\rho(J \cap F) \leq \rho^{-\eps}(\tau/\rho)^{1-\sigma}.
\end{equation}
\end{lem}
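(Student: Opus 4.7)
The plan is to adapt the proof of Lemma \ref{coveringNumBdOneScale}, replacing the $\tau$-cube in the hypothesis by the slab $S := V(Q)^{-1}(J) \cap Q$ of dimensions approximately $\tau \times \rho^{1/2} \times \rho^{1/2}$ (where $Q$ is now a $\rho^{1/2}$-cube and $J$ is a $\tau$-interval).

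First I would apply Lemma \ref{coveringNumBdOneScale} with a small parameter $\eps_1 \ll \eps$ in place of $\eps$, yielding an $\eps_1$-extremal refinement $(\tubes_1, Y_1)_\delta$ that satisfies the $\tau$-cube covering bound. I would then apply Proposition \ref{multiScaleStructureExtremal} at scale $\rho$ (with a small parameter $\eps_2$) and perform the technical refinements appearing in the proof of Lemma \ref{coveringNumBdOneScale}: pass to a balanced cover $(\tilde\tubes, \tilde Y)_\rho$, invoke Lemma \ref{robustTransLem} to obtain robust transversality, and refine to ensure an analogue of property (P) (so that each $\rho$-tube contributes a near-full shading to every $\tau$-cube it intersects).

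Next I would execute a Cordoba $L^2$ argument inside $S$ mirroring the one in Lemma \ref{coveringNumBdOneScale}. For each height $t \in J \cap F$ (where $F = V(Q) \cdot (E_{\tubes_1} \cap Q)$), the sub-slab $V(Q)^{-1}([t-\delta^{-\eta}\rho, t+\delta^{-\eta}\rho]) \cap Q$ has dimensions $\sim \rho^{1/2} \times \rho^{1/2} \times \delta^{-\eta}\rho$. By selecting many $\rho^{1-\eps_3}$-separated points $p_i$ in this sub-slab together with $\rho$-tubes $\tilde T_i$ through each $p_i$ whose directions pairwise differ by at least $\rho^{\eps_3}$, the Cordoba estimate produces a ``grain'' $\bigcup_i \tilde Y(\tilde T_i) \cap S$ of volume $\gtrapprox \rho^{O(\eps)} \rho^2$. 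Since grains at distinct heights $t$ (separated by at least $2\delta^{-\eta}\rho$) are disjoint, summing gives
\begin{equation*}
\mathcal{E}_\rho(J \cap F) \lesssim \frac{|S \cap E_{\tubes_1}|}{\rho^{O(\eps)} \rho^2}.
\end{equation*}

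Consequently, to establish the desired bound $\mathcal{E}_\rho(J \cap F) \leq \rho^{-\eps}(\tau/\rho)^{1-\sigma}$, it would suffice to show, after one final refinement, that $|S \cap E_{\tubes_1}| \leq W_0 := \rho^{1+\sigma-O(\eps)} \tau^{1-\sigma}$ for every pair $(Q,J)$. By $\eps$-extremality, the average value of $|S \cap E_{\tubes_1}|$ over all pairs $(Q,J)$ is of order $\rho^{1+\sigma}\tau$, which is smaller than $W_0$ by a factor of $\tau^\sigma \leq 1$. A pigeonhole argument would then identify and delete the $\delta$-cubes lying in ``bad'' slabs where $|S \cap E_{\tubes_1}| > W_0$.

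The hard part will be showing that the total mass removed by the pigeonhole step is at most a $\delta^{O(\eps)}$-fraction of $|E_{\tubes_1}|$; since the average slab weight sits only a factor $\tau^\sigma$ below the threshold $W_0$, a direct Markov-type argument is insufficient (and a naive dyadic decomposition picks up logarithmic losses per level that accumulate to overwhelm $|E_{\tubes_1}|$). Overcoming this requires carrying out the pigeonhole refinement via the balanced cover structure furnished by Proposition \ref{multiScaleStructureExtremal}, so that the ``excess'' $\rho$-cubes in bad slabs can be accounted for against the constant multiplicity of the cover, and balancing the small parameters $\eps_1, \eps_2, \eps_3, \eta$ against $\eps$ so that the sub-polynomial losses from the Cordoba estimate, the multiplicity bounds, and the pigeonhole refinement combine into an $\eps$-extremal refined collection.
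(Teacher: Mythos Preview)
Your proposal has a genuine gap at exactly the point you flag as ``the hard part.'' You need the uniform slab bound $|S\cap E_{\tubes_1}|\leq W_0=\rho^{1+\sigma-O(\eps)}\tau^{1-\sigma}$ for every pair $(Q,J)$, and you propose to achieve it by deleting the $\delta$-cubes lying in slabs where this fails. But the Markov/pigeonhole mechanism you describe cannot control the deleted mass: the bad slabs---those with $|S\cap E|>W_0$---may well carry a \emph{constant} fraction of $|E_{\tubes_1}\cap Q|$, not a $\delta^{O(\eps)}$ fraction. Nothing in the balanced-cover structure of Proposition \ref{multiScaleStructureExtremal} prevents the mass inside a $\rho^{1/2}$-cube from concentrating on a small number of $\tau$-slabs; extremality controls the distribution of $\rho$-cubes, not of anisotropic $\tau\times\rho^{1/2}\times\rho^{1/2}$ slabs. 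So the refinement step as written would destroy extremality, and you have not indicated any alternative route to the slab volume bound.

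The paper avoids this obstacle by never attempting to bound $|S\cap E|$. Instead it first applies Lemma \ref{coveringNumBdOneScale} at both scales $\tau$ and $\rho^{1/2}$, then refines so that every $\rho$-grain inside each $\rho^{1/2}$-cube $Q$ is \emph{nearly full} (volume $\gtrapprox\rho^{O(\eps)}\rho^2$). By Fubini there is a line $L$ through $Q$ in the direction $V(Q)$ meeting a $\rho^{O(\eps)}$-fraction of the grains; one then restricts $E$ to only those grains that $L$ actually hits. After this restriction, the grains with height in $J$ are in bijection with the points of $L\cap E$ lying in a segment of $L$ of length $\tau$---and that segment is contained in $O(1)$ $\tau$-cubes $W$. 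Now the already-established $\tau$-cube bound $\mathcal{E}_\rho\big(V(Q)\cdot(E\cap W)\big)\lesssim\rho^{-O(\eps)}(\tau/\rho)^{1-\sigma}$ from Lemma \ref{coveringNumBdOneScale} finishes the job directly. The point is that the transversal line $L$ converts the anisotropic slab question back into a cube question, for which you already have the estimate.
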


\begin{proof}
For notational convenience, we will suppose that $\rho^{1/2}/\tau$ is an integer, and thus every $\tau$-cube is contained in a single $\rho^{1/2}$ cube. The case where $\rho^{1/2}/\tau$ is not an integer can be handled similarly. Let $\eps_1,\ldots,\eps_4>0$ be small constants to be chosen below. We will select $\eta$ very small compared to $\eps_1$, and $\eps_i$ very small compared to $\eps_{i+1}$. We will select $\eps_4$ small compared to $\eps$. 

Apply Proposition \ref{multiScaleStructureExtremal} to $(\tubes,Y)_\delta$ with $\eps_1$ in place of $\eps$, and $\rho$ as above. We obtain a refinement of $(\tubes,Y)_\delta$ (which we will continue to denote by $(\tubes,Y)_\delta$), and a $\eps_1$-extremal collection of $\rho$-tubes, $(\tilde\tubes_1, \tilde Y_1)_\rho$. In what follows, we will describe various sub-collections of $(\tilde\tubes, \tilde Y)_\rho$. Since $(\tilde\tubes, \tilde Y)_\rho$ is a balanced cover of $(\tubes,Y)_\delta$, each sub-collection of $(\tilde\tubes, \tilde Y)_\rho$ will induce an analogous sub-collection of $(\tubes,Y)_\delta$, of the same relative density.

Applying Lemma \ref{coveringNumBdOneScale} to $(\tilde\tubes_1,\tilde Y_1)_\rho$ with $\eps_2$ in place of $\eps$ and $\tau$ as above, and then again to the output with $\eps_3$ in place of $\eps$ and $\rho^{1/2}$ in place of $\tau$, we obtain a $\eps_3$-extremal sub-collection $(\tilde\tubes_2,\tilde Y_2)_\rho$ of $(\tilde\tubes_1,\tilde Y_1)_\rho$ with the following two properties: 
\begin{itemize}
\item For each $\rho^{1/2}$-cube $Q$, we have 
\begin{equation}
\label{rhoCoveringInsideRho12Cube}
\mathcal{E}_{\rho}\big( V(Q) \cdot (E_{\tilde\tubes_2}\cap Q) \big)\leq\rho^{-\eps_3}\rho^{\frac{\sigma-1}{2}}.
\end{equation}
\item For each $\rho^{1/2}$-cube $Q$, and each $\tau$-cube $W\subset Q$, we have 
\begin{equation}\label{rhoCoveringVqInW}
\mathcal{E}_{\rho}\big( V(Q) \cdot (E_{\tilde\tubes_2}\cap W) \big)\lesssim \delta^{-\eta}\rho^{-\eps_2}(\tau/\rho)^{\sigma-1}.
\end{equation}
\end{itemize}
Note that in \eqref{rhoCoveringVqInW} we have the unit vector $V(Q)$ rather than $V(W)$. This requires justification. By Lemma \ref{coveringNumBdOneScale} we get the estimate $\mathcal{E}_{\rho}\big( V(W) \cdot (E_{\tilde\tubes_2}\cap W) \big)\lesssim \rho^{-\eps_2}(\tau/\rho)^{\sigma-1}$. To obtain \eqref{rhoCoveringVqInW}, we bound
\begin{equation}\label{coarseningCoveringNumberArgument}
\begin{split}
\mathcal{E}_{\rho}\big( V(Q) \cdot (E_{\tilde\tubes_2}\cap W) \big)& \leq 2\delta^{-\eta}\mathcal{E}_{2\delta^{-\eta}\rho}\big( V(Q) \cdot (E_{\tilde\tubes_2}\cap W)\big)\\
&\lesssim \delta^{-\eta}\mathcal{E}_{\delta^{-\eta}\rho}\big( V(W) \cdot (E_{\tilde\tubes_2}\cap W)\big)\\
&\leq \delta^{-\eta}\mathcal{E}_{\rho}\big( V(W) \cdot (E_{\tilde\tubes_2}\cap W)\big)\\
&\leq \delta^{-\eta}\rho^{-\eps_2}(\tau/\rho)^{\sigma-1}.
\end{split}
\end{equation}
The key step is the second inequality, which used the fact that $\operatorname{diam}(W)\lesssim \rho^{1/2}$ and $\angle(V(Q),V(W))\leq 2\delta^{-\eta}\rho^{1/2}$; the latter statement follows from the fact that $W\subset Q$, $\operatorname{diam}(Q)\lesssim\rho^{1/2},$ and $V$ is $\delta^{-\eta}$-Lipschitz.

Apply Proposition \ref{multiScaleStructureExtremal} to $(\tilde\tubes_2,\tilde Y_2)_\rho$ with $\eps_4$ in place of $\eps$ and $\rho^{1/2}$ in place of $\rho$. We obtain a refinement $(\tilde\tubes_3,\tilde Y_3)_\rho$ of $(\tilde\tubes_2,\tilde Y_2)_\rho$, which is contained in a union of $\rho^{1/2}$-cubes of cardinality $\lesssim \rho^{-\eps_4}\rho^{\frac{\sigma-3}{2}}$.

After a further refinement, we can suppose that each $\rho^{1/2}$-cube $Q$ that intersects $E_{\tilde\tubes_3}$ satisfies $|E_{\tilde\tubes_3}\cap Q|\gtrapprox \rho^{2\eps_4}\rho^{\frac{\sigma+3}{2}}.$ 

The two properties described above continue to hold for $(\tilde\tubes_3,\tilde Y_3)_\rho$. Therefore, if $Q$ is a $\rho^{1/2}$-cube that intersects $E_{\tilde\tubes_3}$, then by \eqref{rhoCoveringInsideRho12Cube}, $E_{\tilde\tubes_3} \cap Q$ is contained in a union of at most $\rho^{-\eps_3}\rho^{\frac{\sigma-1}{2}}$ ``grains,'' which are sets of the form $Q \cap N_{\rho}(\pi_Q^{-1}(t))$, where $t\in\RR$. The union of these grains has volume $\lesssim \rho^{-\eps_3}\rho^{\frac{\sigma+3}{2}}$, while on the other hand $E_{\tilde\tubes_3}\cap Q$ has volume $\gtrapprox \rho^{2\eps_4}\rho^{\frac{\sigma+3}{2}}$. This means that after further refining $(\tilde\tubes_3,\tilde Y_3)_\rho$ to obtain a pair $(\tilde\tubes_4,\tilde Y_4)_\rho$, we can suppose that for each $\rho^{1/2}$-cube $Q$ and each $t \in V(p_Q) \cdot ( E_{\tilde\tubes_4}\cap Q)$, we have
\begin{equation}\label{grainsFull}
|E_{\tilde\tubes_4}\cap Q\ \cap\ \pi_Q^{-1}(N_{\rho}(t))| \gtrapprox \rho^{2\eps_4+\eps_3} \rho^2.
\end{equation}
(Note that $\rho^2$ is the volume of a $\rho\times\rho^{1/2}\times\rho^{1/2}$ grain).

Next, let $Q$ be one of the $\rho^{1/2}$ cubes described above. By Fubini, there must be a point $q\in Q$ so that the line $L = q + \RR V(p_Q)$ satisfies 
\begin{equation}\label{goodLine}
|L \cap E_{\tilde\tubes_4} \cap Q|\gtrapprox \rho^{2\eps_4}\rho^{\frac{\sigma+3}{2}} \gtrsim \rho^{\eps_2 + 2\eps_4}|E_{\tilde\tubes_4} \cap Q|.
\end{equation}
(The $|\cdot|$ on the LHS of \eqref{goodLine} denotes 1-dimensional Lebesgue measure, while the $|\cdot|$ on the RHS of \eqref{goodLine} denotes 3-dimensional Lebesgue measure).

For each cube $Q$, let $G_Q$ be the union of grains that intersect $L \cap E_{\tilde\tubes_4} \cap Q$, i.e.
\[
G_Q = Q \cap \Big[  \pi_Q^{-1}\Big(N_{\rho}\big(  \pi(L \cap E_{\tilde\tubes_4} \cap Q) \big)  \Big)    \Big].
\]
Then by \eqref{grainsFull} and \eqref{goodLine}, we have
\begin{equation}\label{muchMassPreserved}
|E_{\tilde\tubes_4} \cap G_Q| \gtrapprox \rho^{\eps_2+\eps_3 + 4\eps_4}|E_{\tilde\tubes_4} \cap Q|.
\end{equation}
Let $\tilde\tubes_5 =\tilde\tubes_4$, and define the shading $\tilde Y_5$ by $\tilde Y_5(\tilde T) = \tilde Y_4(\tilde T)\cap \bigcup_{Q}G_Q$. By \eqref{muchMassPreserved} and the bound $\eps_2+\eps_3 + 4\eps_4\leq 5\eps_4$, we have $\sum_{\tilde T\in\tilde\tubes_5}|\tilde Y_5(\tilde T)|\gtrapprox \rho^{5\eps_4}\sum_{\tilde T\in\tilde\tubes_4}|\tilde Y_4(\tilde T)|$. 

Unwinding definitions, we see that if $Q$ is a $\rho^{1/2}$-cube and if $J\subset\RR$ is an interval, then 
\[
\mathcal{E}_{\rho}\Big(J \cap \big(V(p_Q) \cdot (E_{\tilde\tubes_5}\cap Q)\big)\Big)\sim \mathcal{E}_{\rho} \Big( V(p_Q) \cdot ( L_J \cap E_{\tilde\tubes_3})\Big),
\]
where $L_J = L \cap \pi_Q^{-1}(J)$, and $L$ is the line from \eqref{goodLine}. In particular, if $J$ is an interval of length $\tau$, then $L_J$ is contained in a union of $O(1)$ $\tau$-cubes $W$, and hence
\[
\mathcal{E}_{\rho}\Big(J \cap \big(V(p_Q) \cdot (E_{\tilde\tubes_5}\cap Q)\big)\Big) \lesssim \sum_W \mathcal{E}_{\rho} \big( V(p) \cdot (E_{\tilde\tubes_5}\cap W)\big),
\]
where the sum is taken over the $O(1)$ $\tau$-cubes $W$ described above. Since the contribution from each of these $\tau$-cubes is $\leq \rho^{-\eps_2}(\tau/\rho)^{1-\sigma},$ we conclude that
\begin{equation}\label{boundScaleRhoOverInterval}
\mathcal{E}_{\rho}\Big(J \cap \big(V(p_Q) \cdot (E_{\tilde\tubes_5}\cap Q)\big)\Big) \lesssim\rho^{-\eps_2}(\tau/\rho)^{1-\sigma}.
\end{equation}

We claim that the sub-collection $(\tubes', Y')_\delta$ of $(\tubes, Y)_\delta$ induced by $(\tilde \tubes_5,\tilde Y_5)_\rho$ satisfies the conclusions of Lemma \ref{coveringNumBdOneScaleInterval}. First, if $\eps_1,\ldots,\eps_4$ are chosen appropriately, then $\sum_{T\in\tubes'}|Y'(T)|\geq\delta^{\eps}$, so $(\tubes', Y')_\delta$  is $\eps$-extremal. Next, let $Q$ be a $\rho^{1/2}$ cube, let $p\in Q$, and let $J$ be an interval of length $\tau$. Then since $E_{\tubes'}\subset E_{\tilde\tubes_5}$, we have
\begin{equation}\label{rhoControlsDelta}
\mathcal{E}_{\rho}\Big(J \cap \big( V(p) \cdot (E_{\tubes'}\cap Q) \big) \Big) 
\lesssim \delta^{-\eta}\mathcal{E}_{\rho}\Big(J \cap \big( V(p_Q) \cdot (E_{\tilde\tubes_5}\cap Q) \big) \Big),
\end{equation}
where we used the same argument as in \eqref{coarseningCoveringNumberArgument} to replace $V(p)$ by $V(p_Q)$, at the cost of a multiplicative factor of $\delta^{-\eta}$.
\eqref{rhoNbhdProjBdInterval} now follows from \eqref{boundScaleRhoOverInterval}.
\end{proof}

For a fixed $\rho$, the next lemma records what happens when we iterate Lemma \ref{coveringNumBdOneScaleInterval} for many different scales $\tau$.
\begin{lem}\label{ADSetOneScale}
For all $\eps>0,$ there exists $\eta>0,\delta_0>0$ so that the following holds for all $\delta\in (0, \delta_0]$. Let $(\tubes,Y)_\delta$ be a $\eta$-extremal collection of tubes with $\delta^{-\eta}$-Lipschitz plane map $V$. Let $\rho\in [\delta^{1-\eps}, \delta^{\eps}]$. 

Then there exists a $\eps$-extremal sub-collection $(\tubes',Y')_\delta$, so that the following holds. Let $Q$ be a $\rho^{1/2}$-cube. Then $V(Q) \cdot ( E_{\tubes'} \cap Q)$ is a $(\rho, 1-\sigma; \delta^{-\eps})_1$-ADset. 
\end{lem}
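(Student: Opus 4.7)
The plan is to iterate Lemma \ref{coveringNumBdOneScaleInterval} across a dyadic family of scales between $\rho$ and $\rho^{1/2}$, producing a single sub-collection $(\tubes',Y')_\delta$ for which the covering bound holds simultaneously at every intermediate scale. Fix a small auxiliary parameter $\eps'>0$ (to be chosen much smaller than $\eps$) and let $\tau_j = 2^j \rho^{1-\eps'}$ for $j=0,1,\ldots,J$ where $J = O(\log(1/\delta))$ is chosen so that $\tau_J \sim \rho^{1/2}$. Starting from $(\tubes_0,Y_0)_\delta = (\tubes,Y)_\delta$, I would successively apply Lemma \ref{coveringNumBdOneScaleInterval} to $(\tubes_{j-1},Y_{j-1})_\delta$ with $\tau=\tau_j$ and some small $\eps_j$ in place of $\eps$, obtaining a sub-collection $(\tubes_j,Y_j)_\delta$ that satisfies the scale-$\tau_j$ estimate. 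Setting $(\tubes',Y')_\delta = (\tubes_J,Y_J)_\delta$, the fact that $V(Q)\cdot(E_{\tubes'}\cap Q)\subset V(Q)\cdot(E_{\tubes_j}\cap Q)$ for each $j$ means that the covering bound at scale $\tau_j$ automatically transfers to the final set.

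To verify the AD-set condition $\mathcal{E}_\rho(F\cap B(t,r))\leq \delta^{-\eps}(r/\rho)^{1-\sigma}$ for $F = V(Q)\cdot(E_{\tubes'}\cap Q)$ and all $r\geq\rho$, I split into three regimes. When $r\geq \rho^{1/2}$, I use that $F$ has diameter $\lesssim \rho^{1/2}$, so the full set $F$ is controlled by the scale-$\tau_J$ bound, giving $\mathcal{E}_\rho(F)\leq \rho^{-\eps_J}\rho^{-(1-\sigma)/2}\leq \delta^{-\eps}(r/\rho)^{1-\sigma}$. When $r\in[\rho^{1-\eps'},\rho^{1/2}]$, pick the smallest $\tau_j\geq 2r$, which satisfies $\tau_j\leq 4r$, cover $B(t,r)$ by $O(1)$ intervals of length $\tau_j$, and apply the scale-$\tau_j$ bound to each. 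When $r\in[\rho,\rho^{1-\eps'}]$ the lemma does not apply directly, but here the trivial bound $\mathcal{E}_\rho(B(t,r)\cap F)\leq 2r/\rho+1$ already suffices: since $(r/\rho)^\sigma \leq \rho^{-\eps'\sigma}\leq \delta^{-\eps'\sigma(1-\eps)}$ and $\sigma\leq 1$, choosing $\eps'$ small compared to $\eps$ makes this at most $\delta^{-\eps}(r/\rho)^{1-\sigma}$.

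The main obstacle is the parameter chase across $J = O(\log(1/\delta))$ iterations. Each application of Lemma \ref{coveringNumBdOneScaleInterval} produces an $\eps_j$-extremal output provided the input is $\eta_j$-extremal for some $\eta_j$ depending on $\eps_j$, and subsequent iterations use this output as input. To ensure the final collection is $\eps$-extremal, I would choose a slowly increasing chain $\eps_1 < \eps_2 < \cdots < \eps_J \leq \eps$ (for example, $\eps_j = \eps/2 + j\cdot \eps/(2J)$), and then determine the hypothesis parameter $\eta$ inductively so that $\eta$ feeds through the $J$-fold composition of the $\eps\mapsto\eta$ relation in Lemma \ref{coveringNumBdOneScaleInterval}. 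The constants $\rho^{-\eps_j}$ produced at each step are all bounded by $\delta^{-\eps}$ since $\rho\geq \delta^{1-\eps}$ and $\eps_j\leq \eps$, so these factors are absorbed into the final AD-set constant. The polylogarithmic losses from passing to constant-multiplicity refinements at each step are handled uniformly by the $\gtrapproxdelta$ notation, contributing a factor of $\log(1/\delta)^{O(J)} = \delta^{-o(1)}$ that is absorbed into $\delta^{-\eps}$.
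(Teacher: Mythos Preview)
Your overall strategy of iterating Lemma~\ref{coveringNumBdOneScaleInterval} over a family of intermediate scales $\tau$ is exactly right, and your verification that the resulting covering bounds assemble into the ADset condition is fine. The genuine gap is in the parameter chase: you take $J = O(\log(1/\delta))$ iterations, and this makes the quantifier structure circular.

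Each application of Lemma~\ref{coveringNumBdOneScaleInterval} requires the input collection to be $\eta(\eps_j)$-extremal in order to produce an $\eps_j$-extremal output, where $\eta(\cdot)$ is the (unspecified, possibly very bad) function implicit in that lemma. To feed through $J$ iterations you need the original $\eta$ to satisfy $\eta \le \eta(\eta(\cdots \eta(\eps)))$ ($J$-fold). Since $J$ depends on $\delta$, so does this composite, and hence so does $\eta$ --- but the statement of the present lemma requires $\eta$ and $\delta_0$ to be chosen \emph{before} $\delta$. Your proposed chain $\eps_j = \eps/2 + j\eps/(2J)$ does not help: the increments $\eps_j - \eps_{j-1} = \eps/(2J)$ shrink with $\delta$, and there is no reason the $\eps\mapsto\eta$ map of Lemma~\ref{coveringNumBdOneScaleInterval} respects such tiny increments uniformly. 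Relatedly, your claim that the accumulated refinement losses are $(\log(1/\delta))^{O(J)} = \delta^{-o(1)}$ is false: with $J \sim \log(1/\delta)$ this is $\exp\bigl(c\,\log(1/\delta)\log\log(1/\delta)\bigr) = \delta^{-c\log\log(1/\delta)}$, which eventually beats every fixed power $\delta^{-\eps}$.

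The paper's fix is to use a \emph{constant} number of iterations. Take $N \ge 1/\eps$ and set $\tau_j = \rho^{(j+N)/(2N)}$ for $j=0,\ldots,N-1$; these $N$ scales are independent of $\delta$, so the $N$-fold composition of $\eta(\cdot)$ yields a legitimate $\eta$ depending only on $\eps$. The price is that consecutive $\tau_j$ differ by a factor $\rho^{1/(2N)}$ rather than $2$, so when you round a given interval length $|J|$ up to the nearest $\tau_j$ you lose a factor $\rho^{-1/(2N)} \le \delta^{-1/(2N)} \le \delta^{-\eps/2}$, which is then absorbed into the ADset constant $\delta^{-\eps}$.
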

\begin{proof}
Let $N\geq 1/\eps$ be an integer. We will select a sequence of numbers $\eps/2 = \eps_N > \eps_{N-1} > \ldots >\eps_0>0$ so that the following holds.  Define $\eta = \eps_0$ and define $(\tubes_0,Y_0)_\delta = (\tubes,Y)_\delta$; this is a $\eps_0$-extremal collection of tubes with a $\delta^{-\eps_0}$-Lipschitz plane map. For each $j = 1,\ldots,N$, apply Lemma \ref{coveringNumBdOneScaleInterval} to $(\tubes_{j-1},Y_{j-1})_\delta$ with $\eps_j$ in place of $\eps$, $\rho$ as above, and $\tau = \tau_j = \rho^{(j+N)/2N}$. Denote the output by $(\tubes_j,Y_j)_\delta$; this pair is $\eps_j$-extremal and has a $\delta^{-\eps_j}$-Lipschitz plane map. We may apply Lemma \ref{coveringNumBdOneScaleInterval} provided that $\eps_{j-1}$ is chosen sufficiently small depending on $\eps_{j}$. Define $(\tubes',Y')_\delta = (\tubes_N, Y_N)_\delta$; this set is $\eps/2$-extremal. 

We will verify that $(\tubes',Y')_\delta$ satisfies the conclusion of Lemma \ref{ADSetOneScale}. Let $Q$ be a $\rho^{1/2}$-cube and let $J\subset\RR$ be an interval of length at least $\rho$. Shrinking $J$ if needed, we may suppose $J\subset V(Q)\cdot Q$, so in particular $|J|\leq 2\rho^{1/2}$. Thus there is an integer $0\leq j <N$ so that $\rho^{(j+N+1)/2N} < |J| \leq 2 \rho^{(j+N)/2N}$. Let $\tilde J$ be an interval of length $\tau_{j}=\rho^{(j+N)/2N}$ that contains $J$ (if $|J|>\rho^{1/2}$ then a slight modification is needed; we need to cover $J$ by a union of two intervals of length $\rho^{1/2}$).  Let $F = V(Q) \cdot ( E_{\tubes'} \cap Q)$. By Lemma \ref{coveringNumBdOneScaleInterval}, we have
\[
\mathcal{E}_\rho(J\cap F) \leq \mathcal{E}_\rho(\tilde J\cap F) \leq  \delta^{-\eps/2}(\rho^{(j+N)/2N}/\rho)^{1-\sigma}
\leq \delta^{-\eps/2}\rho^{-1/2N}(|J|/\rho)^{1-\sigma}\leq \delta^{-\eps}(|J|/\rho)^{1-\sigma}.\qedhere
\]
\end{proof}

The next lemma records what happens when we iterate Lemma \ref{ADSetOneScale} for many different scales $\rho$.
\begin{lem}\label{grainStructureLem}
For all $\eps>0$, there exists $\delta_0,\eta>0$ so that the following holds. Let $\delta\in (0,\delta_0]$ and let $(\tubes,Y)_\delta$ be an $\eta$-extremal collection of tubes with a $\delta^{-\eta}$-Lipschitz plane map $V$. 

Then there exists a $\eps$-extremal sub-collection $(\tubes',Y')_\delta$, so that 
\begin{equation}\label{grainStructureStatement}
V(p)\cdot\big(E_{\tubes'}\cap B(p, \rho^{1/2}\big)\ \textrm{is a}\ (\rho, 1-\sigma; \delta^{-\eps})_1\textrm{-ADset}\quad\textrm{for all}\ \rho\in[\delta,1],\ p\in \RR^3.
\end{equation}
\end{lem}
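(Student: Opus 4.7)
The plan is to iteratively apply Lemma \ref{ADSetOneScale} at a finite ($\delta$-independent) family of scales $\rho_j$ dyadically spanning $[\delta,1]$, and then to interpolate the resulting AD-set condition to all intermediate scales and to balls $B(p,\rho^{1/2})$ in place of $\rho^{1/2}$-cubes. Fix $N = \lceil 20/\eps \rceil$ and define scales $\rho_j = \delta^{1-j/N}$ for those integers $j$ with $\rho_j \in [\delta^{1-\eps/2}, \delta^{\eps/2}]$, so that Lemma \ref{ADSetOneScale} is applicable at each one. First I would apply Lemma \ref{ADSetOneScale} iteratively, feeding the refinement from step $j$ as the input to step $j+1$. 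To handle the fact that the lemma produces a weaker extremality guarantee than it requires as input, I would use a chain $\eps = \eps_N > \eps_{N-1} > \ldots > \eps_1 > 0$ with $\eps_{j-1} \leq \eta_{\mathrm{Lem}}(\eps_j)$, where $\eta_{\mathrm{Lem}}$ is the function whose existence is asserted by Lemma \ref{ADSetOneScale}. Because $N$ depends only on $\eps$, such a chain can be constructed, and the $\eta$ in the hypothesis of Lemma \ref{grainStructureLem} can be chosen to be at most $\eta_{\mathrm{Lem}}(\eps_1)$.

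After the iteration, the final refinement $(\tubes',Y')_\delta$ is $\eps$-extremal and, by the conclusion of Lemma \ref{ADSetOneScale} (which is inherited by subcollections), satisfies for each $j$ and each $\rho_j^{1/2}$-cube $Q$ that $V(Q) \cdot (E_{\tubes'} \cap Q)$ is a $(\rho_j, 1-\sigma, \delta^{-\eps_j})_1$-ADset. To deduce the statement of the lemma, fix $p \in \RR^3$ and $\rho \in [\delta,1]$. For $\rho \in [\rho_j, \rho_{j+1}]$ in the valid range, I would cover $B(p,\rho^{1/2})$ by $O((\rho/\rho_j)^{3/2}) = O(\delta^{-3/(2N)})$ cubes $Q_i$ of side $\rho_j^{1/2}$ and combine the AD-property of each $V(Q_i) \cdot (E_{\tubes'} \cap Q_i)$. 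The conversion from $V(Q_i)\cdot$ to $V(p)\cdot$ uses the $\delta^{-\eta}$-Lipschitz property of $V$: writing $x = p + y$ with $|y| \leq \rho^{1/2}$, the difference $(V(p) - V(Q_i)) \cdot x = (V(p) - V(Q_i)) \cdot p + (V(p) - V(Q_i)) \cdot y$, whose $x$-dependent part is bounded by $\delta^{-\eta}\rho^{1/2}\cdot \rho_j^{1/2} \leq \rho$ provided $1/N \geq 2\eta$. Hence replacing $V(Q_i)$ by $V(p)$ perturbs the projected set by at most $\rho$ (modulo a constant translation that does not affect AD-set structure), so the AD-set property is preserved up to a bounded multiplicative factor. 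The $(\rho_j, 1-\sigma, C)$-AD condition then directly implies $(\rho, 1-\sigma, C)$-AD for $\rho \geq \rho_j$. Boundary scales $\rho < \delta^{1-\eps/2}$ are handled by the trivial inequality $\mathcal{E}_\rho \leq (\rho_{\min}/\rho)\mathcal{E}_{\rho_{\min}}$, losing at most a $(\rho_{\min}/\delta)^\sigma \leq \delta^{-\eps\sigma/2}$ factor, and the largest scales $\rho > \delta^{\eps/2}$ use the top applied scale analogously.

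The main obstacle is the careful accounting of constants required to ensure the final AD-set bound is $\delta^{-\eps}$ rather than something larger. The accumulated multiplicative losses are: at most $\delta^{-\eps/2}$ from Lemma \ref{ADSetOneScale}, $\delta^{-3/(2N)} \leq \delta^{-3\eps/40}$ from the covering, $\delta^{-\eta}$ from the Lipschitz perturbation, and $\delta^{-\eps/20}$ from the boundary trivial bound. Provided $N = \lceil 20/\eps \rceil$ and $\eta$ is chosen small enough (in particular $\eta \leq 1/(2N) \leq \eps/40$, which also ensures the perturbation bound above), the product of these factors is at most $\delta^{-\eps}$, as required. A secondary subtlety is that the iterated refinement must remain $\eps$-extremal; this is guaranteed by the $\delta$-independent choice of $N = O(1/\eps)$ and the chain of decreasing parameters $\eps_j$, which keeps the cumulative loss bounded by a $\delta$-independent constant.
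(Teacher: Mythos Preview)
Your approach is essentially the paper's: iterate Lemma~\ref{ADSetOneScale} at $O(1/\eps)$ scales $\rho_j=\delta^{j/N}$, then interpolate to all $\rho$ via the Lipschitz property of $V$ and trivial bounds at the boundary ranges. The one substantive variation is that for $\rho$ between two sampled scales, the paper covers $B(p,\rho^{1/2})$ by $O(1)$ cubes at the \emph{coarser} neighbor $\rho_j>\rho$ (and then pays a $\delta^{-1/N}$ factor converting $\mathcal{E}_{\rho_j}$ to $\mathcal{E}_\rho$), whereas you cover by $O(\delta^{-3/(2N)})$ cubes at the \emph{finer} neighbor $\rho_j<\rho$ and pay the loss through the cube count; both work equally well.

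Two small slips to repair. First, your Lipschitz perturbation is actually $\lesssim \delta^{-\eta}\rho$, not $\leq\rho$: since $Q_i$ merely intersects $B(p,\rho^{1/2})$, one has $|p-p_{Q_i}|\lesssim \rho^{1/2}$ (not $\rho_j^{1/2}$), giving $|(V(p)-V(Q_i))\cdot y|\lesssim \delta^{-\eta}\rho^{1/2}\cdot\rho^{1/2}$; this extra $\delta^{-\eta}$ is harmlessly absorbed. Second, your accounting requires the maximal AD-constant coming from Lemma~\ref{ADSetOneScale} to be $\delta^{-\eps/2}$, so take the chain with terminal value $\eps_N\leq \eps/2$ rather than $\eps_N=\eps$; the output is then $\eps/2$-extremal (hence $\eps$-extremal) and your product of losses indeed stays below $\delta^{-\eps}$.
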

\begin{proof}
Let $N\geq 2/\eps$ be an integer. We will select a sequence of numbers $\eps/3 = \eps_N > \eps_{N-1} > \ldots >\eps_0>0$ so that the following holds.  Define $\eta = \eps_0$ and define $(\tubes_0,Y_0)_\delta = (\tubes,Y)_\delta$; this is a $\eps_0$-extremal collection of tubes with a $\delta^{-\eps_0}$-Lipschitz plane map. For each $j = 1,\ldots,N$, apply Lemma \ref{ADSetOneScale} to $(\tubes_{j-1},Y_{j-1})_\delta$ with $\eps_j$ in place of $\eps$ and $\rho=\rho_j=\delta^{j/N}$. Denote the output by $(\tubes_j,Y_j)_\delta$; this pair is $\eps_j$-extremal and has a $\delta^{-\eps_j}$-Lipschitz plane map. We may apply Lemma \ref{ADSetOneScale} provided that $\eps_{j-1}$ is chosen sufficiently small depending on $\eps_{j}$. Define $(\tubes',Y')_\delta = (\tubes_N, Y_N)_\delta$; this set is $\eps/3$-extremal.

Next, let $\rho\in [\delta,1]$, let $p\in\RR^3$, and let $J$ be an interval of length at least $\rho$. Our goal is to show that
\begin{equation}\label{RhoCoveringBdJ}
\mathcal{E}_{\rho}\Big(J\ \cap\  \Big[V(p)\cdot\big(E_{\tubes'}\cap B(p, \rho^{1/2}\big) \Big]  \Big)\leq\delta^{-\eps}(|J|/\rho)^{1-\sigma}.
\end{equation}
\medskip
\noindent {\bf Case 1: $\rho\geq\delta^{1/N}$}. In this case, \eqref{RhoCoveringBdJ} follows from the simple observation that $E_{\tubes'}\subset B(0,1)$, and $\mathcal{E}_{\rho}([-1,1])\leq 2\rho^{-1} \leq \delta^{-\eps}$. 

\medskip
\noindent {\bf Case 2: $\rho\in[\delta^{1-1/N}, \delta^{1/N})$}. In this case, there is an integer $1\leq j\leq N-2$ so that $\delta^{(j+1)/N}\leq \rho<\delta^{j/N}$.  $B(p, \rho^{1/2})$ can be covered by a union of $O(1)$ $\rho_{j}^{1/2}$-cubes. By Lemma \ref{ADSetOneScale}, for each such cube $Q$ we have
\begin{equation}\label{rhoJCoveringForCenterQ}
\mathcal{E}_{\rho_j}\Big(J\ \cap\  \big(V(Q)\cdot(E_{\tubes'}\cap Q) \big)\Big)\leq\delta^{-\eps/3}(|J|/\rho_j)^{1-\sigma}.
\end{equation}
Recall that $V(Q) = V(p_Q)$, where $p_Q$ is the center of $Q$. Since $Q$ intersects $B(p, \rho^{1/2})$, we have that $|p_Q-p|\lesssim \rho_j^{1/2}$. Since $V$ is $\delta^{-\eta}$-Lipschitz, this implies $\angle(V(Q), V(p))\lesssim \delta^{-\eta}\rho_j^{1/2}$. Finally, since $Q$ has diameter $\leq 2\rho_j^{1/2}$, we have
\begin{equation*}
\begin{split}
\mathcal{E}_{\delta^{-\eta}\rho_j}\Big(J\ \cap\  \big(V(p)\cdot(E_{\tubes'}\cap Q) \big)\Big)&\sim \mathcal{E}_{\delta^{-\eta}\rho_j}\Big(J\ \cap\  \big(V(Q)\cdot(E_{\tubes'}\cap Q) \big)\Big)\\
&\leq \delta^{-\eps/3}(|J|/\rho_j)^{1-\sigma}, 
\end{split}
\end{equation*}
where the final inequality used \eqref{rhoJCoveringForCenterQ}. 

Since \eqref{rhoJCoveringForCenterp} holds for each of the $O(1)$ cubes whose union covers $B(p, \rho^{1/2})$, we have 
\begin{equation}\label{rhoJCoveringForCenterp}
\mathcal{E}_{\delta^{-\eta}\rho_j}\Big(J\ \cap\  \Big[V(p)\cdot\big(E_{\tubes'}\cap B(p, \rho^{1/2}\big) \Big]  \Big)\lesssim\delta^{-\eps/3}(|J|/\rho_j)^{1-\sigma}.
\end{equation}
This is almost what we want, except we have bounded the $\delta^{-\eta}\rho_j$-covering number, rather than the $\rho$-covering number. But since each interval of length $\delta^{-\eta}\rho_j$ can be covered by $\leq \delta^{-1/N-\eta}$ intervals of length $\rho$, \eqref{rhoJCoveringForCenterp} implies
\[
\mathcal{E}_{\rho}\Big(J\ \cap\  \Big[V(p)\cdot\big(E_{\tubes'}\cap B(p, \rho^{1/2}\big) \Big]  \Big)\lesssim\delta^{-\eps/3-1/N-\eta}(|J|/\rho_j)^{1-\sigma}\leq \delta^{-\eps}(|J|/\rho_j)^{1-\sigma}.
\]
\eqref{RhoCoveringBdJ} now follows, provided $\delta_0$ and $\eta$ are chosen sufficiently small.

\medskip
\noindent {\bf Case 3: $\rho\in[\delta,\delta^{1-1/N})$}. This reduces to Case 2, using the observation that for every set $X\subset\RR$, $\mathcal{E}_\rho(X) \leq \delta^{-1/N}\mathcal{E}_{\delta^{1-1/N}}(X)$.

\end{proof}

\subsection{Proof of Proposition \ref{plainyGrainyProp}}
We are now ready to prove Proposition \ref{plainyGrainyProp}. In brief, we use the results from Sections \ref{findingPlaneMapSec} and \ref{lipRegularityPlaneMapSec} to find an extremal set of $\delta$-tubes $(\tubes,Y)_\delta$ with a plane map. Next, we use the results from Section \ref{grainSec} to show that this set of tubes arranges into grains of dimensions $\sqrt\delta\times\sqrt\delta\times\delta$. We select a tube $T_0$ whose $\sqrt\delta$ neighborhood covers about $\delta^{-1}$ tubes from $(\tubes,Y)_\delta$, and consider the unit rescaling (recall Definition \ref{unitRescalingDefn}) of this collection of tubes with respect to the $\sqrt\delta$ neighborhood $T_0$. This is a set of $\sqrt\delta$ tubes, which we will denote by $(\tilde T, \tilde Y)_{\sqrt\delta}$. 

In what follows, we define $\rho = \sqrt\delta$. Note that the $\sqrt\delta$ neighborhood of $T_0$ can be covered by about $\delta^{-1/2}$ balls of radius $\sqrt\delta$, and the restriction of $E_{\tubes}$ to each of these balls consists of a union of parallel grains of dimensions $\sqrt\delta\times\sqrt\delta\times\delta$. After re-scaling, the situation is as follows: The unit ball can be covered by $\rho^{-1}$ ``slabs'' (rectangular prisms) of dimensions $1\times 1\times\rho$, and the restriction of $E_{\tilde\tubes}$ to each slab consists of a union of parallel (re-scaled) grains of dimensions $1\times\rho\times\rho$; these will be called \emph{global grains}, and they will satisfy the properties described in Item \ref{plainyGrainyPropItem1} from the statement of Proposition \ref{plainyGrainyProp}.

Finally, we will apply the results from Section \ref{grainSec} to show that $(\tilde\tubes,\tilde Y)_{\rho}$ arranges into grains of dimensions $\sqrt\rho\times\sqrt\rho\times\rho$; these will be called \emph{local grains}, and they will satisfy the properties described in Item \ref{plainyGrainyPropItem2} from the statement of Proposition \ref{plainyGrainyProp}. We now turn to the details. 
\begin{proof}[Proof of Proposition \ref{plainyGrainyProp}]
Let $\eps_1,\ldots,\eps_4>0$ be small constants to be chosen below. We will select $\eps_i$ very small compared to $\eps_{i+1},$ and $\eps_4$ very small compared to $\eps$. 

\medskip
\noindent{\bf Step 1: Global grains.}\\
Define $\tau_0 = \delta_0^{\frac{2+\sigma}{\sigma}}$. By Lemma \ref{existenceOfLipschitzPlaneMap} followed by Lemma \ref{grainStructureLem}, there exists a $\eps_1$-extremal collection of tubes $(\tubes,Y)_\tau$ for some $\tau\in (0, \tau_0]$ that has a $\tau^{-\eps_1}$-Lipschitz plane map $V$, and satisfies \eqref{grainStructureStatement} with $\eps_1$ in place of $\eps$. 

Define $\delta = \tau^{\frac{\sigma}{2+\sigma}}$, so $\delta\in (0,\delta_0]$. Apply Proposition \ref{multiScaleStructureExtremal} to $(\tubes,Y)_\tau$, with $\tau$ in place of $\delta$, $\delta$ in place of $\rho$, and $\eps_2$ in place of $\eps$. We obtain a refinement $(\tubes_1,Y_1)_\tau$ of $(\tubes,Y)_\tau$, and a $\eps_2$-extremal collection of $\delta$-tubes $(\tilde\tubes_1,\tilde Y_1)_{\delta}$. 

Fix a choice of $\delta$-tube $\tilde T$; define $\tubes^\dag = \tubes_1[\tilde T]$, and let $Y^\dag$ be the restriction of $Y_1$ to $\tubes^\dag$. Since the unit re-scaling of $(\tubes^\dag,Y^\dag)_\tau$ relative to $\tilde T$ is $\eps_2$-extremal, we can find a sub-collection $(\tubes_1^\dag,Y_1^\dag)_\tau$ of $(\tubes^\dag,Y^\dag)_\tau$, with $\sum|Y_1^\dag(T)|\gtrapprox \tau^{\eps_2} \sum|Y^\dag(T)|,$ and a tube $T_0\in\tubes_1^\dag$ with the following property (P): for each $\delta$-cube $Q$ that intersects $E_{\tubes_1^\dag}$, we have $Y_1^\dag(T_0)\cap Q\neq\emptyset$. After enlarging $\tilde T$ slightly (we might have to replace $\delta$ by $2\delta$, but this is harmless), we may suppose that $T_0$ and $\tilde T$ have the same coaxial line.

Let $(\hat\tubes, \hat Y)_{\tau/\delta}$ be the unit re-scaling of $(\tubes_1^\dag,Y_1^\dag)_\tau$ relative to $\tilde T$. This re-scaling sends $T_0$ to a tube $\hat T_0$ whose coaxial line is the $z$-axis. Let 
\[
Z = \{z \in \delta\ZZ\colon (0,0,z) \in \hat Y(\hat T_0)\}.
\]
Since $T_0$ has Property (P), for each $p = (x,y,z) \in E_{\hat \tubes}$ there is $z_0\in Z$ with $|z-z_0|\lesssim\delta$.

Next, we will analyze what \eqref{grainStructureStatement} says about $(\hat\tubes, \hat Y)_{\tau/\delta}$. For each $z\in Z$, let $\phi^{-1}(z)$ be the inverse image of $z$ under the unit-rescaling associated to $\tilde T$. Then $\phi^{-1}(z)$ is contained in the coaxial line of $T_0$, and there is a point $p_z\in Y_1^\dag(T_0)$ with $|\phi^{-1}(z)-p_z|\lesssim \delta$. By \eqref{grainStructureStatement}, we have
\begin{equation}\label{dotted}
V(p_z)\cdot\big(E_{\tubes_1^\dag}\cap B(p_z, \delta)\big)\ \textrm{is a}\ (\delta^2, 1-\sigma; \tau^{-\eps_1})_1\textrm{-ADset}.
\end{equation}
But if we consider the image of $B(p_z, \delta)$ under $\phi$, \eqref{dotted} says that there is a unit vector $\hat V(z)\in S^2$ so that
\begin{equation}\label{rescaledDotted}
\hat V(z)\cdot\Big(E_{\hat\tubes}\cap \big(\RR^2\times [z, z+\delta)\big)\Big)\ \textrm{is a}\ (\delta, 1-\sigma; O(\tau^{-\eps_1}))_1\textrm{-ADset}.
\end{equation}

Since $|V(p_z)\cdot \dir(L_0)|\leq\tau$, we have $|\hat V(z)\cdot(0,0,1)|\leq \tau/\delta\leq 1/10$ for all $z\in Z$. In particular, since $V$ is $\tau^{-\eps_1}$-Lipschitz, we have that that $\hat V$ is also $\tau^{-\eps_1}$-Lipschitz, and thus for each interval $I\subset[-1,1]$ of length $\sim \tau^{\eps_1}$, we must have either $|\hat V(z)\cdot (1,0,0)|\geq \frac{1}{3}$ or $|\hat V(z)\cdot (0,1,0)|\geq \frac{1}{3}$ for all $z\in I\cap Z$. Interchanging the $x$ and $y$ axes if necessary, we may replace $Z$ with $Z\cap I$, and define $\hat\tubes_1=\hat\tubes$, $\hat Y_1(\hat T) = \hat Y(\hat T) \cap (\RR^2\times I)$. If we choose the interval $I$ appropriately, then $\sum_{\hat T\in\hat\tubes}|\hat Y_1(\hat T)|\gtrsim \tau^{\eps_1}\sum_{\hat T\in\hat\tubes}|\hat Y(\hat T)|$, and $|\hat V(z)\cdot (1,0,0)|\geq\frac{1}{3}$ for all $z\in Z$. In particular, there are $3\tau^{-2\eps_1}$-Lipschitz functions $f,g\colon Z\to\RR$ so that for all $z\in Z$, we have that $\hat V(z)$ is parallel to $(1, f(z), g(z))$ ($f$ is given by dividing the second component of $\hat V$ by the first, and $g$ is defined by dividing the third component of $\hat V$ by the first). Currently $f(z)$ is defined on $Z$, but by the Kirszbraun-Valentine Lipschitz extension theorem \cite{Kir}, we can extend $f$ to a $3\tau^{-2\eps_1}$-Lipschitz function with domain $[-1,1]$. By \eqref{rescaledDotted}, for each $p = (x,y,z)\in E_{\hat\tubes_1}$ we have
\begin{equation}\label{fDot}
(1, f(z), 0) \cdot\Big(E_{\hat\tubes}\cap \big(\RR^2\times [z, z+\delta)\big)\Big)\ \textrm{is a}\ (\delta, 1-\sigma; O(\tau^{-\eps_1}))_1\textrm{-ADset}.
\end{equation}

\medskip
\noindent{\bf Step 2: Local grains.}\\
Apply Lemma \ref{existenceOfWeakPlaneMap} to $(\hat\tubes_1, \hat Y_1)_{\tau/\delta}$ with $\tau/\delta$ in place of $\delta$ and $\eps_3$ in place of $\eps$; we obtain a refinement  $(\hat\tubes_2, \hat Y_2)_{\tau/\delta}$ and a map $U\colon E_{\hat\tubes_2}\to S^2$ that satisfies
\[
|U(p)\cdot \dir (\hat T)|\leq(\tau/\delta)^{\sigma/2}\quad\textrm{for all}\ p\in E_{\hat\tubes_2},\ \hat T\in\hat\tubes_2(p).
\]
Next, apply Lemma \ref{existenceOfPlaneMap} with $\tau/\delta$ in place of $\delta$, $\delta$ in place of $\rho$ (note that this is a valid choice, since $\delta = (\tau/\delta)^{\sigma/2}$), and $\eps_3$ in place of $\eps$. We obtain a $\eps_3$-extremal collection of $\delta$-tubes $(\tilde T, \tilde Y)_\delta$ that covers $(\hat\tubes_2, \hat Y_2)_{\tau/\delta}$ and a plane map $W\colon E_{\tilde \tubes}\to S^2$. Each $\delta$-cube $Q\subset E_{\tilde \tubes}$ intersects $E_{\hat\tubes_2}$, and thus by \eqref{fDot}, for each $z\in Z$ we have
\begin{equation}\label{intersectionScaleDelta}
(1, f(z), 0) \cdot\Big(E_{\tilde\tubes}\cap \big(\RR^2\times [z, z+\delta)\big)\Big)\ \textrm{is a}\ (\delta, 1-\sigma; O(\tau^{-\eps_1}))_1\textrm{-ADset}.
\end{equation}
Indeed, to verify \eqref{intersectionScaleDelta} let $\tilde X = \big(E_{\tilde\tubes}\cap \big(\RR^2\times [z, z+\delta)\big)\big)$ and let $\hat X = \big(E_{\hat\tubes}\cap \big(\RR^2\times [z, z+\delta)\big)\big)$. It suffices to show that each $w \in (1, f(z), 0) \cdot \tilde X$ is contained in the $O(\delta)$-neighborhood of $(1, f(z), 0) \cdot \hat X$. For such a $w$, there exists a $\delta$-cube $Q$ that intersects $\tilde X$ with $w \in (1, f(z), 0)\cap Q$. But since $Q$ is a $\delta$-cube, it must be contained in $\tilde X$. Since $Q\cap E_{\hat\tubes}\neq\emptyset,$ there must be a point $p \in Q\cap \hat X$. Thus $|(1, f(z), 0)\cdot w - (1,f(z),0)\cdot p|\lesssim |w-p|\lesssim\delta$. 

We will re-write \eqref{intersectionScaleDelta} as
\begin{equation}\label{intersectionScaleDeltaSlice}
(1, f(z_0), 0) \cdot\Big(E_{\tilde\tubes}\cap \{z=z_0\}\Big)\ \textrm{is a}\ (\delta, 1-\sigma; O(\tau^{-\eps_1}))_1\textrm{-ADset},
\end{equation}
for all $z_0\in Z$. In fact, \eqref{intersectionScaleDeltaSlice} holds for all $z\in[-1,1]$, since the set on the LHS of \eqref{intersectionScaleDeltaSlice} is empty if $z_0\in[-1,1]\backslash Z$.

To complete the proof, apply Lemma \ref{grainStructureLem} to $(\tilde T, \tilde Y)_\delta$ with $\eps_4$ in place of $\eps$. If $\eps_1,\ldots,\eps_4$ are chosen sufficiently small, then the resulting refinement of $(\tilde T, \tilde Y)_\delta$ almost satisfies the conclusions of Proposition \ref{plainyGrainyProp}, except the function $f$ and the plane map $W$ are not $1$-Lipschitz. This can be fixed via a mild rescaling, using Lemma \ref{rescalingPreservesExtremality}.
\end{proof}


\section{The global grains slope function is $C^2$}\label{globalGrainsIsC2Sec}
Our goal in this section is to strength the conclusions of Proposition \ref{plainyGrainyProp} by increasing the regularity of the global grains slope function. More precisely, we will prove the following.
\begin{prop}\label{C2GrainsProp}
For all $\eps,\delta_0>0$, there exists $\delta\in (0,\delta_0]$ and a $\eps$-extremal set of tubes $(\tubes,Y)_\delta$ with the following properties:
\begin{enumerate}
\item[$1^\prime.$] {\bf Global grains with $C^2$ slope function.}\\
There is a function $f\colon [-1,1]\to\RR$ with $\Vert f\Vert_{C^2}\leq 1$ so that $\big( E_{\tubes} \cap \{z = z_0\}\big) \cdot (1, f(z_0), 0)$ is a $(\delta, 1-\sigma, \delta^{-\eps})_1$-ADset for each $z_0\in [-1,1]$.

\item[2.] {\bf Local grains with Lipschitz plane map.} \\
$(\tubes,Y)_{\delta}$ has a 1-Lipschitz plane map $V$. For all $\rho\in[\delta,1]$ and all $p\in \RR^3$, $V(p) \cdot \big( B(p,\rho^{1/2}) \cap E_\tubes\big)$ is a $(\rho, 1-\sigma, \delta^{-\eps})_1$-ADset. 
\end{enumerate}
\end{prop}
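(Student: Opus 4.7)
}
The plan is to start from an $\eta$-extremal family satisfying the conclusions of Proposition \ref{plainyGrainyProp} (with $\eta$ chosen very small compared to $\eps$), and to produce a refinement whose global slope function $f$ is $C^2$. The organizing principle, indicated in the introduction, is that $f$ (or more precisely a representative of $f$ at scale $\delta$) has $\Vert f\Vert_{C^2}\lesssim 1$ provided that for every scale $\rho\in (\delta,1)$ and every interval $I\subset[-1,1]$ of length $\rho^{1/2}$, the graph of $f$ over $I$ lies in the $\rho$-neighborhood of some affine line. A standard Whitney-type extension argument then converts these multi-scale linear approximations into the desired $C^2$ bound, after which the local-grain clause (Item $2$) follows by applying Lemma \ref{rescalingPreservesExtremality} to match the cosmetic normalization $\Vert f\Vert_{C^2}\le 1$.

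The main task is therefore to prove the linearization statement at a fixed scale $\rho$ and interval $I$. I would first localize to this scale using Proposition \ref{multiScaleStructureExtremal}: cover $(\tubes,Y)_\delta$ by a balanced family of $\rho$-tubes, select one whose axis sits above $I$, and pass to the unit rescaling of $(\tubes,Y)_\delta$ relative to that tube (Definition \ref{unitRescalingDefn}). In the rescaled picture, the graph of $f|_I$ becomes a set $F\subset[0,1]^2$ that is the graph of a $O(1)$-Lipschitz function. Independently, the local-grain structure from Item~$2$ of Proposition \ref{plainyGrainyProp} provides, inside each ball of the natural rescaled intermediate scale, a family of parallel slabs whose common normal varies Lipschitz continuously with the center of the ball. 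Encoding those normals as a slope function $g$ produces a second set $G\subset[0,1]^2$, also the graph of a Lipschitz function.

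The crux is to establish the arithmetic containment
\begin{equation*}
F\cdot(G-G)\subset S,
\end{equation*}
where $S$ is the $(\rho,1-\sigma,\delta^{-O(\eps)})_1$-ADset recording the spacing of the global grains in the slices $\{z=z_0\}$. The derivation uses closed paths in $E_{\tubes}$: starting from a global grain at height $z_0$, traveling along a local grain with plane-map normal $q$, then along another local grain with normal $q'$, and returning to height $z_0$, yields a displacement between two global grains that, to leading order, equals $p\cdot(q-q')$ for $p\in F$ determined by the slope difference of adjacent slices. Property (P) of Proposition \ref{plainyGrainyProp} then forces this displacement into $S$. Once this is set up, I would apply the quantitative form of Theorem \ref{SWThm} (stated in this section and proved in Section \ref{SWProofSec}). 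Case (B) of that theorem is ruled out by the Dyatlov--Zahl observation \cite[Proposition 6.13]{DZ}, which asserts that an Ahlfors regular set of dimension strictly less than $1$ cannot contain an interval in any neighborhood. Hence Case (A) holds, meaning $F$ and $G$ lie in the $\rho$-neighborhoods of orthogonal lines; since $F$ is a rescaled copy of the graph of $f|_I$, this is exactly the required linearization of $f|_I$ at scale $\rho$.

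The main obstacle I anticipate is the construction of $G$ and the verification of the arithmetic identity $F\cdot(G-G)\subset S$: it requires simultaneously juggling the Lipschitz plane map $V$, both scales of grains, the unit rescaling bookkeeping, and enough pigeonholing refinements to keep the resulting sub-collection $\eps$-extremal. Once this geometric-to-arithmetic translation is in place, the remaining inputs (Theorem \ref{SWThm}, the Dyatlov--Zahl obstruction, and the Whitney-type passage from scale-by-scale linear approximation to a global $C^2$ bound) are comparatively mechanical.
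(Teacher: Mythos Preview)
Your overall strategy matches the paper's exactly: establish that the graph of $f$ over every interval of length $\rho^{1/2}$ lies in the $\rho$-neighborhood of a line by applying Theorem \ref{SWThm} (this is Lemma \ref{locallyLinearLem}, extended to all slabs in Lemma \ref{locallyLinearEverywhereLem}), iterate over a geometric sequence of scales to obtain nested rectangles (Lemma \ref{nestedRectanglesCoverGraphF}), convert the nested linear approximations into a $C^2$ bound (Proposition \ref{rectVsC2Lem}, which plays the role of your ``Whitney-type extension''), and finish with a rescaling via Lemma \ref{rescalingPreservesExtremality}.

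Two details in your sketch diverge from the paper and, as stated, would not quite work. First, the localization is \emph{not} via a tube unit rescaling: the map of Definition \ref{unitRescalingDefn} dilates $x,y$ by $\rho^{-1}$ but leaves $z$ essentially fixed, so it does not send the graph of $f|_I$ (which lives in the $(z,\text{slope})$-plane inside a box of side $\rho^{1/2}$) to a unit-scale set. The paper instead restricts $E_{\tubes}$ to a horizontal slab $\RR^2\times I$ and then to the $\rho^{1/2}$-neighborhood of a single global grain within that slab (this is the set $E$ in the hypotheses of Lemma \ref{locallyLinearLem}); the set $F$ is then defined directly as the $\rho^{-1/2}$-dilation of the graph of $f|_I$, with no tube rescaling involved. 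Second, the closed path has three legs, not two: $Q_1\to Q_2$ along a local grain (same $y$), $Q_2\to Q_3$ along a \emph{global} grain (same $z$, different $y$), and $Q_3\to Q_4$ along another local grain, with $Q_1,Q_4$ pigeonholed to share the height $z_0$. Your description omits the middle global-grain leg; without it the return to $z_0$ is not forced and the displacement does not reduce to $p\cdot(q-q')$. These are exactly the ``bookkeeping'' points you flagged as the main obstacle, so your instincts about where the work lies are correct, but the geometric setup needs to be adjusted along these lines before the argument goes through.
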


We refer the reader to Section \ref{proofSketchSection}, Step 2 for an overview of the main ideas used to prove Proposition \ref{C2GrainsProp}. Our main tool for proving Proposition \ref{C2GrainsProp} will be the following new result from projection theory.

\begin{thm}\label{SWThm}
For all $\eps>0$, there exists $\eta,\delta_0>0$ so that the following holds for all $\delta\in (0, \delta_0]$. Let $F,G\subset[0,1]^2$ be $(\delta, 1, \delta^{-\eta})$-ADsets. Then at least one of the following must hold.
\begin{enumerate}[label=(\Alph*)]
	\item\label{SWThmItemA} There exist two lines $\ell$  and  $\ell^\perp$ (where $\ell^\perp$ passes through the origin and is orthogonal to $\ell$)  whose $\delta$-neighborhoods cover a significant fraction of $F$ and $G$ respectively. In particular, 
\begin{equation}\label{largeIntersectionRec}
\mathcal{E}_\delta(N_\delta(\ell)\cap F)\geq \delta^{\eps-1} \ \textrm{and}\ \ 
\mathcal{E}_\delta(N_\delta(\ell^\perp)\cap G)\geq \delta^{\eps-1}.
\end{equation}

\item\label{SWThmItemB} Let $\mathcal{H}\subset F\times G\times G$ be any set satisfying $\mathcal{E}_{\delta}(\mathcal{H})\geq \delta^{\eta-3}$. 
Then there exists $\rho\geq\delta$ and an interval $I$ of length at least $\delta^{-\eta}\rho$ such that 
\begin{equation}\label{bigRCoveringNumber}
\mathcal{E}_\rho\big(I \cap \{a\cdot(b_1-b_2)\colon (a,b_1,b_2)\in \mathcal{H}\} \big) \geq \big(|I|/\rho\big)^{1-\eps}.
\end{equation}
\end{enumerate}
\end{thm}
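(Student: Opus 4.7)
The plan is to prove Theorem \ref{SWThm} by contrapositive: assume alternative \ref{SWThmItemA} fails and deduce \ref{SWThmItemB}. The informal strategy, outlined in the introduction, is that failure of \ref{SWThmItemA} forces $A_1 \cup A_2$ to not be concentrated near any line; by a Beck-type theorem the difference set then spans a nearly one-dimensional set of directions; and a Kaufman-type projection theorem produces a direction $\theta$ for which $\pi_\theta(A_3)$ has nearly full one-dimensional covering number in some window, which, after rescaling by $|a_1-a_2|$, becomes the interval promised by \ref{SWThmItemB}.

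Concretely, I would first pigeonhole $G$ so that $|a_1 - a_2|$ lies in a single dyadic scale $t \in [\delta, 1]$ (at a cost of a $\log(1/\delta)$ factor absorbed by decreasing $\eta$). Let $\Theta \subset S^1$ be the resulting direction set $\{(a_1 - a_2)/|a_1 - a_2|\}$. Next I would invoke the discretized Beck-type theorem of Orponen, Shmerkin and the first author \cite{OSW}: using the $(\delta, 1)$-set hypothesis on $A_1, A_2$ together with the non-concentration of $A_1 \cup A_2$ on any line (forced by failure of the first two clauses of \ref{SWThmItemA}), one should be able to show that $\Theta$ is a $(\delta/t,\, 1 - O(\eta),\, \delta^{-O(\eta)})$-set at scale $\delta/t$. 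Then I would apply a discretized Kaufman-type projection theorem in the spirit of \cite{Orp, SW0, OSW} to the $(\delta, 1)$-set $A_3$ with direction set $\Theta$. Such a theorem yields a dichotomy: either (i) $A_3$ is concentrated in the $\delta$-neighborhood of a line orthogonal to some $\theta^* \in \Theta$, which is ruled out by the third clause of \ref{SWThmItemA} applied to the orthogonal pair $(\ell, \ell^\perp)$ with $\dir(\ell) = \theta^*$; or (ii) there exist $\theta^* \in \Theta$, a scale $\rho \in [\delta, 1]$, and an interval $J$ of length $\gtrsim \delta^{-\Omega(\eta)} \rho$ such that
\[
\mathcal{E}_\rho\bigl(J \cap \pi_{\theta^*}(A_3)\bigr) \geq (|J|/\rho)^{1-\eps/2}.
\]
Finally, pick a representative $(a_1^*, a_2^*, a_3^*) \in G$ whose direction lies within $\delta/t$ of $\theta^*$ and with $|a_1^* - a_2^*| \sim t$. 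Then $(a_1^* - a_2^*) \cdot a_3 = |a_1^* - a_2^*|\, \pi_{\theta^*}(a_3) + O(\delta)$ for $a_3 \in A_3$, so the covering bound on $\pi_{\theta^*}(A_3)$ transfers to one on $\{(a_1^* - a_2^*) \cdot a_3 : (a_1^*, a_2^*, a_3) \in G\}$ on the interval $I = t \cdot J$ at scale $t\rho$, which is exactly the content of \eqref{bigRCoveringNumber}.

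The main obstacle, I expect, is the Beck-type step: the dichotomy ``$A_1 \cup A_2$ lies on a line versus $\Theta$ is one-dimensional'' is not clean at the discretized, multiscale level, since $A_1 \cup A_2$ may be line-concentrated at some scales but not at others and $\Theta$ may behave differently at different scales. I anticipate that a careful multiscale decomposition is needed, and the scale $\rho$ appearing in \ref{SWThmItemB} will be the scale at which the line-concentration of $A_1 \cup A_2$ first fails (which is also the scale feeding into the Kaufman-type step). A secondary difficulty is that \ref{SWThmItemB} must hold for \emph{every} sufficiently large $G$, not one chosen by the proof, so the bookkeeping must avoid large refinements of $G$; instead, one should pass to a refinement of the $A_i$'s themselves, exploiting the fact that the $(\delta, 1, \delta^{-\eta})$-set property is inherited by subsets after mild quantitative loss.
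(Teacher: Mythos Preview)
Your overall strategy---Beck-type radial projection to get a rich direction set, then Kaufman to find a good direction---matches the paper's proof in spirit (this is exactly how the paper proves its core Assertion~2). But several steps in your outline do not go through as written.

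First, the dichotomy you state for the Kaufman step is not how Kaufman works. Theorem~\ref{thm: Kaufman} always outputs a direction with large projection once the direction set $\Theta$ is a genuine $(\delta,\beta)$-set; there is no ``unless $A_3$ lies on a line orthogonal to $\theta^*$'' clause. Consequently your mechanism for invoking the third inequality in \eqref{largeIntersectionRec} (the $\ell^\perp$ condition on $A_3$) is wrong. In the paper, that condition arises from a completely separate argument (Lemma~\ref{A1A3Localized}, Step~1): if $A_1\cup A_2\subset N_t(\ell)$ but $A_3\not\subset N_{\delta^{-\eps_1}t}(\ell^\perp)$, one exhibits directly---without Kaufman---a scale $\rho$ and interval $I$ satisfying \eqref{bigRCoveringNumber}. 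Only after this step forces $A_3$ into the orthogonal strip does the paper anisotropically rescale and invoke Kaufman. Your proposal skips this, and I do not see how to recover the $\ell^\perp$ clause otherwise.

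Second, after selecting $(a_1^*,a_2^*)$ you need the fiber $\{a_3:(a_1^*,a_2^*,a_3)\in G\}$ to be a $(\delta,1)$-set of nearly full size in order to apply Kaufman to it; for an arbitrary dense $G$ this fiber can be empty. The paper handles this by first passing $G$ through a hypergraph refinement (Lemma~\ref{hypergraphRefinementLem}) that makes it \emph{uniformly} $\delta^{O(\eta)}$-dense, so every fiber is large. Your remark that one should ``refine the $A_i$'s themselves'' rather than $G$ is backwards: the refinement is of $G$, and the point is that a $\delta^{O(\eta)}$-loss in density is acceptable. Finally, the multiscale obstacle you flag is real, but the paper's resolution is not a multiscale decomposition: it is a two-ends reduction on tubes (Lemma~\ref{twoEndsOnLines}) that localizes $A_1,A_2$ to a single strip of some width $w$, followed by an anisotropic rescaling (Lemma~\ref{lem: rescaling}) to reduce to the case $w\sim 1$ at scale $\delta/w$, and then a bootstrap (Lemmas~\ref{lem: liushen} and~\ref{iteratedBootstrap}) upgrading weak non-concentration to $1$-thin tubes before Kaufman is applied.
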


\begin{rem}
Note that the statement of Theorem \ref{SWThm} makes sense over $\CC$ (with all exponents doubled), but the theorem is false. To construct a counter-example, let $\Omega \subset\CC^2$ be a neighbourhood of the origin and let $F=G=\{(z,\bar z)\colon z\in\CC\}\cap \Omega$; these are $(\delta, 2, O(1))$-sets. Let $\mathcal{H} = F\times G\times G$; then $\mathcal{E}_\delta(\mathcal{H})\gtrsim\delta^{-6}$. Conclusion (A) fails because the $\delta$-neighbourhood of every line $\ell=\{(a+bt, c+dt)\colon t\in \CC\}$ intersects only a small fraction of $F$; indeed, since $(b,d)\neq (0,0)$, we have $\mathcal{E}_{\delta} (N_{\delta}(\ell) \cap F) \leq \delta^{-1}$, which is much smaller than $\delta^{-2}$.  Conclusion (B) fails because if $\omega=(z_0, \bar z_0)\in F$ and $\zeta_1=(\bar z_1, z_1),\zeta_2=(\bar z_2, z_2)\in G$, then
\[
\omega\cdot(\zeta_1-\zeta_2) = z_0(z_1-z_2) + \bar z_0(\bar z_1-\bar z_2) = z_0(z_1-z_2) + \overline{z_0(z_1-z_2)} = \overline{\omega\cdot(\zeta_1-\zeta_2)},
\]
and thus
\[
\{\omega\cdot(\zeta_1-\zeta_2) \colon (\omega,\zeta_1,\zeta_2)\in\mathcal H\} \subset \{z\in\CC\colon \operatorname{Im}(z) = 0\}.
\]
The latter set fails to satisfy the analogue of \eqref{bigRCoveringNumber} (with exponent $2-\eps$ in place of $1-\eps$) for any choice of scale $\rho$.
\end{rem}

The proof of Theorem \ref{SWThm} uses ideas and results that are different in character from the rest of the proof of Theorem \ref{mainThm}, so we will defer the proof to Section \ref{SWProofSec}. In Section \ref{SWProofSec} we will prove a mild generalization of Theorem \ref{SWThm} in which the non-concentration condition on $F$ and $G$ is weakened. 


\subsection{The slope function is locally linear}
In this section we will use Theorem \ref{SWThm} to show that if $(\tubes,Y)_\delta$ is an extremal collection of tubes that satisfies the conclusions of Proposition \ref{plainyGrainyProp}, then the restriction of the global grains slope function to an interval of length $\rho^{1/2}$ looks linear at scale $\rho$. 

To begin, we will study the interplay between the ``global grains'' and ``local grains'' structures from Proposition \ref{plainyGrainyProp}. If we select a scale $\rho\in[\delta,\delta^{\eps}]$ and restrict $E_{\tubes}$ to the $\rho^{1/2}$ neighborhood of a global grain, then we obtain a set $E\subset E_{\tubes}$ that has the following two structures: The global grains property (Item \ref{plainyGrainyPropItem1} from Proposition \ref{plainyGrainyProp}) says that each ``horizontal'' (i.e. $z=z_0$) slice of $E$ is a union of $1\times\rho$ rectangles pointing in the direction $(1, f(z_0), 0)$, and the spacing of these rectangles forms a ADset in the sense of Definition \ref{ADSetDefn}. The local grains property (Item \ref{plainyGrainyPropItem2} from Proposition \ref{plainyGrainyProp}) says that the restriction of $E$ to a ball of radius $\rho^{1/2}$ is a union of $\rho^{1/2}\times\rho^{1/2}\times\rho$ grains, whose normal direction is given by the plane map $V$. We will use these two structures to construct sets $F$, $G$, and $\mathcal{H}\subset F\times G\times G$ that satisfy the hypotheses of Theorem \ref{SWThm}. We will show that Conclusion \ref{SWThmItemB} from Theorem \ref{SWThm} would contradict the ADset spacing condition from the global grains structure, and hence the sets $F$ and $G$ must satisfy Conclusion \ref{SWThmItemA}. This in turn implies that the slope function $f$ is linear at scale $\rho$ over intervals of length $\rho^{1/2}$. More precisely, the slope function $f$ agrees with a linear function at scale $\rho$ on a substantial portion of most intervals of length $\rho^{1/2}$. The precise version is stated in Lemma \ref{locallyLinearLem} below. The precise hypotheses of this lemma are slightly technical; they are formulated in this fashion to allow the lemma to be applied at many scales and locations. 

In Lemma \ref{locallyLinearEverywhereLem}, we will apply Lemma \ref{locallyLinearLem} at many scales and locations to the (re-scaled) slices of an extremal Kakeya set. We will conclude that for many different scales $\rho$, the slope function $f$ agrees with a linear function at scale $\rho$ on a substantial portion of most intervals of length $\rho^{1/2}$. This in turn implies that on a substantial portion of its domain, the slope function $f$ agrees with a function that has controlled $C^2$-norm. 

\begin{lem}\label{locallyLinearLem}
For all $\eps>0$, there exists $\eta,\rho_0>0$ so that the following holds for all $\rho\in (0, \rho_0]$. Let $I\subset[-1,1]$ be an interval of length $\rho^{1/2}$ with left endpoint $z^*$, and let $Z\subset I$. Let $f\colon I \to[0,1]$ be 1-Lipschitz. Let $E\subset[-1,1]^2\times Z$ be a union of $\rho$-cubes contained in the $\rho^{1/2}$-neighborhood of a line pointing in direction $(-f(z^*), 1, 0)$, with $|E|\geq\rho^{1+\sigma/2+\eta}$. Let $V\colon E\to S^2$ be 1-Lipschitz, with $|V(p)\cdot(0,0,1)|\leq 1/2$ for all $p\in E$. Suppose that for each $z_0\in Z$, 
\begin{equation}\label{fewGlobalGrainValues}
\big(E \cap \{z=z_0\}\big)\cdot\big(1, f(z_0),0\big)\quad\textrm{is a}\ (\rho, 1-\sigma,\rho^{-\eta})_1\textrm{-ADset},
\end{equation}
and for each $p\in E$, 
\begin{equation}\label{fewLocalGrainValues}
\big|\big(E\cap B(p, \rho^{1/2})\big)\cdot V(p)\big|\leq\rho^{\frac{1+\sigma}{2}-\eta}.
\end{equation}

\medskip

Then there exists a set $Z_{\operatorname{lin}}\subset Z$ with 
$
|Z_{\operatorname{lin}}|\geq \rho^{1/2+\eps},
$ 
and a linear function  $L\colon [z^*, z^*+\rho^{1/2}]\to [0,1]$ so that $|f(z)-L(z)|\leq\rho$ for all $z\in Z_{\operatorname{lin}}$. 
\end{lem}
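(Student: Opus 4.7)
The plan is to apply Theorem \ref{SWThm} to two sets $F,G\subset[-1,1]^2$ constructed from $f$ and the plane map $V$, so that $F\cdot(G-G)$ is (up to rescaling) contained in the Ahlfors-regular set of global grain positions at height $z_0$. Alternative \ref{SWThmItemB} will then be excluded by the Dyatlov--Zahl observation \cite[Prop.~6.13]{DZ} that AD-regular sets of dimension $<1$ cannot contain interval-like concentrations, and alternative \ref{SWThmItemA} will force the rescaled graph of $f$ to lie in the $\rho$-neighbourhood of a line, giving local linearity.

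Write $\rho=\delta^{1/2}$. After a rigid motion of $\RR^3$ we may assume the line is the $y$-axis and $f(z_0)=0$. Define
\begin{equation*}
F=\bigl\{\bigl(\tfrac{f(z)-f(z_0)}{\rho},\tfrac{z-z_0}{\rho}\bigr):z\in Z\bigr\}\subset[-1,1]^2,
\end{equation*}
a 1-Lipschitz graph at scale $\rho$; after a pigeonhole on $Z$ (using the volume bound $|E|\ge\delta^{1+\sigma/2+\eta}$), $F$ becomes a $(\rho,1,\rho^{-O(\eta)})$-set. For $G$, note that the local-grain condition forces $V(p)\perp(-f(z_p),1,0)$, so $V_2(p)/V_1(p)=f(z_p)$, leaving only $\alpha(p):=V_3(p)/V_1(p)$ as free data (bounded since $|V_3|\le 1/2$ forces $|V_1|$ bounded below). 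Fix a base point $p_0=(x_0,y_0,z_0)\in E$ by pigeonhole and set $\alpha_0=\alpha(p_0)$. Define
\begin{equation*}
G=\bigl\{(y_p,\alpha(p)):p\in E\bigr\}\cup\bigl\{(y,\alpha_0):(x,y,z_0)\in E\text{ for some }x\bigr\}.
\end{equation*}
Because $V$ is 1-Lipschitz and each slice $E\cap\{y_p=\text{const}\}$ has diameter $O(\rho)$, $\alpha$ varies by $O(\rho)$ across each such slice, so $G$ lies in an $O(\rho)$-neighbourhood of a union of two 1-Lipschitz graphs and is a $(\rho,1,O(1))$-set.

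The arithmetic identity comes from tracking $x+f(z)y$ along the path $p_0\to p_1\to p_2$, where $(p_0,p_1)$ share the local grain through $p_0$ and $(p_1,p_2)$ share the local grain through $p_1$, with $p_1=(x_1,y_1,z_1)$ at some height $z_1$ and $p_2=(x_2,y_2,z_0)$ back at height $z_0$. Using $V_2(p_i)/V_1(p_i)=f(z_i)$ for $i=0,1$, a direct computation yields
\begin{equation*}
(x_2+f(z_0)y_2)-(x_0+f(z_0)y_0)=\bigl(f(z_1)-f(z_0)\bigr)(y_1-y_2)+(z_1-z_0)\bigl(\alpha(p_1)-\alpha_0\bigr),
\end{equation*}
which equals $\rho\,p\cdot(q-q')$ with $p=\bigl(\tfrac{f(z_1)-f(z_0)}{\rho},\tfrac{z_1-z_0}{\rho}\bigr)\in F$, $q=(y_1,\alpha(p_1))\in G$, and $q'=(y_2,\alpha_0)\in G$. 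As $p_2$ ranges over $E\cap\{z=z_0\}$, the left-hand side ranges over $P-c_0$, where $c_0=x_0+f(z_0)y_0$ and $P$ is the AD-regular grain-position set from \eqref{fewGlobalGrainValues}. Hence $F\cdot(G-G)\subset\rho^{-1}(P-c_0)$, which is a $(\rho,1-\sigma,\rho^{-O(\eta)})_1$-AD set at scale $\rho$.

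Apply Theorem \ref{SWThm} at scale $\rho$ with $A_3=F$ and $A_1,A_2\subset G$. Alternative \ref{SWThmItemB} would provide, at some scale $r\ge\rho$ on some interval $I$ with $|I|/r\ge\rho^{-\eta}$, a covering lower bound $\mathcal{E}_r(I\cap F\cdot(G-G))\ge(|I|/r)^{1-\eps'}$ (for a small parameter $\eps'<\sigma$); combined with the AD upper bound $\le\rho^{-O(\eta)}(|I|/r)^{1-\sigma}$, this forces $(|I|/r)^{\sigma-\eps'}\le\rho^{-O(\eta)}$, which is violated once $|I|/r$ is large enough (taking $\eta$ small relative to $\sigma-\eps'$). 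Hence alternative \ref{SWThmItemA} holds: a significant fraction of $F$ lies in the $\rho$-neighbourhood of a line $\ell^\perp$. Unrescaling, this corresponds to a linear function $L$ and a subset $Z_{\operatorname{lin}}\subset Z$ of size $\ge\delta^{1/2+\eps}$ on which $|f(z)-L(z)|\le\delta$, as required. The principal obstacle is ensuring that the pigeonholing preserves the $(\rho,1)$-set properties of $F$ and $G$ and that the admissible triples $(p_0,p_1,p_2)$ form a subset $\mathcal{H}\subset F\times G\times G$ of density $\ge\rho^{-3+O(\eta)}$ as required by Theorem \ref{SWThm}; this is controlled by combining the volume lower bound $|E|\ge\delta^{1+\sigma/2+\eta}$ with the local-grain estimate \eqref{fewLocalGrainValues} to prevent collapse of the parameterisation.
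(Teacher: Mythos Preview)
Your overall architecture is right: build $F$ from the rescaled graph of $f$, build $G$ from the third-to-first component ratio of $V$, show that a dot product $F\cdot(G-G)$ lands in the rescaled AD-regular set of grain positions, apply Theorem~\ref{SWThm}, and rule out alternative~(B) via the AD upper bound. This is exactly the paper's strategy. But your implementation has a genuine gap in the path construction.

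You use a two-step path $p_0\to p_1\to p_2$ where each step is a ``local grain'' relation $V(p_i)\cdot(p_{i+1}-p_i)=O(\delta)$, and you assert $V_2(p)/V_1(p)=f(z_p)$. This equality is false: the argument that $V(p)\perp(-f(z_p),1,0)$ (which is the content of the paper's Step~1, using the interaction of \eqref{fewGlobalGrainValues} and \eqref{fewLocalGrainValues}) only gives $|V(p)\cdot(-f(z_p),1,0)|\lesssim\delta^{1/2-O(\eta)}$, hence $V_2/V_1=f(z_p)+O(\rho)$. When you substitute this into your first relation, the error term is $O(\rho)\cdot|y_1-y_0|$. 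If $|y_1-y_0|\sim 1$ (which you need, since $q=(y_1,\alpha(p_1))$ and $q'=(y_2,\alpha_0)$ must range over a set of unit diameter for $G$ to be a $(\rho,1)$-set), the error is $O(\rho)\gg\delta$, so your identity only places $(x_2+f(z_0)y_2)-(x_0+f(z_0)y_0)$ in an $O(\rho)$-neighbourhood of the AD-set --- far too coarse. Conversely, if you restrict to $|y_1-y_0|=O(\rho)$ (i.e.\ genuinely local steps inside $B(p_0,\delta^{1/2})$), then $|q-q'|=O(\rho)$ and all your dot products are trivially $O(\rho)$, yielding no information.

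The paper avoids this by using a \emph{three}-step path $Q_1\to Q_2\to Q_3\to Q_4$: two local steps with \emph{fixed $y$-coordinate} (so the $V_2/V_1\approx f$ error is multiplied by $\Delta y=0$), separated by one \emph{global} step with fixed $z$-coordinate that uses the exact vector $(1,f(z),0)$ rather than any approximation of $V$. This is why the $y$-variable can range over $[0,1]$ without introducing an $O(\rho)$ error. The resulting identity~\eqref{dotInA} then holds up to $O(\delta)$, and the quadruple count (Step~3, via Cauchy--Schwarz on global grains and the local-grain density) supplies the $\rho^{-3+O(\eta)}$ many triples you correctly flag as the ``principal obstacle'' but leave unresolved. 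You also omit the Step~1 argument that $|V_1(p)|\gtrsim 1$, without which $\alpha=V_3/V_1$ is not even well-defined.
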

\begin{proof}
Without loss of generality we may suppose that $I = [0, \rho^{1/2}]$ (i.e.~$z^*=0$), $f(0)=0$ (and hence $f(Z)\subset[-\rho^{1/2},\rho^{1/2}]$), and that $E$ is contained in the $\rho^{1/2}$-neighborhood of the $y$-axis.  If not, then we can apply a (harmless) translation so that $E$ is contained in the $\rho^{1/2}$-neighborhood of a line passing through the origin, and then we can apply a rotation around the $z$-axis so that $f(0) = 0$. 

Note that the image $E'$ of $E$ under this rotation is no longer a union of $\rho$-cubes (since by definition, $\rho$-cubes are aligned with the standard grid). However, we can replace $E'$ by the union $E''$ of $\rho$ cubes that intersect $E'$, and \eqref{fewGlobalGrainValues} and \eqref{fewLocalGrainValues} will continue to hold for $E''$ (the quantities $\rho^{-\eta}$ and $\rho^{\frac{1+\sigma}{2}-\eta}$ in \eqref{fewGlobalGrainValues} and \eqref{fewLocalGrainValues} are respectively weakened to $100\rho^{-\eta}$ and $100\rho^{\frac{1+\sigma}{2}-\eta},$ but this is harmless). Note that our rotation fixes each plane $\{z=z_0\}$, so after the initial shift $Z\to Z-z^*$, the set $Z$ remains unchanged by this procedure. 

\medskip

\noindent {\bf Step 1: $V$ has large $x$-coordinate}\\
Divide $[0,\rho^{1/2}]\times [0,1]\times [0,\rho^{1/2}]$ into $\rho^{-1/2}$ cubes. After pigeonholing and refining $E$, we may suppose that each $\rho^{1/2}$-cube $Q$ that intersects $E$ satisfies $|Q\cap E|\gtrsim \rho^{3/2+\sigma/2+\eta}$, and these $\rho^{1/2}$ cubes are $\rho^{1/2}$ separated. For each such cube $Q$, select a point $p_Q\in Q\cap E$; since the cubes are $\rho^{1/2}$ separated, so are the points $p_Q$. For each such cube $Q$, define $\pi_Q(q) = q\cdot V(p_Q)$. 

Comparing with \eqref{fewLocalGrainValues}, we may further refine $E$ so that each local grain is almost full: if $Q$ is $\rho^{1/2}$-cube that intersects $E,$ and $w \in V(p_Q)\cdot (Q \cap E)$, then 
\begin{equation}\label{localGrainAlmostFull}
|E\cap Q \cap N_{\rho}(\pi_Q^{-1}(w))|\gtrsim \rho^{2+2\eta}.
\end{equation} 
Fix such a cube $Q$. By \eqref{localGrainAlmostFull} and Fubini, there is some $z_1\in [0,\rho^{1/2}]$ so that  
\begin{equation}\label{rectangleFilledOut}
\Big|\{z=z_1\} \cap \big( E\cap Q \cap N_{\rho}\big(\pi_Q^{-1}(w)\big)\big)\Big|\gtrsim \rho^{3/2+2\eta}
\end{equation}
(note that $|\cdot|$ in \eqref{localGrainAlmostFull} refers to 3-dimensional Lebesgue measure, while $|\cdot|$ in \eqref{rectangleFilledOut} refers to 2-dimensional Lebesgue measure).

Since $|V(p_Q)\cdot (0,0,1)|\leq 1/2$, the set on the LHS of \eqref{rectangleFilledOut} is contained in a rectangle $R$ (in the $\{z=z_1\}$ plane) of dimensions roughly $\rho^{1/2}\times\rho$. Thus $R\cdot (1, f(z_1))$ is an interval $I$, whose length depends on the angle between the vectors $V(p_Q)$ and $(1, f(z_1), 0)$. More precisely, we have 
\begin{equation}\label{lengthOfI}
|I|\sim \rho^{1/2} |V(p_Q)\cdot (-f(z_1), 1, 0)|.
\end{equation}
By \eqref{rectangleFilledOut}, we have
\begin{equation}\label{largeDot}
\Big|(1, f(z_1), 0)\cdot \Big(\{z=z_1\} \cap \big( E\cap Q \cap N_{\rho}(\pi_Q^{-1}(w))\big)\Big)\Big|\gtrsim \rho^{2\eta}|I|,
\end{equation}
but the set on the LHS of \eqref{largeDot} is contained in $I \cap \big(E \cap \{z=z_1\}\big)\cdot\big(1, f(z_1),0\big)$, and \eqref{fewGlobalGrainValues} says that 
\[
\Big|I \cap \big(E \cap \{z=z_1\}\big)\cdot\big(1, f(z_1),0\big)\Big|\leq \rho^{1-\sigma-\eta}|I|^{\sigma}.
\]
Re-arranging, we conclude that $|I|\leq\rho^{1-3\eta/(1-\sigma)}$, and thus by \eqref{lengthOfI}, 
\begin{equation}\label{dotWithFz10}
|V(p_Q)\cdot (-f(z_1), 1, 0)|\lesssim  \rho^{1/2-3\eta/(1-\sigma)}. 
\end{equation}
Since $f(z_1)\in [0, \rho^{1/2}]$ and $V(p_Q)$ is a unit vector, if $\eta$ is selected sufficiently small then \eqref{dotWithFz10} implies that $|V(p_Q)\cdot (0, 1, 0)|\leq 1/10.$ Since $V(p_Q)$ is a unit vector and $|V(p_Q)\cdot(0,0,1)|\leq 1/2$, we conclude that $|V(p_Q)\cdot (1,0,0)|\geq 1/4$.

\medskip

\noindent {\bf Step 2: Finding the function $g$.}\\
Let $E_{\rho^{1/2}}$ be the union of $\rho^{1/2}$-cubes that intersect $E$, and let $J\subset[0,1]$ be the projection of $E_{\rho^{1/2}}$ to the $y$-axis. Each $y\in J$ is in the image of precisely one $\rho^{1/2}$-cube that intersects $E$; we will denote this cube by $Q(y)$. The set $J$ is a union of $\rho^{1/2}$ intervals, which are $\rho^{1/2}$ separated. 

Define the functions $g(y), h(y)\colon J\to\RR$ as follows: For each $y\in J$, $(1, h(y), g(y))$ is parallel to $V(p_Q)$, where $Q=Q(y)$. The functions $g$ and $h$ are constant on each $\rho^{1/2}$ interval of $J$. Since $V$ is 1-Lipschitz and $|V(p_Q)\cdot(1,0,0)|\geq 1/4$ for each cube $Q$, we have that $g$ and $h$ are $4$-Lipschitz. Abusing notation, we will replace $g$ and $h$ by their $4$-Lipschitz extension to $[0,1]$; the image of $g$ and $h$ are contained in $[-5,5]$.

If $p = (x,y,z)\in E$ is contained in a $\rho^{1/2}$-cube $Q$, then $|V(p)-V(p_Q)|\leq \sqrt 3\rho^{1/2}$, and hence by \eqref{fewLocalGrainValues} we have
\[
\big|\big(E\cap B(p, \rho^{1/2})\big)\cdot (1, h(y), g(y))\big|\lesssim\rho^{\frac{1+\sigma}{2}-\eta}.
\]
In particular, for each $y_0\in [0,1]$ we have
\begin{equation}\label{ySliceLocalGrains}
\Big|\big(E \cap \{y=y_0\}\big)\cdot\big(1, 0,g(y_0)\big)\Big|\lesssim \rho^{\frac{1+\sigma}{2}-\eta}.
\end{equation}

We remark that all of the arguments thus far also apply to the Heisenberg group example \eqref{defnOfH} (this would involve working in $\CC^3$ rather than $\RR^3$, but all of the arguments thus far could be translated to that setting). For the Heisenberg group example, we would have that the function $g\colon \CC\to\CC$ is given by $g(z) = \bar z$.

\medskip

\noindent{\bf Step 3: Paths of length 3.}\\
After refining the set $E$, we can select a set $Z_{\operatorname{popular}}\subset\rho\ZZ\cap[0, \rho^{1/2}]$ so that each $\rho$-cube $Q\subset E$ has center with $z$-coordinate in $Z_{\operatorname{popular}}$, and each $z\in Z_{\operatorname{popular}}$ is the center of  $\gtrsim \rho^{-3/2+\sigma/2+\eta}$ such cubes. After this refinement $E$ is still a union of $\rho$-cubes, and its volume has decreased by at most a factor of $1/2$. 

After pigeonholing and applying a shift of the form $(x,y,z)\mapsto(x,y-y_0,z)$ for some $y_0\in [0, \rho^{1/2}]$, we can find a set $E_1\subset E$ that is a union of $\rho$-cubes, with $|E_1|\geq \rho^{1/2+2\eta}|E|$, so that each $\rho$-cube $Q\subset E_1$ has center $p_Q$ whose $y$-coordinate is contained in $\rho^{1/2}\ZZ$. Indeed, for each local grain, we choose  a $\rho\times \rho\times \rho^{1/2}$-segment orthogonal to  the $y$-direction and include the intersection of this segment with $E$ in $E_1$.   Note that \eqref{fewGlobalGrainValues} and \eqref{ySliceLocalGrains} continue to hold with $E_1$ in place of $E$.

We say that two $\rho$-cubes $Q$ and $Q'$ contained in $E_1$ are \emph{in the same global grain} if their centers $p_Q$ and $p_{Q'}$ have the same $z$-coordinate, which we will denote by $z_Q$, and 
\[
\big| p_Q\cdot(1, f(z_Q), 0) - p_{Q'}\cdot(1, f(z_Q), 0)\big| \leq \rho.
\]

We say that two $\rho$-cubes $Q$ and $Q'$ contained in $E_1$ are \emph{in the same local grain} if their centers $p_Q$ and $p_{Q'}$ have the same $y$-coordinate, which we will denote by $y_Q$, and

\[
\big| p_Q\cdot(1, 0, g(y_Q)) - p_{Q'}\cdot(1, 0, g(y_{Q}))\big| \leq \rho.
\]

Recall that $E_1$ is a union of at least $\rho^{-3/2+\sigma/2+3\eta}$ $\rho$-cubes. For each such cube $Q$, the $y$-coordinate of its center is an element of $\rho^{1/2}\ZZ\cap[0,1]$, and thus takes one of $\rho^{-1/2}$ values. Once the $y$-coordinate has been fixed, by \eqref{fewLocalGrainValues} there are at most $\rho^{-1/2+\sigma/2-\eta}$ choices for $p_Q\cdot \big(1, 0, g(y)\big)$. Thus we can partition $E_1$ into $O(\rho^{-1 + \sigma/2 - \eta})$ sets, each of which are unions of $\rho$-cubes, so that any two $\rho$-cubes from a common set are in the same local grain. Since $E_1$ contains at least $\rho^{-3/2+\sigma/2+3\eta}$ $\rho$-cubes, by Cauchy-Schwarz there are $\gtrsim \big(\rho^{-3/2+\sigma/2+3\eta}\big)^2/\rho^{-1 + \sigma/2 - \eta} = \rho^{-2+\sigma/2+7\eta}$ pairs $(Q,Q')$, where $Q$ and $Q'$ are $\rho$-cubes in $E$ that are contained in a common local grain. 

Consider the map $s\colon (Q,Q')\mapsto (z_Q, z_{Q'}, r_{Q'})$, where $z_Q$ is the $z$-coordinate of the center of $Q$ (similarly for $z_{Q'}$), and $r_{Q'}$ is the number of the form $\rho\ZZ$ closest to $p_{Q'}\cdot(1, f(z_{Q'}), 0)\big)$. There are $\leq \rho^{-1/2}$ possible values for $z_Q$ and similarly for $z_{Q'}$. By \eqref{fewGlobalGrainValues}, for each such value of $z$ there are at most $\rho^{-1/2 +\sigma/2-\eta}$ values for $r_{Q'}$. Thus $s$ takes at most $\rho^{-3/2 +\sigma/2 - \eta}$ values.

Thus by Cauchy-Schwarz, there are $\gtrsim \big(\rho^{-2+\sigma/2+7\eta}\big)^2 / \rho^{-3/2 +\sigma/2 - \eta} = \rho^{-5/2 + \sigma/2 + 15\eta}$ pairs $((Q,Q'),(\tilde Q, \tilde Q'))$ where $Q$ and $Q'$ (resp.~$\tilde Q$ and $\tilde Q'$) are contain in a common local grain, and $s(Q,Q') = s(\tilde Q,\tilde Q')$.

Let $\mathcal{Q}$ denote the set of quadruples $Q_1,\ldots,Q_4$ of $\rho$-cubes contained in $E_1$ that have the following relations:
\begin{itemize}
\item $Q_1$ and $Q_2$ are in the same local grain.
\item $Q_2$ and $Q_3$ are in the same global grain.
\item $Q_3$ and $Q_4$ are in the same local grain.
\item The midpoints of $Q_4$ and $Q_1$ have the same $z$-coordinate.
\end{itemize}
After renaming the tuple $((Q,Q'),(\tilde Q, \tilde Q'))$ as $(Q_1,\ldots,Q_4)$, we conclude that 
\begin{equation}\label{numberOfQuadruples}
\#\mathcal{Q} \gtrsim \rho^{-5/2+\sigma/2+15\eta}.
\end{equation}

\medskip

\noindent{\bf Step 4: A skew transform.}\\
In this step we will perform a minor coordinate change to reduce to the case that $f(0) = g(0) = 0$. By \eqref{numberOfQuadruples}, pigeonholing, and \eqref{fewGlobalGrainValues}, we can select a choice $(z_0,w)$ so that there are at least $\gtrsim \rho^{-3/2+8\eta}$ quadruples $(Q_1,\ldots,Q_4)$ where the center of $Q_1$ has $z$-coordinate $z_0$, and 
\begin{equation}\label{wGrain}
\big| p_{Q_1}\cdot(1, f(z_0), 0)-w|\leq\rho.
\end{equation}
Denote this set of quadruples by $\mathcal{Q}'$.
Define
\begin{equation}\label{defnA}
A =  \big(E \cap \{z=z_0\}\big)\cdot\big(1, f(z_0),0\big) - w.
\end{equation}
By \eqref{fewGlobalGrainValues}, $A$ is a $(\rho, 1-\sigma,\rho^{-\eta})_1$-ADset.

Define
\[
\phi(x,y,z) = \big(x + f(z_0)y + g(0)(z-z_0),\ y,\ z-z_0\big).
\]

Define $f^\dag(z) = f(z) - f(z_0)$ and $g^\dag(y) = g(y)-g(0)$. Then $f^{\dag}(0) = g^{\dag}(0)=0$. Let $E^{\dag}= \phi(E)$. Then for each $z_1\in [-z_0,\ \rho^{1/2}-z_0]$, we have 
\begin{equation}\label{EDagZSlice}
\big(E^\dag \cap \{z=z_1\}\big)\cdot\big(1, f^\dag(z_1), 0\big) = \big(E \cap \{z=z_1+z_0\}\big)\cdot\big(1, f(z_1), 0\big)+g(0)(z_1-z_0),
\end{equation}
so by \eqref{fewGlobalGrainValues}, the LHS of \eqref{EDagZSlice} is a $(\rho, 1-\sigma,\rho^{-\eta})_1$-ADset. Similarly, for each $y_1\in [0,1]$,
\begin{equation}\label{EDagYSlice}
\big(E^\dag \cap \{y=y_1\}\big)\cdot\big(1, 0, g^\dag(y_1)\big) = \big(E \cap \{y=y_1\}\big)\cdot\big(1, 0, g(y_1)\big)+f(z_0)y_1-g(0)z_0,
\end{equation}
so by \eqref{fewLocalGrainValues}, the LHS of \eqref{EDagYSlice} has $\rho$-covering number $\lesssim \rho^{\frac{-1+\sigma}{2}-\eta}$.

\medskip

\noindent{\bf Step 5: Few dot products.}\\
Let $\mathcal{W}^\dag$ be the images of the centers of the cubes in $\mathcal{Q}'$ under $\phi$. Note that for each quadruple $(p_1,\ldots,p_4)\in\mathcal{W}^\dag$, $p_1$ and $p_4$ have $z$-coordinate 0, and by \eqref{wGrain}, the $x$-value of $p_1$ (which we will denote by $x_1$) satisfies $|x_1 - w| \leq\rho$.

For each $(p_1,\ldots,p_4)\in\mathcal{W}^\dag$, we will associate a tuple $(y_1, z_2, y_3)$ as follows: $y_1$ will be the $y$-coordinate of $p_1$; $z_2$ will be the $z$-coordinate of $p_2;$ and $y_3$ will be the $y$-coordinate of $p_3$. We will show that
\begin{equation}\label{dotInA}
z_2\big(g^{\dag}(y_1)-g^{\dag}(y_3)\big) + \big(y_1-y_3\big)f^{\dag}(z_2) \in N_{4\rho}(A).
\end{equation}
To establish \eqref{dotInA}, we will argue as follows. For each index $i=1,\ldots,4$, write $p_i = (x_i, y_i, z_i)$. First, we have $|x_1-w|\leq\rho$ and $z_1=0$. Since $Q_1$ and $Q_2$ are in the same local grain, $y_2=y_1$, and $x_1,x_2,z_2$ are related by $|(x_1,y_1,0)\cdot(1, 0, g^{\dag}(y_1)) - (x_2,y_2,z_2)\cdot(1, 0, g^{\dag}(y_1))|\leq \rho$ i.e.
\[
\big|x_2 - \big(w -z_2g^{\dag}(y_1)\big)\big|\leq 2\rho.
\]
Next, since $Q_2$ and $Q_3$ are in the same global grain, $z_2=z_3$, and $x_2,x_3,y_2,y_3$ are related by $|(x_2,y_2,z_2)\cdot(1, f^{\dag}(z_2),0)-(x_3,y_3,z_2)\cdot(1, f^{\dag}(z_2),0)|\leq\rho,$ i.e. (since $y_1=y_2$)
\[
\Big| x_3 -\big(w -z_2g^{\dag}(y_1) + (y_1-y_3)f^{\dag}(z_2)\big)\Big| \leq 3\rho.
\]
Finally, we have $z_4=0$. since $Q_3$ and $Q_4$ are in the same local grain, $y_4=y_3$, and  $x_3,x_4,z_3$ are related by $|(x_3,y_3,z_3)\cdot(1, 0, g^{\dag}(y_3)) - (x_4,y_4,0)\cdot(1, 0, g^{\dag}(y_3))|\leq \rho$ i.e. (since $z_2=z_3$)
\begin{equation}\label{x4Explicit}
\Big| x_4 -\Big(w - z_2\big(g^{\dag}(y_1)-g^{\dag}(y_3)\big) + \big(y_1-y_3\big)f^{\dag}(z_2)\Big)\Big| \leq 4\rho.
\end{equation}
Since $p_4=\phi(p_Q)$ for some $\rho$-cube $Q\subset E$ whose center has $z$-coordinate $z_0$, after unwinding definitions we see that $x_4\in \big(E \cap \{z=z_0\}\big)\cdot\big(1, f(z_0),0\big)$ and hence \eqref{dotInA} follows from \eqref{x4Explicit}.

Define
\begin{equation*}
\begin{split}
F &= \{\big(\rho^{-1/2}z,\ \rho^{-1/2}f^{\dag}(z)\big)\colon z-z_0 \in Z_{\operatorname{popular}}\},\\
G &= \{\big(y,\ g^{\dag}(y)\big)\colon y\in [0,1]\cap \rho^{1/2}\ZZ\}.
\end{split}
\end{equation*}
Since $f^{\dag}$ is 1-Lipschitz and $g^{\dag}$ is 4-Lipschitz, $F$ and $G$ are $(\rho^{1/2}, 1, C)_{2}$-ADsets for some absolute constant $C$. 
 Let $\mathcal{H}$ be the set of $6$-tuples 
\[
\big(y_1,\ g^{\dag}(y_1),\ \rho^{-1/2}z_2,\ \rho^{-1/2} f^{\dag}(z_2),\ y_3,\ g^{\dag}(y_3) \big), 
\]
where $(y_1,z_2,y_3)$ corresponds to a triple from $\mathcal{W}^{\dag}$. By construction $\mathcal{H}\subset F\times G\times G$, and 
\[
\#\mathcal{H}=\#\mathcal{W}^{\dag}=\#\mathcal{Q}' \gtrsim (\rho^{1/2})^{16\eta-3}.
\]

Let $\tilde A = N_{4\rho^{1/2}}\big(\{\rho^{-1/2}a\colon a\in A\}\big)$. Then $\tilde A\subset[0,1]$ is a $(\rho^{1/2}, 1-\sigma,8\rho^{-\eta})_1$-ADset, and \eqref{dotInA} says that
\[
\{p\cdot (q-q')\colon (p,q,q')\in\mathcal{H}\} \subset \tilde A.
\] 

Let $\eta_1$ be the output of Theorem \ref{SWThm} with $\eps$ as above and $\rho^{1/2}$ in place of $\rho$. If $\eta>0$ is selected sufficiently small, then $F,G,$ and $\mathcal{H}$ satisfy the hypotheses of Theorem \ref{SWThm}. If $\eta>0$ is selected sufficiently small depending on $\eta_1$, then the conclusion of Item \ref{SWThmItemB} cannot hold, since \eqref{bigRCoveringNumber} violates the property that $\tilde A$ is a $(\rho^{1/2}, 1-\sigma,8\rho^{-\eta})_1$-ADset. 

Hence Conclusion \ref{SWThmItemA} from Theorem \ref{SWThm} must hold. Let $\ell$ and $\ell^\perp$ be the orthogonal lines from Conclusion \ref{SWThmItemA} of Theorem \ref{SWThm}. Then by \eqref{largeIntersectionRec}, we have
\[
\mathcal{E}_{\rho}\big( \rho^{1/2}(F\cap N_{\rho^{1/2}}(\ell) ) \big)\geq(\rho^{1/2})^{\eps-1}. 
\]

Define $Z_{\operatorname{lin}}$ to be the union of $\rho$-intervals that intersect the set
\[
 \big\{z \in Z_{\operatorname{popular}}\colon \big(\rho^{-1/2}(z+z_0), \rho^{-1/2}f^{\dag}(z+z_0)\big)\in F\cap N_{\rho^{1/2}}(\ell) \big\}.
\]

We have $\rho^{-1}|Z_{\operatorname{lin}}|\geq (\rho^{1/2})^{\eps}\#Z_{\operatorname{popular}}$, and thus
\[
|E\cap ([-1,1]^2\times Z_{\operatorname{lin}})|\gtrapprox_\rho \rho^{\eps/2}\rho^3 \rho^{-3/2+\sigma/2+3\eta} = \rho^{\eps/2+3\eta+3/2+\sigma/2}.
\]
But by \eqref{fewGlobalGrainValues}, this implies that $|Z_{\operatorname{lin}}|\gtrapprox \rho^{1/2+\eps/2+4\eta}$. We will select $\rho_0>0$ sufficiently small so that this quantity is at least $\rho^{1/2+\eps}$. 

To conclude the proof, the graph of the linear function $L$ from the statement of Lemma \ref{locallyLinearLem} is the obvious re-scaling of $\ell$ by a factor of $\rho^{1/2}$. Then for each $z\in Z_{\operatorname{lin}},$ we have $|f(z)-L(z)|\leq\rho$. 
\end{proof}

The next lemma records what happens when we slice an extremal Kakeya set $(\tubes,Y)_\delta$ into horizontal slabs of thickness $\rho^{1/2}$, select the $\rho^{1/2}$ neighborhood of a global grain from each of these slices, and apply Lemma \ref{locallyLinearLem}: we conclude that the slope function $f$ is linear at scale $\rho$ on each interval of length $\rho^{1/2}$. In what follows, we define $\pi_z\colon\RR^3\to\RR$ to be the projection to the third coordinate.

\begin{lem}\label{locallyLinearEverywhereLem}
For all $\eps>0$, there exists $\eta>0,\delta_0>0$ so that the following holds for all $\delta\in (0, \delta_0]$. Let $(\tubes,Y)_\delta$ be a $\eta$-extremal collection of tubes, and let $\rho\in [\delta^{1-\eps}, \delta^{\eps}]$. Suppose that there are functions $f\colon [-1,1]\to \RR$ and $V\colon E_{\tubes}\to S^2$ that satisfy Items \ref{plainyGrainyPropItem1} and \ref{plainyGrainyPropItem2} from the statement of Proposition \ref{plainyGrainyProp}, with $\eta$ in place of $\eps$. 

Then there is a $\eps$-extremal sub-collection $(\tubes',Y')_\delta$ and a function $L\colon [-1, 1]\to\RR$ with the following properties:
\begin{itemize}
\item $L$ is 2-Lipschitz, and linear on each interval of the form $[n\rho^{1/2}, (n+1)\rho^{1/2}]$, $n\in\ZZ$. 
\item For each $z\in \pi_z(E_{\tubes'})$, we have $|f(z) - L(z)|\leq\rho$. 
\end{itemize}
\end{lem}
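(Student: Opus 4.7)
The plan is to apply Lemma~\ref{locallyLinearLem} with $\rho$ playing the role of $\delta$, separately on each interval $I_n := [n\rho^{1/2},(n+1)\rho^{1/2})$ that partitions $[-1,1]$, and then to glue the resulting linear pieces into a $2$-Lipschitz function $L$. To set this up at the coarser scale $\rho$, I would first invoke Proposition~\ref{multiScaleStructureExtremal} on $(\tubes,Y)_\delta$ at scale $\rho$ to produce a refinement together with an $\eta'$-extremal cover $(\tilde\tubes,\tilde Y)_\rho$. The global-grain property from Item~\ref{plainyGrainyPropItem1} thickens automatically, since $(E_\tubes\cap\{z=z_0\})\cdot(1,f(z_0),0)$ is a $(\delta,1-\sigma,\delta^{-\eta})_1$-ADset and hence also a $(\rho,1-\sigma,\delta^{-\eta})_1$-ADset, and the local-grain property at scale $\rho$ comes directly from Item~\ref{plainyGrainyPropItem2}.

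For each $I_n$, I would pigeonhole to select a $1\times\rho^{1/2}\times\rho^{1/2}$ rectangular prism $R_n$ inside $[-1,1]^2\times I_n$, aligned with the direction $(-f(n\rho^{1/2}),1,0)$, with $|R_n\cap E_\tubes|\ge \delta^{O(\eta)}\rho^{1+\sigma/2}$. The global-grain ADset bound caps the number of $\rho$-separated global grains meeting each $z$-slice of the slab by $\delta^{-\eta}\rho^{-(1-\sigma)/2}$, so such an $R_n$ exists after discarding a $\delta^{O(\eta)}$-fraction of the slab mass; the $1$-Lipschitz property of $f$ keeps the global-grain direction essentially constant (varying by $O(\rho^{1/2})$) throughout $I_n$. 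Letting $E^{(n)}$ be the union of $\rho$-cubes inside $R_n\cap E_\tubes$ and $Z^{(n)}\subset I_n$ its projection to the $z$-axis, I would check the hypotheses of Lemma~\ref{locallyLinearLem} for $(E^{(n)},f|_{I_n},V|_{E^{(n)}},Z^{(n)})$ at scale $\rho$: the bound $|V(p)\cdot(0,0,1)|\le 1/2$ holds because $V(p)$ is nearly perpendicular to the global-grain direction $(1,f(z),0)$, whose $z$-component vanishes; the ADset conditions transfer from Items~\ref{plainyGrainyPropItem1}--\ref{plainyGrainyPropItem2}. Applying the lemma yields a subset $Z_{\operatorname{lin}}^{(n)}\subset Z^{(n)}$ of measure $\ge\rho^{1/2+\eps'}$ and a linear function $L_n\colon I_n\to\RR$ with $|f(z)-L_n(z)|\le\rho$ for $z\in Z_{\operatorname{lin}}^{(n)}$.

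To build $L$, I would define the $\rho^{1/2}$-piecewise linear interpolant
\[
L(z)=f(n\rho^{1/2})+\frac{f((n+1)\rho^{1/2})-f(n\rho^{1/2})}{\rho^{1/2}}\bigl(z-n\rho^{1/2}\bigr),\quad z\in I_n,
\]
which is continuous, $1$-Lipschitz (hence $2$-Lipschitz), and linear on each $I_n$. To check $|f-L|\le\rho$ on $\bigcup_n Z_{\operatorname{lin}}^{(n)}$: on each $I_n$ both $L|_{I_n}$ and $L_n$ are linear, and after a further pigeonhole I may assume $Z_{\operatorname{lin}}^{(n)}$ contains points within $O(\rho)$ of both endpoints of $I_n$ (handled by applying Lemma~\ref{locallyLinearLem} on slightly overlapping double-width intervals). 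The $1$-Lipschitz property of $f$ then forces $|f(k\rho^{1/2})-L_n(k\rho^{1/2})|=O(\rho)$ for $k\in\{n,n+1\}$; since two linear functions on $I_n$ agreeing to error $O(\rho)$ at the endpoints agree to error $O(\rho)$ throughout, one gets $|L-L_n|=O(\rho)$ on $I_n$ and therefore $|f-L|=O(\rho)$ on $Z_{\operatorname{lin}}^{(n)}$. The constants are absorbed by applying Lemma~\ref{locallyLinearLem} at scale $c\rho$ for small $c>0$. Finally, restricting $(\tubes,Y)_\delta$ to $\delta$-cubes whose $z$-coordinate lies in $\bigcup_n Z_{\operatorname{lin}}^{(n)}$ preserves a $\rho^{\eps'}$-fraction of the mass, so for $\eta$ sufficiently small the resulting $(\tubes',Y')_\delta$ is $\eps$-extremal.

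The main obstacle is ensuring the per-interval approximations combine into a single $2$-Lipschitz function with a uniform $\rho$ error rather than an error that accumulates over the $\rho^{-1/2}$ intervals. The crucial observation is that $L$ is built from the values of $f$ itself at the knots $n\rho^{1/2}$, not by gluing the $L_n$'s end-to-end, so there is no drift across intervals; the role of Lemma~\ref{locallyLinearLem} is then simply to certify, on each $I_n$ separately, that $f$ is in fact $\rho$-close to \emph{some} linear function, from which closeness to the endpoint-interpolation follows automatically.
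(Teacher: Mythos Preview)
Your overall plan---pass to scale $\rho$ via Proposition~\ref{multiScaleStructureExtremal}, then apply Lemma~\ref{locallyLinearLem} slab by slab---matches the paper. The gap is in how you glue.

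You define $L$ by linearly interpolating the values $f(n\rho^{1/2})$ at the knots, and you claim $|f-L|\le\rho$ on $\bigcup_n Z_{\operatorname{lin}}^{(n)}$. For this you need $|L-L_n|=O(\rho)$ on $I_n$, and since both are linear there, this amounts to $|f(k\rho^{1/2})-L_n(k\rho^{1/2})|=O(\rho)$ for $k=n,n+1$. But Lemma~\ref{locallyLinearLem} only gives $|f-L_n|\le\rho$ on $Z_{\operatorname{lin}}^{(n)}$, a set of measure $\ge\rho^{1/2+\eps'}$ whose \emph{location} inside $I_n$ you have no control over. Your suggested fix (overlapping double-width intervals, then pigeonhole) does not produce points of $Z_{\operatorname{lin}}$ within $O(\rho)$ of a prescribed knot: the output set still has measure only $\approx\rho^{1/2+\eps'}$ and could sit anywhere in the interval. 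Concretely, take $f$ to be a tent on $I_n$: $f(z)=z-n\rho^{1/2}$ on the left half, $f(z)=(n+1)\rho^{1/2}-z$ on the right half. Then $f$ agrees exactly with the linear function $L_n(z)=z-n\rho^{1/2}$ on the left half, so the conclusion of Lemma~\ref{locallyLinearLem} is satisfied with $Z_{\operatorname{lin}}^{(n)}$ equal to that half. But your interpolant is $L\equiv 0$ on $I_n$ (since $f$ vanishes at both endpoints), and $|f-L|$ reaches $\rho^{1/2}/2\gg\rho$ on $Z_{\operatorname{lin}}^{(n)}$. Nothing in the hypotheses of Lemma~\ref{locallyLinearLem} rules out this tent behaviour of $f$; indeed, certifying that $f$ is approximately linear is precisely what the lemma is for.

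The paper avoids this by taking $L=L_S$ on each active slab $S$ (so $|f-L|\le\rho$ on $Z_{\operatorname{lin},S}$ is immediate from Lemma~\ref{locallyLinearLem}) and filling the inactive slabs by linear interpolation. The $2$-Lipschitz bound is then argued separately: the slope of $L_S$ is $\le 2$ because $Z_{\operatorname{lin},S}$ contains two points at distance $\ge\rho^{1/2+\eps_3}$ where $|f-L_S|\le\rho$ and $f$ is $1$-Lipschitz; the interpolation across gaps is controlled because the active slabs can be taken $\rho^{1/2}$-separated and the endpoint values of $L_S,L_{S'}$ are each $\rho$-close to $f$ somewhere in their slab. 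Your ``crucial observation'' correctly identifies that gluing the $L_n$'s end to end risks drift, but your alternative (interpolate $f$ at knots) trades that problem for the one above; the paper's route is to accept the $L_S$'s and control the jumps directly.
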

\begin{proof}
Let $\eps_1,\eps_2,\eps_3>0$ be small constants to be chosen below. We will select $\eta$ very small compared to $\eps_1$, $\eps_1$ very small compared to $\eps_2$, $\eps_2$ very small compared to $\eps_3$, and $\eps_3$ very small compared to $\eps$.

Apply Proposition \ref{multiScaleStructureExtremal} to $(\tubes,Y)_\delta$ with $\eps_1$ in place of $\eps$, and $\rho$ as above. We obtain a refinement of $(\tubes,Y)_\delta$, and a $\eps_1$-extremal collection of $\rho$-tubes, $(\tilde\tubes, \tilde Y)_\rho$. In what follows, we will describe various sub-collections of $(\tilde\tubes, \tilde Y)_\rho$. Since $(\tilde\tubes, \tilde Y)_\rho$ is a balanced cover of $(\tubes,Y)_\delta$, each of these sub-collections of $(\tilde\tubes, \tilde Y)_\rho$ will induce an analogous sub-collection of $(\tubes,Y)_\delta$, of the same relative density. 

Apply Proposition \ref{multiScaleStructureExtremal} to $(\tilde\tubes, \tilde Y)_\rho$ with $\eps_2$ in place of $\eps_1$, and $\rho^{1/2}$ in place of $\rho$. We obtain a refinement of $(\tilde\tubes,\tilde Y)_\rho$, and a $\eps_2$-extremal collection of $\rho^{1/2}$-tubes, $(\dbtilde\tubes, \dbtilde Y)_{\rho^{1/2}}$. After further refining each of these three collections, we may suppose that $(\tubes,Y)_\delta$ has constant multiplicity; $(\tilde\tubes,\tilde Y)_\rho$ is a balanced cover of $(\tubes,Y)_\delta$, and $(\dbtilde\tubes, \dbtilde Y)_{\rho^{1/2}}$ is a balanced cover of $(\tilde\tubes,\tilde Y)_\rho$  (and hence also a balanced cover of $(\tubes,Y)_\delta$).

Recall that each shading $\dbtilde Y(\dbtilde \tubes)$ is a union of $\rho^{1/2}$-cubes. After further refining $(\dbtilde\tubes, \dbtilde Y)_{\rho^{1/2}}$ (which again induces a refinement of $(\tubes,Y)_\delta$ and $(\tilde\tubes,\tilde Y)_\rho$), we may suppose that $E_{\dbtilde\tubes}$ is contained in a union of $\rho^{1/2}$-separated ``slabs'' of the form $\RR^2\times [n\rho^{1/2}, (n+1)\rho^{1/2}]$, $n\in \ZZ$, and for each such slab $S$ that meets $E_{\dbtilde\tubes}$, we have $|S\cap E_{\tilde\tubes}|\gtrapprox_{\rho} \rho^{1/2+\eps_1}|E_{\tilde\tubes}|$ and hence $|S\cap E_{\tubes}|\gtrapprox_{\delta} \rho^{1/2+\eps_1}|E_{\tubes}|\gtrapprox \rho^{1/2+\eps_1}\delta^{\sigma+\eta}$.

We shall consider each slab $S$ in turn. Let $Z_S\subset [n\rho^{1/2}, (n+1)\rho^{1/2}]$ be the set of values $z_0$ for which $|E_{\tubes}\cap \{z=z_0\}|\gtrapprox_{\delta}\rho^{2\eps_1}\delta^{\sigma + \eta}$. By Fubini, $|Z_S|\geq \rho^{1/2+\eps_1}\delta^{\eta}$. Select a choice of $z_0\in Z_S$. Since $(\tubes,Y)_\delta$ satisfies Item \ref{plainyGrainyPropItem1}  from the statement of Proposition \ref{plainyGrainyProp}, we have $|(E_\tubes\cap\{z=z_0\})\cdot(1, f(z_0),0)|\leq \delta^{\sigma-\eta}$, i.e.~$(E_\tubes\cap\{z=z_0\})\cdot(1, f(z_0),0)$ is contained in a union of at most $\delta^{\sigma-\eta-1}$  $1\times\delta$ rectangles (in the $\{z=z_0\}$ plane), each of which have normal direction $(1, f(z_0), 0)$. By pigeonholing, we can select one of these rectangles that intersects $E_{\tubes}\cap\{z=z_0\}$ in a set of (two-dimensional) Lebesgue measure $\gtrapprox_\delta \delta^{1+2\eta}\rho^{2\eps_1}$, i.e. there exists $w\in \RR$ so that
\begin{equation}\label{dottedPs}
|\{ p \in E_{\tubes}\cap \{z=z_0\}\colon |p\cdot (1, f(z_0), 0)-w|\leq\delta\}| \gtrapprox_\delta \delta^{1+2\eta}\rho^{2\eps_1},
\end{equation}
where $|\cdot|$ denotes 2-dimensional Lebesgue measure. The set on the LHS of \eqref{dottedPs} is contained in the $\delta$-neighborhood of a unit line segment in the $\{z=z_0\}$ plane, that points in the direction $(1, f(z_0),0)$; denote this line segment by $\ell$. Then $\ell$ must intersect $\gtrapprox \delta^{2\eta}\rho^{-1/2+2\eps_1}$ $\rho^{1/2}$-cubes from $E_{\dbtilde\tubes}$, and hence
\[
\big|N_{\rho^{1/2}}(\ell) \cap E_{\tilde\tubes}\big| \gtrapprox \big(\delta^{2\eta}\rho^{-1/2+\eps_1}\big)\big(\rho^{\eps_1}\rho^{\frac{\sigma-3}{2}}\big)
=\delta^{2\eta}\rho^{\sigma/2 - 1 + 2\eps_1}.
\]

We can now verify that if $\eta$ and $\eps_1$ are chosen sufficiently small compared to $\eps_3$, then the set $N_{\rho^{1/2}}(L) \cap E_{\tilde\tubes},$ along with the function $f$ (restricted to $[n\rho^{1/2}, (n+1)\rho^{1/2}]$) and the function $V$ (restricted to $N_{\rho^{1/2}}(\ell) \cap E_{\tubes}$) satisfy the hypotheses of Lemma \ref{locallyLinearLem}, with $\eps_3$ in place of $\eps$. 

Applying Lemma \ref{locallyLinearLem}, we obtain a set $Z_{\operatorname{lin},S}\subset [n\rho^{1/2}, (n+1)\rho)$ and a linear function $L_S$ with domain $[n\rho^{1/2}, (n+1)\rho)$, so that $|f(z)-L_S(z)|\leq\rho$ for all $z\in Z_{\operatorname{lin},S}$, and $|Z_{\operatorname{lin},S}|\geq\rho^{1/2+\eps_3}$.  In particular, $Z_{\operatorname{lin}, S}$ contains two points $z_1, z_2$ that are at least $\rho^{1/2+\eps_3}$-separated. Since $f$ is 1-Lipschitz, $L_S$ is linear, and $|f(z_i)-L_S(z_i)|\leq\rho$ for $i=1,2$, we conclude that $L_S$ has slope at most 2.

Since $Z_{\operatorname{lin},S}\subset Z_S$ and $|Z_{\operatorname{lin},S}|\geq\rho^{1/2+\eps_3}$, we have 
\begin{equation}
\big| E_{\tubes} \cap (\RR^2\times Z_{\operatorname{lin},S}) \big| \geq \rho^{1/2+\eps_1+\eps_3}|E_\tubes|.
\end{equation}
Define $Z_{\operatorname{lin}}=\bigcup_S Z_{\operatorname{lin},S}$, where the union is taken over all horizontal slabs $S$ that meet $E_{\dbtilde\tubes}$, a union of $\rho^{1/2}$-cubes. Then 
\[
\big| E_{\tubes} \cap (\RR^2\times Z_{\operatorname{lin}}) \big| \gtrapprox \rho^{\eps_1+\eps_2+\eps_3}|E_\tubes|,
\]
and hence if we define $\tubes'=\tubes$ and $Y'(T) = Y(T) \cap (\RR^2\times Z_{\operatorname{lin}})$, then (since we replaced $(\tubes,Y)_\delta$ with a constant multiplicity refinement at an earlier step) we have $\sum_{T\in\tubes'}|Y'(T)|\gtrapprox_\delta \delta^\eta \rho^{\eps_1+\eps_2+\eps_3}$. If $\eta,\eps_1,\eps_2,\eps_3$ are chosen sufficiently small (depending on $\eps$), then $(\tubes',Y')$ is $\eps$-extremal. 

Finally, define $L$ by $L(z) = L_S(z)$ if $z\in I_S$ (here we write $S = [-1,1]^2\times I_S$). Note that for each slab $S$, there are points $p=(x,y,z), p'=(x',y',z')\in E_{\tubes'}$ with $|z-z'|\geq 4\rho$. Then $\big| \frac{L(z)-L(z')}{z-z'} \big| \leq \big|\frac{f(z) - f(z')}{z-z'}|+\frac{2\rho}{|z-z'|}\leq 3/2$, i.e. $L$ is $(3/2)$-Lipschitz on $I_S$. Finally, we can extend $L$ to $[-1,1]$ by defining $L$ to be piecewise linear; since the intervals $I_S$ on which $L$ was initially defined are $\rho^{1/2}$-separated (and on each such interval, $L$ agrees with  $f$, which is a $1$-Lipschitz function up to accuracy $\rho$), we have that the extension of $L$ to $[-1,1]$ is $2$-Lipschitz. 
\end{proof}

The next lemma records what happens when we iterate Lemma \ref{locallyLinearEverywhereLem} for many different scales. The proof is straightforward and is omitted. 

\begin{lem}\label{nestedRectanglesCoverGraphF}
For all $\eps>0$ and $N\geq 1$, there exists $\eta>0,\delta_0>0$ so that the following holds for all $\delta\in (0, \delta_0]$. Let $(\tubes,Y)_\delta$ be a $\eta$-extremal collection of tubes. Suppose that there are functions $f\colon [-1,1]\to \RR$ and $V\colon E_{\tubes}\to S^2$ that satisfy Items \ref{plainyGrainyPropItem1} and \ref{plainyGrainyPropItem2} from the statement of Proposition \ref{plainyGrainyProp}, with $\eta$ in place of $\eps$. 

For each $j=1,\ldots,N$, define $\rho_j = \delta^{j/N}$. Then there is a $\eps$-extremal sub-collection $(\tubes',Y')_\delta$ and functions $L_1,\ldots,L_N$ 
with the following properties:
\begin{enumerate}[label=(\alph*)]
\item\label{nestedRectanglesCoverGraphFConclusionA} Each $L_j$ is 2-Lipschitz, and linear on each interval of the form $[n\rho_j^{1/2}, (n+1)\rho_j^{1/2}]$, $n\in\ZZ$. 
%
%
\item\label{nestedRectanglesCoverGraphFConclusionD} For each $z\in \pi_z(E_{\tubes'})$ and each index $j$, we have $|f(z) - L_j(z)|\leq\rho_j$. 
\end{enumerate}
\end{lem}

Conclusion \ref{nestedRectanglesCoverGraphFConclusionD} enforces some consistency between the functions $L_j$ for different values of $j$---but we must be a bit careful. Let $I$ be an interval of the form $[n\rho_j^{1/2}, (n+1)\rho_j^{1/2}]$. Note that Conclusion \ref{nestedRectanglesCoverGraphFConclusionD} does \emph{not} imply that $|L_j(z) - L_{j-1}(z)|\leq \rho_{j}+\rho_{j-1}$ for all $z\in I$. However, if $z_1,z_2\in I\cap \pi_z(E_{\tubes'})$, then $|L_j(z) - L_{j-1}(z)|\leq \rho_{j}+\rho_{j-1}$ for all $z\in [z_1,z_2]$. Luckily, after a harmless refinement of $(\tubes',Y')_\delta$ by a factor of $4^{-N}$, can ensure that the output $(\tubes',Y')_\delta$ satisfies the following popularity condition: each interval $I=[n\rho_j^{1/2}, (n+1)\rho_j^{1/2}]$ that intersects $\pi_z(E_{\tubes})$ has large intersection with $I$, see \eqref{popularInsideIj} below.  We can also enforce a separation condition that will be useful later:

\emph{
For each index $j$, let $\mathcal{I}_j$ be the set of intervals of the form $I=[n\rho_j^{1/2}, (n+1)\rho_j^{1/2}]$ that intersect $\pi_z(E_{\tubes'})$. Then in addition to Items (a) and (b) above, for each $j=1,\ldots,N$ we also have:
\begin{enumerate}
\item[(c)] Each interval $I\in \mathcal{I}_j$  satisfies 
\begin{equation}\label{popularInsideIj}
|I\cap\pi_z(E_{\tubes'})|\geq 4^{-N} \delta^{\eps}|I|.
\end{equation}
\item[(d)] $\mathcal{I}_j$ is $\rho_j^{1/2}$-separated, i.e.~no two adjacent intervals are contained in $\mathcal{I}_j$.
\end{enumerate}
}

Indeed, for each $j=N,\ldots,1$, we obtain Items (c) and (d) at scale $\rho_j$ as follows. For Item (c), we discard all intervals $I$ for which \eqref{popularInsideIj} fails, and refine the shading $Y'$ accordingly. For Item (d), we either discard all intervals $[n\rho_j^{1/2}, (n+1)\rho_j^{1/2}]$ for which $n$ is odd, or all intervals for which $n$ is even. This step does not destroy Items (c) and (d) for previous (larger) values of $j$ (corresponding to smaller values of $\rho_j$).

Thus, for each $I\in\mathcal{I}_j$ there is an interval $I'\subset I$ of length $|I'|\geq 4^{-N}\delta^{\eps}\rho_j^{1/2}$ so that $\pi_z(E_{\tubes'})\cap I\subset I'$, and 
\begin{equation}\label{LjInsideLjm1}
|L_j(z) - L_{j-1}(z)|\leq \rho_{j}+\rho_{j-1}\quad\textrm{for all}\ z\in I'.
\end{equation}
It will be helpful to restate Lemma \ref{nestedRectanglesCoverGraphF} by replacing the piecewise-linear functions $L_j$ with ``vertical trapezoids,'' which are defined as follows:

\begin{defn}
A \emph{vertical trapezoid} is a set of the form
\[
R = \{(z,t)\in J\times \RR\colon |t - L(z)|\leq s\},
\]
where $J\subset[-1,1]$ is an interval and $L\colon J\to[-1,1]$ is linear. We define $w=|J|$ to be the \emph{length} of the trapezoid, and $s$ to be the \emph{height} of the trapezoid; we will refer to $R$ as a $w\times s$ trapezoid, and we define $\operatorname{slope}(R)$ to be the slope of $L$.
\end{defn}
With this definition, \eqref{LjInsideLjm1} can be rephrased as a set-containment statement about vertical trapezoids. This allows us to restate Lemma \ref{nestedRectanglesCoverGraphF} (with the additional Conclusions (c) and (d))  as follows:

\begin{cor}\label{nestedRectanglesCoverGraphFCor}
For all $\eps>0$ and $N\geq 1$, there exists $\eta>0,\delta_0>0$ so that the following holds for all $\delta\in (0, \delta_0]$. Let $(\tubes,Y)_\delta$ be a $\eta$-extremal collection of tubes. Suppose that there are functions $f\colon [-1,1]\to \RR$ and $V\colon E_{\tubes}\to S^2$ that satisfy Items \ref{plainyGrainyPropItem1} and \ref{plainyGrainyPropItem2} from the statement of Proposition \ref{plainyGrainyProp}, with $\eta$ in place of $\eps$. 

For each $j=1,\ldots,N$, define $\rho_j = \delta^{j/N}$. Then there is a $\eps$-extremal sub-collection $(\tubes',Y')_\delta$ and sets $\mathcal{R}_1,\ldots,\mathcal{R}_N$ of vertical trapezoids, with the following properties:

\begin{itemize}
\item Each trapezoid has slope in $[-2,2]$.
\item For each $j$, each trapezoid $R\in\mathcal{R}_j$ has height $\rho_j$ and length between $\rho_j^{1/2+\eps}$ and $\rho_j^{1/2}$. 
\item For each $j$, the projections of the trapezoids in $\mathcal{R}_j$ to the $z$-coordinate are $\rho_j^{1/2}$-separated.
\item For each $j>1$ and each $R\in\mathcal{R}_j$, there is a (necessarily unique) parent trapezoid $\tilde R\in\mathcal{R}_{j-1}$ that contains $R$.
\item Let $D = \pi_z(E_{\tubes'})$. Then $\operatorname{graph}(f|_D) \subset \bigcup_{R\in\mathcal{R}_N}R$. 
\end{itemize}
\end{cor}


\subsection{Locally linear at many scales implies $C^2$}
In this section, we will prove that the conclusions of Corollary \ref{nestedRectanglesCoverGraphFCor} imply that $f$ must have small $C^2$ norm (or more precisely, the restriction $f|_D$ agrees to accuracy $\delta$ with a function that has small $C^2$ norm). The precise statement is as follows.
\begin{prop}\label{rectVsC2Lem}
Let $\eta>0$, let $N\geq 2$, and let $\delta>0$. For each $j=1,\ldots,N$, define $\rho_j = \delta^{j/N}$ and let $\mathcal{R}_j$ be a set of vertical trapezoids with the following properties:
\begin{itemize}
\item Each trapezoid has slope in $[-2,2]$.
\item For each $j$, each trapezoid $R\in\mathcal{R}_j$ has height $\rho_j$ and length between $\rho_j^{1/2+\eta}$ and $\rho_j^{1/2}$. 
\item For each $j$, the projections of the trapezoids in $\mathcal{R}_j$ to the $z$-coordinate are $\rho_j^{1/2}$-separated.
\item For each $j>1$ and each $R\in\mathcal{R}_j$, there is a (necessarily unique) parent trapezoid $\tilde R\in\mathcal{R}_{j-1}$ that contains $R$.
\end{itemize}
Let $D\subset[-1,1]$ and let $f\colon D\to\RR$ with $\operatorname{graph}(f)\subset\bigcup_{R\in\mathcal{R}_N}R$.
 
Then there exists a function $g \colon [-1,1]\to\RR$ so that
\begin{enumerate}
\item $|f(x)-g(x)|\leq\delta$ for all $x\in D$.
\item $|g''(x)|\lesssim \delta^{-1/N - 2\eta}$ for all $x\in [0,1]$.
\item If $x\in\pi(R)$ for some $R\in\mathcal{R}_j$, then $|g'(x)-\operatorname{slope}(R)|\lesssim\rho_j /\rho_{j+1}^{1/2+\eta}$.
\end{enumerate} 
\end{prop}

\noindent The next lemma describes the basic building block of the function $g$.
\begin{lem}\label{functionAdaptedToLineSegment}
Let $x_1<x_2$ and let $L$ be the line segment from $(x_1,y_1)$ to $(x_2,y_2)$. Let $a>0$. Then there is a function $G\colon\RR\to\RR$ with the following properties.
\begin{itemize}
\item $L\subset \operatorname{graph}(G)$.
\item $G(x)=0$ for $x\in \RR\backslash[x_1-a, x_2+a]$.
\item Let $h = |y_1| + |y_2|$ and let $s = \frac{y_2-y_1}{x_2-x_1}$. Then $|G'(x)|\lesssim h/a + |s|$ and $|G''(x)|\lesssim h/a^2 + |s|/a$ for all $x\in\RR$. 
\end{itemize}
\end{lem}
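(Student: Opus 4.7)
The plan is to build $G$ by extending the linear function through $L$ across $[x_1,x_2]$ and then damping it to zero on the two buffer intervals $[x_1-a,x_1]$ and $[x_2,x_2+a]$ by multiplication with a fixed smooth cutoff. More precisely, fix once and for all a function $\psi\colon \RR\to [0,1]$ with $\psi(t)=0$ for $t\leq 0$, $\psi(t)=1$ for $t\geq 1$, $\psi'(0)=\psi''(0)=\psi'(1)=\psi''(1)=0$, and $\|\psi'\|_\infty,\|\psi''\|_\infty\lesssim 1$. Let $L_{\mathrm{ext}}(x)=y_1+s(x-x_1)$, and define $G(x)=L_{\mathrm{ext}}(x)$ on $[x_1,x_2]$, $G(x)=L_{\mathrm{ext}}(x)\,\psi((x-x_1+a)/a)$ on $[x_1-a,x_1]$, $G(x)=L_{\mathrm{ext}}(x)\,\psi((x_2+a-x)/a)$ on $[x_2,x_2+a]$, and $G(x)=0$ elsewhere.

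The first two bullets are immediate from the construction: $G$ restricts to $L_{\mathrm{ext}}$ on $[x_1,x_2]$ (so $L\subset\operatorname{graph}(G)$) and vanishes outside $[x_1-a,x_2+a]$. The compatibility conditions $\psi(1)=1$, $\psi'(1)=\psi''(1)=0$ (and the analogous conditions at $0$) ensure that $G$ is $C^2$ at each of the four gluing points $x_1-a$, $x_1$, $x_2$, $x_2+a$, so it suffices to check the derivative bounds on the two buffer intervals.

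For the third bullet, note that on each buffer interval one has $|L_{\mathrm{ext}}(x)|\le h+|s|a$, since $L_{\mathrm{ext}}$ equals $y_i$ at an endpoint and has slope $s$. On $[x_2,x_2+a]$ the product rule gives
\begin{equation*}
G'(x)=s\,\psi\bigl((x_2+a-x)/a\bigr)-\tfrac{1}{a}L_{\mathrm{ext}}(x)\,\psi'\bigl((x_2+a-x)/a\bigr),
\end{equation*}
\begin{equation*}
G''(x)=-\tfrac{2s}{a}\,\psi'\bigl((x_2+a-x)/a\bigr)+\tfrac{1}{a^2}L_{\mathrm{ext}}(x)\,\psi''\bigl((x_2+a-x)/a\bigr),
\end{equation*}
so $|G'(x)|\lesssim |s|+(h+|s|a)/a\lesssim h/a+|s|$ and $|G''(x)|\lesssim |s|/a+(h+|s|a)/a^2\lesssim h/a^2+|s|/a$. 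The analogous computation on $[x_1-a,x_1]$ is identical. On $[x_1,x_2]$ we have $|G'|=|s|$ and $|G''|=0$, and outside $[x_1-a,x_2+a]$ both derivatives vanish, so the claimed bounds hold uniformly on $\RR$.

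There is no real obstacle here; the only thing to be mindful of is the $C^2$ matching at the four breakpoints, which is handled by choosing $\psi$ flat to second order at $0$ and $1$ so that the one-sided derivatives of $G$ agree on both sides of each gluing point.
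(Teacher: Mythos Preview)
Your proof is correct and essentially coincides with the paper's. In fact the paper's construction, once you expand $\psi_a(t)=(t-a)\phi_a(t)$ and collect terms, is exactly $G(x)=L_{\mathrm{ext}}(x)\cdot\phi_a(\,\cdot\,)$ on each buffer interval; the paper just writes this product as a sum $y_i\phi_a+s\psi_a$ and bounds the two summands separately, whereas you keep the product intact and apply the Leibniz rule directly. The derivative estimates and the $C^2$ matching are identical in substance.
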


\begin{proof}
Let $\phi\colon \RR \to [0,1]$ be a smooth (weakly) monotone increasing function with $\phi(x)=0$ for $0\leq x\leq 1/3$ and $\phi(x)=1$ for $2/3\leq x\leq 1$. We will choose $\phi$ so that $|\phi'(x)|\leq 10$ and $|\phi''(x)|\leq 10$ for $0\leq x\leq 1$. Define $\phi_a(x) =\phi(x/a)$. Then $\phi_a(0)=0$, $\phi_a(a)=1$, $(\phi_a)_+'(0) = (\phi_a)_-'(a)=0$, and $(\phi_a)_+''(0)=(\phi_a)_-''(a)=0$. We also have $|\phi_a'(x)|\lesssim a^{-1}$ and $|\phi_a''(x)|\lesssim a^{-2}$ for $x\in [0,a].$

Define  $\psi_a(x) = (x-a)\phi_a(x)$. Then $\psi_a(0)=\psi_a(a)=0$, $(\psi_a)_+'(0)=0$, $(\psi_a)_{-}'(a)=1$, and $(\psi_a)_+''(0)=(\psi_a)_-''(a)=0$. We also have $|\psi_a'(x)|\lesssim 1$ and $|\psi_a''(x)|\lesssim a^{-1}$ for $x\in [0,a]$. 

Let 
\[
G(x) = \left\{\begin{array}{ll}
y_1\phi_a(x-x_1+a)+s\psi_a(x-x_1+a),& \quad x\leq x_1,\\
s(x-x_1) +y_1, & \quad x_1<x<x_2, \\
y_2\phi_a(a+x_2-x)-s\psi_a(a+x_2-x),& \quad x\geq x_2.
 \end{array}\right. \qedhere
\]

\end{proof}
The function $g$ from \ref{rectVsC2Lem} will be a sum of the functions $G$ from Lemma \ref{functionAdaptedToLineSegment}, with one function $G$ for each trapezoid $R$. We turn to the details.
\begin{proof}[Proof of Lemma \ref{rectVsC2Lem}]
 For each $R\in\mathcal{R}_1$, let $\tilde L(R)$ be the line segment coaxial with $R$. Next, for $2\leq k\leq N$ and for each $R\in\mathcal{R}_k$, let $\tilde s(R)=\operatorname{slope}(R)-\operatorname{slope}(\tilde R)$, where $\tilde R\in\mathcal{R}_{k-1}$ is the parent of $R$. Let $(x_1,y_1)$, $(x_2,y_2)$ be the endpoints of the line segment coaxial with $R$. Let $\tilde y_1,\tilde y_2$ be selected so that $(x_1,\tilde y_1), (x_2,\tilde y_2)$ lie on the line segment coaxial with $\tilde R$. Let $\tilde L(R)$ be the line segment from $(x_1, y_1-\tilde y_1)$ to $(x_2, y_2-\tilde y_2)$. Finally, let $G_R(x)$ be the function defined in Lemma \ref{functionAdaptedToLineSegment} with $L = \tilde L(R)$ and $a = \rho_k^{1/2}/2$.

Define
\[
g(x) = \sum_{k=1}^N \sum_{R\in\mathcal{R}_k}G_R(x).
\]

We will verify that this function satisfies the three conclusions of Lemma \ref{rectVsC2Lem}. Conclusion 1 follows immediately from the definition of $g$ and the definition of the functions $\{G_R\}$. 

For conclusion 2, observe that for each $1\leq k\leq N$ and each $R\in\mathcal{R}_k$, we have $x_2-x_1 \gtrsim \rho_k^{1/2+\eta}$ and $|y_1-\tilde y_1|, |y_2-\tilde y_2|\leq \rho_{k-1}$. We also have $|\operatorname{slope}(\tilde L(R))|\leq \rho_{k-1}/\rho_k^{1/2+\eta}$. This implies that
\begin{equation}\label{boundGpp}
|G_R''(x)|\lesssim \frac{\rho_{k-1}}{\rho_k^{1+2\eta}}\leq \delta^{-\frac{1}{N}-2\eta \frac{k}{N}}\quad\textrm{for all}\ x\in\RR.
\end{equation}
We have 
\begin{equation}\label{computegpp}
|g''(x)| \leq \sum_{k=1}^N \Big| \sum_{R\in\mathcal{R}_k} G_R''(x)\Big|.
\end{equation}

Recall that for each index $k=1,\ldots,N$, the projections of the trapezoids in $\mathcal{R}_k$ to the $z$-axis are $\rho_k^{1/2}$-separated. This implies that the functions $\{G_R\colon R\in\mathcal{R}_k\}$ have disjoint support. Thus for each $k=1,\ldots,N$, at most one function from $\{G_R: R\in\mathcal{R}_k\}$ can contribute to the inner sum in \eqref{computegpp}. Thus by \eqref{boundGpp} we have 
\[
|g''(x)|\lesssim \sum_{k=1}^N \delta^{-\frac{1}{N}-2\eta \frac{k}{N}} \lesssim \delta^{-\frac{1}{N}-2\eta}.
\]

For conclusion 3, observe that if $R\in\mathcal{R}_k$, then
\begin{equation}\label{boundGp}
|G_R'(x)|\lesssim  \frac{\rho_{k-1}}{\rho_k^{1/2+\eta}}\quad\textrm{for all}\ x\in\RR.
\end{equation}
Next, if $R_0\in\mathcal{R}_k$ and $x\in \pi(R_0)$, then
\[
\operatorname{slope}(R_0) = \sum_{j=1}^k \sum_{R\in\mathcal{R}_j}G'_R(x).
\]
Thus
\[
|g'(x) - \operatorname{slope}(R_0)| \leq \sum_{j=k+1}^N \Big|\sum_{R\in\mathcal{R}_j}G'_R(x)\Big|.
\]
Again, for each index $k=1,\ldots,N$ the functions $\{G_R\colon R\in\mathcal{R}_k\}$ have disjoint support. Thus for each $j=k+1,\ldots,N$, at most one function $G_R,\ R\in\mathcal{R}_j$ can contribute to the above sum. Thus by \eqref{boundGp} we have 
\[
|g'(x) - \operatorname{slope}(R_0)| \leq \sum_{j=k+1}^N \frac{\rho_{j-1}}{\rho_j^{1/2+\eta}}\lesssim \frac{\rho_{k}}{\rho_{k+1}^{1/2+\eta}}.\qedhere
\]
\end{proof}


\subsection{Proof of Proposition \ref{C2GrainsProp}}
We will prove Proposition \ref{C2GrainsProp} by combining Corollary \ref{nestedRectanglesCoverGraphFCor} and Proposition \ref{rectVsC2Lem}. The details are as follows. 

\begin{proof}[Proof of Proposition \ref{C2GrainsProp}]
Let $\delta_0>0'$ and  $\eps_1,\eps_2>0$ be small constants to be determined below. We will select $\eps_1$ very small compared to $\eps_2$, and $\eps_2$ very small compared to $\eps$. Let $\delta\in (0,\delta_0']$ and let $(\tubes,Y)_\delta$ be a collection of $\delta$-tubes that satisfies the conclusions of Proposition \ref{plainyGrainyProp}, with $\eps_1$ in place of $\eps$. Apply Corollary \ref{nestedRectanglesCoverGraphFCor} to $(\tubes,Y)_\delta$ with $\eps_2$ in place of $\eps$ and $N = 1/\eps_2$, and let $(\tubes_1,Y_1)_\delta$ and $\{\mathcal{R}_j\}_{j=1}^N$ be the output of this lemma. 

Apply Proposition \ref{rectVsC2Lem} to the family of trapezoids $\{\mathcal{R}_j\}_{j=1}^N$. The resulting function $g$ almost satisfies Conclusion $1^\prime$ of Proposition \ref{C2GrainsProp}, except we only have the bound $\Vert g\Vert_{C^2}\lesssim \delta^{-3\eps_2}.$ This can be remedied by restricting $(\tubes_1,Y_1)_\delta$ to a slab of dimensions roughly $1\times 1\times \delta^{3\eps_2}$ and then applying a vertical re-scaling using Lemma \ref{rescalingPreservesExtremality}. After this re-scaling, the re-scaled function $g$ will satisfy $\Vert g\Vert_{C^2}\leq 1$, as desired, and the resulting collection of tubes $(\tubes',Y')_{\delta'}$ will continue to satisfy Conclusion 2 of Proposition \ref{C2GrainsProp}. If $\delta_0'$ is chosen sufficiently small (depending on $\delta_0$ and $\eps$) and if $\eps_2$ is chosen sufficiently small, then we will have $\delta'\leq\delta_0$, and $(\tubes',Y')_\delta'$ will be $\eps$-extremal. 
\end{proof}


\section{The slope function has large slope}\label{largeSlopeSec}
Our goal in this section is to strength the conclusions of Proposition \ref{C2GrainsProp} by showing that the global grains slope function has derivative bounded away from 0. In this step we make important use of the fact that the tubes in $\tubes$ point in different directions (though with a modification of our argument, a weaker condition specifying that the tubes in $\tubes$ do not concentrate too tightly into $\rho$ tubes is sufficient---see \cite{WZ}). More precisely, we will prove the following.

\begin{prop}\label{largeSlopeSlopeFunction}
For all $\eps,\delta_0>0$, there exists $\delta\in (0,\delta_0]$ and a $\eps$-extremal set of tubes $(\tubes,Y)_\delta$ with the following properties:
\begin{enumerate}
\item[$1^{\prime\prime}.$] {\bf Global grains with nonsingular $C^2$ slope function.}\\
There is a function $f\colon [-1,1]\to\RR$ with $1\leq |f'(z)|\leq 2$ and $|f''(z)|\leq 1/100$ for all $z\in[-1,1],$ so that $\big( E_{\tubes} \cap \{z = z_0\}\big) \cdot (1, f(z_0), 0)$ is a $(\delta, 1-\sigma, \delta^{-\eps})_1$-ADset for each $z_0\in [-1,1]$.

\item[2.] {\bf Local grains with Lipschitz plane map.} \\
$(\tubes,Y)_{\delta}$ has a 1-Lipschitz plane map $V$. For all $\rho\in[\delta,1]$ and all $p\in \RR^3$, $V(p) \cdot \big( B(p,\rho^{1/2}) \cap E_\tubes\big)$ is a $(\rho, 1-\sigma, \delta^{-\eps})_1$-ADset. 
\end{enumerate}
\end{prop}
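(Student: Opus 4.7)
The plan is to start from the $\eps$-extremal collection $(\tubes,Y)_\delta$ and $C^2$ slope function $f$ with $\|f\|_{C^2}\le 1$ produced by Proposition \ref{C2GrainsProp}, and perform a local anisotropic rescaling to achieve $|f'|\in[1,2]$ and $|f''|\le 1/100$, while preserving Item 2 and the global grains Ahlfors-regular structure. The work splits into three steps.

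Step 1 (the main obstacle) is to locate a point $z_0\in[-1,1]$ at which $|f'(z_0)|=M$ for some $M$ that is at least $\delta^{\eta_0}$, for a small absolute constant $\eta_0$. This is where the Kakeya hypothesis is used. Since the tubes in $\tubes$ span a 2-dimensional set of directions, the skew projection
\[
\pi_{f_0}(x,y,z)=(x+f_0 y,\; z)
\]
sends each $T\in\tubes$ to a $\delta$-neighborhood of a line of slope $(a+f_0 b)/c$ in the $(u,z)$-plane; if $|f'|$ were uniformly tiny on an interval $J$, these slopes would still be $\delta$-separated and span an interval, so Cordoba's 2D Kakeya estimate gives $|\pi_{f_0}(\bigcup T)|\gtrsim |J|/\log(1/\delta)$. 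On the other hand, the global grains AD-set condition forces a per-slice bound that, when integrated in $z$, yields $|\pi_{f_0}(E_\tubes\cap(\RR^2\times J))|\lesssim |J|\delta^{\sigma}+|J|\cdot\max_{z\in J}|f(z)-f_0|\cdot\delta^{\sigma-1}$. Balancing these two estimates, and iterating over dyadic scales between $\delta$ and $1$ (using the multi-scale extremality from Proposition \ref{multiScaleStructureExtremal} to guarantee that enough tubes contribute at each slab), produces the desired point $z_0$ with $|f'(z_0)|\ge \delta^{\eta_0}$.

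Step 2: Using $\|f\|_{C^2}\le 1$, set $L=M/100$ and $I=[z_0-L/2,\; z_0+L/2]$. Then $|f'(z)-f'(z_0)|\le L$ on $I$, so $|f'|\in[0.99M,\,1.01M]$ and $f'$ has constant sign on $I$.

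Step 3: Apply Lemma \ref{rescalingPreservesExtremality} with the anisotropic dilation
\[
\phi(x,y,z)=\Bigl(x,\; \tfrac{y}{LM},\; \tfrac{z-z_0}{L}\Bigr),
\]
whose rescaling factors $1/L=100/M$ and $1/(LM)=100/M^2$ are bounded by $\delta^{-C\eta_0}$, hence within the tolerance of the lemma once $\eta_0$ is small. A direct computation shows that the rescaled slope function $\tilde f(z')=f(Lz'+z_0)/(LM)$ satisfies $|\tilde f'(z')|=|f'(Lz'+z_0)|/M\in[0.99,1.01]$ and $|\tilde f''(z')|=L|f''|/M\le L/M=1/100$; after possibly flipping the sign of $y$ we obtain $\tilde f'\in[1,2]$. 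The horizontal-slice AD-set structure of the global grains is preserved because $\phi$ acts affinely in each $\{z=z_0\}$-slice; Item 2 (Lipschitz plane map and local grains AD-set structure) is preserved by Lemma \ref{rescalingPreservesExtremality} modulo absorbing small constants into the $\delta^{-\eps}$ prefactor and a final normalization to restore the 1-Lipschitz bound on the plane map.

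The hard part is Step 1: the quantitative Kakeya--projection estimate forcing $|f'|$ to be non-negligible somewhere. The key technical difficulty is to make the 2D Kakeya lower bound and the AD-set upper bound commensurate in spite of $f$ itself varying across the slab under consideration, which forces a trade-off between the length of the slab and the sharpness of the projection estimate. I expect this to require a combination of the skew projection argument with a pigeonholing over dyadic scales $\rho\in[\delta,1]$ provided by the multi-scale structure of Proposition \ref{multiScaleStructureExtremal}, and possibly a further application of Lemma \ref{robustTransLem} to rule out directional concentration of the tube family under $\pi_{f_0}$.
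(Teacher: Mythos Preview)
Your Steps 2--3 are sound and match the paper's endgame (the paper uses $\phi(x,y,z)=(x,\,m^2y/100,\,100z/m)$, essentially your map). The problem is Step~1.

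As written, the Cordoba bound is wrong: $\pi_{f_0}$ sends direction $(a,b,1)$ to slope $a+f_0b$, so the $\delta^{-2}$ direction-separated tubes in $\tubes$ project to only $\sim\delta^{-1}$ distinct slopes, each with multiplicity $\sim\delta^{-1}$; the projected family is \emph{not} direction-separated in $\RR^2$, and a slab-localized Cordoba bound for one representative per slope gives only $|J|^2/\log(1/\delta)$, not $|J|/\log(1/\delta)$. Your upper bound also fails to exploit the AD regularity at the coarse scale $\max_J|f-f_0|$, which is what makes the bookkeeping close. The projection idea can nonetheless be rescued, and the fix is simpler than the multi-scale iteration you anticipate: work globally ($J=[-1,1]$), combine $\sum_T|\pi_{f_0}(Y(T))|\gtrsim\delta^{-1}\sum_T|Y(T)|\ge\delta^{\eps-1}$ with the crude bound $\int\big(\sum_T\chi_{\pi_{f_0}(T)}\big)^2\lesssim\delta^{-2}\log(1/\delta)$ (the $\delta^{-1}$-fold parallel stacking dominates) to get $|\pi_{f_0}(E_\tubes)|\gtrsim\delta^{2\eps}$; then compare with the upper bound $|\pi_{f_0}(E_\tubes)|\lesssim\delta^{-\eps}(\sup|f'|)^\sigma$ obtained by applying the $(\delta,1-\sigma)$-ADset condition at scale $\sup|f-f_0|\le 2\sup|f'|$. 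This forces $\sup|f'|\gtrsim\delta^{3\eps/\sigma}$, which suffices for Step~2.

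The paper takes a genuinely different route and never forms $\pi_{f_0}$. Its Lemma~\ref{largeDerivativeOnInterval} assumes $|f'|<|J|$ on a good interval $J$ and uses the \emph{local} grain structure (Item~2, the Lipschitz plane map $V$): setting $\rho=|J|^2$, the near-constancy of $f$ on $J$ forces the global grains in the slab to be consistent with the local $\rho^{1/2}$-scale slabs normal to $V$, so by Lemma~\ref{dottedADSetLem} every tube meeting a fixed $\rho^{1/2}$-prism $P_S$ satisfies $|\dir(T)\cdot\tilde V(p_S)|\lesssim\delta^{-O(\eta)}\rho^{1/2}$. Pigeonholing then produces $\gg 1$ tubes within angle $\delta$, contradicting direction separation. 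So the paper's argument is three-dimensional and hinges on Item~2, which your Step~1 never invokes; your corrected projection argument is two-dimensional and uses only Item~$1'$ plus direction separation. Either route feeds into the same Step~2--3 rescaling.
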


In the remainder of the paper, only  Item $1''$  will be used. We keep Item $2$  for completeness. 
We will begin with a technical lemma, which says that if $U$ is a segment of a tube $T$, and if the image of a large subset of $U$ under a linear projection $p\mapsto p\cdot v$ is an ADset, then $v$ must be nearly orthogonal to $\dir(T)$. The precise statement is as follows.
\begin{lem}\label{dottedADSetLem}
Let $\eps>0$,  $0\leq\alpha<1,$ and $0<\delta\leq\rho\leq 1$. Let $T$ be a $\delta$-tube, let $U\subset T$ be a tube segment of length $\rho^{1/2}$, and let $F\subset U$ with 
\begin{equation}\label{FMostlyFull}
|F|\geq\delta^{\eps}|U|.
\end{equation} 
Let $v\in S^2$, and suppose $v\cdot F$ is a $(\rho, \alpha, \delta^{-\eps})_1$-ADset. Then
\begin{equation}
|v\cdot \dir(T)|\lesssim \delta^{\frac{-2\eps}{1-\alpha}}\rho^{1/2}.
\end{equation}
\end{lem}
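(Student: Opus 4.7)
The plan is to combine a simple volume-covering estimate with the AD-regularity hypothesis on $v\cdot F$. Let $w = \dir(T)$, set $\theta = v\cdot w$, and choose an orthonormal frame $w,w_1,w_2$ of $\RR^3$. After translating so the centre of $U$ is at the origin, I will parametrize points of $U$ as $p = tw + s_1 w_1 + s_2 w_2$ with $|t|\leq \rho^{1/2}/2$ and $|s_1|,|s_2|\lesssim \delta$. Then
\[
v\cdot p \;=\; t\theta + s_1(v\cdot w_1) + s_2(v\cdot w_2),
\]
so $v\cdot U$ is contained in an interval of length $L \lesssim \rho^{1/2}|\theta| + \delta$. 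If $L<\rho$ there is nothing to prove, so I may assume $L\geq \rho$ (in particular $|\theta|\gtrsim \rho^{1/2}$).

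The first key step is the upper bound for the area of a slab cross-section: for any real $c$, the slab $v^{-1}([c,c+\rho])$ intersects $U$ in the set where $c\leq t\theta + s_1(v\cdot w_1)+s_2(v\cdot w_2)\leq c+\rho$, so fixing $(s_1,s_2)\in[-\delta,\delta]^2$ the admissible $t$-range has length at most $\min(\rho/|\theta|,\rho^{1/2})$. Hence
\[
\sup_{c\in\RR}\bigl|U\cap v^{-1}([c,c+\rho])\bigr|
\;\lesssim\; \delta^{2}\min\!\bigl(\rho/|\theta|,\ \rho^{1/2}\bigr).
\]
The second key step is to cover $v\cdot F$ by $N := \mathcal{E}_\rho(v\cdot F)$ intervals of length $\rho$; their $v$-preimages form $N$ slabs that cover $F$. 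Combining with the slab bound and the hypothesis $|F|\geq \delta^{\eta}|U|\gtrsim \delta^{\eta+2}\rho^{1/2}$ yields
\[
\delta^{\eta+2}\rho^{1/2} \;\lesssim\; |F| \;\leq\; N\cdot \delta^{2}\min\!\bigl(\rho/|\theta|,\rho^{1/2}\bigr),
\qquad\textrm{so}\qquad N \;\gtrsim\; \frac{\delta^\eta|\theta|}{\rho^{1/2}}
\]
in the regime $|\theta|\geq\rho^{1/2}$ (the other regime already gives the conclusion).

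The third step uses Definition \ref{ADSetDefn}: applying the AD bound with a ball of radius $L\sim \rho^{1/2}|\theta|$ containing $v\cdot F$ gives
\[
N \;=\; \mathcal{E}_\rho(v\cdot F) \;\leq\; \delta^{-\eta}(L/\rho)^\alpha \;\lesssim\; \delta^{-\eta}\bigl(|\theta|/\rho^{1/2}\bigr)^\alpha.
\]
Comparing the two bounds on $N$ gives $\delta^{2\eta}\bigl(|\theta|/\rho^{1/2}\bigr)^{1-\alpha}\lesssim 1$, i.e.\ $|\theta|\lesssim \delta^{-2\eta/(1-\alpha)}\rho^{1/2}$, which is the required inequality. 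There is no substantive obstacle here; the only thing to be a little careful about is the edge case $L<\rho$, which makes the AD-set bound vacuous but also forces $|\theta|\lesssim \rho^{1/2}$ directly and hence is harmless.
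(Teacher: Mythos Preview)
Your proof is correct and follows essentially the same approach as the paper: both arguments lower-bound $\mathcal{E}_\rho(v\cdot F)$ via a volume/pigeonhole estimate (you use a slab cross-section bound, the paper equivalently partitions $U$ into sub-segments of length $\rho/|\theta|$ that project to $\sim\rho$-intervals) and then compare with the AD-set upper bound applied to the containing interval of length $\sim\rho^{1/2}|\theta|$. The only cosmetic difference is that the paper disposes of the trivial regime by directly assuming $|\theta|\geq\rho^{1/2}$ rather than via the edge case $L<\rho$; your handling of that case implicitly uses the (true but unstated) lower bound $L\geq\rho^{1/2}|\theta|$ coming from the tube's central axis.
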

\begin{proof}
We will suppose that $|v\cdot \dir(T)|\geq \rho^{1/2}$, since otherwise there is nothing to prove. Let $\tau = |v\cdot \dir(T)|$. Divide $U$ into sub-segments of length $\rho/\tau$ (by hypothesis $\rho/\tau\leq \rho^{1/2}$, so there is at least one such segment), and let $\mathcal{S}$ be this set of segments. For each $S\in\mathcal{S}$, $v\cdot S$ is an interval of length $\sim\rho$, and no point $x\in\RR$ is contained in more than 10 of the sets $\{v\cdot S\colon S\in\mathcal{S}\}$. By \eqref{FMostlyFull}, we have
\[
\#\{S\in\mathcal{S}\colon S\cap F\neq\emptyset\}\geq\delta^{\eps}\tau\rho^{-1/2},
\]
and thus 
\begin{equation}\label{rhoCoveringVFLowerBd}
\mathcal{E}_{\rho}(v\cdot F)\gtrsim \delta^{\eps}\tau\rho^{-1/2}.
\end{equation}
On the other hand, $v\cdot F\subset v\cdot U$ is contained in an interval $I\subset\RR$ of length $\sim\rho^{1/2}\tau$, and hence
\begin{equation}\label{rhoCoveringVFUpperBd}
\mathcal{E}_{\rho}(v\cdot F)\leq \mathcal{E}_{\rho}((v\cdot F) \cap I)\leq\delta^{-\eps}(|I|/\rho)^\alpha\lesssim \delta^{-\eps}(\rho^{-1/2}\tau)^\alpha.
\end{equation}
Comparing \eqref{rhoCoveringVFLowerBd} and \eqref{rhoCoveringVFUpperBd}, we conclude that $\tau\lesssim \delta^{\frac{-2\eps}{1-\alpha}}\rho^{1/2}$, as desired.
\end{proof}

The next lemma says that if $(\tubes,Y)_\delta$ is an extremal set of tubes pointing in different directions with $C^2$ slope function $f$, then $f$ cannot have small derivative on any ``thick'' slab that contains (at least) an average amount of the mass of $(\tubes,Y)_\delta$. This lemma contains the key geometric argument needed to prove Proposition \ref{largeSlopeSlopeFunction}. 

\begin{lem}\label{largeDerivativeOnInterval}
For all $\eps>0,$ there exists $\eta>0,\delta_0>0$ so that the following holds for all $\delta\in (0, \delta_0]$. Let $(\tubes,Y)_\delta$ be a set of tubes that satisfies the conclusions of Proposition \ref{C2GrainsProp}, with $\eta$ in place of $\eps$. Suppose in addition that the tubes in $\tubes$ point in $\delta$-separated directions. Let $J\subset[-1,1]$ be an interval, with $\delta^{1/2}\leq |J|\leq\delta^\eps$. Suppose that
\begin{equation}\label{JFull}
\sum_{T\in\tubes}|Y(T)\cap (\RR^2\times J)|\geq\delta^{\eta}|J|,
\end{equation}
and that $(\RR^2\times J)\cap E_{\tubes}$ can be covered by a union of $\leq\delta^{-\eta}|J|^{-2 + \sigma}$ cubes of side-length $J$. 

Then the slope function $f\colon [-1,1]\to\RR$ from Proposition \ref{C2GrainsProp} satisfies 
\begin{equation}\label{FPrimeLargeOnJ}
|f'(z)|\geq|J|\quad\textrm{for all}\ z\in J.
\end{equation}
\end{lem}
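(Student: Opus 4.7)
The plan is to argue by contradiction: suppose $|f'(z_0)|<r:=|J|$ for some $z_0\in J$, and extract a contradiction from the tube hypotheses. The $C^2$ bound $|f''|\le 1/100$ together with $|J|=r$ forces $|f'(z)|\le 2r$ on $J$ and $|f(z)-f(z_0)|\le 2r^2$ throughout $J$, so the global grain direction $(-f(z),1,0)$ is almost constant on $S=\RR^2\times J$.

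First I would find many tubes concentrated in $S$. Writing $U_T=T\cap(\RR^2\times J)$ so that $|U_T|\asymp\delta^2 r$, the hypothesis $\sum_T|Y(T)\cap S|\ge\delta^\eta r$ combined with $|Y(T)\cap S|\le |U_T|$ yields, after dyadic pigeonholing, a subcollection $\tubes'\subset\tubes$ with $\#\tubes'\gtrapproxdelta\delta^{\eta-2}$ such that $|Y(T)\cap U_T|\gtrapproxdelta\delta^{O(\eta)}|U_T|$ for every $T\in\tubes'$. These tubes still have $\delta$-separated directions.

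Second, I would apply Lemma~\ref{dottedADSetLem} with $\rho=r^2$ (so $\rho^{1/2}=r$), $\alpha=1-\sigma$, and with the vector
\begin{equation*}
v\ =\ (1,f(z_0),0)/\sqrt{1+f(z_0)^2}
\end{equation*}
(the unit global grain normal at $z_0$) applied to each $T\in\tubes'$ with $F=Y(T)\cap U_T$. The crux is to verify that $v\cdot F$ lies in a $(r^2,1-\sigma,\delta^{-C\eta})_1$-AD set. A direct computation gives, for $(x,y,z)\in F\subset E_\tubes$,
\begin{equation*}
v\cdot(x,y,z)\ =\ \tfrac{1}{\sqrt{1+f(z_0)^2}}\bigl(x+f(z)y\bigr)\ +\ \tfrac{f(z_0)-f(z)}{\sqrt{1+f(z_0)^2}}\,y,
\end{equation*}
so the first summand lies in the rescaled AD set provided by Conclusion~$1^\prime$ at level $z$, and the second is bounded by $2r^2$ since $|y|\le 1$. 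To upgrade the union over $z\in J$ into a \emph{single} AD set at scale $r^2$, I would invoke Conclusion~$2$: in any ball $B(p,r)\subset S$ the set $V(p)\cdot(B(p,r)\cap E_\tubes)$ is a $(r^2,1-\sigma,\delta^{-\eps})_1$-AD set. Because both $V(p)$ and $v$ are forced (via the plane-map property applied to the essentially two-parameter family of tube directions near $p$) to be normal to the same plane, consistency between the global and local grain structures combined with the $1$-Lipschitz regularity of $V$ gives $|V(p)-v|\lesssim r$ throughout $S$; since $F\subset B(p,r)$, this yields $v\cdot F\subset V(p)\cdot F+[-Cr^2,Cr^2]$, placing $v\cdot F$ inside a $Cr^2$-thickening of the local grain AD set, which is itself an AD set at scale $r^2$.

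Third, Lemma~\ref{dottedADSetLem} then delivers, for each $T\in\tubes'$,
\begin{equation*}
|v\cdot\dir(T)|\ \lesssim\ \delta^{-2\eta/\sigma}\,r,
\end{equation*}
i.e.\ writing $\dir(T)=(v_1,v_2,v_3)$, $|v_1+f(z_0)v_2|\lesssim\delta^{-2\eta/\sigma}r$. Every such direction lies in a slab in $S^2$ of thickness $\delta^{-2\eta/\sigma}r$ around the great circle $\{v_1+f(z_0)v_2=0\}$; this slab has surface area $\lesssim\delta^{-2\eta/\sigma}r$, so it contains at most $\lesssim\delta^{-2-2\eta/\sigma}r$ many $\delta$-separated directions. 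Combining with $\#\tubes'\gtrapproxdelta\delta^{\eta-2}$ and the $\delta$-separation of directions, I obtain $\delta^{\eta-2}\lesssim\delta^{-2-2\eta/\sigma}r$, i.e.\ $r\gtrsim\delta^{\eta(1+2/\sigma)}$. Since the lemma allows $r\le\delta^\eps$, choosing $\eta$ much smaller than $\eps\sigma/(\sigma+2)$ contradicts this for $\delta$ small, which completes the proof.

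The main obstacle is the AD-set verification in the second step: the naive picture gives only $v\cdot F\subset\bigcup_{z\in J}N_{r^2}(cW_z)$, a union of AD sets at scale $\delta$ with no automatic AD property at the coarser scale $r^2$. Controlling this by comparing with the \emph{local} grain AD set requires the quantitative approximation $V(p)\approx v$ to order $r$ on $S$, which is where both structural pieces of Proposition~\ref{largeSlopeSlopeFunction} (the $C^2$-regular global grain slope and the Lipschitz local plane map) have to be used in concert. Making this approximation precise enough to be absorbed by the $r^2$-slack of Lemma~\ref{dottedADSetLem} is the delicate step.
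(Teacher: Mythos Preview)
Your proof has a genuine gap at precisely the point you flag as delicate: the claim that $|V(p)-v|\lesssim r$ throughout $S$, where $v$ is parallel to $(1,f(z_0),0)$, is false. The plane map $V(p)$ is orthogonal to the \emph{tube directions} through $p$; the vector $v$ is the horizontal-slice normal to the global grains. These are different constraints. Tubes are not horizontal---their directions have $z$-component $\geq 1/2$---so $v\cdot\dir(T)$ has no reason to be small, and $v$ is not forced to lie in $V(p)^\perp$. Concretely (see the argument in Lemma~\ref{locallyLinearLem}), one can only conclude that $V(p)$ lies approximately in the plane spanned by $(1,f(z_0),0)$ and $(0,0,1)$; its $z$-component can be of order $1$, so $|V(p)-v|$ is in general $O(1)$, not $O(r)$. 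Consequently $v\cdot F$ and $V(p)\cdot F$ differ by $O(r)$, not $O(r^2)$, and the $r^2$-scale AD property you need for Lemma~\ref{dottedADSetLem} does not follow. (Note also that applying Lemma~\ref{dottedADSetLem} with $V(p)$ itself is vacuous: the plane-map property already gives $|V(p)\cdot\dir(T)|\le\delta$.)

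The paper's proof circumvents this by never trying to identify $V$ with $v$. Instead it (i) restricts to a single plane $\{y=y_0\}$ and works with the \emph{projected} vector $\tilde V(p_S)=(V_x(p_S),0,V_z(p_S))$, for which the local-grain AD property still holds on the $\rho^{1/2}$-square $S$ in that plane; (ii) uses the global-grain structure (after arranging each grain to be nearly full) to \emph{extend} this AD property from $S$ to the full prism $P_S=\{(x,y,z):(x,z)\in S\}$, exploiting that $|f(z)|\le\rho$ on $J$; and (iii) applies Lemma~\ref{dottedADSetLem} with $\tilde V(p_S)$, yielding $|\dir(T)\cdot\tilde V(p_S)|\lesssim\delta^{-C\eta}\rho^{1/2}$ for the tubes passing through $P_S$. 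The contradiction then comes from pigeonholing \emph{within a single prism}: the covering hypothesis $\leq\delta^{-\eta}|J|^{-2+\sigma}$ (which your argument never uses) guarantees some $P_S$ contains $\gtrsim\delta^{-2}\rho^{(1-\sigma)/2}$ tubes, and these are confined to a strip of width $\delta^{-C\eta}\rho^{1/2}$ on $S^2$, forcing $\gtrsim\delta^{-\sigma\eps}$ of them to be parallel.
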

\begin{proof}
Suppose not, i.e.~there exists $z_0\in J$ with $|f'(z_0)|<|J|$. Since $\Vert f \Vert_{C^2}\leq 1$, we have $|f'(z)|\leq 2|J|$ for all $z\in J$. We will obtain a contradiction.

Define $\rho = |J|^2$, so $\delta\leq\rho\leq\delta^{2\eps}$. After a translation and rotation, we may suppose that $J = [0, \rho^{1/2}]$ and $f(0) = 0$ (note that after this rotation, our shadings $Y(T)$ will be unions of cubes that are no longer axis parallel, but this is harmless for the arguments that follow).

After a refinement, we may suppose there is a number $\delta^{-\sigma+\eta}\leq\mu\leq\delta^{-\sigma-\eta}$ so that for each point $p\in (\RR^2\times J)\cap E_{\tubes}$, we have $\#\tubes(p)=\mu$. Define $F = (\RR^2\times J)\cap E_{\tubes}$. By \eqref{JFull}, we have $
\delta^{\sigma+2\eta}|J| \lessapprox |F|\lessapprox \delta^{\sigma-2\eta}|J|$. In the arguments that follow, we will describe several refinements of $F$ that endow the set with additional structural properties. Each of these refinements will continue to be a union of (rotated) $\delta$-cubes.

\medskip

\noindent{\bf Refinement 1: Every global grain is popular.} Recall that $(\tubes,Y)_\delta$ satisfies Item $1^\prime$ from Proposition \ref{C2GrainsProp}. Informally, this says that $E_{\tubes}$ is a union of global grains (rectangular prisms of dimensions $1\times\delta\times\delta$), and that most of these grains are almost full. More precisely, we can select a subset $F_1\subset F$ with $|F_1|\geq\delta^{2\eta}|F|$ so that the following property holds: For each $z_1\in J$ and each $w\in (F\cap\{z=z_1\})\cdot (1, f(z_1), 0)$, define the $1\times\delta\times\delta$ rectangular prism
\begin{equation}\label{defnGrainGrz}
G_{w,z_1} = \{ |z-z_1|\leq \delta\} \cap \{ |(x,y)\cdot(1, f(z_1))-w|\leq\delta\}.
\end{equation}
Then $G_{w,z_1}$ has large intersection with $F_1$; specifically, we have
\begin{equation}\label{fullGrains}
| G_{w,z_1}\cap  F_1| \geq \delta^{2\eta}|G_r| \gtrsim \delta^{2+2\eta}.
\end{equation}

\medskip

\noindent{\bf Refinement 2: Every global grain intersects a common $\{y=y_0\}$ plane.}
Each of the global grains $G_{w,z_1}$ from \eqref{defnGrainGrz} will intersect each plane of the form $\{y=y_0\}$ for $0\leq y_0\leq 1$. While it may not be true that every set $G_{w,z_1}\cap F_1$ will intersect every plane $\{y=y_0\}$, it is true that a typical set of this form is likely to intersect a typical plane. Thus we can find a choice of plane $\{y=y_0\}$ that intersects a large proportion of the sets $G_{w,z_1}\cap F_1$. We turn to the details.

Recall that $F$ (and hence $F_1$) can be covered by a union of $\leq\delta^{-\eta}\rho^{-1+\sigma/2}$ cubes of side-length $\rho^{1/2}$; weakening this inequality by a constant factor, we may suppose that these cubes are aligned with the $\rho^{1/2}$-grid. Denote the union of these cubes by $K$. Let $Y_{\operatorname{bad}}\subset[-1,1]$ be the set of $y_0$ for which $|\{y=y_0\}\cap K|\geq \delta^{-7\eta}\rho^{1/2+\sigma/2}$. By Fubini we have $|Y_{\operatorname{bad}}|\leq\delta^{4\eta}.$ If $G_{w,z_1}$ is a set of the form \eqref{defnGrainGrz}, then $|G_{w,z_1}\cap (\RR\times Y_{\operatorname{bad}}\times\RR)| \lesssim \delta^{4\eta+2}$, and hence if $G_{w,z_1}$ satisfies \eqref{fullGrains}, then
\[
|(G_{w,z_1}\cap  F_1) \backslash (\RR\times Y_{\operatorname{bad}}\times\RR)| \geq |G_{w,z_1}\cap  F_1|/2.
\]
Thus if we define $F_1'= F_1\backslash (\RR\times Y_{\operatorname{bad}}\times\RR)$, then (in light of Refinement 1) we have $|F_1'|\geq |F_1|/2$. 

By Fubini, there is a choice of $y_0\in [-1,1]\backslash Y_{\operatorname{bad}}$ so that
\[
\Big| \{y=y_0\}\cap F_1'\Big| \geq\frac{1}{2}|F_1'|,
\]
where on the LHS, $|\cdot|$ denotes 2-dimensional Lebesgue measure and on the RHS, $|\cdot|$ denotes 3-dimensional Lebesgue measure. Let $X=\bigcup G_{w,z_1}$, where the union is taken over all grains $G_{w,z_1}$ with $\{y=y_0\}\cap G_{w,z_1}\cap F_1'\neq\emptyset$, and let $F_2 = F_1'\cap X$. By \eqref{fullGrains}, we have $|F_2|\geq\delta^{4\eta}|F|$.

\medskip

\noindent{\bf Step 3: Local grains at scale $\rho$ extend to global grains.}
Recall that by Item $1'$ from Proposition \ref{C2GrainsProp}, each slice $E_{\tubes}\cap \{z=z_0\}$ is a union of global grains. A priori, the global grains for one value of $z_0$ need not be related to the grains for a different value of $z_0$ (though of course the slopes of these grains are related by the regularity of the slope function $f$). In this step, we will show that the local grains coming from Item 2 of Proposition \ref{C2GrainsProp} ensure consistency between the global grains for different slices $\{z=z_0\}$. We turn to the details.

Cover the rectangle $[-1,1]\times\{y=y_0\}\times J$ by interior-disjoint squares of side-length $\rho^{1/2}=|J|$. Since $y_0\not\in Y_{\operatorname{bad}}$, at most $\delta^{-10\eta}\rho^{\frac{\sigma-1}{2}}$ of these squares intersect $F_2\cap\{y=y_0\}.$ Denote this set of squares by $\mathcal{S}$. 

Let $S\in\mathcal{S}$ be a square, with center $p_S = (x_S, y_0, \rho^{1/2}/2)$. By Item $2$ from Proposition \ref{C2GrainsProp}, we have that $(S\cap F_2)\cdot V(p_S)$ is a $(\rho, 1-\sigma, \delta^{-\eta})_1$-ADset. Write $V(p_S) = (V_x(p_S), V_y(p_S), V_z(p_S))$. Since $S\cap F_2\subset\{y=y_0\}$, if we define $\tilde V(p_S) = \big( V_x(p_S), 0, V_z(p_S)\big)$, then
\begin{equation}\label{tildeVDoTADSet}
(S\cap F_2)\cdot \tilde V(p_S)\ \textrm{is a}\ (\rho, 1-\sigma, \delta^{-\eta})_1\textrm{-ADset}. 
\end{equation}

In particular, $S\cap F_2$ is contained in a union of at most $\delta^{-\eta}\rho^{\frac{\sigma-1}{2}}$ rectangles of dimensions $\rho^{1/2}\times\rho$. If $R\subset S$ is such a rectangle, and if $G_{w,z_1}$ is a grain of the form \eqref{defnGrainGrz} that intersects $R$, then since $|f(z_1)|\leq \rho$, we have $G_{w,z_1}\subset \{(x,y,z)\colon (x,z)\in 2R\}$, where $2R$ is the $\rho^{1/2}\times 2\rho$ rectangle that has the same major axis as $R$. But in light of \eqref{tildeVDoTADSet}, this implies that if we define the prism $P_S = \{(x,y,z)\colon (x,z)\in S\}$, then 
\[
(F_2 \cap P_S)\cdot \tilde V(p_S)\ \textrm{is a}\ (\rho, 1-\sigma, 2\delta^{-\eta})_1\textrm{-ADset}. 
\]
See Figure \ref{largeSlopeFig}

\begin{figure}[h]
\centering
\begin{overpic}[scale=0.4]{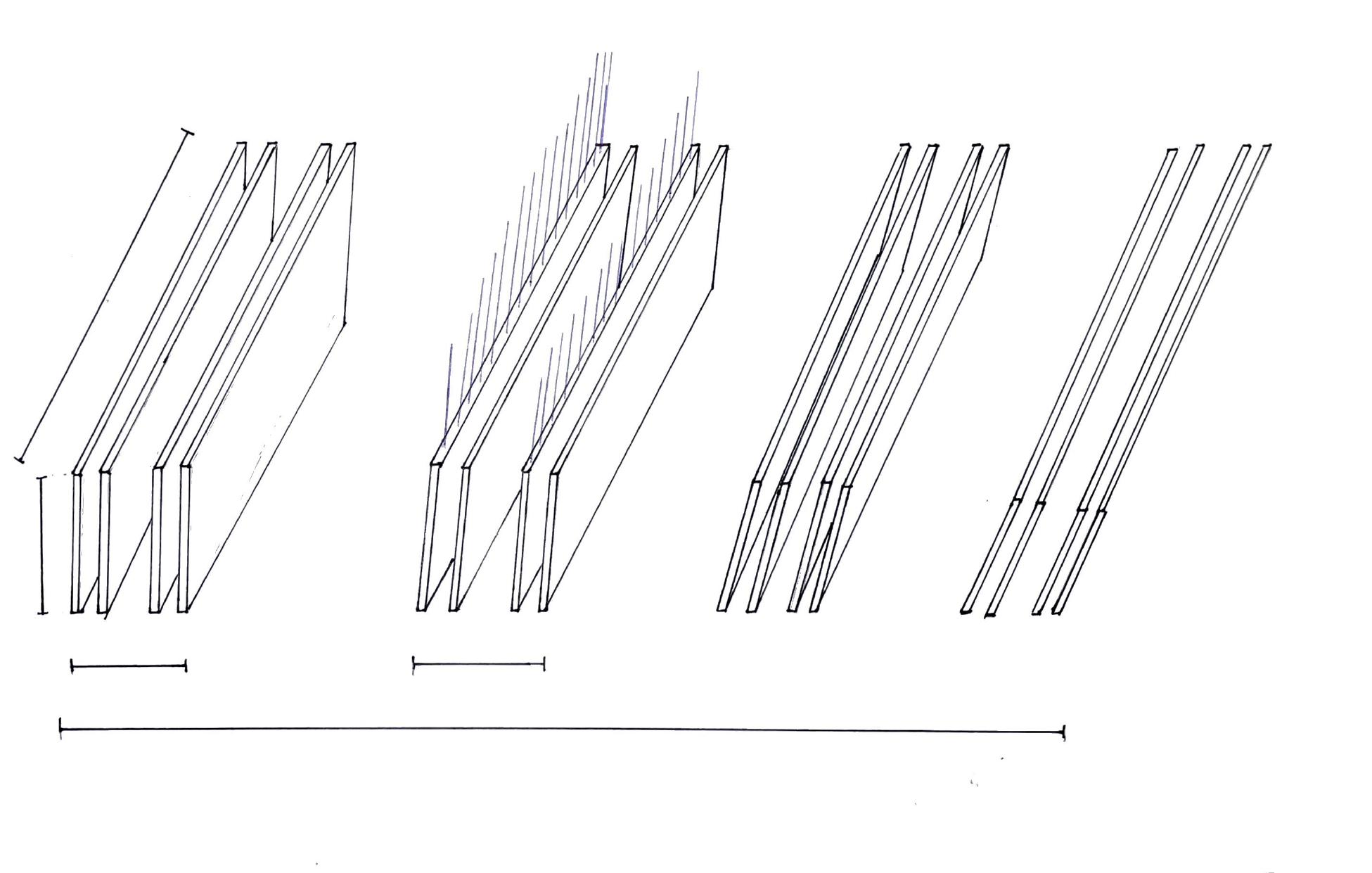}
\put (5,12) {{\footnotesize $\Delta x : \rho^{1/2}$}}
\put (30,12) {{\footnotesize $\Delta x : \rho^{1/2}$}}
\put (40,6) {{\footnotesize $\Delta x : 1$}}
\put (0,19) {{\footnotesize \rotatebox{90}{$\Delta z: \rho^{1/2}$}}}
\put (4,40) {{\footnotesize \rotatebox{63}{$\Delta y: 1$}}}
\end{overpic}
\caption{The rectangular prisms $\{P_S\colon S\in\mathcal{S}\}$, and the tubes from $\tubes$ (blue lines) that intersect them. For clarity, only a few tubes (blue lines) have been drawn.}
\label{largeSlopeFig}
\end{figure}

\medskip

\noindent{\bf Step 4: $\tubes$ contains parallel tubes.}
Many tubes from $\tubes$ must intersect each of the parallel slabs in Figure \ref{largeSlopeFig}, and if a tube $T$ intersects a slab, this imposes constraints on the possible directions of $v(T)$. In this step, we will show that these constraints force many tubes from  $\tubes$ to point in almost the same direction. This contradicts the assumption that the tubes in $\tubes$ point in $\delta$-separated directions. We turn to the details.

For each $T\in\tubes$, define $F_2(T) = Y(T)\cap F_2$. Then recalling \eqref{JFull}, we have 
\[
\sum_{T\in\tubes}|F_2(T)| \geq \delta^{5\eta}|J|=\delta^{5\eta}\rho^{1/2}.
\]
Recall that each $T\in\tubes$ intersects at most 3 prisms $P_S, S\in\mathcal{S}$, and for such a prism we have $|F_2(T)\cap P_S|\leq \rho^{1/2}\delta^2$. Thus by pigeonholing there exists a square $S\in\mathcal{S}$ and a set $\tubes_S\subset\tubes$ with 
\begin{equation}\label{sizeOfTubesS}
\#\tubes_S \gtrsim \delta^{-2}(\#\mathcal{S})^{-1}\gtrsim \delta^{11\eta-2}\rho^{\frac{1-\sigma}{2}},
\end{equation} 
so that $|F_2(T)|\gtrsim \delta^{11\eta}\rho^{1/2}\delta^{2}$ for each $T\in\tubes_S$. 

But for each such $T\in\tubes_S$, we have that $F_2(T) \cdot \tilde V(p_S)\subset (F_2\cap P_S)\cdot \tilde V(p_S)$ is a $(\rho, 1-\sigma, 2\delta^{-\eta})_1$-ADset. Thus by Lemma \ref{dottedADSetLem}, we have 
\[
|\dir(T)\cdot \tilde V(p_S)|\leq \delta^{-C_0\eta}\rho^{1/2},
\]
where the constant $C_0$ depends on $\sigma$. But comparing with \eqref{sizeOfTubesS} and pigeonholing, we conclude that there is a direction $v\in S^2$ so that
\[
\#\{T\in\tubes_S\colon |\dir(T)-v| \leq \delta\}\geq \delta^{C_1\eta}\rho^{-\sigma/2}\geq \delta^{C_1\eta-\sigma \eps},
\]
where $C_1$ is a constant that depends on $\sigma$. If $\eta>0$ is selected sufficiently small (depending on $\sigma$ and $\eps$), then this contradicts the assumption that the tubes in $\tubes$ point in $\delta$-separated directions. 
\end{proof}

We are almost ready to prove Proposition \ref{largeSlopeSlopeFunction}. First we will prove an intermediate result, which constructs an extremal set of tubes $(\tubes,Y)_\delta$ with the property that the shadings $Y(T)$ are supported on a thick slab, and the global grains slope function $f$ is large on this slab. 

\begin{lem}\label{intervalWithBigConstDerivativeLem}
For all $\eps,\delta_0>0$, there exists $\delta\in (0,\delta_0]$ and a set of tubes $(\tubes,Y)_\delta$ that satisfy the conclusions of Proposition \ref{C2GrainsProp}, as well as the following additional properties. First, for each $T\in\tubes$ we have $Y(T)\subset [-1,1]^2\times [0,\delta^{\eps}]$. Second, there is a number $m\geq\delta^{\eps}$ so that $m\leq |f'(z)|\leq 2m$ for all $z\in [0,\delta^{\eps}]$.
\end{lem}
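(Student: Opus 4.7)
The plan is to apply Proposition~\ref{C2GrainsProp}, locate an interval on which $|f'|$ is uniformly bounded below via Lemma~\ref{largeDerivativeOnInterval}, and then shrink to a sub-interval on which $|f'|$ also has bounded oscillation using the $C^2$ regularity of $f$.

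First, I would invoke Proposition~\ref{C2GrainsProp} with a parameter $\eta$ chosen very small compared to $\eps$, producing an $\eta$-extremal collection $(\tubes_0, Y_0)_\delta$ with a $C^2$ slope function $f$ satisfying $\|f\|_{C^2}\le 1$ and a $1$-Lipschitz plane map $V$. Because Lemma~\ref{largeDerivativeOnInterval} requires tubes in $\delta$-separated directions, I pass to a sub-collection with this property by pigeonholing a $\delta$-net of $S^2$; by Item~\ref{DefnOfMGoodCoveringItem} of Definition~\ref{defnOfM} (applied with $\rho=\delta$) this costs at most a $\delta^{-\eta}$ factor in mass.

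Next, I partition $[-1,1]$ into sub-intervals $J$ of length $2\delta^\eps$. By pigeonholing simultaneously in the mass $\sum_{T}|Y_0(T)\cap(\RR^2\times J)|$ and in the covering number of $(\RR^2\times J)\cap E_{\tubes_0}$ by $|J|$-cubes, I can find a slab $J$ satisfying the two hypotheses of Lemma~\ref{largeDerivativeOnInterval}. The covering-number bound $\lesssim \delta^{-\eta}|J|^{\sigma-2}$ follows by combining Item~$1'$ of Proposition~\ref{C2GrainsProp} (each $z$-slice lies in $\lesssim\delta^{-\eta}\delta^{\sigma-1}$ rectangles of dimensions $1\times\delta$) with Fubini in the $z$-direction. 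I apply Lemma~\ref{largeDerivativeOnInterval} with auxiliary parameter $\eps_L<\eps$ (chosen so that $|J|=2\delta^\eps\le\delta^{\eps_L}$) and with $\eta_L$ slightly larger than $\eta$. This yields $|f'(z)|\ge |J|=2\delta^\eps$ for every $z\in J$.

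I then pick $z_0$ at the center of $J$ and set $v:=|f'(z_0)|\ge 2\delta^\eps$. Since $|f''|\le 1$, the derivative $f'$ is $1$-Lipschitz, so on the sub-interval $J' := [z_0-\delta^\eps/2,\ z_0+\delta^\eps/2]\subset J$ we have $|f'(z)|\in[v-\delta^\eps/2,\ v+\delta^\eps/2]$. Setting $m:=v-\delta^\eps/2$, one checks $m\ge 3\delta^\eps/2\ge\delta^\eps$ and $v+\delta^\eps/2\le 2m$ (the latter inequality being equivalent to $v\ge 3\delta^\eps/2$, which holds). Thus $|f'(z)|\in[m,2m]$ on $J'$. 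After a harmless translation in $z$ I may assume $J'=[0,\delta^\eps]$. Taking $\tubes:=\tubes_0$ and $Y(T):=Y_0(T)\cap(\RR^2\times[0,\delta^\eps])$, the global grains ADset property, the local grains ADset property, and the Lipschitz plane map from Proposition~\ref{C2GrainsProp} are all preserved, since each is an upper bound on a restriction of $E_\tubes$ that only improves when we pass to a subset.

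The main obstacle is the verification of $\eps$-extremality of the restricted family: a slab of width $\delta^\eps$ can only capture a $\delta^\eps$ fraction of the mass, so to end with $\sum|Y(T)|\ge \delta^\eps$ one must choose $\eta$ sufficiently small and $\delta_0$ small enough that the ambient mass of the collection from Proposition~\ref{C2GrainsProp} (which is comparable to $1$ up to polylogarithmic and $\delta^{O(\eta)}$ losses incurred during the many refinement steps in Sections~\ref{planinessAndGraininessSection}--\ref{globalGrainsIsC2Sec}) comfortably dominates $\delta^\eps$ after the pigeonholed slab restriction. Everything else is a standard multi-step cascade of small parameters (one chooses $\eta\ll\eps_L\ll\eps$) and a routine check that the $\delta$-separated direction condition and the covering-number hypothesis both survive the pigeonholes, so the main geometric input is simply Lemma~\ref{largeDerivativeOnInterval} together with the $C^2$ bound on $f$.
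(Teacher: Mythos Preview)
Your overall strategy matches the paper's: apply Proposition~\ref{C2GrainsProp}, pass to direction-separated tubes, pigeonhole a slab $J$ satisfying the hypotheses of Lemma~\ref{largeDerivativeOnInterval}, and then use $|f''|\le 1$ to pin down $|f'|$ to within a factor of $2$ on a sub-interval. The final oscillation step and the mass pigeonholing are fine.

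There is, however, a genuine gap in your verification of the covering-number hypothesis of Lemma~\ref{largeDerivativeOnInterval}. You claim that the bound
\[
\mathcal{E}_{|J|}\big(E_{\tubes_0}\cap(\RR^2\times J)\big)\lesssim \delta^{-\eta}|J|^{\sigma-2}
\]
follows from Item~$1'$ of Proposition~\ref{C2GrainsProp} together with ``Fubini in the $z$-direction''. But Item~$1'$ is a statement about each individual slice $E_{\tubes_0}\cap\{z=z_0\}$: it controls the $|J|$-covering number of the \emph{projection} of that slice onto the direction $(1,f(z_0),0)$. It says nothing about how the covering intervals for different values of $z_0\in J$ relate to one another. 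Since the shadings $Y(T)$ need not be full (so a tube hitting the slab at height $z_1$ need not contribute to $E_{\tubes_0}\cap\{z=z_0\}$), tube continuity alone does not force the slice covers to align. Fubini applied to the slice bound yields only the volume estimate $|E_{\tubes_0}|\lesssim \delta^{\sigma-\eta}$, not a covering-number estimate at the coarser scale $|J|$.

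The paper obtains this covering bound by a different route: it applies Proposition~\ref{multiScaleStructureExtremal} at scale $\rho=\delta^{\eps_2}$ to produce a refinement $(\tubes_1,Y_1)_\delta$ covered by an $\eps_2$-extremal collection $(\tilde\tubes,\tilde Y)_\rho$. Extremality of the coarse collection gives $|E_{\tilde\tubes}|\le \rho^{\sigma-\eps_2}$, and since $E_{\tilde\tubes}$ is a union of $\rho$-cubes this is exactly a $\rho$-covering number bound for $E_{\tubes_1}\subset E_{\tilde\tubes}$. Pigeonholing over slabs then produces a $J$ with both the mass and covering-number hypotheses of Lemma~\ref{largeDerivativeOnInterval}. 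You should invoke Proposition~\ref{multiScaleStructureExtremal} here rather than Item~$1'$.
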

\begin{proof}
Let $\eps_1,\eps_2>0$ be small constants to be chosen below. We will select $\eps_1$ very small compared to $\eps_2$, and $\eps_2$ very small compared to $\eps$.

Let $\delta\in(0,\delta_0]$ and let $(\tubes,Y)_\delta$ be a collection of tubes that satisfy the conclusions of Proposition \ref{C2GrainsProp}, with $\eps_1$ in place of $\eps$. After passing to a sub-collection, we may suppose that the tubes point in $\delta$-separated directions. If $\eps_1$ is selected sufficiently small, then we may apply Proposition \ref{multiScaleStructureExtremal} with $\delta^{\eps_2}$ in place of $\rho$, and we can select a refinement $(\tubes_1,Y_1)$ of $(\tubes,Y)$ so that $E_{\tubes_1}$ is covered by $\lesssim \delta^{-\eps_2+\sigma-3}$ cubes of side-length $\delta^{\eps_2}$. 

By pigeonholing, we can find an interval $J\subset[-1,1]$ of length $\delta^{\eps_2}$ so that $E_{\tubes_1}\cap(\RR^2\times J)$ is covered by $\lesssim \delta^{-2\eps_2+\sigma-2}$ cubes of side-length $\delta^{\eps_2}$, and $\sum_{T\in\tubes_1}|Y_1(T)\cap (\RR^2\times J)|\gtrsim \delta^{2\eps_2}|J|$. 

Finally, if $\eps_2$ is selected sufficiently small compared to $\eps$, then we can apply Lemma \ref{largeDerivativeOnInterval} to conclude that $|f'(z)|\geq\delta^{\eps}$ for all $z\in J$. Since $|f''(z)|\leq 1$, there must be a number $m\geq\delta^{\eps}$ so that $m\leq |f'(z)|\leq 2m$ on $J$. Finally, replace each shading $Y_1(T)$ by $Y_1(T)\cap (\RR^2\times J)$, and apply a translation in the $z$-direction so that $J = [0,\delta^{\eps}]$. 
\end{proof}

Proposition \ref{largeSlopeSlopeFunction} follows by applying a re-scaling to the collection of tubes fro Lemma \ref{intervalWithBigConstDerivativeLem}. The details are as follows.
\begin{proof}[Proof of Proposition \ref{largeSlopeSlopeFunction}]
Let $\eps_1>0$ be a small constant to be chosen below. We will select $\eps_1$ very small compared to $\eps$. Let $\delta\in(0,\delta_0^2]$ and let $(\tubes,Y)_\delta$ satisfy the conclusions of Lemma \ref{intervalWithBigConstDerivativeLem} with $\eps_1$ in place of $\eps$ (and some $m\geq\delta^{\eps_1}$). 

Define $\phi(x,y,z)= (x, \frac{m^2y}{100}, \frac{100z}{m}),$ and define $\tilde f(z) = 100m^{-2}f(mz/100)$. Then 
\begin{equation}\label{boundsOnFpFpp}
1\leq |\tilde f'(z)|\leq 2\quad\textrm{and}\quad |\tilde f''|\leq 1/100,\qquad z\in [0, m^{-1}\delta^{\eps_1}].
\end{equation}
We can modify $\tilde f$ on $[-1,1]\backslash [0, m^{-1}\delta^{\eps}]$ so that \eqref{boundsOnFpFpp} holds for all $z\in [0,1]$. Furthermore, if $(x,y,z)\in [-1,1]^2\times J$, then 
\[
(x,y,z)\cdot(1, f(z),0)=x + yf(z) = \phi(x,y,z)\cdot(1, \tilde f(\tilde z), 0).
\] 
It remains to construct the $\eps$-extremal collection of tubes $(\tilde \tubes, \tilde Y)_{\tilde \delta}$ that satisfies the conclusions of Proposition \ref{largeSlopeSlopeFunction}. We will do this (for some $\tilde\delta\leq\delta^{1-C\eps_1}$; if $\eps_1$ is selected sufficiently small, then $C\eps_1\leq1/2$ and hence $\tilde\delta\leq \delta_0$, as required) by applying the scaling $\phi\colon [-1,-1]^2\times [0, \delta^{\eps_1}] \to [-1,1]^3$ to $(\tubes,Y)_\delta$, and using Lemma \ref{rescalingPreservesExtremality}. 
\end{proof}


\subsection{Twisted projections}

We will re-interpret the conclusion of Proposition \ref{largeSlopeSlopeFunction} as a statement regarding twisted projections. First a definition.

\begin{defn}[Twisted projection]
For $f\colon [-1,1]\rightarrow \mathbb{R}$, we define the \emph{twisted projection} $\pi_f\colon [-1,1]^3\to\RR^2$ by
\[
\pi_f(x,y,z) = (x + f(z) y, z).
\]
\end{defn}
With this definition, the pair $(\tubes,Y)_\delta$ and the function $f$ from the conclusion of Proposition \ref{largeSlopeSlopeFunction} satisfies $|\pi_f(E_{\tubes})|\leq \delta^{\sigma-\eps}$.

Applying H\"older's inequality, we have the following consequence of Proposition \ref{largeSlopeSlopeFunction}.
\begin{cor}\label{bigL3Norm}
For all $\eps>0$, $\delta_0>0$, there exists $\delta\in (0,\delta_0]$; an $\eps$-extremal collection of $\delta$-tubes  $\tubes$ (see Definition~\ref{defnExtremal}); and a function $f\colon [-1,1]\rightarrow \mathbb{R} $ with $1\leq |f'(z)|\leq 2$ and $|f''(z)|\leq 1/100$ for all $z\in[-1,1],$ so that 
\begin{equation}\label{L32NormSumChiPiF}
\Big \Vert \sum_{T\in\tubes}\chi_{\pi_f(T)} \Big\Vert_{L^{3/2}(\RR^2)} \geq \delta^{-1-\sigma/3+\eps}.
\end{equation}
\end{cor}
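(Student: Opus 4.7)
The proof is a direct consequence of Proposition \ref{largeSlopeSlopeFunction} combined with H\"older's inequality. Let $(\tubes, Y)_\delta$ be the $\eta$-extremal collection produced by that proposition, where $\eta$ is a small constant to be chosen in terms of $\eps$, and let $f$ be the associated slope function. I would begin by bounding $|\pi_f(E_\tubes)|$. For each $z_0 \in [-1,1]$, the horizontal slice $\pi_f(E_\tubes) \cap \{z = z_0\}$ coincides with $(E_\tubes \cap \{z = z_0\}) \cdot (1, f(z_0), 0)$, which by the global grains property ($1^{\prime\prime}$) of Proposition \ref{largeSlopeSlopeFunction} is a $(\delta, 1-\sigma, \delta^{-\eta})_1$-ADset contained in a bounded interval of $\RR$. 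Its one-dimensional Lebesgue measure is therefore $\lesssim \delta \cdot \delta^{-\eta} \cdot \delta^{\sigma-1} = \delta^{\sigma - \eta}$, and integrating over $z_0$ via Fubini yields $|\pi_f(E_\tubes)| \lesssim \delta^{\sigma - \eta}$.

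Next, by Item \ref{DefnOfMGoodCoveringItem} of Definition \ref{defnOfM} applied at scale $\rho = \delta$, at most $\delta^{-\eta}$ tubes of $\tubes$ are pairwise (essentially) parallel; a standard pigeonholing (combined with dyadic pigeonholing on $|Y(T)|$) produces a direction-separated sub-collection $\tubes' \subset \tubes$ with $\sum_{T \in \tubes'} |Y(T)| \gtrapproxdelta \delta^{2\eta}$. I would then verify the pointwise estimate $|\pi_f(Y(T))| \gtrsim |Y(T)|/\delta$ for each $T \in \tubes'$. Indeed, $Y(T)$ is a union of $\delta$-cubes, and the cross-section of $T$ at any fixed height $z \in \delta\ZZ$ contains only $O(1)$ such cubes, so $Y(T)$ meets $\gtrsim |Y(T)|/\delta^3$ distinct heights. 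Cubes at different heights project under $\pi_f$ into disjoint horizontal strips of $\RR^2$, and since the restriction of $\pi_f$ to each plane $\{z = z_0\}$ is the affine map $(x,y) \mapsto x + f(z_0)y$ with unit Jacobian in $x$, every $\delta$-cube projects to a planar set of area $\gtrsim \delta^2$. Summing these contributions produces the claim.

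Combining these pieces, since $Y(T) \subset T \cap E_\tubes$ one has $\chi_{\pi_f(Y(T))} \leq \chi_{\pi_f(T)}\, \chi_{\pi_f(E_\tubes)}$, and H\"older's inequality with conjugate exponents $3/2$ and $3$ gives
\[
\delta^{-1 + 2\eta} \lesssim \sum_{T \in \tubes'} |\pi_f(Y(T))| \leq \int \Big(\sum_{T \in \tubes'} \chi_{\pi_f(T)}\Big)\chi_{\pi_f(E_\tubes)} \leq \Big\Vert \sum_{T\in\tubes'} \chi_{\pi_f(T)} \Big\Vert_{L^{3/2}} |\pi_f(E_\tubes)|^{1/3}.
\]
Rearranging and invoking the measure bound $|\pi_f(E_\tubes)|^{1/3} \lesssim \delta^{(\sigma - \eta)/3}$ yields $\Vert \sum_{T \in \tubes'}\chi_{\pi_f(T)}\Vert_{L^{3/2}} \gtrapproxdelta \delta^{-1 - \sigma/3 + O(\eta)}$. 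Choosing $\eta$ small enough in terms of $\eps$ (and $\delta_0$ small enough to absorb the polylogarithmic losses hidden in $\gtrapproxdelta$) makes this exceed $\delta^{-1 - \sigma/3 + \eps}$, as required. There is no substantive obstacle here—the argument is a one-step Hölder deduction—so the only piece deserving care is the pointwise lower bound $|\pi_f(Y(T))| \gtrsim |Y(T)|/\delta$, which hinges only on $f$ being bounded and $\pi_f$ being essentially injective on each horizontal slice.
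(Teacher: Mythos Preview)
Your proof is correct and follows the same route as the paper's: bound $|\pi_f(E_\tubes)|\lesssim\delta^{\sigma-\eta}$ via the global grains structure, pass to a direction-separated sub-collection using Item~\ref{DefnOfMGoodCoveringItem}, and finish with H\"older against $\chi_{\pi_f(E_\tubes)}$. You have supplied more detail than the paper (in particular the verification that $|\pi_f(Y(T))|\gtrsim|Y(T)|/\delta$), but the argument is the same.
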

\begin{proof}
	By Definition~\ref{defnOfM} \ref{DefnOfMBigVolItem} and H\"{o}lder's inequality, 
	\[
	\delta^{-1+\eps} \leq \Big\Vert \sum_{T\in \tubes} \chi_{\pi_f(Y(T))} \Big\Vert_{L^1(\pi_f(E_{\tubes}))} \leq  |\pi_f(E_f)|^{1/3} \Big\Vert \sum_{T\in \tubes} \chi_{\pi_f(Y(T))} \Big\Vert_{L^{3/2} (\mathbb{R}^2)}. 
	\]
	The conclusion follows from Proposition~\ref{largeSlopeSlopeFunction}. 
\end{proof}

\section{A $C^2$ projection theorem}\label{C2ProjThmSec}
In this section we will introduce a version of Wolff's circular maximal theorem \cite{Wo, KoW} for $C^2$ curves, and show how this theorem can bound expressions like the LHS of \eqref{L32NormSumChiPiF}. The goal is to prove the following.
\begin{prop}\label{smallL3Norm}
For all $\eps>0$, there exists $\delta_0>0$ so that the following holds for all $\delta\in (0, \delta_0]$. Let $\tubes$ be a set of $\delta$-tubes pointing in $\delta$-separated directions. Let $f\colon [-1, 1]\rightarrow \mathbb{R}$ with $1\leq |f'(z)|\leq 2$ and $|f''(z)|\leq 1/100$ for all $z\in[-1,1]$. Then
\begin{equation}
\Big \Vert \sum_{T\in\tubes}\chi_{\pi_f(T)} \Big\Vert_{L^{3/2}(\RR^2)} \leq \delta^{-1-\eps}.
\end{equation}
\end{prop}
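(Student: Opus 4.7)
The plan is to realize each projected tube $\pi_f(T)$ as the $\delta$-neighborhood of a $C^2$ curve in $\RR^2$, verify that this family of curves satisfies Sogge's cinematic curvature condition, invoke the $L^3$ curvilinear maximal function bound of Pramanik, Yang, and Zahl \cite{PYZ}, and interpolate with the trivial $L^1$ bound to obtain the desired $L^{3/2}$ estimate.

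First, I would parametrize: each tube $T\in\tubes$ has direction with $z$-component at least $1/2$, so its coaxial line can be written uniquely as $\ell_T = \{(\alpha + mz,\ \beta + nz,\ z) : z\in[-1,1]\}$ with $|m|,|n|\lesssim 1$, and direction-separation of tubes corresponds to $\delta$-separation of the planar parameter $(m,n)$. Under the twisted projection, $\ell_T$ maps to the graph of
\[
\gamma_T(z) \;=\; \alpha + mz + f(z)(\beta + nz),\qquad z\in[-1,1],
\]
and because $|f|\lesssim 1$ one checks that $\pi_f(T)$ is contained in the $O(\delta)$-neighborhood of this graph.

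Second, I would verify cinematic curvature. For two tubes with parameter difference $\Delta=(\Delta\alpha,\Delta\beta,\Delta m,\Delta n)$, setting $h := \gamma_{T_1} - \gamma_{T_2}$ yields
\[
h'(z) = \Delta m + f(z)\Delta n + f'(z)(\Delta\beta + z\Delta n),\qquad h''(z) = 2f'(z)\Delta n + f''(z)(\Delta\beta + z\Delta n).
\]
The hypotheses $|f'|\geq 1$ and $|f''|\leq 1/100$ ensure that the linear map $(\Delta\beta + z\Delta n,\ \Delta n)\mapsto (h'(z)-\Delta m - f(z)\Delta n,\ h''(z))$ is non-singular with determinant bounded uniformly away from zero; reading off $\Delta\alpha$ from $h$ itself then makes $(h,h',h'')$ non-degenerate in $(\Delta\alpha,\Delta\beta,\Delta n)$. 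The residual parameter $\Delta m$ corresponds to an affine shift of $\gamma_T$ and is absorbed after reparametrization, so the effective three-parameter family $\{\gamma_T\}$ satisfies Sogge's cinematic curvature condition.

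Third, I would apply \cite{PYZ} to obtain $\bigl\|\sum_{T\in\tubes}\chi_{N_\delta(\gamma_T)}\bigr\|_{L^3(\RR^2)} \lesssim \delta^{-1-\eps/2}$, the analogue for cinematic curves of Wolff's $L^3$ circular maximal bound applied to the $\#\tubes\leq\delta^{-2}$ direction-separated curves. Interpolating this with the trivial estimate $\bigl\|\sum\chi_{\pi_f(T)}\bigr\|_{L^1}\lesssim \delta\cdot\#\tubes\lesssim\delta^{-1}$ via $\|\cdot\|_{3/2}\leq \|\cdot\|_1^{1/2}\|\cdot\|_3^{1/2}$ gives
\[
\Bigl\|\sum_{T\in\tubes}\chi_{\pi_f(T)}\Bigr\|_{L^{3/2}(\RR^2)} \;\lesssim\; \delta^{-1-\eps/4},
\]
which is the claimed bound (after a cosmetic adjustment of $\eps$). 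The main obstacle is the verification of cinematic curvature: without the quantitative control $|f'|\geq 1$ and $|f''|\leq 1/100$ furnished by Proposition \ref{largeSlopeSlopeFunction}, the second derivative $h''$ could vanish identically for non-trivial parameter differences, breaking the cinematic curvature condition that is the essential hypothesis of \cite{PYZ}. Once cinematic curvature is in place, the remainder is a black-box application of the Pramanik--Yang--Zahl bound together with standard interpolation.
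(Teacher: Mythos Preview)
Your overall strategy---realize $\pi_f(T)$ as the $\delta$-neighborhood of a $C^2$ curve and invoke the cinematic maximal bound from \cite{PYZ}---is the same as the paper's, but there is a genuine gap in the cinematic curvature verification and an unnecessary (and unjustified) detour through $L^3$.

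\textbf{The cinematic condition fails for the full four-parameter family.} Take $\Delta n = 0$ and any $z_0$ with $f''(z_0)=0$ (the hypotheses only give $|f''|\le 1/100$, so this is allowed; e.g.\ $f(z)=z+cz^4$ for small $c$). With $\Delta m = -f'(z_0)\Delta\beta$ and $\Delta\alpha = -z_0\Delta m - f(z_0)\Delta\beta$ one has $h(z_0)=h'(z_0)=h''(z_0)=0$ while $\|h\|_{C^2}>0$, so \eqref{lowerBdDifferencesFFpFpp} is violated. Your ``absorption of $\Delta m$ by an affine shift'' does not repair this: the shear $(\gamma,z)\mapsto(\gamma-mz,z)$ that removes the $mz$ term depends on $m$ and therefore cannot be applied uniformly to the whole family. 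In your own computation, you only recover $(\Delta\alpha,\Delta\beta,\Delta n)$ from $(h,h',h'')$ \emph{after subtracting} $\Delta m$ from $h'$, which presupposes knowledge of $\Delta m$.

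\textbf{What the paper does instead.} Write the direction of $T$ as $(c,d,1)$ and fix $c$. For each fixed $c$ the family $\{g_{a,b,d}(t)+ct : a,b,d\in[-1,1]\}$ \emph{is} cinematic (this is Lemma~\ref{GabdCinematic}, and the lower bound $|f'|\ge 1$ is exactly what makes the $h''$ term control $\Delta d$). The $\lesssim\delta^{-1}$ tubes with this value of $c$ have $\delta$-separated $d$-parameters, hence satisfy the one-dimensional Frostman condition \eqref{frostmanCondition}, and Theorem~\ref{twistedProjectionPYZ} gives $\|\sum_i\chi_{\pi_f(T_i)}\|_{3/2}\le \delta^{-\eps}$ directly. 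Summing over $\sim\delta^{-1}$ values of $c$ by the triangle inequality yields $\delta^{-1-\eps}$. There is no need to pass through $L^3$; indeed the version of \cite{PYZ} quoted as Theorem~\ref{twistedProjectionPYZ} is already an $L^{3/2}$ estimate under a one-dimensional Frostman hypothesis, and your $\delta^{-2}$ curves do not satisfy that hypothesis globally, so your claimed $L^3$ bound $\lesssim\delta^{-1-\eps/2}$ for the full family is not a black-box consequence.
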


Comparing Corollary \ref{bigL3Norm} and Proposition \ref{smallL3Norm}, we conclude that $\sigma=0$. Combined with Proposition \ref{sigmaNVsDimensionProp}, this concludes the proof of Theorem \ref{mainThm} and Theorem \ref{mainThm}$^\prime$. All that remains is to prove Proposition \ref{smallL3Norm}.

Our main tool for proving Proposition \ref{smallL3Norm} is a version of Wolff's $L^{3/2}$ circular maximal function bound for $C^2$ curves, which is a special case of Theorem 1.7 in \cite{PYZ}. Before stating the result, we recall Definition 1.6 from \cite{PYZ}.
\begin{defn}\label{cinematicFunctionDefn}
Let $I\subset\RR$ be a compact interval and let $\mathcal{F}\subset C^2(I)$. We say $\mathcal{F}$ is a family of \emph{cinematic functions}, with cinematic constant $K$ and doubling constant $D$ if the following conditions hold.
\begin{enumerate}
\item $\mathcal{F}$ has diameter at most $K$.
\item  $\mathcal{F}$ is a doubling metric space, with doubling constant at most $D$.
\item For all $f,g\in \mathcal{F},$ we have
\begin{equation}\label{lowerBdDifferencesFFpFpp}
\inf_t\big( |f(t)-g(t)| + |f'(t)-g'(t)| + |f''(t)-g''(t)|\big) \geq K^{-1} \Vert f-g\Vert_{C^2(I)}.
\end{equation}
\end{enumerate}
\end{defn}

\begin{thm}\label{twistedProjectionPYZ}
Let $I\subset\RR$ be a compact interval and let $\mathcal{F}\subset C^2(I)$ be a family of cinematic functions. Then for all $\eps>0$, there exists $\delta_0>0$ depending on $\eps, K, D$ so that the following holds for all $\delta\in (0,\delta_0]$. Let $F\subset\mathcal{F}$ be a $\delta$-separated set that satisfies the Frostman non-concentration condition
\begin{equation}\label{frostmanCondition}
\#\big(F\cap B\big)\leq \delta^{-\eps}(r/\delta)\quad\textrm{for all balls}\ B\subset C^2(I)\ \textrm{of diameter}\ r.
\end{equation}
Then
\[
\Big\Vert \sum_{f\in F}\chi_{f^\delta}\Big\Vert_{3/2} \leq \delta^{-\eps},
\]
where $f^\delta$ is the $\delta$-neighborhood of $\operatorname{graph}(f)$.
\end{thm}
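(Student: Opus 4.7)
The plan is to prove Theorem \ref{twistedProjectionPYZ} by adapting Wolff's approach to his $L^3$ bound for the circular maximal function \cite{Wo, KoW} to the general cinematic setting. The two essential ingredients are a tangency-counting estimate, which uses the cinematic condition \eqref{lowerBdDifferencesFFpFpp}, and a C\'ordoba-type $L^2$ estimate, which combines with a trivial $L^1$ bound via interpolation.

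First I would prove the tangency-counting estimate. For each $f \in F$ and dyadic $r \in [\delta, 1]$, say that $g \in F$ is \emph{$r$-tangent to $f$} if there exists $t_0 \in I$ with $|f(t_0) - g(t_0)| \le \delta$, $|f'(t_0) - g'(t_0)| \le \delta/r$, and $|f''(t_0) - g''(t_0)| \le \delta/r^2$. Two consequences follow. On the one hand, the cinematic condition \eqref{lowerBdDifferencesFFpFpp} forces $\Vert f - g\Vert_{C^2(I)} \lesssim K \delta / r^2$, so by \eqref{frostmanCondition} applied to a ball of this radius in $\mathcal{F}$,
\begin{equation*}
\#\{g \in F : g \text{ is } r\text{-tangent to } f\} \lesssim \delta^{-\eps} r^{-2}.
\end{equation*}
On the other hand, Taylor expansion of $f-g$ around $t_0$, using $\Vert f-g\Vert_{C^2} \lesssim \delta/r^2$, shows that the set $\{t : |f(t)-g(t)| \le 2\delta\}$ is contained in at most two intervals of total length $O(r)$, which by Fubini gives $|f^\delta \cap g^\delta| \lesssim r\delta$.

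Next I would run a C\'ordoba-type $L^2$ argument. Expanding
\begin{equation*}
\int\Big(\sum_{f \in F}\chi_{f^\delta}\Big)^2 = \sum_{f,g \in F} |f^\delta \cap g^\delta|
\end{equation*}
and decomposing pairs by the minimal dyadic scale $r$ at which they are $r$-tangent, the tangency estimate yields
\begin{equation*}
\sum_{g \in F} |f^\delta \cap g^\delta| \lesssim \delta + \sum_{r\ \text{dyadic}} r\delta \cdot \delta^{-\eps} r^{-2} \lesssim \delta^{1-2\eps}
\end{equation*}
for each fixed $f$. Summing over $f$ and using $\#F \lesssim \delta^{-1-\eps}$ (from \eqref{frostmanCondition} at the full diameter of $\mathcal{F}$), one obtains $\Vert\sum_f \chi_{f^\delta}\Vert_2 \lesssim \delta^{-2\eps}$. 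Combined with the trivial bound $\Vert\sum_f \chi_{f^\delta}\Vert_1 = \sum_f |f^\delta| \lesssim \#F \cdot \delta \lesssim \delta^{-\eps}$, the H\"older interpolation $\Vert h\Vert_{3/2} \le \Vert h\Vert_1^{1/3}\Vert h\Vert_2^{2/3}$ yields $\Vert\sum_f \chi_{f^\delta}\Vert_{3/2} \lesssim \delta^{-O(\eps)}$, which gives the theorem after renaming $\eps$.

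The main obstacle is establishing the intersection bound $|f^\delta \cap g^\delta| \lesssim r\delta$ cleanly. Two subtleties arise. First, $f$ and $g$ may be ``tangent'' at several different points $t_0 \in I$, and one must control the total number of such tangencies by combining the uniform bound $\Vert f''-g''\Vert_{C^0} \lesssim \delta/r^2$ with the fact that $f'-g'$ has a controlled number of zeros on $I$; the doubling hypothesis on $\mathcal{F}$ in Definition \ref{cinematicFunctionDefn} is used here to rule out anomalous clustering. Second, when $\#F$ is much smaller than $\delta^{-1-\eps}$, one should apply \eqref{frostmanCondition} at the scale of $F$ itself rather than at the full diameter; this does not affect the $L^{3/2}$ bound but requires minor bookkeeping. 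A two-ends or broad-narrow reduction in the spirit of Wolff may be needed to optimize the constants and eliminate logarithmic losses, though the crude $L^1$--$L^2$ interpolation above already suffices for the $\delta^{-\eps}$ bound asserted in Theorem \ref{twistedProjectionPYZ}.
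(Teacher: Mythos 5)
First, note that the paper does not prove Theorem \ref{twistedProjectionPYZ} at all: it is quoted as a special case of Theorem 1.7 of \cite{PYZ}, and the proof there is a full-length adaptation of Wolff's circular maximal function argument (tangency-rectangle counting in the spirit of Clarkson--Edelsbrunner--Guibas--Sharir--Welzl, two-ends and bilinear reductions, induction on scales). So your proposal is an attempt at a self-contained proof, and unfortunately it has a fundamental gap: a C\'ordoba-type $L^2$ estimate interpolated with the trivial $L^1$ bound cannot give the $L^{3/2}$ bound $\delta^{-\eps}$, no matter how carefully the tangency bookkeeping is done. The obstruction is that the $L^2$ norm of $\sum_f \chi_{f^\delta}$ is genuinely of size $\delta^{-1/4}$ for admissible families: take the one-parameter subfamily of curves passing through a fixed point with a fixed slope (for the cinematic family $g_{a,b,d}$ this is a line in parameter space, so it contains $\sim\delta^{-1}$ $\delta$-separated elements and satisfies \eqref{frostmanCondition}); pairs at $C^2$-distance $\rho$ are mutually tangent and have $|f^\delta\cap g^\delta|\sim \delta^{3/2}\rho^{-1/2}$, and summing $\sum_{\rho\ \mathrm{dyadic}}(\rho/\delta)\cdot\delta^{3/2}\rho^{-1/2}\sim\delta^{1/2}$ per curve gives $\Vert\sum\chi_{f^\delta}\Vert_2^2\sim\delta^{-1/2}$. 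Interpolating this against $\Vert\sum\chi_{f^\delta}\Vert_1\lesssim\delta^{-\eps}$ can never do better than $\Vert\sum\chi_{f^\delta}\Vert_{3/2}\lesssim\delta^{-1/6-O(\eps)}$. This $\delta^{-1/6}$ loss is exactly the gap between the ``easy'' $L^2$ theory and Wolff's theorem; closing it is the entire content of \cite{Wo,KoW,PYZ} and requires controlling how many pairs can be tangent \emph{simultaneously and in the same place} (incidence counting for tangency rectangles), not merely how many curves lie in a $C^2$-ball around a fixed $f$.

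There are also concrete errors in the computation as written. Your decomposition should be by the \emph{maximal} dyadic $r$ at which a pair is $r$-tangent (being $r$-tangent is a weaker condition as $r$ decreases, so every intersecting pair is $\delta$-tangent and the ``minimal scale'' is not the right invariant); with that reading, the per-pair bound $|f^\delta\cap g^\delta|\lesssim r\delta$ is morally right (modulo the component-counting issue you flag), but the dyadic sum is then $\sum_{\delta\le r\le 1}\delta^{1-\eps}r^{-1}$, which is dominated by the \emph{small}-$r$ end and, after capping the count $\delta^{-\eps}r^{-2}$ by $\#F\lesssim\delta^{-1-\eps}$, evaluates to $\sim\delta^{1/2-\eps}$ (attained at $r\sim\delta^{1/2}$, i.e.\ unit-separated tangent pairs), not $\delta^{1-2\eps}$. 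Plugging in the correct value reproduces exactly the $\delta^{-1/4}$ bound for the $L^2$ norm described above, confirming that the loss is real rather than an artifact of the bookkeeping. If you want a proof of Theorem \ref{twistedProjectionPYZ}, you must either reproduce the Wolff/PYZ tangency-incidence and induction-on-scales machinery or cite \cite{PYZ} as the paper does.
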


We will be interested in the following family of functions. For $f\in C^2(I)$ and $a,b,d\in\RR$, define 
\begin{equation}\label{defnGabd}
g_{a,b,d}(t) = a + bf(t) + dtf(t).
\end{equation}
Define $M_f = \{g_{a,b,d}\colon a,b,d\in [-1,1]\}$. We will show that under certain conditions on $f$, this is a family of cinematic functions.
\begin{lem}\label{GabdCinematic}
Let $f\colon [0,1]\to\RR$ with $f(0)=0$, $1\leq |f'(t)|\leq 2$, and $|f''(t)|\leq 1/100$ for all $t\in [0,1]$. Then $M_f$ is a family of cinematic functions, and the implicit constants in Definition \ref{cinematicFunctionDefn} are bounded independently of the choice of $f$.
\end{lem}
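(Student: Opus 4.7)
\medskip

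The plan is to check the three conditions of Definition \ref{cinematicFunctionDefn} by reducing everything to the invertibility of a single $3\times 3$ matrix. Throughout, let $h = g_{a,b,d} - g_{a',b',d'}$ and set $\alpha = a-a',\ \beta = b-b',\ \gamma = d-d'$, so that
\[
h(t) = \alpha + \beta f(t) + \gamma t f(t), \quad h'(t) = \beta f'(t) + \gamma f(t) + \gamma t f'(t), \quad h''(t) = \beta f''(t) + 2\gamma f'(t) + \gamma t f''(t).
\]
Since $f(0)=0$ and $|f'|\leq 2$ on $[0,1]$, we have $|f(t)|\leq 2$ and $|tf(t)|\leq 2$, so $\|g_{a,b,d}\|_{C^2}\lesssim 1$ uniformly, giving the diameter bound (condition 1).

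For condition 3 (the cinematic condition), write the pointwise data as $(h(t),h'(t),h''(t))^{T} = M(t)(\alpha,\beta,\gamma)^{T}$ where
\[
M(t) = \begin{pmatrix} 1 & f(t) & tf(t) \\ 0 & f'(t) & f(t)+tf'(t) \\ 0 & f''(t) & 2f'(t)+tf''(t) \end{pmatrix}.
\]
Expanding along the first column, $\det M(t) = 2(f'(t))^{2} - f(t)f''(t)$. Since $|f'(t)|\geq 1$, $|f(t)|\leq 2$ and $|f''(t)|\leq 1/100$, we obtain $\det M(t) \geq 2 - 2/100 \geq 1$ for every $t\in[0,1]$. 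The entries of $M(t)$ are uniformly bounded, so $\|M(t)^{-1}\|$ is bounded by an absolute constant, yielding
\[
|\alpha|+|\beta|+|\gamma| \ \lesssim \ |h(t)|+|h'(t)|+|h''(t)| \quad \textrm{for every}\ t\in[0,1].
\]
Taking the infimum over $t$ on the right and combining with the obvious upper bound $\|h\|_{C^{2}([0,1])} \lesssim |\alpha|+|\beta|+|\gamma|$ (which follows from the explicit formulas above and the uniform bounds on $f,f',f''$), we get
\[
\|h\|_{C^{2}([0,1])} \ \lesssim \ \inf_{t\in[0,1]} \big(|h(t)|+|h'(t)|+|h''(t)|\big),
\]
which is exactly condition 3 with an absolute constant $K$.

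Condition 2 (doubling) now comes for free: the same bound $|\alpha|+|\beta|+|\gamma| \lesssim \inf_{t}(|h(t)|+|h'(t)|+|h''(t)|) \leq \|h\|_{C^{2}}$, combined with the upper bound above, says that the parametrization $(a,b,d) \mapsto g_{a,b,d}$ is bi-Lipschitz from $([-1,1]^{3},|\cdot|)$ to $(M_f, \|\cdot\|_{C^{2}})$ with constants independent of $f$. Since $[-1,1]^{3}$ is a doubling metric space with an absolute doubling constant, so is $M_f$. The only step that requires any real input is the determinant computation; everything else is book-keeping, and the bound $|f''|\leq 1/100$ in the hypothesis is precisely what guarantees that the $-f(t)f''(t)$ correction cannot overcome the main term $2(f'(t))^{2}\geq 2$.
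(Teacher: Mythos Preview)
Your proof is correct and in fact cleaner than the paper's. The paper proceeds by a case analysis on the relative sizes of $|\beta|$ and $|\gamma|$: in the case $|\beta|\le 10|\gamma|$ it bounds $|h''(t)|$ from below using $2|\gamma||f'(t)|$ as the dominant term, and in the case $|\beta|>10|\gamma|$ it bounds $|h'(t)|$ from below using $|\beta||f'(t)|$ as the dominant term; the contribution of $|\alpha|$ is then recovered implicitly from $|h(t)|$. Your approach bypasses this splitting entirely by packaging $(h,h',h'')$ as $M(t)(\alpha,\beta,\gamma)^{T}$ and computing $\det M(t)=2(f')^{2}-ff''\ge 1$ directly, which immediately gives a uniform bound on $\|M(t)^{-1}\|$ and hence the cinematic inequality at every $t$ simultaneously. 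This is more systematic and makes the role of the hypothesis $|f''|\le 1/100$ transparent (it just needs to be small enough that $|ff''|<2(f')^{2}$). Your treatment of the doubling condition via the bi-Lipschitz parametrization by $[-1,1]^{3}$ is also more explicit than the paper's, which leaves that step to the reader after asserting $\|g_1-g_2\|_{C^2}\sim |a_1-a_2|+|b_1-b_2|+|d_1-d_2|$.
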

\begin{proof}
Let $a_1,a_2,b_1,b_2,d_1,d_2\in [-1,1]$, and let $g_i = g_{a_i,b_i,d_i}$. We have $\Vert g_1-g_2\Vert_{C^2}\sim |a_1-a_2|+|b_1-b_2|+|d_1-d_2|$, so it suffices to show that
\begin{equation}\label{bigInf}
\inf_{t\in [0,1]} |g_1(t)-g_2(t)| + |g_1'(t) - g_2'(t)| + |g_1''(t) - g_2''(t)| \geq \frac{1}{200}\big(|a_1-a_2|+|b_1-b_2|+|d_1-d_2|\big).
\end{equation}

\noindent{\bf Case 1:} $|a_1-a_2|\geq  10 ( |b_1-b_2|+ |d_1-d_2|)$.\\
We have  $|f(t)|\leq 2t$ and 
\[
\inf_{t\in [0,1]} |g_1(t)-g_2(t)|\geq |a_1-a_2|- 2(|b_1-b_2| + |d_1-d_2|) \geq \frac{1}{2}|a_1-a_2|.
\]
Thus \eqref{bigInf} holds and we are done.

\medskip

\noindent{\bf Case 2a:} $|a_1-a_2|<  10 ( |b_1-b_2|+ |d_1-d_2|)$ and $|b_1-b_2|\geq 10|d_1-d_2|$.\\
We have 
\begin{equation*}
\begin{split}
|g_1'(t) - g_2'(t)| 
& \geq |b_1-b_2|\ |f'(t)| - |d_1-d_2|\big( |f(t)| + |tf'(t)|\big) \\
&\geq |b_1-b_2| - \frac{1}{10}|b_1-b_2|\big( |f(t)| + |tf'(t)|\big) \geq \frac{1}{2}|b_1-b_2|.
\end{split}
\end{equation*}
Thus \eqref{bigInf} holds and we are done. 

\medskip

\noindent{\bf Case 2b:} $|a_1-a_2|<  10 ( |b_1-b_2|+ |d_1-d_2|)$ and $|b_1-b_2|< 10|d_1-d_2|$ .\\
We have 
\[
|g_1''(t) - g_2''(t)| \geq |d_1-d_2| |f'(t)| - \big(|b_1-b_2| + |d_1-d_2|t \big)|f''(t)|\geq |d_1-d_2| - 11|d_1-d_2|\ |f''(t)| \geq \frac{1}{2}|d_1-d_2|.
\]
Thus \eqref{bigInf} holds and we are done.

\end{proof}

\noindent We are now ready to prove Proposition \ref{smallL3Norm}.
\begin{proof}[Proof of Proposition \ref{smallL3Norm}]
Each $T\in\tubes$ has a coaxial line pointing in the direction $(c,d,1)$. Without loss of generality, we may suppose that $c,d\in \delta\ZZ\cap[-1,1]$, and the directions $(c,d,1)$ are distinct for distinct $T\in\tubes$. By the triangle inequality, for each $c\in[-1,1]$ it suffices to prove the estimate

\begin{equation}\label{estimatOneDimFamilyTubes}
\Big \Vert \sum_{i\in\mathcal{I}} \chi_{\pi_f(T_i)} \Big\Vert_{L^{3/2}(\RR^2)} \leq \delta^{-\eps},
\end{equation}
where $N=\lceil \delta^{-1} \rceil, \mathcal{I}\subset \{-N,\ldots,N\}$  (the set of indices $\mathcal{I}$ depends on $c$) and for each $i\in\mathcal{I}$, $T_i$ is a $\delta$-tube in $\mathbb{T}$ whose coaxial line $L_i$ is of the form $(a_i, b_i,0)+\RR(c, i\delta,1)$ with $a_i,b_i\in [-1,1]$.
For each tube $T_i$, $\pi_f(T_i)$ is contained in the $2\delta$-neighborhood of the plane curve $\pi_f(L_i)$, which is the graph of the function $g_i = g_{a_i, b_i, d_i}(t) + ct$ (recall that $g_{a_i, b_i, d_i}$ is as defined in \eqref{defnGabd}). By Lemma \ref{GabdCinematic}, the set  $\{g_{a,b,d}(t) + ct\colon a,b,d\in [-1,1]\}\subset C^2([0,1])$ is a family of cinematic functions, and the implicit constants in Definition \ref{cinematicFunctionDefn} are bounded independently of $c$, $f$, and $N$ (and hence $\delta$). 

The estimate \eqref{estimatOneDimFamilyTubes} thus follows from Theorem \ref{twistedProjectionPYZ}, since the set of functions $\{g_i\}_{i\in\mathcal{I}}$ clearly satisfies the Frostman condition \eqref{frostmanCondition}.
\end{proof}

\begin{rem}
It is also possible to conclude that $\sigma = 0$ using restricted projections \cite{GGGHMW, GGM, GGW} introduced by F\"{a}ssler and Orponen in \cite{FOrestr}, rather than using Theorem \ref{twistedProjectionPYZ}.   We will briefly sketch this latter approach here. We will use the same notation as in the proof of Proposition~\ref{estimatOneDimFamilyTubes}. By pigeonholing, there exists $c\in [-1, 1]$ and the corresponding set $\mathcal{I}\subset \{-N, \dots, N\}$ such that  $\#\mathcal{I} \geq \delta^{-\eps/100}$ and for each $i\in \mathcal{I}$, $T_i$ is a $\delta$-tube in $\mathbb{T}$ whose coaxial line $L_i$ is of the form $(a_i, b_i, 0) + \mathbb{R}(c, i\delta, 1)$ with $a_i, b_i \in [-1, 1]$.  For this fixed $c$ and  each $z\in [-1,1]$, define the (translated) restricted projection  as
 \begin{equation} 
 	P_z\colon (a, b, d) \mapsto ( a +cz + f(z) (b+ dz))= (a, b, d) \cdot (1, f(z), f(z)z) + cz.  \end{equation} 
Since $f$ is $C^2$ and $1\leq |f'(z)|\leq 2$ for each $z\in [-1, 1]$, there exists a nondegenerate curve $\gamma\colon [-1, 1]\rightarrow \mathbb{R}^3$ such that $\gamma'(z)=  (1, f(z), f(z)z)$. Here nondegenerate means $\det(\gamma'(z), \gamma''(z), \gamma'''(z)) \neq 0$ for each $z\in [-1, 1]$. The conclusion follows from  \cite[Theorem 2]{GGM} with $s=\sigma+\eps/10$, $t=1-\eps/10$ (note that \cite[Theorem2]{GGM} still holds if we assume each $\delta$-ball contained in $H$ intersects $\delta^{\eps/10} \#\Theta$ many slabs). See also \cite[Theorem 2.1]{GGW}   with $m=1, \alpha=1$. 
 	
\end{rem}

\section{Proof of Theorem \ref{SWThm}}\label{SWProofSec}

In this section we prove Theorem \ref{SWThm}. For the reader's convenience we will recall it here. 

\begin{SWThmRepeat}
For all $\eps>0$, there exists $\eta,\delta_0>0$ so that the following holds for all $\delta\in (0, \delta_0]$. Let $F,G\subset[0,1]^2$ be $(\delta, 1, \delta^{-\eta})$-ADsets. Then at least one of the following must hold.
\begin{enumerate}[label=(\Alph*)]
	\item\label{SWThmItemARepeat} There exist orthogonal lines $\ell$ and $\ell^\perp$ whose $\delta$-neighborhoods cover a significant fraction of $F$ and $G$ respectively. In particular, 
\begin{equation*}\tag{\ref{largeIntersectionRec}}
\mathcal{E}_\delta(N_\delta(\ell)\cap F)\geq \delta^{\eps-1} \ \textrm{and}\ \ 
\mathcal{E}_\delta(N_\delta(\ell^\perp)\cap G)\geq \delta^{\eps-1}.
\end{equation*}

\item\label{SWThmItemBRepeat} Let $\mathcal{H}\subset F\times G\times G$ be any set satisfying $\mathcal{E}_{\delta}(\mathcal{H})\geq \delta^{\eta-3}$. 
Then there exists $\rho\geq\delta$ and an interval $I$ of length at least $\delta^{-\eta}\rho$ such that 
\begin{equation*}\tag{\ref{bigRCoveringNumber}}
\mathcal{E}_\rho\big(I \cap \{a\cdot(b_1-b_2)\colon (a,b_1,b_2)\in \mathcal{H}\} \big) \geq \big(|I|/\rho\big)^{1-\eps}.
\end{equation*}
\end{enumerate}
\end{SWThmRepeat}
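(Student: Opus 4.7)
The plan is to argue by contraposition: assume that conclusion \ref{SWThmItemB} fails for some set $G \subset A_1 \times A_2 \times A_3$ of near-maximal size, and deduce that \ref{SWThmItemA} must hold. The failure of \ref{SWThmItemB} means that at every scale $\rho \in [\delta, 1]$ and every interval $I$ of length $\geq \delta^{-\eta} \rho$, the $\rho$-covering number of $I \cap \{(a_1 - a_2) \cdot a_3 : (a_1,a_2,a_3) \in G\}$ is at most $(|I|/\rho)^{1-\eps}$. Informally this says the set of dot products is contained (at scale $\delta$ and its multi-scale dyadic ancestors) in something behaving like an Ahlfors-regular set of dimension $1 - \eps$, so in particular its projection along any specific direction cannot achieve dimension $1$.

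The core structural step is then a discretized Beck-type dichotomy applied to $A_3$ (with $A_1, A_2$ playing the role of a ``differencing'' set). Following \cite{OSW}, I would show that if $A_3$ is not contained in the $\delta^{\eps'}$-neighborhood of a single line (up to a fraction $\delta^{\eps'}$ of its points, for some $\eps'$ a small power of $\eta$), then after refinement there is a point $q_0 \in A_3$ such that $\Theta := \{(q - q_0)/|q - q_0| : q \in A_3'\} \subset S^1$ is a $(\delta, s; \delta^{-C\eta})$-set of directions for some $s > 0$ depending on $\eps$. The actual implementation must be carried out at many scales simultaneously using the usual dyadic multi-scale decomposition, and at each scale one needs to decide whether the behavior is ``concentrated on a line'' or ``spanning a positive-dimensional set of directions''; this multi-scale bookkeeping is the first technical hurdle. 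If at every relevant scale $A_3$ behaves linearly, then $A_3$ is contained near a single line $\ell^\perp$, and we advance to the analysis of $A_1, A_2$.

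Given a direction set $\Theta$ as above, I would apply Kaufman's projection theorem (or more precisely its discretized form, as developed in \cite{Orp, SW0, OSW}, which says: if $E \subset [0,1]^2$ is a $(\delta, 1; \delta^{-\eta})$-set and $\Theta$ is a $(\delta, s; \delta^{-\eta})$-set of directions, then for all but a $\delta^{\eps_1}$-fraction of $\theta \in \Theta$, the projection $E \cdot \theta$ has $\delta$-covering number $\geq \delta^{-1+\eps_1}$, for some $\eps_1$ depending on $s, \eta$). Applied to $E = A_1 - A_2$ and the direction set $\Theta$ coming from $A_3$, this produces, after a pigeonholing over triples $(a_1,a_2,a_3) \in G$, a $\theta_0 = (a_3^* - q_0)/|a_3^* - q_0|$ for some $a_3^* \in A_3$ such that the set of $(a_1 - a_2) \cdot (a_3^* - q_0)$ has $\delta$-covering number $\gtrsim \delta^{-1 + O(\eta)}$ in an interval of length $\sim 1$. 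Absorbing the translation by $q_0$ (a dot product with a fixed vector shifts the image by another fixed vector after splitting), this forces the $\delta$-covering number of some interval's intersection with $\{(a_1 - a_2) \cdot a_3 : (a_1,a_2,a_3) \in G\}$ to be $\geq \delta^{-1 + O(\eta)}$, violating the assumed failure of \ref{SWThmItemB} provided $\eta$ is chosen small enough.

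The dichotomy then closes as follows: if $A_3$ does concentrate near a line $\ell^\perp$, symmetrically we must check whether $A_1 \cup A_2$ concentrates near the orthogonal line $\ell$. If not, then $A_1 - A_2$ spans a positive-dimensional set of directions; running the same Kaufman-type argument with the roles of $A_3$ and $A_1 - A_2$ swapped (projecting $A_3$ onto directions coming from $A_1 - A_2$) produces an interval with large covering number in the dot product set, again contradicting the failure of \ref{SWThmItemB}. The main obstacle I anticipate is the multi-scale compatibility: the decision ``concentrated near a line or spanning directions'' depends on the scale, and one needs to pass from a statement at some dyadic scale $\rho$ back to the conclusion \eqref{largeIntersectionRec} at scale $\delta$. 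This is handled by iterating Kaufman's theorem at each dyadic scale and exploiting the quantitative non-concentration condition \eqref{deltasCSetEqn}, at the cost of allowing $\eta$ to depend polynomially on $\eps$.
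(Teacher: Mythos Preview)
Your high-level dichotomy (concentrated on a line vs.\ spans many directions, then apply Kaufman) matches the paper's informal sketch, but the proposal has a genuine quantitative gap in the Kaufman step. You assert that if $E$ is a $(\delta,1)$-set and $\Theta$ is a $(\delta,s)$-set of directions for some $s>0$, then most $\theta\in\Theta$ satisfy $\mathcal{E}_\delta(E\cdot\theta)\geq\delta^{-1+\eps_1}$ with $\eps_1$ small. But the discretized Kaufman theorem (the paper's Theorem~\ref{thm: Kaufman}) requires $\alpha\leq\beta$ and only yields $\mathcal{E}_\delta(\pi_\theta F)\gtrsim\delta^{-\alpha+o(1)}$; with a $(\delta,s)$ direction set and a $(\delta,1)$ target one only gets covering number $\gtrsim\delta^{-s+o(1)}$, not $\delta^{-1+o(1)}$. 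A Beck-type dichotomy applied to $A_3$ gives a direction set of \emph{some} positive dimension $s$, possibly much smaller than $1$, so the resulting projection bound is far too weak to force \eqref{bigRCoveringNumber}. Your ``iterating Kaufman at each dyadic scale'' does not address this: the obstruction is not a multi-scale bookkeeping issue but that a low-dimensional direction set simply cannot certify a near-full projection via Kaufman alone.

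The paper closes this gap by a different and more involved mechanism. It reverses the roles---fixing $a_1\in A_1$, taking $\Lambda=\{(a_1-a_2)/|a_1-a_2|\}$ as the direction set, and projecting $A_3$---and then works to make $\Lambda$ a genuine $(\delta,1)$-set. This requires first a two-ends reduction (Lemma~\ref{twoEndsOnLines}) localizing $A_1,A_2$ to strips, an argument (Lemma~\ref{A1A3Localized}) forcing the strips to coincide and trapping $A_3$ in the orthogonal strip, and a rescaling (Lemma~\ref{lem: rescaling}) to reduce to Assertion~\ref{assertion2} where $A_1,A_2$ satisfy a weak Frostman-on-lines condition \eqref{nonConcentratedOnLines}. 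The decisive step, absent from your proposal, is the \emph{thin-tubes bootstrap} of \cite{OSW} (Lemmas~\ref{lem: liushen}, \ref{lem: SW}, \ref{iteratedBootstrap}): the $\zeta$-Frostman condition yields $(1/4)$-thin tubes for $(A_1,A_2)$, and iterating the bootstrap lemma upgrades this all the way to $1$-thin tubes. Only then is $\Lambda$ a $(\delta,1)$-set and Kaufman delivers a projection of near-full dimension (Lemma~\ref{largeDotProductThinTubes}). A secondary issue: your ``absorbing the translation by $q_0$'' fails as written, since $(a_1-a_2)\cdot q_0$ varies with $(a_1,a_2)$ and is not a constant shift of the dot-product set; the paper's ordering of the roles avoids this entirely.
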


A key ingredient in our proof is a result about radial projections from \cite{OSW}, which in turn builds off of ideas from \cite{OS21, SW0}, and ultimately relies on Bourgain's discretized sum-product theorem. Our proof of Theorem \ref{SWThm} actually proves a slightly stronger statement that has weaker hypotheses. We first recall the following definition.

\begin{defn}\label{deltasCSet}
For $n\geq 1,\ \alpha\in (0, n]$, $C\geq 1$ and $\delta>0$, We say a set $E\subset\RR^n$ is a $(\delta,\alpha, C)_n$-Frostman set if $E$ is a union of $\delta$-cubes, and for all $x\in\RR^n$ and all $r\geq\delta$ we have  
	\begin{equation}\label{deltasCSetEqn}
	|E\cap B(x,r)| \leq C  r^\alpha|E|.
	\end{equation}
We form an analogous definition if $E\subset S^1$ is a union of $\delta$-arcs. 
\end{defn}
In practice we will often consider sets $E\subset \RR^2$, so we will drop the subscript $2$ from our notation. Definition \ref{deltasCSet} first appeared in \cite{BGam} and is closely related to Katz and Tao's notion of a $(\delta,\alpha)_n$-set from \cite{KT} (sets obeying Katz and Tao's non-concentration condition are now sometimes referred to as $(\delta,\alpha)_n$-Katz-Tao sets to avoid confusion with Definition \ref{deltasCSet}). However, in Katz and Tao's definition from \cite{KT}, the quantity $|E|$ on the RHS of \eqref{deltasCSetEqn} is replaced by $\delta^{n-\alpha}$; the former could be much larger. 

We can now state the following slightly stronger variant of Theorem \ref{SWThm}. 

\begin{SWThmRepeatPrime}
For all $\eps>0$, there exists $\eta,\delta_0>0$ so that the following holds for all $\delta\in (0, \delta_0]$. Let $F,G_1,G_2\subset[0,1]^2$ be $(\delta, 1, \delta^{-\eta})$-Frostman sets. Then at least one of the following must hold.
\begin{enumerate}[label=(\Alph*)]
	\item\label{SWThmItemAPrimed} There exist orthogonal lines $\ell$ and $\ell^\perp$ whose $\delta$-neighborhoods cover a significant fraction of $F$ and $G_1\cup G_2$ respectively. In particular, 
\begin{equation}\label{largeIntersectionRecPrime}
\mathcal{E}_\delta(N_\delta(\ell)\cap F)\geq \delta^{\eps-1},\quad \mathcal{E}_\delta(N_\delta(\ell^\perp)\cap G_1)\geq \delta^{\eps-1},\quad
\mathcal{E}_\delta(N_\delta(\ell^\perp)\cap G_2)\geq \delta^{\eps-1}.
\end{equation}

\item\label{SWThmItemBPrimed} Let $\mathcal{H}\subset F\times G_1\times G_2$ be any set satisfying $\mathcal{E}_{\delta}(\mathcal{H})\geq \delta^{\eta-3}$. 
Then there exists $\rho\geq\delta$ and an interval $I$ of length at least $\delta^{-\eta}\rho$ such that 
\begin{equation}\label{bigRCoveringNumberPrime}
\mathcal{E}_\rho\big(I \cap \{a\cdot(b_1-b_2)\colon (a,b_1,b_2)\in \mathcal{H}\} \big) \geq \big(|I|/\rho\big)^{1-\eps}.
\end{equation}
\end{enumerate}
\end{SWThmRepeatPrime}

Observe that if $E$ is a union of $\delta$-cubes that is also a $(\delta,\alpha, C)_n$-ADset, and if $|E| \geq c\delta^{n-\alpha}$ for some $c>0$, then $E$ is a $(\delta,\alpha, C/c)_n$-Frostman set, and hence Theorem \ref{SWThm}$'$ (with $G_1=G_2=G$) implies Theorem \ref{SWThm}. Indeed, if $F,G$ satisfy the hypotheses of Theorem \ref{SWThm}, then $\mathcal{E}_\delta(F)\leq \delta^{-1-\eta}$ and $\mathcal{E}_\delta(G)\leq \delta^{-1-\eta}$. If at least one of these sets has $\delta$-covering number $\leq \delta^{-1+4\eta}$, then $\mathcal{E}_\delta(F\times G\times G)\leq\delta^{2\eta-3}$, and thus Conclusion (B) of Theorem \ref{SWThm} is vacuously true. On the other hand, if both $F$ and $G$ have $\delta$-covering number $\geq\delta^{-1+4\eta}$, then $F$ and $G$ are $(\delta,1,\delta^{-5\eta})$-Frostman sets (is harmless to suppose that $F$ and $G$ are unions of $\delta$-squares, and $\mathcal{H}$ is a union of $\delta$ cubes), and thus the sets $F$ and $G_1=G_2=G$ satisfy the hypotheses of Theorem \ref{SWThm}$'$ with $5\eta$ in place of $\eta$.

\subsubsection{Notation conventions}\label{notationConventionSection}
For the remainder of Section \ref{SWProofSec}, we adopt the convention that if $X,Y$ are unions of $\delta$-cubes in $\RR^n$ (resp.~$\delta$-arcs in $S^1$), then we will only consider subsets $\mathcal{H}\subset X\times Y$ that are unions of sets $S_1\times S_2$, where $S_1\subset X$ and $S_2\subset Y$ are $\delta$-cubes (resp.~$\delta$-arcs). Note that the truth (or falsehood) of Theorem \ref{SWThm}$'$ is not affected by the adoption of this convention. 

If $Z$ is a union of $\delta$ cubes in $\RR^n$, it will sometimes be convenient to think of $Z$ as a set of $\delta$ cubes. For example, we will sometimes identify the (infinite) sets of points $\mathcal{H}\subset X\times Y$ with the corresponding (finite) sets of $\delta$ cubes; thus we we can think of $\mathcal{H}$ as a bipartite graph on the (finite) vertex sets $X$ and $Y$. In this spirit, we will sometimes use $\# Z$ to refer to the number of $\delta$ cubes in $Z$, i.e. $\#Z = \delta^{-n}|Z|$.


\subsection{A key special case: $F\cdot(G_1-G_2)$ is large if $(G_1,G_2)$ have thin tubes}
In this section we will introduce the results and definitions needed to state and prove a key special case of Theorem \ref{SWThm}$'$; this is Proposition \ref{largeDotProductThinTubesProp} below. Proposition \ref{largeDotProductThinTubesProp} says that Conclusion (B) holds (with $\rho=\delta$ and $I = [-2,2]$), provided that the graph $\mathcal{H}$ satisfies a mild uniformity condition, and (far more importantly) that the pair $(G_1,G_2)$ satisfies a strong non-concentration condition on lines. 
\subsubsection{Hypergraphs and uniform density}
The set $\mathcal{H}$ from Conclusion (B) of Theorem \ref{SWThm}$'$ is a 3-regular tripartite hypergraph. Some of our arguments can be stated more simply if each vertex of this this hypergraph is a member of (at least) an average number of edges. The precise definition is as follows.

\begin{defn}
Let $A_1,\ldots,A_k$ be finite sets and let $\mathcal{H}\subset A_1\times\ldots\times A_k$. 
For each nonempty multi-index $I\subset[k]=\{1,\ldots,k\},$ define $A_I = \bigtimes_{i\in I}A_i$. For each $a_I = (a_i)_{i\in I}\in A_I$, define

\[
n_{\mathcal{H}}[a_I] =\{ (h_1,\ldots,h_k)\in \mathcal{H}\colon a_i = h_i\ \textrm{for all}\ i\in I\}.
\]
For example if $\mathcal{H} \subset A_1\times A_2\times A_3$ and $(a_1,a_3)\in A_1\times A_3$, then $n_{\mathcal{H}}[(a_1,a_3)]$ is the set of all edges of the form $(h_1,h_2,h_3)\in \mathcal{H}$ with $h_1=a_1$ and $h_3=a_3$. 
\end{defn}

\begin{defn}
If $\mathcal{H}\subset A_1\times\ldots\times A_k$ and $I\subset[k]$, we define $\pi_I\colon \mathcal{H}\to A_I$ to be the projection onto $A_I$. If $I=\{i\}$ we write $\pi_i$ instead of $\pi_{\{i\}}$. Finally, if $h\in \mathcal{H}$, we write $h_I$ to denote $\pi_I(h)$. 
\end{defn}
\begin{rem}
In some of the sections that follow, we will consider sets $\mathcal{H}\subset F\times G_1\times G_2$. To avoid confusion, we will use the notation $\pi_F\colon \mathcal{H}\to F$ to denote the projection map to $F$, $\pi_{F\times G_1}$ to denote the projection to $F\times G_1$, etc.
\end{rem}

\begin{defn}
Let $0<c\leq 1$. We say a $k$-uniform $k$-partite hypergraph $\mathcal{H}\subset A_1\times\ldots\times A_k$ is \emph{uniformly} $c$-\emph{dense} if for each edge $h\in \mathcal{H}$ and each $I\subset[k]$ we have 
\begin{equation}\label{sizeOfNbhd}
\#n_{\mathcal{H}}[h_I]\geq c\prod_{i\not\in I}\#A_i.
\end{equation}
\end{defn}
This definition is useful because some pigeonholing arguments are simplified when the corresponding graph is uniformly $c$-dense, and if $\mathcal{H}$ is uniformly $c$-dense then its induced subgraphs will still be dense (though possibly not uniformly dense). The next result says that every hypergraph $\mathcal{H}$ has a uniformly $c$-dense refinement, where $c$ is comparable to the average density of $\mathcal{H}$.

\begin{lem}[Hypergraph refinement lemma]\label{hypergraphRefinementLem}
 Let $A_1,\ldots,A_k$ be finite sets, let $\mathcal{H} \subset A_1\times\ldots\times A_k$ be a $k$-uniform hypergraph with density $d = \#\mathcal{H}/\prod_{i=1}^k\#A_i$, and let $0<\eps<1$. Then there exists a uniformly $(2^{-k}\eps d)$-dense subgraph $\mathcal{H}'\subset \mathcal{H}$ with $\#\mathcal{H}'\geq(1-\eps)\#\mathcal{H}$. 
\end{lem}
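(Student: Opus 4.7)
The plan is to construct $G'$ by a greedy iterative removal of ``light'' vertices. Set $c = 2^{-k}\eps d$. Starting from $G_0 = G$, at each step I would look for some nonempty proper subset $\emptyset \neq I \subsetneq [k]$ and some tuple $a_I \in A_I$ for which the neighborhood $n_{G_t}[a_I]$ is nonempty but has fewer than $c\prod_{i\notin I}\#A_i$ edges; if such a pair exists, delete all edges through $a_I$ to form $G_{t+1}$, otherwise halt and set $G' = G_t$. Since $\#G_t$ strictly decreases at each step, the process terminates in finite time.

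The crucial bookkeeping observation is that each fixed pair $(I, a_I)$ can trigger a deletion at most once — once we remove $n_{G_t}[a_I]$, no edge of $G_{t+1}$ projects to $a_I$ on coordinates $I$, so $n_{G_s}[a_I]$ remains empty for all $s > t$ and fails the nonemptiness requirement. Each such removal deletes strictly fewer than $c\prod_{i\notin I}\#A_i$ edges. Summing over the $2^k - 2$ nonempty proper subsets $I$ and over $a_I \in A_I$ gives
\[
\#(G \setminus G') \;\leq\; \sum_{\emptyset \neq I \subsetneq [k]} \#A_I \cdot c \prod_{i\notin I}\#A_i \;\leq\; (2^k - 2)\, c \prod_{i=1}^k \#A_i \;\leq\; \eps \#G,
\]
using $c \prod_{i=1}^k \#A_i = 2^{-k}\eps \#G$. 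Hence $\#G' \geq (1-\eps)\#G$.

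It remains to verify uniform $c$-density of $G'$. For any nonempty proper $I \subsetneq [k]$ and any edge $g \in G'$, the set $n_{G'}[g_I]$ contains $g$ and is therefore nonempty; the termination condition then forces $\#n_{G'}[g_I] \geq c \prod_{i\notin I}\#A_i$, as required. The two boundary subsets of $[k]$ handle themselves: for $I = [k]$ we have $\#n_{G'}[g] = 1 \geq c$ (since $c \leq 1$), and for $I = \emptyset$ the requirement reduces to $\#G' \geq c\prod_{i=1}^k\#A_i = 2^{-k}\eps\#G$. Using the refined bound $\#G' \geq (1 - (1-2^{1-k})\eps)\#G$ (which is why it was important to exclude $I = \emptyset$ from the iteration, since otherwise a single step could wipe out the entire graph) this follows from the elementary inequality $(1 - 2^{-k})\eps \leq 1$, valid for all $\eps \leq 1$ and $k \geq 1$.

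There is no real obstacle here: the argument is a careful double-counting. The only genuine subtlety is deciding which subsets $I$ to iterate over — allowing $I = \emptyset$ would be catastrophic, while restricting to nonempty proper $I$ yields both the correct size bound and, automatically, the density condition at $I = \emptyset$.
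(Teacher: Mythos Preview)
Your proof is correct and follows essentially the same greedy-removal argument as the paper. The paper iterates over all nonempty $I\subset[k]$ (tracking removed tuples via auxiliary sets $A_I'$ rather than your ``nonempty neighborhood'' criterion) and bounds the total deletions by summing over all $2^k$ subsets; you instead restrict the iteration to nonempty proper $I$ and handle the boundary cases $I=\emptyset$ and $I=[k]$ explicitly, which yields slightly sharper constants but is otherwise the same double-counting.
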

Lemma \ref{hypergraphRefinementLem} was proved by Dvir and Gopi \cite{DG} in the special case $k=2$ and $\eps = 1/2$. A similar proof works in general; we give the details below.
\begin{proof}
To begin, define $A_I'=A_I$ for each multi-index $I$, and define $\mathcal{H}'=\mathcal{H}$. Next, if there exists a multi-index $I$ and an element $h_I\in A_I'$ with 
\begin{equation}\label{removalCriteria}
\#n_{\mathcal{H}'}[h_I]<\frac{\eps}{2^k}\frac{\#\mathcal{H}}{\prod_{i\in I}\#A_i},
\end{equation} 
then we remove $h_I$ from $A_I'$, and we remove all of the edges $n_{\mathcal{H}'}[h_I]$ from $\mathcal{H}'$. We repeat this process until no $h_I\in A_I'$ fits the removal criteria \eqref{removalCriteria} for any multi-index $I$. 

Each $h_I\in A_I'$ that is removed deletes at most $\frac{\eps}{2^k}\frac{\#\mathcal{H}}{\prod_{i\in I}\#A_i}$ edges from $\mathcal{H}$. Thus the number of edges deleted by this procedure is at most
\[
\sum_{I\subset[k]}\frac{\eps}{2^k}\frac{\#\mathcal{H}}{\prod_{i\in I}\#A_i}\#A_I = \sum_{I\subset[k]}\frac{\eps}{2^k} \#\mathcal{H} = \eps\#\mathcal{H}.
\]
After this procedure has finished, the surviving graph $\mathcal{H}'$ is uniformly $(2^{-k}\eps d)$-dense.
\end{proof}



\begin{rem}
Let $\mathcal{H}\subset A_1\times \ldots\times A_k$ be uniformly $c$-dense, let $1\leq j\leq k$, and suppose that the projection map $\pi_j$ is onto. Let $A_j'\subset A_j$, and define $\tilde A_i=A_i$ for $i\neq j$, and $\tilde A_j = A_j'$. Let $\mathcal{H}' = \mathcal{H}\cap (\tilde A_1\times\ldots \times \tilde A_k),$ be the corresponding induced subgraph. Then $\big(\#\mathcal{H}'\big) / \big(\prod_{i=1}^k \#\tilde A_i\big)\geq c$. 

$\mathcal{H}'\subset \tilde A_1\times \ldots\times \tilde A_k$ need not be uniformly $c$-dense, but by Lemma \ref{hypergraphRefinementLem} it has a uniformly $2^{-k+1}c$-dense subgraph $\mathcal{H}''\subset \mathcal{H}'$ with $\#\mathcal{H}''\geq\frac{1}{2}\#\mathcal{H}'$. We will use this observation frequently in the arguments below. 
\end{rem}


\subsubsection{Kaufman's projection theorem}
The next tool we will need is Kaufman's projection theorem \cite{Kau}. The precise version we will need is given below; we include a proof for completeness.

\begin{thm}\label{thm: Kaufman} 
	Let $0<\alpha\leq\beta\leq 1$ and let $\eps>0$. Then there exists $\delta_0>0$ so that the following holds for all $\delta\in (0,\delta_0]$. Let $C\geq 1$, let $F\subset \mathbb{R}^2$ be a $(\delta, \alpha, C)$-Frostman set, and let $\Lambda\subset S^1$ be a $(\delta, \beta, C)$-Frostman set.  Let $\mathcal{H}\subset \Lambda \times F$, and let $d = |\mathcal{H}| / (|\Lambda|\,|F|)$. Then there exists $\Lambda'\subset \Lambda$ with $|\Lambda'| \gtrsim  d|\Lambda|$ such that for every $\theta\in \Lambda'$, we have
	\[
	|\theta\cdot F_{\theta}| \gtrsim C^{-2}d^3 \delta^{1-\alpha+\eps},
	\]
	where $F_{\theta}=\{ x\in F\colon  (\theta, x)\in H\}$.
\end{thm}
\begin{proof}
Apply Lemma \ref{hypergraphRefinementLem} to $\mathcal{H}$ with $\eps=1/2$, let $\mathcal{H}'\subset \mathcal{H}$ be the resulting refinement, and let $\Lambda_0\subset\Lambda$ be the projection of $\mathcal{H}'$ to the first coordinate. Then $|\Lambda_0|\gtrsim d |\Lambda|$ (so in particular $\Lambda_0$ is a $(\delta, \beta, C/d)$-Frostman set), and for each $\theta\in \Lambda_0$, we have $|F_{\theta}| \gtrsim d |F|$. 

Define $\nu=\frac{\chi_{\Lambda_0}}{|\Lambda_0|}$, $\mu = \frac{\chi_F}{|F|}$, $\mu_{\theta}=\frac{\chi_{F_\theta}}{|F_\theta|}$. If $\theta\in \Lambda_0$ then $\frac{d\mu_{\theta}}{d\mu} \lesssim d^{-1}$. For $x\neq y$, define $\pi^y(x)=\frac{x-y}{|x-y|}$.
Fix $0<\gamma<\alpha$. Since $\mu$ and $\nu$ satisfy the Frostman conditions $\mu(B(x,r))\leq Cr^\alpha$ and $\nu(B(x,r))\leq (C/d)r^\beta$, there is a constant $K$ depending on $\alpha-\gamma$  so that 
\[
\int_F\int_F |x-y|^{-\gamma}d\mu(x)d\mu(y)\leq K C,
\]
and 
\[
\int_{\Lambda_0} |\theta\cdot e|^{-\gamma} d\nu (\theta) \leq K d^{-1}C
\quad\textrm{for every}\ e\in S^1.
\]
In what follows, all implicit constants are allowed to depend on $\alpha-\gamma$. We compute

	\begin{align*}
	 \int_{\Lambda_0} \int_F \int_F  & |\theta\cdot x-\theta\cdot y|^{-\gamma} d\mu_{\theta}(x) d\mu_{\theta}(y) d\nu(\theta) \\
	& \lesssim d^{-2}  \int_F\int_F \int_{\Lambda_0} |\theta\cdot x-\theta\cdot y|^{-\gamma} d\nu(\theta) d\mu(x) d\mu(y)\\
	& =d^{-2} \int_F\int_F \int_{\Lambda_0} |x-y|^{-\gamma} \cdot |\theta\cdot \pi^y(x)|^{-\gamma} d\nu(\theta) d\mu(x) d\mu(y)\\
	&\lesssim C d^{-3} \int_F\int_F |x-y|^{-\gamma} d\mu(x)d\mu(y)\\
	& \lesssim  C^2 d^{-3}. 
	\end{align*}
	
	By Fubini, there exists a subset $\Lambda'\subset \Lambda_0$ with $|\Lambda'| \gtrsim |\Lambda_0|$, so that for every $\theta\in \Lambda'$ we have
	\[
\int_F\int_F |\theta\cdot x-\theta\cdot y|^{-\gamma} d\mu_{\theta}(x) d\mu_{\theta}(y) \lesssim C^2 d^{-3}.
	\]
We conclude that $|\theta\cdot F_{\theta}| \gtrsim C^{-2}d^3 \delta^{1-\gamma}$. The result now follows by selecting $\gamma = \alpha-\eps/2$ and choosing $\delta_0=\delta_0(\eps)>0$ sufficiently small.
\end{proof}

\subsubsection{Thin tubes}
In \cite{SW0}, Shmerkin and the first author introduced a non-concentration condition on pairs of Borel measures in the plane, which prevents the supports of these measures from concentrating on a common line. We recall this definition here.

\begin{defn}\label{thinTubes}
Let $\beta\geq 0$, $K\geq 1$, and $c \in [0,1)$. Let $\nu_1,\nu_2$ be Borel probability measures on $\RR^2$. We say that $(\nu_1,\nu_2)$ have $(\beta,K,1-c)$-\emph{thin tubes} if there exists a Borel set $E\subset \operatorname{supp}(\nu_1)\times \operatorname{supp}(\nu_2)$ with $\nu_1\times\nu_2(E)\geq 1-c$ with the following property: for all $b_1\in \operatorname{supp}(\nu_1)$, all lines $\ell\subset\RR^2$ containing $b_1,$ and all $r>0$, we have 
\begin{equation}\label{frostmanOnThickenedLines}
\nu_2\big( \{b_2\in N_r(\ell)\colon (b_1,b_2)\in E\}\big) \leq K r^\beta.
\end{equation}
\end{defn}

In our setting, we will consider unions of $\delta$-cubes rather than probability measures. To this end, we introduce the following variant of Definition \ref{thinTubes}.

\begin{discretizedThinTubesDefn}
Let $\beta\geq 0$, $K\geq 1$, and $c \in [0,1)$. Let $G_1,G_2\subset \RR^2$ be unions of $\delta$-squares. We say that $(G_1,G_2)$ have $\delta$-discretized $(\beta, K, 1-c)$ \emph{thin tubes} if there exists a union of $\delta$-cubes $E \subset G_1 \times G_2$ with $|E| \geq (1-c)|G_1|\,|G_2|$ with the following property: for all $b_1 \in G_1$, all lines $\ell\subset\RR^n$ containing $b_1,$ and all $r\geq\delta$, we have
\begin{equation}\label{nonConcentrationNearLinesDiscretized}
\big|\{b_2\in G_2\cap N_r(\ell)\colon (b_1,b_2)\in E\}\big| \leq K \cdot r^{\beta}|G_2|.
\end{equation}
We will sometimes suppress the parameters $K$ and $c$, and just say that $(G_1,G_2)$ have $\delta$-discretized $\beta$-thin tubes. 
\end{discretizedThinTubesDefn}

Definitions \ref{thinTubes} and \ref{thinTubes}$'$ are related as follows:
\begin{lem}\label{relateThinVsDiscretizedThinTubes}
Let $G_1,G_2\subset\RR^2$ be unions of $\delta$-squares, and define the probability measures $\nu_i = \chi_{G_i}/|G_i|$. If $(\nu_1,\nu_2)$ has $(\beta, K, 1-c)$-thin tubes, then $(G_1,G_2)$ has $\delta$-discretized $(\beta, K', 1-c)$-thin tubes, with $K'=4^\beta K/c$.
\end{lem}
\begin{proof}
Let $E\subset \operatorname{supp}(\nu_1)\times \operatorname{supp}(\nu_2)$ be a set with measure $\nu_1\times\nu_2(E)\geq 1-c$ that satisfies \eqref{frostmanOnThickenedLines} for all $b_1\in \operatorname{supp}(\nu_1)$, all lines $\ell\subset\RR^2$ containing $b_1,$ and all $r>0$. 

Define $E'$ to be the union of those $\delta$ cubes $S = S_1\times S_2\subset G_1\times G_2$ (each $S$ is a $4$-dimensional $\delta$-cube)  satisfying $\nu_1\times\nu_2(S\cap E)\geq c\ \nu_1\times\nu_2(S)=c\delta^4/|G_1||G_2|$. We have 
\[
|(G_1\times G_2)\backslash E'| = \big(\nu_1\times\nu_2((G_1\times G_2) \backslash E')\big)|G_1||G_2| < c|G_1||G_2|,
\]
and thus
\[
|E'| \geq (1-c)|G_1||G_2|.
\]

We will show that this set $E'$ satisfies \eqref{nonConcentrationNearLinesDiscretized}. Fix a choice of $b_1\in G_1$, a line $\ell\subset\RR^2$ containing $b_1$, and $r\geq\delta$. Our goal is to establish \eqref{nonConcentrationNearLinesDiscretized} with $4^\beta K/c$ in place of $K$.  Let $S_1\subset G_1$ be the $\delta$-square containing $b_1$. Observe that for all $b_1'\in S_1$, we have
\begin{equation}\label{nu2EstimateNearb1}
\nu_2\big(\{b_2\in N_{2r}(\ell)\colon (b_1',b_2)\in E\}\big)\leq K(4r)^\beta.
\end{equation}
Indeed, this is because $N_{2r}(\ell)$ is contained in the $4r$-neighbourhood of a line passing though $b_1'$; we apply \eqref{frostmanOnThickenedLines} to this latter line. 

Let $\mathcal{S}$ be the set of $\delta$-squares intersecting $N_r(\ell)$. We have
\begin{equation}\label{unionSContained2sNbhd}
N_{r}(\ell)\subset \bigcup_{S\in\mathcal{S}}S \subset N_{2r}(\ell).
\end{equation}
We compute
\begin{equation}
\begin{split}
|\{b_2\in G_2\cap N_r(\ell)\colon (b_1,b_2)\in E'\}| & \leq \delta^2\#\{S_2\in\mathcal{S}\colon S_1\times S_2\subset E'\}\\
& = \delta^2\#\{S_2\in\mathcal{S}\colon \nu_1\times\nu_2((S_1\times S_2)\cap E) \geq c\delta^4/|G_1||G_2|\}\\
&\leq c^{-1}\frac{|G_1|}{\delta^2}|G_2| \sum_{S\in\mathcal{S}}\nu_1\times\nu_2((S_1\times S_2)\cap E)\\
&\leq c^{-1}\frac{|G_1|}{\delta^2}|G_2| \int_{S_1}\int_{N_{2r}(\ell)}\chi_E(b_1',b_2)d\nu_2(b_2)d\nu_1(b_1')\\
&\leq c^{-1}\frac{|G_1|}{\delta^2}|G_2| \int_{S_1} K(4r)^\beta d\nu_1(b_1')\\
&\leq \frac{4^\beta K}{c}r^\beta |G_2|.
\end{split}
\end{equation}
The first inequality used the first inequality of \eqref{unionSContained2sNbhd}. The next line follows from the definition of $E'$. The fourth line used the second inequality of \eqref{unionSContained2sNbhd}, and the fifth line used \eqref{nu2EstimateNearb1}.
\end{proof}


\subsubsection{$F\cdot(G_1-G_2)$ is large if $(G_1,G_2)$ have thin tubes}

In several of the results that follow, it will be helpful to suppose that the sets $F,G_1,$ and $G_2$ are localized to small balls, and that these balls are separated from each other and from the origin. We record this assumption below
\begin{defn}
We say that sets $F,G_1,G_2\subset [0,1]^2$ satisfy the \emph{standard separation conditions} if
\begin{equation}\label{wellSeparationCondition}
\operatorname{diam}(F),\ \operatorname{diam}(G_1),\ \operatorname{diam}(G_2)\leq \frac{1}{10};\quad \operatorname{dist}(G_1,G_2)\geq\frac{1}{2};\quad\operatorname{dist}(F,0)\geq\frac{1}{2}.
\end{equation}
\end{defn}

\noindent We can now state and prove the following special case of Theorem \ref{SWThm}$'$.

\begin{lem}\label{largeDotProductThinTubesProp}
For all $\eps>0$, there exists $\delta_0>0$ so that the following holds for all $\delta\in (0,\delta_0]$. Let $c\in [0,1)$ and $K\geq 1$. Let $F,G_1,G_2 \subset[0,1]^2$ be $(\delta, 1, K)$-Frostman sets that satisfy the standard separation conditions, and suppose furthermore that $(G_1,G_2)$ has  $\delta$-discretized $(1, K, 1-c)$-thin tubes. Let $\mathcal{H}\subset F\times G_1 \times G_2$ be uniformly $2c$-dense.  Then
\begin{equation}
\mathcal{E}_{\delta}\big( \{a\cdot(b_1-b_2)\colon (a,b_1,b_2)\in \mathcal{H}\}\big) \gtrsim c^5K^{-2} \delta^{\eps-1}.
\end{equation}
\end{lem}
\begin{proof}
Since $(G_1,G_2)$ has $\delta$-discretized $(1, K, 1-c)$-thin tubes, there exists a set $E\subset G_1\times G_2$ with $|E|\geq (1-c)|G_1|\,|G_2|$ that satisfies \eqref{nonConcentrationNearLinesDiscretized} with $\beta=1$ and $K$ as above. Since $\mathcal{H}$ is $2c$ dense, we have $|\pi_{G_1\times G_2}(\mathcal{H})|\geq 2c|G_1|\,|G_2|$. Thus $|E\cap \pi_{G_1\times G_2}(\mathcal{H})|\geq (2c-c)|G_1|\,|G_2|\geq c|G_1|\,|G_2|$; denote this new set by $E'$. 

 By Fubini, there exists $b_1\in G_1$ so that $| \pi_{G_2}(n_{E'}[(b_1)])| \geq c |G_2|$. Fix this choice of $b_1$ and define $G_2' =  \pi_{G_2}(n_{E'}[(b_1)])$. Let $L$ be a maximal set of lines containing $b_1$ with $\delta$-separated directions, each of which intersects $G_2'$. By \eqref{nonConcentrationNearLinesDiscretized} with $\beta=1$ and $r=\delta$, we have $\#L \geq \frac{ |G_2'|}{K\delta |G_2|}\geq \frac{c}{K}\delta^{-1}.$ Let $\Lambda\subset S^1$ be the set of (grid-aligned) $\delta$-arcs that intersect the directions of lines in $L$. Then $\Lambda$ is a $(\delta, 1, K/c)$-Frostman set, and for each $\theta\in\Lambda$ there exists (at least one) $b_2\in G_2$ so that $(b_1,b_2)\in \pi_{G_1\times G_2}(\mathcal{H})$; $|b_1-b_2|\geq\frac{1}{2}$; and $\frac{b_1-b_2}{|b_1-b_2|}=\theta$. Choose such a $b_2\in G_2$ for each $\theta\in\Lambda$, and let $H\subset\Lambda\times F$ be the set of pairs $(\theta,a)$ so that $(a,b_1,b_2)\in \mathcal{H}$. Since $\mathcal{H}$ is $2c$ dense, for each $\theta\in\Lambda$ there are at least $2c(\# F)$ choices of ($\delta$-separated) $a\in F$ with $(\theta,a)\in H$. Thus $|H|\geq 2c |\Lambda|\,|F|$. 

We now apply Theorem \ref{thm: Kaufman} to $\Lambda$, $F$, and $H$, with $\alpha=\beta=1$ and $\eps$ as above. Let $\delta_1>0$ and $\Lambda'\subset\Lambda$ be the output from that lemma. Select $\theta\in\Lambda'$, let $(b_1,b_2)\in \pi_{G_1\times G_2}(\mathcal{H})$ be the pair of points associated to $\theta$, and let $r=|b_1-b_2|\geq\frac{1}{2}$. We have
\begin{equation}
\begin{split}
\{a\cdot (b_1-b_2)\colon (a,b_1,b_2)\in \mathcal{H}\}& \supset \{a\cdot\theta r \colon a\in (F)_\theta \}.
\end{split}
\end{equation}
By Theorem \ref{thm: Kaufman} the latter set has $\delta$-covering number $\gtrsim c^5K^{-2}\delta^{\eps-1}$.
\end{proof}

\begin{rem}
Note that an analogue of Lemma \ref{largeDotProductThinTubesProp} is also true over $\CC$, i.e.~this result does not distinguish between the fields $\RR$ and $\CC$. 
\end{rem}

\subsection{ Radial projection: $(G_1, G_2)$ have thin tubes unless they are concentrated on lines}
The main result of this section is the following result, which says that the conclusion of Lemma \ref{largeDotProductThinTubesProp} continues to hold if we replace the hypothesis that $(G_1,G_2)$ have 1-thin tubes with the hypothesis that $G_1$ and $G_2$ satisfy a weak non-concentration near lines estimate.

\begin{prop}\label{weakNonConcentrationOnLinesImpliesConclusionBProp}
For all $0<\eps,\zeta<1$, there exists $\lambda=\lambda(\eps)>0$, $\alpha=\alpha(\eps,\zeta)>0$, and $\delta_0=\delta_0(\eps,\zeta)>0$ so that the following holds for all $\delta\in (0,\delta_0]$. %
Let $F,G_1,G_2 \subset[0,1]^2$ be $(\delta, 1, \delta^{-\lambda})$-Frostman sets that satisfy the standard separation conditions.
Suppose that $G_1$ and $G_2$ satisfy the following non-concentration near lines estimate:
\begin{equation}\label{nonConcentratedOnLinesInProp}
|G_i\cap N_r(\ell)|\leq (\delta^{-\lambda}r)^{\zeta}|G_i|\quad\textrm{for all lines}\ \ell\ \textrm{and all}\ r\geq\delta,\ \quad i=1,2.
\end{equation}
Let $\mathcal{H}\subset F \times G_1 \times G_2$ be uniformly $\delta^{\alpha}$-dense. Then
\begin{equation}\label{nonConcentrationNearLinesGivesManyDotsEqn}
\mathcal{E}_{\delta}\big( \{a\cdot(b_1-b_2)\colon (a,b_1,b_2)\in \mathcal{H}\}\big) \geq \delta^{\eps-1}.
\end{equation}
\end{prop}

Proposition \ref{weakNonConcentrationOnLinesImpliesConclusionBProp} is essentially a small variant of the radial projection theorem in  \cite{OSW};  we reprove it here for completeness.  The proof of  radial projection theorem \cite{OSW} uses the ``bootstrapping lemma'' that first appeared in \cite{SW0}, which in turn uses a result from \cite{OS21} that obtains an $\eps$-improved Furstenberg set estimate. This latter result made crucial use of Bourgain's discretized sum-product theorem \cite{bour, bour2}. In particular, the analogue of Proposition \ref{weakNonConcentrationOnLinesImpliesConclusionBProp} is not true if we replace the field $\RR$ by $\CC$. The ``bootstrapping lemma'' we state here is Lemma 2.9 from \cite{OSW}.

\begin{prop}[\cite{OSW}, Lemma 2.9]\label{bootstrapLem}
Let $\beta,\eps>0$, $\sigma\in[\beta,1-\eps],$ $c\in(0,1/10)$, and $K,C\geq 1$. Then there exists $\tau = \tau(\beta,\eps)>0$ and $M(\beta,\eps)\geq 1$ so that the following holds.
Let $\nu_1,\nu_2$ be Borel probability measures on $\RR^2$ with $\operatorname{dist}(\operatorname{supp}(\nu_1),\ \operatorname{supp}(\nu_2))\geq 1/2$ that satisfy the Frostman non-concentration estimate
\[
\nu_1(B(x,r))\leq Cr\qquad\textrm{and}\qquad \nu_2(B(x,r))\leq Cr,
\] 
for all balls $B(x,r)\subset\RR^2$. Suppose furthermore that both $(\nu_1,\nu_2)$ and $(\nu_2,\nu_1)$ have $(\sigma, K, 1-c)$-thin tubes. 

Then both $(\nu_1,\nu_2)$ and $(\nu_2,\nu_1)$ have $(\sigma+\tau, K', 1-3c)$-thin tubes, where
\[
K' = \max\big\{K, \frac{C^2 M}{c} \big\}^{M}.
\]
\end{prop}
\begin{rem}
The statement of Lemma 2.9 in \cite{OSW} differs slightly from the version given above. First, Proposition \ref{bootstrapLem} considers the special case $s=1$ from \cite[Lemma 2.9]{OSW}, and thus $\sigma<1$; this allows us to simply a few expressions. Second, \cite[Lemma 2.9]{OSW} specifies that $\sigma\in[\beta,1)$, and the corresponding value of $\tau$ is bounded away from 0 on compact subsets of $\{\beta\leq \sigma< 1)$. We have fixed the compact subset $\{\beta\leq\sigma\leq 1-\eps\}$ in advance, and thus $\tau = \tau(\beta,\eps)>0$. 

Most importantly, the parameter $\beta$ in the statement of \cite[Lemma 2.9]{OSW} is not permitted to be chosen freely. Instead, \cite[Lemma 2.9]{OSW} specifies a particular value of $\beta$ (a careful reading of \cite{Orp} shows that $\beta = 1/2$, though this fact is not necessary for our arguments here). Thus the version of Proposition \ref{bootstrapLem} stated above appears to be superficially stronger than what is stated in \cite[Lemma 2.9]{OSW}. However, examining the (short) proof  of \cite[Lemma 2.9]{OSW}, it does not actually require a specific choice of $\beta$; any $\beta>0$ works equally well (though the value of $\tau$ will depend on $\beta$). 
For our arguments, we will fix $\beta = 1/4$.
\end{rem}

In the discretized setting (and fixing the choice $\beta = 1/4$), Proposition \ref{bootstrapLem} can be restated as follows:

\begin{bootstrapLemDiscretized}
Let $\eps>0$, $\sigma\in[1/4,1-\eps],$ $c\in(0,1/10)$, and $K,C\geq 1$. Then there exists $\tau = \tau(\eps)>0$ and $M(\eps)\geq 1$ so that the following holds.
Let $G_1,G_2\subset[0,1]^2$  be $(\delta, 1,C)$-Frostman sets, with  $\operatorname{dist}(G_1,\ G_2)\geq 1/2$. Suppose that both $(G_1,G_2)$ and $(G_2,G_1)$ have $\delta$-discretized $(\sigma, K, 1-c)$-thin tubes. 

Then both $(G_1,G_2)$ and $(G_2,G_1)$ have $\delta$-discretized $(\sigma+\tau, K', 1-3c)$-thin tubes, where
\[
K' = \max\big\{\frac{4K}{3c}, \frac{4C^2 M}{3c^2} \big\}^{M}.
\]
\end{bootstrapLemDiscretized}
Indeed, we obtain Proposition \ref{bootstrapLem}$'$ by applying Proposition \ref{bootstrapLem} to the probability measures $\nu_1 = \chi_{G_1}/|G_1|$ and $\nu_2 = \chi_{G_2}/|G_2|$ and then using Lemma \ref{relateThinVsDiscretizedThinTubes} (this final step worsens $K'$ by a factor of $4/(3c)$).

By repeatedly applying Proposition \ref{bootstrapLem}$'$ we obtain the following.

\begin{cor}\label{quarterThinTubesCor}
For all $\eps>0$, there exists $N = N(\eps)\geq 1$ so that the following holds.
Let $c\in(0,1/10)$, $K,C\geq 1$. Let $G_1,G_2\subset[0,1]^2$  be $(\delta, 1,C)$-Frostman sets, with  $\operatorname{dist}(G_1, G_2)\geq 1/2$. Suppose that both $(G_1,G_2)$ and $(G_2,G_1)$ have $\delta$-discretized $(1/4, K, 1-c)$-thin tubes. 

Then both $(G_1,G_2)$ and $(G_2,G_1)$ have $\delta$-discretized $(1-\eps, K', 1-3^N c)$-thin tubes, and thus have $\delta$-discretized $(1, \delta^{-\eps}K', 1-3^Nc)$-thin tubes, where
\[
K' = \max\big\{\frac{4K}{3c}, \frac{4C^2 N}{3c^2} \big\}^N.
\]
\end{cor}

\subsubsection{Grabbing the bootstraps}
Corollary \ref{quarterThinTubesCor} partially bridges the gap between Lemma \ref{largeDotProductThinTubesProp} and Proposition \ref{weakNonConcentrationOnLinesImpliesConclusionBProp}. Corollary \ref{quarterThinTubesCor} says that $(G_1,G_2)$ have $\delta$-discretized 1-thin tubes, which is the condition needed to apply Lemma \ref{largeDotProductThinTubesProp}. Unfortunately, the hypothesis \eqref{nonConcentratedOnLinesInProp} of Proposition \ref{weakNonConcentrationOnLinesImpliesConclusionBProp} only implies that $(G_1,G_2)$ and $(G_2,G_1)$ have $\delta$-discretized $\delta^\zeta$-thin tubes, and if $\zeta<1/4$ then we cannot immediately apply Corollary \ref{quarterThinTubesCor}.

To fix this problem, we will first prove that the pair $(G_1,G_2)$ from Proposition \ref{weakNonConcentrationOnLinesImpliesConclusionBProp} has $1/4$-thin tubes; this is Lemma \ref{lem: liushen} below. We will then be able to apply Corollary \ref{quarterThinTubesCor} and Lemma \ref{largeDotProductThinTubesProp}. The precise result we need is as follows. Results of this type first appeared in  \cite{Orp} (see also \cite{LS} and \cite[Appendix B]{Shmerkin}), but our setup is slightly different, so we will include a proof. Note that an analogue of the arguments below would also work over $\CC$, i.e.~the argument below does not distinguish between the fields $\RR$ and $\CC$.

\begin{lem}\label{lem: liushen}
Let $\lambda,\zeta,\alpha>0$. Then there exists $\delta_0>0$ so that the following is true for all $\delta\in (0,\delta_0]$. Let $G_1,G_2\subset[0,1]^2$ be $(\delta, 1, \delta^{-\lambda})$-Frostman sets with $\text{dist}(G_1, G_2)\geq 1/2$, and suppose that $G_2$ satisfies the non-concentration near lines estimate
\begin{equation}\label{nonConcentrationOnLinesForG2}
|G_2\cap N_r(\ell)|\leq (\delta^{-\lambda}r)^{\zeta}|G_2|\quad\textrm{for all lines}\ \ell\ \textrm{and all}\ r\geq\delta.
\end{equation}
Then $(G_1,G_2)$ have $\delta$-discretized $(\frac{1}{4}, K, 1-\delta^{\alpha})$-thin tubes, where $K = \delta^{-\frac{3\alpha}{\zeta}-\lambda}.$
\end{lem}
\begin{proof}
Fix $\beta = 1/4$; our goal in Lemma \ref{lem: liushen} is to prove that $(G_1,G_2)$ have $\delta$-discretized $\beta$-thin tubes. The exponent $\beta=1/4$ is chosen for concreteness, but it is not crucial for our argument---any exponent $\beta < 1/2$ would work equally well. We will use $\beta$ in place of $1/4$ to distinguish this quantity from other less important constants that occur elsewhere in the proof. 

Let $\delta\leq r\leq 1$; we say a pair of $\delta$-squares $(Q,S)\subset G_1\times G_2$ is \emph{bad} at scale $r$ if there is an infinite strip $T$ of width $r$ that intersects $Q$ and $S$, with $|T\cap G_2|\geq K r^{\beta}|G_2|$. Observe that no pairs of $\delta$-squares are bad at any scale greater than $K^{-1/\beta}$. 
Define
\[
E_{\operatorname{bad}} = \bigcup_{\substack{\delta\leq r\leq K^{-4}\\ r\ \textrm{dyadic}}} \bigcup_{(Q,S)\ \textrm{bad}} Q\times S,
\]
where the first union is taken over all numbers $\delta\leq r\leq K^{-1/\beta}$ of the form $2^k\delta$, $k\in\ZZ$, and the second union is taken over all pairs $(Q,S)$ of $\delta$-squares that are bad at scale $r$. Define $E = (G_1\times G_2)\backslash E_{\operatorname{bad}}$. By construction, each $b_1\in G_1$ satisfies \eqref{nonConcentrationNearLinesDiscretized} for this set $E$, with $K$ as above. Clearly $E$ is a union of $\delta$-cubes. To prove the lemma, it remains to show that for $K$ as above, we have $|E|\geq (1-\delta^\alpha)|G_1|\,|G_2|$. To do this, it suffices to show that for each dyadic $\delta\leq r\leq K^{-1/\beta}$, 
\begin{equation}\label{boundOnNumberBadPairs}
\Big| \bigcup_{(Q,S)\ \textrm{bad}} Q\times S\Big| \leq \frac{\delta^\alpha}{|\log\delta|}|G_1|\,|G_2|.
\end{equation}

Fix $r$. For each square $Q\subset G_1$, let $G_2^{Q}\subset G_2$ be the union of those squares $S\subset G_2$ so that $(Q,S)$ is bad at scale $r$. Define $G_1'$ to be the union of those squares $Q\subset G_1$ for which $|G_2^Q|\leq \frac{\delta^\alpha}{2|\log\delta|}|G_2|$, and let $G_2'' = G_2\backslash G_2'$. The contribution to \eqref{boundOnNumberBadPairs} from pairs $(Q,S)$ with $Q\subset G_1'$ is at most $\frac{\delta^\alpha}{|\log\delta|}|G_1|\,|G_2|$, which is acceptable. 

It remains to analyze the contribution from those pairs $(Q,S)$ with $Q\subset G_1''$, and in particular it suffices to show that 
\begin{equation}\label{boundOnG1pp}
|G_1''|\leq \frac{\delta^\alpha}{2|\log\delta|}|G_1|.
\end{equation} 
We will prove this by contradiction. Suppose that \eqref{boundOnG1pp} fails. For each square $Q\subset G_1''$, select a set $\mathcal{T}_Q$ of width-$r$ strips pointing in $r$-separated directions, each of which intersect $Q$ and each of which satisfy 
\begin{equation}\label{eq: 1/4thintube}
	|T\cap G_2|\geq K r^{\beta}|G_2|.
	\end{equation}  We have $|G_2\cap \bigcup_{T\in\mathcal{T}_Q}T|\gtrsim\frac{\delta^\alpha}{|\log\delta|}|G_2|,$ and since $G_1$ and $G_2$ are $1/2$-separated, each point $b_2\in G_2$ is contained in at most 10 of the strips from $\mathcal{T}_Q.$ If $Q\subset G_1''$ and $S\subset G_2$, write $Q\sim S$ if $S$ intersects one of the strips from $\mathcal{T}_Q$. Define
\[
\mathcal{W}=\{(Q,Q',S)\colon Q\sim S,\ Q'\sim S\}.
\]
If \eqref{boundOnG1pp} fails, then by Cauchy-Schwarz we have 
\begin{equation}\label{lowerBoundOnCalT}
\#\mathcal{W}\gtrsim \Big(\frac{\delta^\alpha}{|\log\delta|}\Big)^4(\#G_1)^2(\#G_2)
\end{equation} 
(recall that we use $\#G_1$ to denote the number of $\delta$-squares in $G_1$, and similarly for $\#G_2$). We will show that for $K$ as above, then \eqref{lowerBoundOnCalT} is impossible. 

First we will bound the number of triples $(Q,Q',S)\in\mathcal{W}$ with $\operatorname{dist}(Q,Q')\leq c_0 \delta^{\lambda}\delta^{-4\alpha}|\log\delta|^4$, where $c_0>0$ is a small absolute constant. Since $G_1$ is a $(\delta, 1, \delta^{-\lambda})$-Frostman set, after $Q$ has been chosen there are at most $c_0\delta^{4\alpha}|\log\delta|^{-4}(\#G_2)$ choices for $Q'$, and thus the number of such triples is at most $c_0\delta^{4\alpha}|\log\delta|^{-4}(\#G_1)^2(\#G_2)$. 

Next we will bound the number of triples $(Q,Q',S)\in\mathcal{T}$ where $S$ is contained in the $s=\delta^{\lambda}\big(c_0 \frac{\delta^\alpha}{|\log\delta|}\big)^{4/\zeta}$ neighborhood of the line $\ell_{Q,Q'}$ that connects the centers of $Q$ and $Q'$. By \eqref{nonConcentrationOnLinesForG2},  after $Q$ and $Q'$ have been chosen there are at $(\delta^{-\lambda}s)^{\zeta}\#G_2=c_0^4 \big(\frac{\delta^\alpha}{|\log\delta|}\big)^4\#G_2$ choices for $S$, and thus the number of such triples is at most $c_0^4 \big(\frac{\delta^\alpha}{|\log\delta|}\big)^4(\#G_1)^2(\#G_2)$. 

Finally, we will bound the number of remaining triples $(Q,Q',S)\in\mathcal{W}$. For each such triple, there are $r$-tubes $T\in\mathcal{T}_Q$ and $T'\in\mathcal{T}_{Q'}$ with $\angle(T,T')\gtrsim s$ (by $\angle(T,T')$ we mean the angle between their coaxial lines), so that $S$ intersects $T\cap T'$. Thus for each pair $Q,Q'\subset G_1''$, the corresponding set of squares $S\subset G_2$ intersect the set
\[
\bigcup_{\substack{ (T,T')\in \mathcal{T}_Q\times \mathcal{T}_{Q'} \\ \angle(T,T')\gtrsim s}}\!\!\!\!\!\!\!\! T\cap T'.
\]
Each set $T\cap T'$ in the above union is contained in a ball of radius $\sim s^{-1}r$, and since $G_2$ is a $(\delta, 1, \delta^{-\lambda})$-Frostman set, each set $T\cap T'$ can intersect at most $\delta^{-\lambda}s^{-1}r(\#G_2)$ squares from $G_2$. By \eqref{eq: 1/4thintube}, there are at most $(\#\mathcal{T}_Q)(\#\mathcal{T}_{Q'})\leq r^{-2\beta}$ such pairs $(T,T')$, and hence when $\delta$ is sufficiently small,  the total number of triples of this type is at most
\[
(\#G_1)^2\big(\delta^{-\lambda}s^{-1}r(\#G_2)\big)r^{-2\beta} \leq \delta^{-\lambda}s^{-1}K^{-2} (\#G_1)^2(\#G_2)\leq  c_0 \Big(\frac{\delta^\alpha}{|\log\delta|}\Big)^4(\#G_1)^2(\#G_2),
\]
where here we recall that $\beta = 1/4$ (and thus the exponent $r^{1-2\beta}$ is positive), and we used the fact that $r\leq K^{-1/\beta} = K^{-4}$ and $K=\delta^{-\frac{3\alpha}{\zeta}-\lambda},\, \,  s=  \delta^{\lambda}\big(c_0 \frac{\delta^\alpha}{|\log\delta|}\big)^{4/\zeta}$. We conclude that if $\delta>0$ is sufficiently small, then $\#\mathcal{W}\leq 3 c_0 \Big(\frac{\delta^{\alpha}}{|\log\delta|}\Big)^4(\#G_1)^2(\#G_2)$. If $c_0>0$ is chosen sufficiently small, then this contradicts \eqref{lowerBoundOnCalT}, and completes the proof. 
\end{proof}

\begin{rem}
It is tempting to avoid Lemma \ref{lem: liushen} by instead iterating Proposition \ref{bootstrapLem}$'$ multiple times starting with $\sigma = \beta = \zeta$ rather than $\sigma = \beta = 1/4$. Unfortunately, this would require $O_{\zeta,\eps}(1)$ iterations, and this in turn would mean that the quantity $\lambda=\lambda(\eps)$ from Proposition \ref{weakNonConcentrationOnLinesImpliesConclusionBProp} would also have to be sufficiently small  depending on $\zeta$ (in particular, much smaller than $\zeta$). This is not acceptable, because in our application below we must use a value of $\lambda$ that is at least as large as $\zeta$. 
\end{rem}

We can now combine Lemma \ref{largeDotProductThinTubesProp}, Corollary \ref{quarterThinTubesCor}, and Lemma \ref{lem: liushen} to prove Proposition \ref{weakNonConcentrationOnLinesImpliesConclusionBProp}. The details are as follows.
\begin{proof}[Proof of Proposition \ref{weakNonConcentrationOnLinesImpliesConclusionBProp}]
Let $\eps,\zeta\in(0,1)$. Let $\lambda=\lambda(\eps),$ $\alpha=\alpha(\eps,\zeta)$, and $\delta_0=\delta_0(\eps,\zeta)$ be specified below. Let $\delta\in(0,\delta_0]$ and let $F,G_1,G_2,\mathcal{H}$ satisfy the hypotheses of Proposition \ref{weakNonConcentrationOnLinesImpliesConclusionBProp}. 

Let $\alpha'>0$ be a quantity to be chosen below. Applying Lemma \ref{lem: liushen} with this choice of $\alpha'$, and with $\zeta,\lambda$ as above, we conclude that $(G_1,G_2)$ and $(G_2,G_1)$ have $\delta$-discretized $(\frac{1}{4}, \delta^{-\frac{3\alpha'}{\zeta}-\lambda}, 1-\delta^{\alpha'})$-thin tubes.

Applying Corollary \ref{quarterThinTubesCor} with $\eps/16$ in place of $\eps$, we conclude that there exists $N = N(\eps)$ so that $(G_1,G_2)$ and $(G_2,G_1)$ have $\delta$-discretized $(1, K, 1-3^N \delta^{\alpha'})$-thin tubes, where
\begin{equation}\label{boundOnKForTubes}
K = \max\{4\delta^{-\frac{4\alpha'}{\zeta}-\lambda},\ 4\delta^{-2\lambda-2\alpha'}N\}^N \leq (4N)^N\delta^{-\frac{4N\alpha'}{\zeta}-2N\lambda}.
\end{equation}
We will select
\[
\lambda = \frac{\eps}{100 N},\quad \alpha' = \frac{\eps\zeta}{100 N}.
\]
Since $N = N(\eps)$ depends only on $\eps$, we have that $\lambda$ depends only on $\eps$, as required. $\alpha'$ depends on $\eps$ and $\zeta$. With these choices, the RHS of \eqref{boundOnKForTubes} is at most $\delta^{-\eps/8}$, provided $\delta_0>0$ is selected sufficiently small.

Let $\alpha=\alpha'/2$, and select $\delta_0$ sufficiently small (depending on $\alpha'$ and $N$, which in turn depend on $\eps$ and $\zeta$) so that $\delta_0^\alpha \geq 2 \cdot 3^N\delta_0^{\alpha'}$, and thus $\delta^\alpha \geq 2 \cdot 3^N\delta^{\alpha'}$. Applying Lemma \ref{largeDotProductThinTubesProp} with $c = 3^N\delta^{\alpha'}$, $K = \delta^{-\eps/8}$, and $\eps/2$ in place of $\eps$, we conclude (provided $\delta_0$ is selected sufficiently small) that
\[
\mathcal{E}_{\delta}\big( \{a\cdot(b_1-b_2)\colon (a,b_1,b_2)\in \mathcal{H}\}\big) 
\geq 3^{5N}\delta^{5\alpha'}\delta^{\eps/4} \delta^{\eps/2-1}
\geq \delta^{\eps-1},
\]
where the final inequality used the fact that $5\alpha' = \frac{5\eps\zeta}{100 N}\leq \eps/5$.
\end{proof}


\subsection{Theorem \ref{SWThm}$'$ for well separated sets and uniform hypergraphs}
Our goal in this section is to prove the following special case of Theorem \ref{SWThm}$'$.
\begin{prop}\label{wellSeparatedAndUniformProp}
For all $\eps>0$, there exists $\eta,\delta_0>0$ so that the following holds for all $\delta\in(0,\delta_0]$. Let $F,G_1,G_2\subset[0,1]^2$ be $(\delta,1,\delta^{-\eta})$-Frostman sets that satisfy the standard separation conditions. Let $\mathcal{H}\subset F\times G_1\times G_2$ be uniformly $\delta^{\eta}$-dense. Then at least one of the following must hold:
\begin{enumerate}[label=(\Alph*)]
	\item\label{assertion1ItemA} There exist two lines $\ell$  and  $\ell^\perp$ (where $\ell^\perp$ passes through the origin and is orthogonal to $\ell$) that satisfy \eqref{largeIntersectionRecPrime}:
	\[ 
\mathcal{E}_\delta(N_\delta(\ell)\cap F)\geq \delta^{\eps-1},\quad \mathcal{E}_\delta(N_\delta(\ell^\perp)\cap G_1)\geq \delta^{\eps-1},\quad
\mathcal{E}_\delta(N_\delta(\ell^\perp)\cap G_2)\geq \delta^{\eps-1}.		\tag{\ref{largeIntersectionRecPrime}}
	\]

\item\label{assertion1ItemB} There exists $\rho\geq\delta$ and an interval $I$ of length at least $\delta^{-\eta}\rho$ that satisfies \eqref{bigRCoveringNumberPrime}:
\[
\mathcal{E}_\rho\big(I \cap \{a\cdot(b_1-b_2)\colon (a,b_1,b_2)\in \mathcal{H}\} \big) \geq \big(|I|/\rho\big)^{1-\eps} \tag{\ref{bigRCoveringNumberPrime}}.
\]
\end{enumerate}
\end{prop}


\subsubsection{A non-concentration condition near lines, and anisotropic rescaling}
We will prove Proposition \ref{wellSeparatedAndUniformProp} by reducing it to Proposition \ref{weakNonConcentrationOnLinesImpliesConclusionBProp}. One key difference between these two results is that Proposition \ref{weakNonConcentrationOnLinesImpliesConclusionBProp} assumes the non-concentration estimate \eqref{nonConcentratedOnLinesInProp}. First, a standard ``two-ends reduction'' argument shows that every set $E\subset[0,1]^2$ has a large subset that satisfies a re-scaled analogue of \eqref{nonConcentratedOnLinesInProp} localized inside a rectangle of some width $w$. A precise version is given below. The proof is standard, and is omitted. 

\begin{lem}\label{twoEndsOnLines}
Let $E\subset[0,1]^2$ be a union of $\delta$-squares and let $0<\zeta\leq 1/4$. Then there exists a number $w>0$ and a line $\ell$, so that if we define $E'=E\cap N_w(\ell)$, then $|E'|\geq \frac{1}{2}w^\zeta|E|$, and for all $0<r\leq 1$ and all lines $\ell$, we have 
\begin{equation}\label{twoEndsInsideRect}
|E'\cap N_r(\ell)| \leq (r/w)^{\zeta}|E'|.
\end{equation}
\end{lem}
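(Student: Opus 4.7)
The plan is to use a pure stopping-time argument on the function
\[
f(r) = \sup_{\ell} |A \cap N_r(\ell)|,
\]
where the supremum is taken over all lines $\ell$ in the plane. I would first observe that $f$ is monotone non-decreasing and continuous in $r$, and satisfies $f(r) = |A|$ once $r$ is large enough for a single strip $N_r(\ell)$ to cover $[0,1]^2$; in particular the inequality $f(r) \geq r^\zeta |A|$ does hold for some $r$ bounded above by an absolute constant.

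Next I would define
\[
w = \inf\{r > 0 : f(r) \geq r^\zeta |A|\}.
\]
This infimum is strictly positive because $f(r) \lesssim r$ for small $r$ (a single strip of width $r$ inside $[0,1]^2$ has area $O(r)$), so the inequality $f(r) \geq r^\zeta |A|$ forces $r \gtrsim |A|^{1/(1-\zeta)} \gtrsim \delta^{2/(1-\zeta)}$. By continuity, $f(w) \geq w^\zeta |A|$, and a compactness argument over the lines meeting $[0,1]^2$ produces some line $\ell$ actually attaining (or, if needed, nearly attaining) this supremum. I would then set $A' = A \cap N_w(\ell)$; the mass bound $|A'| \geq \tfrac{1}{2} w^\zeta |A|$ then follows directly by construction.

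For the two-ends inequality \eqref{twoEndsInsideRect}, I would split on whether $r \geq w$ or $r < w$. In the easy regime $r \geq w$ the bound is trivial since $(r/w)^\zeta \geq 1 \geq |A' \cap N_r(\ell')|/|A'|$. In the regime $r < w$ the minimality of $w$ forces $f(r) < r^\zeta |A|$, so for every line $\ell'$ one has $|A' \cap N_r(\ell')| \leq f(r) < r^\zeta |A|$; rewriting $r^\zeta |A| = (r/w)^\zeta \cdot w^\zeta |A|$ and using $|A'| \geq \tfrac{1}{2} w^\zeta |A|$ gives the claim up to at worst a factor of $2$, which can be absorbed either by slightly shrinking $w$ or by a mild adjustment of $\zeta$.

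The only real obstacle is administrative: one must verify that $f$ is continuous in $r$ and that the supremum defining $f(w)$ can be attained by an actual line $\ell$, so that $w$ and $\ell$ exist and the two-ends dichotomy above is clean. Both points are standard, since $(r, \ell) \mapsto |A \cap N_r(\ell)|$ is jointly continuous on a compact family of lines meeting $[0,1]^2$, so no iteration or further pigeonholing is required.
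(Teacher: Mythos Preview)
Your argument is correct and is exactly the standard two-ends reduction the paper has in mind; the paper omits the proof entirely, calling it ``standard,'' and your stopping-time on $f(r)=\sup_\ell |A\cap N_r(\ell)|$ is the canonical way to do it. One small remark: your factor-of-$2$ worry is unnecessary. Since $\ell\mapsto |A\cap N_r(\ell)|$ is continuous on the compact family of lines meeting $[0,1]^2$, the supremum $f(w)$ is actually attained, so you may take $|A'|=f(w)\geq w^\zeta|A|$ (not merely $\geq \tfrac12 w^\zeta|A|$). Then for $r<w$ you get $|A'\cap N_r(\ell')|\leq f(r)<r^\zeta|A|=(r/w)^\zeta\, w^\zeta|A|\leq (r/w)^\zeta|A'|$ on the nose, with no constant to absorb.
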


If we anisotropically re-scale the output set $E'\subset  N_w(\ell)$ from Lemma \ref{twoEndsOnLines} by a factor of $w^{-1}$ in the $l^\perp$ direction, then the non-concentration estimate \eqref{twoEndsInsideRect} from Lemma \ref{twoEndsOnLines} will resemble the non-concentration hypothesis \eqref{nonConcentratedOnLinesInProp} from Proposition \ref{wellSeparatedAndUniformProp}. The problem is that if $E$ was a $(\delta,1,\delta^{-\eta})$-Frostman set, then $E'$ will be a $(\delta,1,\delta^{-\eta-\zeta})$-Frostman set, but the image of $E'$ under this anisotropic rescaling might not be a $(\delta,1,C)$-Frostman set for any reasonable value of $C$. The next lemma says that we can fix this problem at the cost of refining $E'$ slightly.

\begin{lem}\label{lem: rescaling}
For all $\eps>0$, there exists $\delta_0>0$ so that the following holds for all $\delta\in (0,\delta_0]$. Let $w\in [\delta^{1-\eps},1]$, let $R\subset \mathbb{R}^2$ be a rectangle of dimensions $1\times w$, and let $E\subset R$ be a $(\delta, 1, C)$-Frostman set for some $C\geq 1$. Let $\phi\colon\RR^2\to\RR^2$ be an affine transform that maps $R$ to $[0,1]^2$. Then there exists a subset $E'\subset E$ with $|E'|\geq (\delta/w)^{\eps} |E|$ so that the union of $\delta/w$ squares that intersect $\phi(E')$ form a $(\delta/w, 1, (w/\delta)^{\eps}C)$--Frostman set and each $\delta/w$-square has about the same size of intersection with $\phi(E')$. 
\end{lem}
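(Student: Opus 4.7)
The plan is to reduce the desired non-concentration property of the image to the Frostman property of $E$ itself by a single pigeonhole step that equalises how much of $E$ sits inside each small cube of the image. Write $\tau = \delta/w$ for the scale we aim to work at in the image, and, after a harmless rotation and translation, assume $R = [0,1]\times[0,w]$ with $A(u,v) = (u,v/w)$, so that $A$ stretches only the short axis and by exactly $1/w$. Then each $\delta$-square $q\subset E$ maps to a $\delta\times\tau$ rectangle, and each $\tau$-cube $Q\subset[0,1]^2$ pulls back to a $\tau\times\delta$ rectangle that sits inside the Euclidean ball of radius $2\tau$ about its centre. The source of difficulty is precisely the asymmetry: many $\delta$-squares of $E$ lying in a single thin $\tau\times\delta$ strip of $R$ may collapse into one $\tau$-cube of the image, creating spurious concentrations that the Frostman property of $E$ does not see at scale $\delta$.

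To kill these concentrations, for each $\tau$-cube $Q\subset[0,1]^2$ I set $c(Q) = \#\{\delta\text{-squares } q\subset E : A(q)\cap Q\neq\emptyset\}$. Each $\delta$-square meets at most four such cubes, so $\sum_Q c(Q) \sim N := |E|/\delta^2$; the trivial Frostman bound gives $c(Q) \lesssim C\tau N$, so the dyadic range of $c(Q)$ has $O(\log(1/\delta))$ levels. Pigeonholing, I select a dyadic value $c_0$ so that $\mathcal{Q}_0 := \{Q : c(Q)\in[c_0,2c_0]\}$ carries a $\gtrsim 1/\log(1/\delta)$ fraction of the total, and I let $E'\subset E$ be the union of $\delta$-squares whose $A$-image meets $\bigcup\mathcal{Q}_0$. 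Then $|E'|\gtrsim|E|/\log(1/\delta)$, and since the hypothesis $w \ge \delta^{1-\eps}$ gives $(w/\delta)^\eps \ge \delta^{-\eps^2} \gg \log(1/\delta)$ for $\delta$ small, this exceeds $(\delta/w)^\eps|E|$.

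Finally I verify that $\tilde E' := \bigcup\mathcal{Q}_0$ is a $(\tau,1,(w/\delta)^\eps C)$-set. Fix $x\in\RR^2$ and $r\ge\tau$. Any $Q\in\mathcal{Q}_0$ meeting $B(x,r)$ is contained in $B(x,3r)$, so the $\delta$-squares counted by $c(Q)$ have centres in $A^{-1}(B(x,O(r)))$, which lies in a Euclidean ball of radius $O(r)$ inside $R$ because $w\le 1$. The $(\delta,1,C)$-property of $E$ is applicable since $r\ge\tau\ge\delta$, and it bounds the number of such $\delta$-squares by $O(CrN)$; hence $\sum_{Q\cap B(x,r)\ne\emptyset}c(Q) \lesssim CrN$ and, since each $c(Q)\ge c_0$, the number of those $Q$'s is $O(CrN/c_0)$. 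Combined with $|\mathcal{Q}_0|\gtrsim N/(c_0\log(1/\delta))$, the ratio $|\tilde E'\cap B(x,r)|/|\tilde E'|$ is $O(Cr\log(1/\delta)) \le (w/\delta)^\eps Cr$. The whole argument is essentially bookkeeping; the only delicate point is the last inequality, namely that the single $\log(1/\delta)$ loss incurred in the pigeonhole is absorbed by the polynomial slack $(w/\delta)^\eps$, and this is precisely what the assumption $w\ge\delta^{1-\eps}$ is engineered to guarantee once $\delta_0$ is chosen small enough.
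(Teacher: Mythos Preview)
Your proof is correct and takes a cleaner route than the paper's. The paper performs a \emph{multi-scale} uniformisation: it fixes a large parameter $T$, introduces $J\sim |\log\rho|/T$ intermediate scales $r_j = 2^{-Tj}$, and pigeonholes so that the number of smallest-scale rectangles inside each $r_j$-rectangle is constant at every level simultaneously; this costs a factor $|\log\delta|^{-J}$ in mass. You observe that a \emph{single} pigeonhole at scale $\tau=\delta/w$ already suffices, losing only one factor of $\log(1/\delta)$. The reason, present in both arguments but exploited more transparently in yours, is that $A^{-1}$ is a contraction (since $w\le 1$), so a ball of radius $r$ in the image pulls back inside a ball of radius $O(r)$ in $R$, and the $(\delta,1,C)$-hypothesis on $E$ applies directly there because $r\ge\tau\ge\delta$. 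Your approach is shorter and incurs a smaller loss; the paper's multi-scale machinery is not actually needed for this particular lemma.

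One small wrinkle worth noting: the lemma asks for the Frostman property of the union of $\tau$-cubes meeting $A(E')$, whereas you verify it only for $\tilde E'=\bigcup\mathcal{Q}_0$. These sets need not coincide, since a $\delta$-square $q\subset E'$ (selected because $A(q)$ meets some $Q\in\mathcal{Q}_0$) may also have image overlapping a $\tau$-cube outside $\mathcal{Q}_0$. However, every such extra cube is an $O(1)$-neighbour of some cube of $\mathcal{Q}_0$ (because $A(q)$ has diameter $\lesssim\tau$), so the true set is sandwiched between $\bigcup\mathcal{Q}_0$ and its $O(\tau)$-thickening; the Frostman constant changes only by an absolute factor, which is absorbed in the $(w/\delta)^\eps$ slack. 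With that one-line remark your argument is complete.
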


\begin{proof}
Let $\mathcal{R}_0$ be the tiling of $R$ by rectangles of dimensions $\delta\times \delta/w$ pointing in the same direction as $R$ (we will suppose for simplicity that $1/\delta$ and $w/\delta$ are integers; if not, then we can shrink $R$ slightly on two sides and prune the set $E$; this step is harmless). By pigeonholing, we can select sets $E'\subset E$ and $\mathcal{R}\subset\mathcal{R}_0$, and a number $m>0$, so that $|E'|\gtrsim|\log\delta|^{-1}|E|$, and $|\tilde R\cap E'| \in [ m, 2m)$ for each $\tilde R\in\mathcal{R}$ with $\tilde R \cap E'\neq\emptyset$. 

Let $F\subset[0,1]^2$ be the union of $\delta/w$ squares that intersect $\phi(E')$. It remains to verify that $F$ is a $(\delta/w, 1, (w/\delta)^{\eps}C)$-Frostman set. Let $r\geq \delta/w$, let $B = B(x,r)$ be a ball, and let $3B$ be its 3-fold dilate. We have
\begin{equation}\label{ballBoundF}
\begin{split}
\frac{|B\cap F|}{|F|} &= \frac{|\phi^{-1}(B)\cap \phi^{-1}(F)|}{|\phi^{-1}(F)|} \\
& \leq \frac{\#\{\tilde R\in\mathcal{R}\colon \tilde R \cap \phi^{-1}(B)\neq\emptyset \}}{\#\mathcal{R}}\\
& \leq \frac{|\phi^{-1}(3B) \cap E'|}{|E'|}\\
& \lesssim \log(1/\delta) \frac{|\phi^{-1}(3B) \cap E|}{|E|}\\
&\leq 3\log(1/\delta) rC.
\end{split}
\end{equation}
For the final inequality, we used the fact that $\phi^{-1}(3B)$ is contained in a ball of radius $3r$, and $E$ is a $(\delta, 1, C)$-Frostman set. Since $(w/\delta)\geq\delta^{-\eps}$, we have $3\log(1/\delta)\leq (w/\delta)^\eps$ provided $\delta_0$ is selected sufficiently small.
\end{proof}

We will sometimes need to move from scale $\delta$ to a coarser scale. The next lemma says that after a refinement, $(\delta,\alpha, C)$-Frostman sets are well-behaved under coarsening.

\begin{lem}\label{coarseningLem}
Let $0<\delta\leq\rho\leq 1$ and let $E\subset \mathbb{R}^2$ be a $(\delta, \alpha, C)$-Frostman set for some $C\geq 1$. Then there is a set $E'\subset E$ with $|E'|\geq|\log\delta|^{-1}|E|$, so that the union of $\rho$ squares intersecting $E'$ is a $(\rho, \alpha, C')$-Frostman set, with $C'\sim |\log\delta|C$. 
\end{lem}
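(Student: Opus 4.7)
The obstruction to coarsening naively is that the passage from $E$ to the union $E^*$ of $\rho$-cubes intersecting $E$ can inflate the total mass by a factor as large as $(\rho/\delta)^2$, which would destroy the non-concentration bound. The plan is to use a single dyadic pigeonholing so that, after restricting $E$ to $E'$, every $\rho$-cube meeting $E'$ contains essentially the same amount of $E$. This equalization will convert the $\delta$-scale bound directly into a $\rho$-scale bound, losing only the pigeonholing factor $|\log\delta|$.

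More concretely, I would proceed as follows. Let $\mathcal{Q}$ be the set of axis-aligned $\rho$-cubes intersecting $E$. For each $Q\in\mathcal{Q}$ the quantity $|E\cap Q|$ lies in $[\delta^2,\rho^2]$, so there are $O(|\log\delta|)$ dyadic values it can take. Pigeonhole to find a dyadic number $m$ and a subfamily $\mathcal{Q}'\subset\mathcal{Q}$ with $|E\cap Q|\in[m,2m)$ for every $Q\in\mathcal{Q}'$, such that
\[
\sum_{Q\in\mathcal{Q}'}|E\cap Q|\ \gtrsim\ |\log\delta|^{-1}|E|.
\]
Set $E'=E\cap\bigcup_{Q\in\mathcal{Q}'}Q$ and $E^{*}=\bigcup_{Q\in\mathcal{Q}'}Q$. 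Then $|E'|\gtrsim|\log\delta|^{-1}|E|$ by construction, and writing $N=\#\mathcal{Q}'$ we have $|E^{*}|=N\rho^{2}$ and $|E'|\sim Nm$, whence
\[
\frac{\rho^{2}}{m}\ \sim\ \frac{|E^{*}|}{|E'|}.
\]

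It remains to verify the non-concentration inequality at scale $\rho$. Fix $x\in\RR^{2}$ and $r\geq\rho$. Any $Q\in\mathcal{Q}'$ with $Q\cap B(x,r)\neq\emptyset$ is contained in $B(x,r+\sqrt{2}\rho)\subset B(x,2r)$, and each such $Q$ contributes at least $m$ to $|E\cap B(x,2r)|$. Using the hypothesis that $E$ is a $(\delta,\alpha,C)$-set (and $2r\geq\delta$),
\[
\#\{Q\in\mathcal{Q}'\colon Q\cap B(x,r)\neq\emptyset\}\ \leq\ \frac{|E\cap B(x,2r)|}{m}\ \leq\ \frac{C(2r)^{\alpha}|E|}{m}.
\]
Multiplying by $\rho^{2}$ and using $\rho^{2}/m\sim|E^{*}|/|E'|$ together with $|E|\leq|\log\delta|\,|E'|$ (up to an absolute constant),
\[
|E^{*}\cap B(x,r)|\ \leq\ 2^{\alpha}C r^{\alpha}\,|E|\cdot\frac{|E^{*}|}{|E'|}\ \lesssim\ 2^{\alpha}C|\log\delta|\,r^{\alpha}|E^{*}|,
\]
which is the required $(\rho,\alpha,C')$-set bound with $C'\sim|\log\delta|C$. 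Since the only step that is not a direct bookkeeping computation is the single dyadic pigeonholing—trivial here—I do not anticipate any serious obstacle; the lemma is essentially the observation that mass-equalization on $\rho$-cubes preserves non-concentration up to the logarithmic loss inherent to pigeonholing.
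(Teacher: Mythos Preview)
Your proof is correct and is essentially identical to the paper's: both perform a single dyadic pigeonhole on the mass $|E\cap Q|$ over $\rho$-cubes $Q$, set $E'$ to be the part of $E$ lying in the surviving cubes, and then bound $|E^*\cap B(x,r)|$ by counting cubes contained in the dilated ball $B(x,2r)$ via the $(\delta,\alpha,C)$ hypothesis and the uniform lower bound $m$ on mass per cube. The only cosmetic difference is that you phrase the count as ``each cube contributes $\ge m$ to $|E\cap B(x,2r)|$'' while the paper phrases it as ``at most $4Cr^\alpha|E|/m$ cubes are contained in $B'$''; these are the same step.
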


\begin{proof}
Tile $\mathbb{R}^2$ by $\rho$-squares. By dyadic pigeonholing, we can select $E'\subset E$ and $m>0$ so that for each $\rho$-square $Q$ in the tiling, either $|E'\cap Q| = \emptyset$ or $|E'\cap Q|=m$. Let $E_\rho$ be the union of $\rho$-squares that intersect $E'$. Then $|E_\rho| = (|E'|/m)\rho^2\leq |\log\delta|^{-1}(|E'|/m)\rho^2$. Let $B$ be a ball of radius $r\geq\rho$, and let $B'$ be its $2$-fold dilate.  Then $|B'\cap E'|\leq |B'\cap E|\leq C (2r)^\alpha|E|$. In particular, there are at most $C4 r^\alpha|E|/m$ $\rho$-squares that are contained in $B'\cap E$, and hence at most $4C r^\alpha|E|/m$ such cubes that intersect $C\cap E$. We conclude that
\[
|E_\rho \cap B|\lesssim \rho^2 C r^\alpha|E|/m \lesssim  |\log\delta|C  r^\alpha|E_\rho|.\qedhere
\]
\end{proof}


\subsubsection{Localization to rectangles of comparable width}
We now consider the following situation. Suppose we have sets $F,G_1,G_2,\mathcal{H}$ that satisfy the hypotheses of Proposition \ref{wellSeparatedAndUniformProp}, and suppose that $G_1$ and $G_2$ are contained in strips of width $w_1$ and $w_2$, respectively. The next lemma says that either Conclusion \ref{assertion1ItemB} of Proposition \ref{wellSeparatedAndUniformProp} is true, or $w_1$ and $w_2$ are almost identical; $G_1$ and $G_2$ are contained in a common strip; and $F$ is contained in an orthogonal strip of comparable thickness.

\begin{lem}\label{A1A3Localized}
For all $\eps>0$, there exists $\eta,\delta_0>0$ so that the following holds for all $\delta\in (0,\delta_0].$ Let $F,G_1,G_2 \subset[0,1]^2$ be $(\delta,1,\delta^{-\eta})$-Frostman sets that satisfy the standard separation conditions, and let $\mathcal{H}\subset F\times G_1\times G_2$ be uniformly $\delta^{\eta}$-dense. Suppose $G_1\subset N_w(\ell)$ for some $w>0$ and some line $\ell$. Let $\ell^\perp$ be the line through the origin that is perpendicular to $\ell$.

Then at least one of the following two things must happen:
\begin{enumerate}[label=(\Alph*)]
	\item\label{A1A3LocalizedItemA} $\pi_{G_2}(\mathcal{H})\subset N_{\delta^{-\eps}w}(\ell)$ and $\pi_{F}(\mathcal{H})\subset N_{\delta^{-\eps}w}(\ell^\perp)$.

	\item\label{A1A3LocalizedItemB} There exists $\rho\geq\delta$ and an interval $I$ of length at least $\delta^{-\eta}\rho$ that satisfies \eqref{bigRCoveringNumberPrime}:
\[
\mathcal{E}_\rho\big(I \cap \{a\cdot(b_1-b_2)\colon (a,b_1,b_2)\in \mathcal{H}\} \big) \geq \big(|I|/\rho\big)^{1-\eps}. \tag{\ref{bigRCoveringNumberPrime}}
\]
\end{enumerate}
\end{lem}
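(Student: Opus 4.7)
The plan is to establish the contrapositive: assuming (B) fails, I would derive (A). First, I would rotate coordinates so that $\ell = \{y=c_0\}$ is horizontal and hence $\ell^\perp = \{x=0\}$ is the $y$-axis; writing $a_i = (x_i,y_i)$, the dot product becomes $(a_1-a_2)\cdot a_3 = (x_1-x_2)x_3 + (y_1-y_2)y_3$ with $|y_1-c_0|\leq w$. Assuming (A) fails, I would use Lemma~\ref{hypergraphRefinementLem} together with dyadic pigeonholing on $G$ to extract a uniformly $\delta^{O(\eta)}$-dense subgraph $G'\subset G$ falling into one of two canonical sub-cases: (I) every $(a_1,a_2,a_3)\in G'$ satisfies $|y_2-c_0|\sim w'$ for a fixed $w'\geq\delta^{-\eps}w$, or (II) every $(a_1,a_2,a_3)\in G'$ satisfies $|x_3|\sim u$ for a fixed $u\geq\delta^{-\eps}w$.

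The main step will handle Case~(I) via an anisotropic rescaling designed to preserve dot products. Let $T(x,y) = (x, y/w')$ act on $A_1, A_2$ and $T^*(x,y) = (x, w' y)$ act on $A_3$; then $(T(a_1)-T(a_2))\cdot T^*(a_3) = (a_1-a_2)\cdot a_3$. Under $T$, the set $T(A_1)$ lies in a horizontal strip of height $w/w'\leq \delta^\eps$ (essentially a line), while $T(A_2)$ sits at vertical distance $\sim 1$ and spans a strip of height $\sim 1$. Invoking Lemma~\ref{lem: rescaling} to pass to a mild refinement, $T(A_2)$ becomes a $(\delta/w',1,\delta^{-O(\eta)})$-set and $T^*(A_3)$ a $(\delta,1,\delta^{-O(\eta)})$-set. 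In these rescaled coordinates, the direction set
\[
\Lambda = \Big\{\frac{T(a_1)-T(a_2)}{|T(a_1)-T(a_2)|} \colon (a_1,a_2,a_3)\in G'\Big\}
\]
spans an arc of $S^1$ of angular width $\sim 1$ and inherits a Frostman condition of type $(\delta/w', 1)$ from the non-concentration of $A_2$ (since for fixed $a_1$ the map $a_2\mapsto$ direction from $T(a_1)$ to $T(a_2)$ is approximately bi-Lipschitz on account of the lower bound $|a_1-a_2|\geq 1/2$). Applying Kaufman's projection theorem (Theorem~\ref{thm: Kaufman}) to $\Lambda$ and $T^*(A_3)$ then yields a substantial subset of $\Lambda$ for which the projection of $T^*(A_3)$ has Lebesgue measure $\gtrsim \delta^{\eps+O(\eta)}$, i.e.\ $\delta$-covering number $\gtrsim \delta^{\eps-1+O(\eta)}$. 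Unwinding the rescaling would give $\mathcal{E}_\delta(\{(a_1-a_2)\cdot a_3 : (a_1,a_2,a_3)\in G'\}) \geq \delta^{\eps-1+O(\eta)}$, establishing (B) with $\rho = \delta$ and $I$ any bounded interval of length $O(1)$ containing all dot products (since $|I|\geq \delta^{-\eta}\rho$ is automatic).

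Case~(II) will be treated by an analogous anisotropic rescaling targeted at the $x$-coordinate of $A_3$. If Case~(I) also holds we are already done; otherwise $A_2\subset N_{\delta^{-\eps}w}(\ell)$, so $|y_1-y_2|\leq 2\delta^{-\eps}w$, and at scale $\rho = \delta^{-\eps}w$ the dot product reduces to the multiplicative expression $x_3(x_1-x_2)$ up to an additive error $O(\rho)$. Working at scale $\rho$, I would rescale $A_3$'s $x$-extent by $1/u$, view $a_3$ as providing ``directions'' in an $S^1$-embedded sense, and again apply Theorem~\ref{thm: Kaufman} (or alternatively a direct multiplicative sum-product argument) together with the $(\delta,1)$-set structure of the $x$-projections of $A_1, A_2$ to obtain~\eqref{bigRCoveringNumber}. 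The hardest part will be verifying the Frostman hypothesis of Theorem~\ref{thm: Kaufman} for the post-rescaling direction sets; this requires carefully combining the uniform density of $G'$ with the $(\delta,1)$-set structure of $A_1, A_2$, and tracking the polylogarithmic losses accrued in successive pigeonhole refinements and in Lemma~\ref{lem: rescaling}. The parameters $\eta$ and $\delta_0$ are chosen at the end to absorb all these losses relative to the target $\eps$.
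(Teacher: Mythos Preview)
Your proposal has the right architecture and the correct key tool (Kaufman via anisotropic rescaling), but Case~(I) contains a genuine gap that would cause the argument to fail. The problem is the claim that $T^*(A_3)$ is a $(\delta,1,\delta^{-O(\eta)})$-set. The map $T^*(x,y) = (x, w'y)$ \emph{compresses} the vertical direction by the factor $w' \leq 1$; a ball $B(z,r)$ pulls back under $(T^*)^{-1}$ to a $r\times (r/w')$ rectangle, which needs $\sim 1/w'$ balls of radius $r$ to cover. Hence one only obtains that $T^*(A_3)$ is a $(\delta,1,\delta^{-\eta}/w')$-set. This $1/w'$ loss enters Theorem~\ref{thm: Kaufman} as $C^{-2} \sim (w')^2\delta^{2\eta}$, so the projection bound degrades to $|\theta\cdot(T^*(A_3))_\theta| \gtrsim (w')^2\delta^{O(\eta)+\eps}$, which is far too weak when, say, $w'\sim\delta^{1/2}$. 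Lemma~\ref{lem: rescaling} cannot repair this: it handles the opposite situation, expanding a set contained in a thin rectangle out to the unit square, not compressing the unit square into a thin strip.

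The paper's proof avoids this by reversing the order of your two cases. It first (Step~1) handles the analogue of your Case~(II) by a direct counting argument, with no Kaufman needed. Setting $t=\inf\{r: A_1\cup A_2\subset N_r(\ell)\}$, if some $a_3$ has $|a_3\cdot e|\geq \tfrac14\delta^{-\eps_1}t$, then for fixed $a_2$ the map $a_1\mapsto (a_1-a_2)\cdot a_3$ is approximately a dilation by $|a_3\cdot e|$ along $e$, and~\eqref{bigRCoveringNumber} follows from a maximal $s$-separated subset of $\pi_1(n_G[(a_2,a_3)])$ together with the $(\delta,1)$-set property of $A_1$. This traps $A_3\subset N_{\delta^{-\eps_1}t}(\ell^\perp)$. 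Only then (Step~2) is the Kaufman argument run: with both $A_1\cup A_2$ and $A_3$ now in orthogonal strips of the \emph{same} width $\sim t$, the affine map $A$ and its transpose $A^T$ simultaneously send these strips to unit squares, and Lemma~\ref{lem: rescaling} applies legitimately to each. The idea missing from your approach is precisely this coupling: you must first pin down $A_3$ (by the elementary argument) before the rescaling of $A_3$ in the Kaufman step can be controlled.
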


\begin{proof}
First, we may suppose that $w\leq\delta^{\eps}$, or else Conclusion \ref{A1A3LocalizedItemA} holds and we are done.

Let $0<\eta<\!\!<\eps_1<\!\!<\eps$ be small quantities that will be chosen below. Without loss of generality, we can suppose that the maps $\pi_F\colon \mathcal{H}\to F,\ \pi_{G_1}\colon\mathcal{H}\to G_1$, and $\pi_{G_2}\colon\mathcal{H}\to G_2$ are onto (if not, replace $F,G_1,G_2$ by the image of $\mathcal{H}$ under the corresponding projection map; this reduces the size of each set by at most a multiplicative factor of $\delta^{\eta}$, and hence the new sets are $(\delta,1,\delta^{-2\eta})$-Frostman sets). Let $t=\inf\{r \colon G_1\cup G_2 \subset N_r(\ell)\}$, and let $U = N_t(\ell)$. Clearly $t\geq\delta$, since $G_1\cup G_2$ contains at least one $\delta$-square. Let $U^\perp=N_{\delta^{-\eps_1}t}(\ell^\perp).$ Let $e$ be a unit vector parallel to $\ell$.

\medskip
\noindent {\bf Step 1. Trapping $F$ and $G_1\cup G_2$ inside thick orthogonal strips}\\
First, we will show that either $F\subset U^\perp$, or Conclusion \ref{A1A3LocalizedItemB} holds. If $t\geq\delta^{-\eps_1}$ then $F\subset[0,1]^2\subset U^\perp$ and we are done. Thus we will suppose $t<\delta^{-\eps_1}$. 
Suppose $F\not\subset U^\perp$, and fix an element $a \in F\backslash U^\perp.$ Since $F$ satisfies \eqref{wellSeparationCondition}, we have $\operatorname{dist}(a, 0)\geq\frac{1}{2}$, and hence $|a\cdot e|\geq \frac{1}{4}\delta^{-\eps_1}t$. In particular, if $b_1,b_1'\in G_1$ are $\geq 2t$ separated, then $|e\cdot(b_1-b_1')|\geq\frac{1}{4}|b_1-b_1'|$ and hence
\begin{equation}\label{separationQpQpp}
|a\cdot (b_1-b_1')| \geq |a\cdot e|\,|e\cdot(b_1-b_1')| - |a\cdot e^\perp|\,|e^\perp \cdot(b_1-b_1')| \geq \frac{1}{4}|a\cdot e|\,|b_1-b_1'| - t.
\end{equation}
Let $s = 8t/|a\cdot e|$. Since $a\in [0,1]^2$, we have $|a\cdot e|\leq 2$ and hence $s\geq 4t$, and in particular $s>\delta$. We also have $s\leq 8t/(\frac{1}{4}\delta^{-\eps_1}t)\leq 32\delta^{\eps_1}$. Observe that if $|b_1-b_1'|\geq s$, then by \eqref{separationQpQpp} we have
\begin{equation}\label{separationQpQppEqn2}
|a\cdot (b_1-b_1')|\geq \frac{1}{8}|a\cdot e|\,|b_1-b_1'|.
\end{equation}
Fix an element $b_2\in G_2$ such that $n_{\mathcal{H}}[(a, b_2)]$ is nonempty, and let $G_1'\subset G_1$ be a maximal $s$-separated subset of $\pi_{G_1}( n_\mathcal{H}[(a,b_2)])$. Since $\mathcal{H}$ is uniformly $\delta^{\eta}$-dense, $\pi_{G_1}( n_{\mathcal{H}}[(a,b_2)])$ has measure at least $\delta^{1-\eta}$, and since $G_1$ is a $(\delta,1,\delta^{-\eta})$-Frostman set, we conclude that $\#G_1'\geq\delta^{2\eta}s^{-1}$. 
Thus by \eqref{separationQpQppEqn2}, if we define $\rho = 2t = |a\cdot e|s/4$, then 
\[
\mathcal{E}_\rho\big(\{ a\cdot (b_1-b_2) \colon b_1\in G_1 \}\big) \geq  \mathcal{E}_\rho\big(\{ a\cdot b_1 \colon b_1\in G_1' \}\big) 
\gtrsim\# G_1' \gtrsim\delta^{2\eta}s^{-1}.
\]
Furthermore, $\{ a\cdot (b_1-b_2)\colon b_1\in G_1 \}$ is contained in the interval $J = a\cdot \big([-1,2]^2 \cap (U - b_2)\big)$, which has length $|J|\sim |a\cdot e| \sim \rho/s  \gtrsim\delta^{-\eps_1} \rho\geq\delta^{-\eta}\rho$. We conclude that
\begin{align*}
\mathcal{E}_\rho\big(J \cap \{ a\cdot (b_1-b_2)\colon (a,b_1,b_2)\in \mathcal{H}\}\big) & 
\geq \mathcal{E}_\rho\big(\{ a\cdot(b_1-b_2)\colon b_1 \in G_1' \}\big)\\
&\gtrsim\delta^{2\eta}s^{-1} \gtrsim \delta^{2\eta}\frac{|J|}{\rho}\gtrsim \delta^{2\eta - \eps_1\eps}\Big(\frac{|J|}{\rho}\Big)^{1-\eps}.
\end{align*}
If $\eta\leq \eps_1\eps/2$ and $\delta_0$ is selected sufficiently small, then Conclusion \ref{A1A3LocalizedItemB} holds. 

\medskip
\noindent {\bf Step 2. $G_1-G_2$ spans many directions inside $U$}\\
At this point we may suppose that $F\subset U^\perp$. If $t\leq \delta^{-\eps+\eps_1}w$, then Conclusion \ref{A1A3LocalizedItemA} holds and we are done. Henceforth we will assume that $t> \delta^{-\eps+\eps_1}w$. We will prove that Conclusion \ref{A1A3LocalizedItemB} holds. Fix a point $b_2\in G_2\,\backslash\, N_{t/2}(\ell)$---such a point must exist since otherwise we would have $t\leq 2w$. In particular, we have $t/2\leq \operatorname{dist}(b_2, N_w(\ell))\leq t$. If $b_1,b_1'\in G_1$ with $|b_1-b_1'|\geq 10 w/t$, then 
\[
|\pi^{b_2}(b_1)-\pi^{b_2}(b_1')| \geq \frac{1}{10}t|b_1-b_1'|,
\]
where $\pi^{b_2}(x) = \frac{x-b_2}{|x-b_2|}$ denotes the radial projection from the vantage point $b_2$. Indeed, this follows from the standard separation conditions, see Figure~\ref{angleseparation}. 

\begin{figure}[h]
	\centering
	\begin{overpic}[scale=0.2]{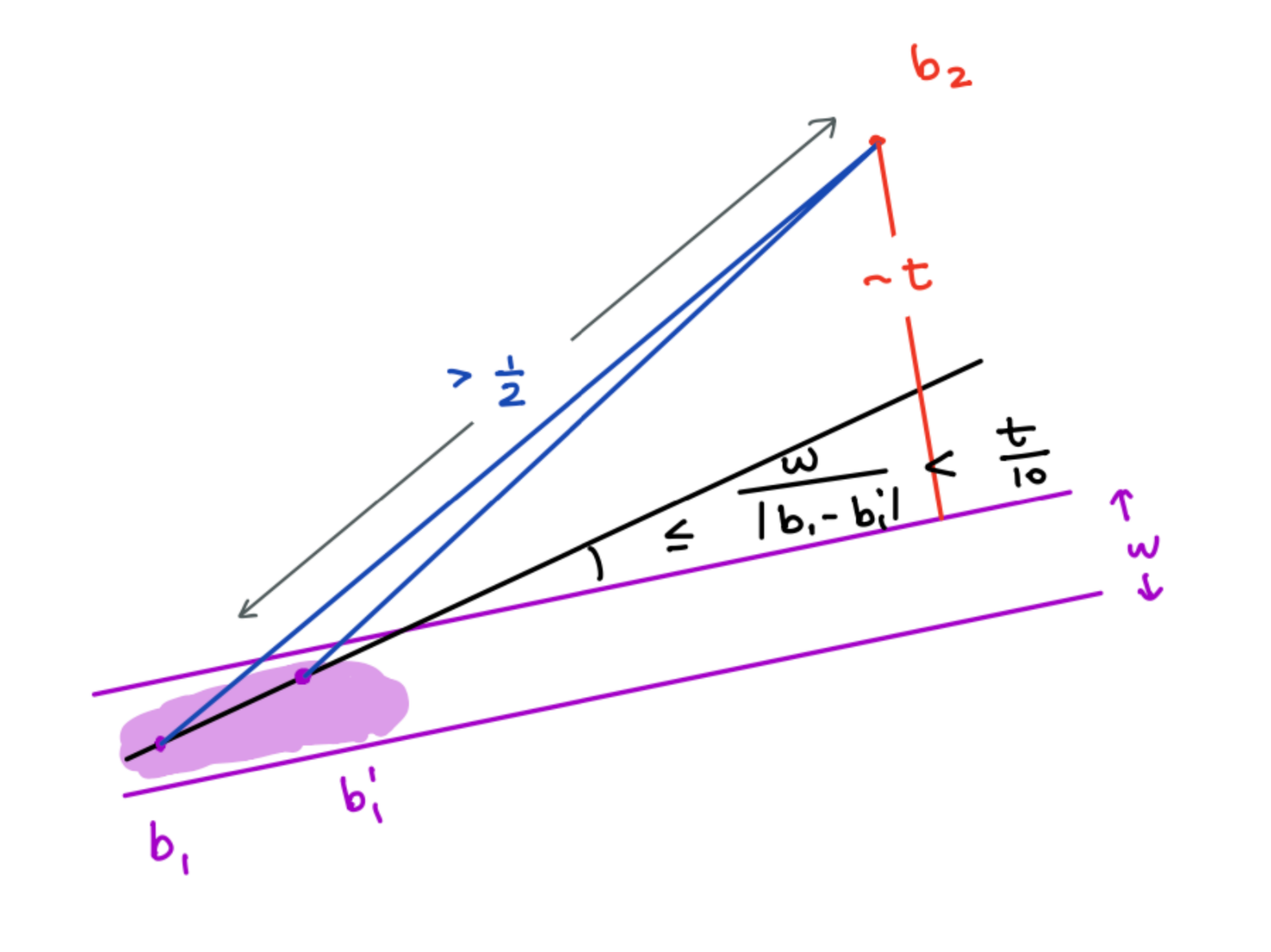}
	\end{overpic}
	\caption{The purple shaded region is $G_1$, which is contained in $N_{w}(\ell)$. The points $b_1, b_1'\in G_1$ have distance $\geq 1/2$ to $b_2\in G_2$, which has distance $\sim t$ to $N_{w}(\ell)$.}
	\label{angleseparation}
\end{figure}

Recall that $G_1$ is contained in a ball $B$ of radius $1/10$, which has distance at least $1/2$ from $b_2$. Thus $G_1\subset B\cap N_w(\ell)$, and $\pi^{b_2}(B\cap N_w(\ell))$ is contained in an arc $J\subset S^1$ of length $|J|\sim t$.

Let $u = 10 w/t$, and let $G_1'\subset G_1$ be a maximal $u$-separated subset of $\pi_{G_1}(n_{\mathcal{H}}[(b_2)])$. Since $\mathcal{H}$ is uniformly $\delta^{\eta}$-dense, $\pi_{G_1}(n_{\mathcal{H}}[(b_2)])$ has measure at least $\delta^{\eta}|G_1|$. Since $G_1$ is a $(\delta,1,\delta^{-\eta})$-Frostman set, we conclude that $\#G_1'\geq \delta^{2\eta}u^{-1}$. Thus we have $\pi^{b_2}(G_1')\subset J$, and $\pi^{b_2}(G_1')$ is dense inside $J$ at scale $w$, in the sense that
\begin{equation}\label{lotsOfDirectionsInsideJ}
\mathcal{E}_w\big (\pi^{b_2}(G_1')\big)\gtrsim \#G_1'\gtrsim \delta^{2\eta}u^{-1}\gtrsim \delta^{2\eta} t w^{-1}.
\end{equation}

Let $X = \pi_{F\times G_1} \big(n_{\mathcal{H}}[(b_2)]\big) \cap (F\times G_1')$; this is the set of pairs $(a,b_1)\in F\times G_1'$ so that $(a,b_1,b_2)\in \mathcal{H}$. Since $\mathcal{H}$ is uniformly $\delta^{\eta}$-dense, we have $\# X \geq\delta^{\eta}(\#F)(\#G_1')$ (recall our notation convention from Section \ref{notationConventionSection}). Apply Lemma \ref{hypergraphRefinementLem} to $X$, with $\eps=1/2$, let $X'\subset X$ be the output, and let $F'=\pi_F(X')$; thus for each $a\in F'$, there are $\gtrsim \delta^{\eta}(\#G_1')$ elements $b_1\in G_1'$ with $(a,b_1)\in X$. 

Let $t' = \delta^{-\eps_1}t$. Let $\tilde R$ be a $2\times t'$ rectangle pointing in direction $e$ that contains $G_1\cup G_2$, and let $\tilde R^\perp$ be a $2\times t'$ rectangle pointing in direction $e^\perp$ that contains $F$. (The existence of such a rectangle $\tilde R$ is easy; just select a suitable subset of $N_{t'}(\ell)$. The existence of $\tilde R^\perp$ is guaranteed by the arguments in Step 1, which says $F\subset U^{\perp}=N_{t'}(\ell^{\perp})$). Let $\phi \colon \RR^2\to\RR^2$ be an affine-linear transformation that sends $\tilde R$ to $[0,1]^2$, and let $\phi^T$ be the transpose of $\phi$; then $\phi^T(\tilde R^\perp)$ is a unit square. 

Apply Lemma \ref{lem: rescaling} to $F\subset \tilde R^\perp$ with $\eta$ as above, and let $F'\subset F$ be the output of that lemma.  Let $\tilde F$ be the union of $\delta/t'$ squares that intersect $\phi^T(F')$; then $\tilde F$ is a $(\delta/t', 1, \delta^{-2\eta})$-Frostman set. Applying Lemma \ref{coarseningLem} with $\delta/t'$ in place of $\delta$ and $w/t'$ in place of $\rho$ (this is valid since $w/t'\leq\delta^\eps<1$), we can find a subset  $Y\subset \tilde F$, so that if $\tilde{F}'$ denotes the set of $w/t'$-squares that intersect $Y$, then $\tilde{F}'$ is a $(w/t', 1, \delta^{-3\eta})$-Frostman set.

Since $|\pi^{b_2}(b_1) - \pi^{b_2}(b_1')| \sim t'|\pi^{\phi(b_2)}(\phi(b_1)) - \pi^{\phi(b_2)}(\phi(b_1'))|$,  by \eqref{lotsOfDirectionsInsideJ} we have
\[
\mathcal{E}_{w/t'}\Big( \pi^{\phi(b_2)}(\phi(G_1')) \Big) \gtrsim \mathcal{E}_w\big (\pi^{b_2}(G_1')\big) \gtrsim \delta^{2\eta} t w^{-1}=\delta^{2\eta+\eps_1} t'w^{-1}.
\]
Thus if $\delta_0$ is chosen sufficiently small, then the radial projection $\Lambda = \pi^{\phi(b_2)}(\phi(G_1'))$ is a $(w/t', 1, \delta^{-3\eta-\eps_1})$-Frostman set. Each $\tilde a \in\tilde{F}'$ can be written as $\tilde a= (\phi^{T})^{-1}(a)$ for some $a\in F'$.  Hence there are $\gtrsim\delta^{\eta}\# G_1'$ unit vectors $\theta\in\Lambda$, so that $\theta = \pi^{\phi(b_2)}(\phi(b_1))$ for some $b_1\in G_1'$ with $(a,b_1)\in X'$ (recall we have fixed $b_2$ at the beginning). In particular, $X'$ has density $d\geq \delta^\eta$.

This is precisely the setting for Theorem \ref{thm: Kaufman}. Applying Theorem \ref{thm: Kaufman} with $\eps/2$ in place of $\eps$, $d=\delta^\eta$ and $C = \delta^{3\eta+\eps_1}$, we conclude that there exists a unit vector $\theta\in\Lambda$ of the form $\theta = \pi^{\phi(b_2)}(\phi(b_1))$, so that if $(\tilde{F}')_\theta$ denotes the set of $\tilde a$ (with preimage $a$) with $(a,b_1)\in X'$, then
\[
\mathcal{E}_{w/t'}(\theta\cdot (\tilde{F}_3')_\theta)\gtrsim \delta^{9\eta+2\eps_1} (t'/w)^{1-\eps/2},
\]
and hence
\[
\mathcal{E}_{w/t'}\big(\{ \phi^T(a) \cdot (\phi(b_1)-\phi(b_2))\colon a \in \pi_F( n_{\mathcal{H}}[(b_1,b_2)])  \}\big) \gtrsim \delta^{9\eta+2\eps_1} (t'/w)^{1-\eps/2}.
\]
But 
\[
\mathcal{E}_{w/t'}\big(\{  \phi^T(a)\cdot (\phi(b_1)-\phi(b_2)) \colon a\in \pi_F( n_{\mathcal{H}}[(b_1,b_2)]) \}\big) = \mathcal{E}_{w}\big(\{ a\cdot (b_1-b_2) \colon a\in \pi_F( n_{\mathcal{H}}[(b_1,b_2)]) \}\big).
\]
The latter set is contained in $\tilde R^\perp\cdot (\tilde R - \tilde R)$, which is an interval $I$ of length about $t'$. Setting $\rho = w$, we conclude that $I$ has length $t'=\delta^{-\eps_1}t\geq\delta^{-\eps}\rho$, and 
\begin{align*}
\mathcal{E}_{\rho}\big(I \cap \{ a\cdot (b_1-b_2)\colon (a,b_1,b_2)\in \mathcal{H} \}\big) & \gtrsim \delta^{9\eta+2\eps_1} (t'/\rho)^{1-\eps/2}\\
&\gtrapproxdelta \delta^{9\eta+2\eps_1} (t'/\rho)^{\eps-\eps/2}\Big(\frac{|I|}{\rho}\Big)^{1-\eps}\\
&\geq \delta^{9\eta+2\eps_1-\eps^2/2}\Big(\frac{|I|}{\rho}\Big)^{1-\eps}.
\end{align*}
If $\eps_1$ is selected sufficiently small (depending on $\eps$) then $9\eta+2\eps_1-\eps^2/2<0$. Selecting $\delta_0$ sufficiently small, we have that Conclusion \ref{A1A3LocalizedItemB} holds. 
\end{proof}


\subsubsection{The second reduction}
Armed with the above lemmas, we are now ready to prove Proposition \ref{wellSeparatedAndUniformProp}. In brief, we apply Lemma \ref{twoEndsOnLines} to localize $G_1$ and $G_2$ to strips $S_1$ and $S_2$. We then use Lemma \ref{A1A3Localized} to show that these strips must coincide, and furthermore, that $F$ is localized to an orthogonal strip of approximately the same width, up to a factor of $\delta^{-\eps}$. We then re-scale these strips using Lemma \ref{lem: rescaling} and apply Proposition \ref{weakNonConcentrationOnLinesImpliesConclusionBProp} to the corresponding re-scaled sets. We now turn to the details.

\begin{proof}[Proof of Proposition \ref{wellSeparatedAndUniformProp}]
Let $\eps>0$. Let $\lambda = \lambda(\eps/2)$ be the output from Proposition \ref{weakNonConcentrationOnLinesImpliesConclusionBProp} with $\eps/2$ in place of $\eps$. Define
\begin{equation}\label{defnEps1}
\eps_1=\eps\lambda/30.
\end{equation}
 Let $\eta_1=\eta_1(\eps_1)$ be the output from Lemma \ref{A1A3Localized} with $\eps_1$ in place of $\eps$. Define
\begin{equation}\label{defnZeta}
\zeta = \min\big( \eta_1/2,\ \eps \lambda/3\big).
\end{equation}
Let $\alpha = \alpha(\eps/2, \zeta)$ be the output from Proposition \ref{weakNonConcentrationOnLinesImpliesConclusionBProp} with $\zeta$ as defined in \eqref{defnZeta} and $\eps/2$ in place of $\eps$. Define
\begin{equation}\label{defnEta}
\eta = \min\big(\eps_1\zeta,\ \eps \alpha/40\big),
\end{equation}
We will be slightly less precise about our choice of $\delta_0$, since this plays a minor role. 

Let $\delta\in (0,\delta_0]$ and let $F,G_1,G_2,\mathcal{H}$ satisfy the hypotheses of Proposition \ref{wellSeparatedAndUniformProp} for these values of $\eta$ and $\delta$. We must show that at least one of Conclusion \ref{assertion1ItemA} or \ref{assertion1ItemB} from Proposition \ref{wellSeparatedAndUniformProp} is true for $\eps$ as above. 

First, apply ``two-ends reduction'' Lemma \ref{twoEndsOnLines} to $G_1$, with $\zeta$ as defined in \eqref{defnZeta}; we obtain a width $w_1$, a line $\ell_1$, and a set $G_1'=G_1\cap N_{w_1}(\ell_1)$. Apply Lemma \ref{hypergraphRefinementLem} to the induced subgraph $\mathcal{H}\cap (F\times G_1'\times G_2)$ with $\eps=1/2$; we obtain a uniformly $c$-dense hypergraph $\mathcal{H}'$ with $c\geq 2^{-4}\delta^{\eta}$. Replacing $G_2$ by its image under $\pi_{G_2}(\mathcal{H}')$, we may suppose that $\pi_{G_2}\colon \mathcal{H}' \to G_2$ is onto; this step might reduce the size of $G_2$ by a multiplicative factor of $c$. Next, apply Lemma \ref{twoEndsOnLines} (with $\zeta$ as defined in \eqref{defnZeta}) to $G_2$; we obtain a width $w_2$, a line $\ell_2$, and a set $G_2'=G_2\cap N_{w_2}(\ell_2)$. Apply Lemma \ref{hypergraphRefinementLem} to the induced subgraph $\mathcal{H}'\cap (F\times G_1'\times G_2')$ with $\eps=1/2$; we obtain a uniformly $c$-dense hypergraph $\mathcal{H}''$ with $c\geq 2^{-8}\delta^{\eta}$. Let $G_1''=\pi_{G_1}(\mathcal{H}'')$, and let $F''=\pi_{F}(\mathcal{H}'')$. 

After this step has been completed, the sets $F'',G_1'',G_2''$ are $(\delta,1, \delta^{-2\zeta}$)-Frostman sets; $\mathcal{H}''$ is uniformly $\delta^{-8}\delta^{\eta}$ regular, and $G_1'',G_2''$ satisfy the following Frostman-type non-concentration on lines estimate:
\begin{equation}\label{frostmanOneAipp}
|G_i''\cap N_r(\ell)| \leq 8 \delta^{-\eta} (r/w_i)^\zeta |G_i''|\quad\textrm{for all lines}\ \ell\ \textrm{and all}\ \delta\leq r\leq 1,\quad i=1,2.
\end{equation}

For notational convenience, we will suppose that $w_1\geq w_2$ (this is harmless since $G_1''$ and $G_2''$ play (anti) symmetric roles). By \eqref{defnZeta} and \eqref{defnEta}, we can apply Lemma \ref{A1A3Localized} with $\eps_1$ in place of $\eps$ (once with $G_1''$ and $G_2''$ as stated, and once with their roles reversed) to conclude that at least one of the following is true. 
\begin{itemize}
\item Conclusion \ref{assertion1ItemB} of Proposition \ref{wellSeparatedAndUniformProp} holds (for $\eps_1$, and hence also for $\eps$).
\item $w_1\leq \delta^{-\eps_1}w_2$; $G_1''\cup G_2''$ is contained in a strip $S$ of width $w = \delta^{-\eps_1}w_2$ (and hence contained in an appropriately chosen rectangle $R$ of dimensions $2\times w$); and $F''$ is contained in an orthogonal rectangle $R^\perp$ of dimensions $2\times w$. 
\end{itemize}
If the first item holds then we are done. Suppose instead that the second item holds. If $w\leq \delta^{1-\eps/10}$, then by pigeonholing we can find orthogonal lines that satisfy \eqref{largeIntersectionRecPrime} for the value of $\eps$ specified above, and hence Conclusion \ref{assertion1ItemA} of Proposition \ref{wellSeparatedAndUniformProp} holds.

Suppose instead that $w>\delta^{1-\eps/10}$. Since $\eta<\eps_1\zeta$, \eqref{frostmanOneAipp} can be re-written as 
\begin{equation}\label{frostmanOnAip}
|G_i''\cap N_r(\ell)| \lesssim \Big(\frac{\delta^{-2\eps_1} r}{w}\Big)^{\zeta}|G_i''|.
\end{equation}

Let $\phi\colon\RR^2\to\RR^2$ be an affine-linear transformation sending $R$ to $[0,1]^2$ and let $\phi^T$ be its transpose; then $\phi^T(R^\perp)$ is a translate of the unit square. We also have
\begin{equation}\label{distortionOfAAT}
a \cdot(b_1-b_2) = w \phi^T(a)\cdot (\phi(b_1)-\phi(b_2))\quad \textrm{for all}\ (a,b_1,b_2)\in R^\perp\times R^2 .
\end{equation}
Let $\tau = \delta/w \leq  \delta^{\eps/10}$. We would like to apply Proposition \ref{weakNonConcentrationOnLinesImpliesConclusionBProp} to the sets $\phi^T(F'')$, $\phi(G_1''),$ and $\phi(G_2'')$ (and the corresponding image of $\mathcal{H}''$) at scale $\tau$ to conclude that Conclusion \ref{assertion1ItemB} of Proposition \ref{wellSeparatedAndUniformProp} must hold. The issue is that while $F'',$ $G_1''$, and $G_2''$ are $(\delta,1,\delta^{-2\zeta})$-Frostman sets, their images under $\phi$ (resp.~$\phi^T$) might not be $(\tau,1,\delta^{-2\zeta})$-Frostman sets. We can fix this by sequentially applying Lemma \ref{lem: rescaling} and Lemma \ref{hypergraphRefinementLem} to each of the sets $F'',$ $G_1''$, and $G_2''$. We now turn to the details. 

We apply Lemma \ref{lem: rescaling} to $G_1''\subset R$, with $\eps_1\eta/10$ in place of $\eps$; we let $G_1'''$ be the resulting subset. We then apply Lemma \ref{hypergraphRefinementLem} to the induced subgraph $\mathcal{H}_1 = \mathcal{H}'''\cap (F''\times G_1'''\times G_2'')$, with $\eps = 1/2$; denote the output by $\mathcal{H}_1'$. Next we apply Lemma \ref{lem: rescaling} to $G_2''\subset R$, with $\eps_1\eta/10$ in place of $\eps$; we let $G_2'''$ be the resulting subset. We then apply Lemma \ref{hypergraphRefinementLem} to the induced subgraph $\mathcal{H}_2 = \mathcal{H}_1'\cap (F''\times G_1'''\times G_2''')$, with $\eps = 1/2$; denote the output by $\mathcal{H}_2'$. Finally we apply Lemma \ref{lem: rescaling} to $F''\subset R^\perp$, with $\eps_1\eta/10$ in place of $\eps$; we let $F'''$ be the resulting subset. We then apply Lemma \ref{hypergraphRefinementLem} to the induced subgraph $\mathcal{H}_3 = \mathcal{H}_2'\cap (F'''\times G_1'''\times G_2''')$, with $\eps = 1/2$; denote the output by $\mathcal{H}_3'$.

Let $\tilde F$  (resp.~$\tilde G_1$, $\tilde G_2$) be the set of $\tau$-squares that intersect $\phi^\perp(F''')$ (resp. $\phi(G_2''')$, $\phi(G_1''')$), let $\tilde{\mathcal{H}}_0$ be the set of $\tau$-cubes that intersect $(\phi\otimes \phi\otimes \phi^T)(\mathcal{H}_3')$, and let $\tilde{\mathcal{H}}_3$ be the refinement obtained by applying Lemma \ref{hypergraphRefinementLem} to $\tilde{\mathcal{H}}_0$ with $\eps=1/2$. We have that $\tilde{\mathcal{H}}_3$ is uniformly $\delta^{4\eta} \geq \tau^{40\eta/\eps}$-dense, and $\tilde G_1,\tilde G_2,\tilde F$ are $(\tau, 1, \tau^{-3\zeta/\eps})$-Frostman sets. 

After these refinements, \eqref{frostmanOnAip} now says that for all $r\in [\delta,w]$ and all lines $\ell$,
\begin{equation}\label{nonConcentrationInAippp}
|G_i'''\cap N_r(\ell)| \leq \Big(\frac{\delta^{-3\eps_1}r}{w}\Big)^{\zeta}|G_i'''|,\quad i=1,2.
\end{equation}
Furthermore, since each $\delta\times \delta/w$ rectangle that intersects $G_1'''$ (resp.~$G_2'''$) has the same size of intersection with $G_1'''$ (resp.~$G_2'''$), \eqref{nonConcentrationInAippp} implies that for all $r\in [\tau,1]$ and all lines $\ell$,
\[
|\tilde G_i\cap N_r(\ell)|\leq (\delta^{-3\eps_1}r)^{\zeta}|\tilde G_i|\leq  (\tau^{-30\eps_1/\eps}r)^{\zeta}|\tilde G_i|,\quad i=1,2.
\]

Let us now verify that the sets $\tilde G_1,\tilde G_2,\tilde F$ and $\tilde{\mathcal{H}}$ satisfy the hypotheses of Proposition \ref{weakNonConcentrationOnLinesImpliesConclusionBProp} with $\eps/2$ in place of $\eps$, and $\tau$ in place of $\delta$. The estimate  \eqref{nonConcentratedOnLinesInProp} holds with $\zeta$ as above, provided $30\eps_1/\eps\leq \lambda$; this is guaranteed by \eqref{defnEps1}. We have that $\tilde G_1,\tilde G_2,$ and $\tilde F$ are $(\tau,1,\tau^{-\lambda})$-Frostman sets, provided $3\zeta/\eps\leq\lambda$; this is guaranteed by \eqref{defnZeta}. Finally, $\tilde{\mathcal{H}}$ is uniformly $\tau^\alpha$-dense, provided $40\eta/\eps\leq \alpha$; this is guaranteed by \eqref{defnEta}. We conclude that there exists $\tau_0$ (depending on $\eps$ and $\zeta$, which in turn depends only on $\eps$) so that 
\begin{equation}\label{largeTauCovering}
\mathcal{E}_\tau(\{\tilde a\cdot (\tilde b_1-\tilde b_2)\colon(\tilde a,\tilde b_1,\tilde b_2)\in \tilde{\mathcal{H}}\})\geq \tau^{\eps/2-1},
\end{equation}
provided $\tau\leq\tau_0$. We can ensure that $\tau\leq\tau_0$ by selecting $\delta_0 = \tau_0^{1/\eps}$. 

Comparing \eqref{largeTauCovering} and \eqref{distortionOfAAT}, we see that there is an interval $I$ of length $w> \delta^{1-\eps/10}$ so that
\[
\mathcal{E}_\delta\big(I \cap \{a\cdot(b_1-b_2)\colon (a,b_1,b_2)\in \mathcal{H}\}\big)\gtrsim \tau^{\eps/2-1}=(|I|/\delta)^{1-\eps/2}.
\]
If $\delta_0>0$ is sufficiently small, then Conclusion \ref{assertion1ItemB} of Proposition \ref{wellSeparatedAndUniformProp} holds.
\end{proof}


\subsection{A final reduction}
In this section we will show that Proposition \ref{wellSeparatedAndUniformProp} implies Theorem \ref{SWThm}$'$. Compared with Theorem~\ref{SWThm}$'$,  the difference between these two results is that in Proposition~\ref{wellSeparatedAndUniformProp}, the sets $F, G_1, G_2$ satisfy the standard separation conditions. 
\begin{proof}[Proof of Theorem \ref{SWThm}$'$]
Let $\eps>0$. Let $\eta_1,\delta_1$ be the output from Proposition \ref{wellSeparatedAndUniformProp}, with $\eps/2$ in place of $\eps$. Let $\eta = \eta_1/100$, $\delta_0 = \delta_1$. Let $F,G_1,G_2, \mathcal{H}$ satisfy the hypotheses of Theorem \ref{SWThm}$'$ with this value of $\eta$ and some $\delta\in(0,\delta_0]$. We must show that at least one of the Conclusions \ref{SWThmItemAPrimed} or \ref{SWThmItemBPrimed} from Theorem \ref{SWThm}$'$ hold for this value of $\eps$ and $\eta$.

Let $\mathcal{B}$ be a cover of $[0,1]^6$ by sets of the form ${\bf B} = B_{F}\times B_{G_1}\times B_{G_2}$, where $B_{F},B_{G_1},B_{G_2}\subset\RR^2$ are balls of radius $\delta^{6\eta}$; we can construct such a cover with $\#\mathcal{B}\lesssim \delta^{-36\eta}.$ This induces a decomposition $\mathcal{H}=\bigcup_{\mathcal{B}}\mathcal{H}_{{\bf B}},$ where $\mathcal{H}_{{\bf B}}=\mathcal{H}\cap(B_{F}\times B_{G_1}\times B_{G_2})$.  Since $F,G_1,G_2$ are $(\delta,1, \delta^{-\eta})$-Frostman sets, we have 
\[
\#\{(a,b_1,b_2)\in \mathcal{H} \colon |a|\geq\delta^{5\eta},\ |b_1-b_2| \geq \delta^{5\eta}\}\geq\frac{1}{2}\#\mathcal{H}. 
\]
Thus by pigeonholing, there exists an element ${\bf B}=B_{F}\times B_{G_1}\times B_{G_2}\in\mathcal{B}$ with $\operatorname{dist}(B_{F}, 0)\geq  \delta^{5\eta}-\delta^{6\eta}$ and $\operatorname{dist}(B_{G_1},B_{G_2})\geq \delta^{5\eta}-\delta^{6\eta}$ so that $\# \mathcal{H}_{{\bf B}}\geq \frac{1}{2}\#\mathcal{H}/\#\mathcal{B}\gtrsim \delta^{36\eta}\# \mathcal{H}$. Let $F'=F\cap B_F,$ $G_1' = G_1\cap B_{G_1},$ and $G_2' = G_2\cap B_{G_2}$. After applying a common translation $\phi$ to the sets $G_1'$ and $G_2'$ (this translation is harmless since it preserves expressions of the form $b_1-b_2$ with $b_1\in G_1'$ and $b_2\in G_2'$; we may also assume it sends $\delta$-squares to $\delta$-squares), we may assume that $G_1'$ and $G_2'$ are contained in $B(0,r),$ with $r = \operatorname{dist}(B_{G_1},B_{G_2})\gtrsim\delta^{5\eta}$.  Let $G_1''$ (resp $G_2''$) be the image of $G_1'$ under the map $\psi_{G} \colon (x,y)\mapsto(x/r, y/r)$. Then $G_1''$ and $G_2''$ are $(\delta, 1, \delta^{-12\eta})$-Frostman sets that are contained in balls of radius $\leq 1/100$ that are $1/2$-separated. Similarly, let $F''$ be the image of $F'$ under the map $\psi_F \colon (x,y)\mapsto(x/r', y/r')$, with $r' = \operatorname{dist}(B_{G_F}, 0)\gtrsim\delta^{5\eta}$; then $F''$ is a $(\delta, 1, \delta^{-12\eta})$-Frostman set that is contained in a ball of radius $\leq 1/100$ that has distance $\geq 1/2$ from the origin. 

Let $\phi_G=\psi_G\circ\phi \colon \RR^2\to\RR^2$ be the translation and dilation described above that was applied to $G_1'$ and $G_2'$.

 Define $\mathcal{H}'=\psi_F\otimes \phi_G\otimes \phi_G(\mathcal{H}_{{\bf B}})$. The sets $F'',G_1'',G_2''$ and $\mathcal{H}'$ might no longer be unions of $\delta$-cubes, but it is harmless to replace them with the unions of $\delta$-cubes that they intersect. The sets $F'',G_1'',G_2''$ now satisfy the standard separation conditions \eqref{wellSeparationCondition}.

Apply Lemma \ref{hypergraphRefinementLem} with $\eps = \delta^{20\eta}$ to $\mathcal{H}'$, and let $\mathcal{H}''$ denote the resulting refinement; this set is uniformly $\delta^{40\eta}$-dense.  We now apply Proposition \ref{wellSeparatedAndUniformProp} to $\mathcal{H}''\subset F''\times G_1''\times G_2''$ with $\eps/2$ in place of $\eps$; we have that $F'',G_1'',G_2''$ satisfy the standard separation conditions \eqref{wellSeparationCondition}, and since each of $F'', G_1''$, and $G_2''$ is a $(\delta,1,\delta^{-12\eta})$-Frostman set, it is also a $(\delta,1,\delta^{-100\eta})$-Frostman set. We have that $\mathcal{H}''$ is uniformly $\delta^{100\eta}$-dense. 

Suppose Conclusion \ref{assertion1ItemA} of Proposition \ref{wellSeparatedAndUniformProp} is true, i.e.~there are  lines $\ell_0$,  and  $\ell_0^\perp$ (where $\ell_0^\perp$ passes through the origin and is perpendicular to $\ell_0$) with 
\[
|N_\delta(\ell_0^\perp)\cap F''|\geq \delta^{\eps/2}|F|,\quad |N_\delta(\ell_0)\cap G_1''|\geq \delta^{\eps/2}|G_1|,\ \ \textrm{and}\ \ |N_\delta(\ell_0)\cap G_2''|\geq \delta^{\eps/2}|G_2|.
\] 
Let $\ell=\phi_G^{-1}(\ell_0) = \phi^{-1}\circ\psi_{G}^{-1}$, then $\ell^\perp\colon=\phi_F^{-1}(\ell_0^\perp)$ is perpendicular to $\ell$. Then $|N_\delta(\ell)\cap F|\geq\delta^{100\eta}|N_\delta(\ell)\cap F''|\geq\delta^{1+\eps/2+100\eta}\geq\delta^{1+\eps}$ and similarly for $|N_\delta(\ell)\cap G_1|$ and $|N_\delta(\ell^\perp)\cap G_2|$; thus Conclusion \ref{SWThmItemAPrimed} from Theorem \ref{SWThm}$'$ holds.

Next, suppose Conclusion \ref{assertion1ItemB} of Proposition \ref{wellSeparatedAndUniformProp} is true, i.e.~there exists $\rho\geq\delta$ and an interval $I$ of length at least $\delta^{-100\eta}\rho$ so that \eqref{bigRCoveringNumberPrime} holds for $F'',G_1'',G_2''$ and $\mathcal{H}''$ with $\eps/2$ in place of $\eps$. Unwinding definitions, we can verify that this implies Conclusion \ref{SWThmItemBPrimed} from Theorem \ref{SWThm}$'$.
\end{proof}


%

\end{document}